\theoremstyle{definition}
\newtheorem{defi}{Definition}[section]
\theoremstyle{plain}
\newtheorem{thm}[defi]{Theorem}
\newtheorem{lem}[defi]{Lemma}
\newtheorem{prop}[defi]{Proposition}
\newtheorem*{prop*} {Proposition}
\newtheorem{cor}[defi]{Corollary}
\theoremstyle{remark}
\newtheorem{rmk}[defi]{Remark}
\newtheorem{nota}[defi]{Notation}
\newcommand{\mathds}{\mathbb}
\numberwithin{equation}{section}
\newcommand{\comm}[1]{ }
\newcommand{\Ktau}{V^\tau_{K_{12}}}
\DeclareMathOperator{\ch}{ch}
\DeclareMathOperator{\tr}{tr}
\DeclareMathOperator{\Aut}{Aut}
\DeclareMathOperator{\wt}{wt}
\DeclareMathOperator{\supp}{Supp}
\DeclareMathOperator{\qdim}{qdim}
\DeclareMathOperator{\glob}{glob}
\newcommand{\idd}{\mathbbm{1}}
\newcommand{\vt}[4]{V_{#1}^{T,#2}(#3)[#4]}
\newcommand{\cK}{ \mathcal{K}}
\newcommand{\GF}{ \mathds{F}_4}
\newcommand{\F}{ \mathds{F}}
\newcommand{\cC}{ \bm{\mathcal{C}}} % code
\newcommand{\cD}{ \bm{\mathcal{D}}} % code
\newcommand{\cS}{ \bm{\mathcal{S}}}
\newcommand{\cZ}{    \bm{ 0 }   } % zero code
\newcommand{\cH}{ \bm{\mathcal{H}}}
\newcommand{\cgg}{ {\bm{  \gamma }}}
\newcommand{\cdd}{{  \bm{ \delta}}}
\newcommand{\cll}{{ \bm{ \lambda }}}
\newcommand{\cee}{{  \bm{ \eta}}}
\newcommand{\cB}{ {\bm{\mathcal{B}}}}
\newcommand{\LCD}{{L_{ \cC \times \cD}} } % code lattice
\newcommand{\LC}{{L_{ \cC \times \cZ}}}
\newcommand{\VL}[1]{V_{L_{ #1}}}
\newcommand{\VLCD}{{V_{\LCD }} }
\newcommand{\VLC}{ {V_{\LC}}}
\newcommand{\VLCd}{{ V_{ L_{ \cC \times \bm{\delta} }}}}
\newcommand{\VLtl}{ { ( V_L^\tau)^{ \otimes \ell} }}
\newcommand{\Ceqt}{\cC_{\equiv \tau}}
\newcommand{\bom}{{ \bar{\omega}}}
\newcommand{\fusion}[3]{{\binom{#3}{#1,\;#2}}}
\newcommand{\fuCD}{ N_{ \cC \times \cD}\fusion}
\newcommand{\fuCDt}{N^\tau_{ \cC \times \cD}\fusion}
\newcommand{\fult}{N_ \otimes \fusion}
\newcommand{\fut}{N_\circ^ \tau\fusion}
\newcommand{\abs}[1]{\left\vert #1 \right\vert}
\DeclareMathOperator{\Syl}{Syl}
\DeclareMathOperator{\Suz}{Suz}
\DeclareMathOperator{\Ker}{Ker}
\DeclareMathOperator{\Hom}{Hom}
\DeclareMathOperator{\Ct}{Cent}
\DeclareMathOperator{\Span}{Span}
\begin{document}

\title{Quantum dimensions and fusion rules of the VOA $ V^\tau_\LCD$ }

 \author{Hsian-Yang Chen}
  \address[H.Y.  Chen]{ Institute of Mathematics, Academia Sinica, Taipei  10617, Taiwan}
\email{hychen@math.sinica.edu.tw}
 \author{Ching Hung Lam}
 \address[C.H. Lam]{ Institute of Mathematics, Academia Sinica, Taipei  10617, Taiwan
 and National Center for Theoretical Sciences, Taiwan}
\email{chlam@math.sinica.edu.tw}
% \thanks{Partially supported by NSC grant
%  100-2628-M-001005-MY4 }

\subjclass[2010]{Primary 17B69; Secondary 20B25 }

%\date{\today}

\begin{abstract}
In this article, we determine  quantum dimensions and   fusion rules for  the orbifold code VOA $ V^\tau_\LCD$.  As an application,  we also construct certain $3$-local subgroups inside the automorphism group of the VOA $V^\sharp$, where $V^\sharp$ is a holomorphic VOA obtained by the $\mathbb{Z}_3$-orbifold construction on the Leech lattice VOA.
\end{abstract}
\maketitle

%\linenumbers
%\printnomenclature
\section{Introduction}

The study of vertex operator algebras as modules of Virasoro VOA %$L\left(\frac{1}{2},0\right) $
was first initiated by Dong, Mason and Zhu\,\cite{DMZ}. They proved that the Moonshine VOA $V^{\natural }$ contains $48$
mutually orthogonal elements such that each of them will generate a copy of
the rational Virasoro VOA $L\left( \frac{1}{2},0\right) $ inside $V^{\natural }$ and the sum of these $48$ conformal vectors is the Virasoro
element of $V^{\natural }$.  This discovery turns out to be very important for the study of the Moonshine VOA \cite{D,M2}.  It also leads to the development of the theory of framed vertex operator algebras (cf. \cite{M1,M2} and \cite{DGH}). Roughly speaking, a framed VOA is a simple VOA which contains a full sub VOA $F\cong L(\frac{1}2,0)^{\otimes n}$ such that $\mathrm{rank}(V)=\mathrm{rank}(F)=n/2$ .  There are many interesting examples, which include the famous Moonshine VOA. Moreover, it is known that if $V$ is a framed VOA with the weight one subspace $V_1=0$, then the full automorphism group $\Aut(V)$ is finite \cite{M2,GL}. Therefore, the theory of framed VOAs is very useful for studying certain finite groups such as the Monster.

In \cite{DMZ}, the Virasoro VOA $L(\frac{1}2, 0)$  was constructed inside the lattice type VOA $V_{\mathbb{Z}\alpha}^+$, where $\langle \alpha, \alpha \rangle =4$. In fact, $V_{\mathbb{Z}\alpha}^+ \cong V_{\sqrt{2}A_1}^+ \cong L(\frac{1}2,0)\otimes L(\frac{1}2,0)$. Therefore, a framed VOA with integral central charge $k$ may also be considered as an extension of
the tensor product of the orbifold VOA $V_{\sqrt{2}A_1}^+$.  In this article, we consider a generalization of framed VOAs. Namely, we replace the VOA $V_{\sqrt{2}A_1}^+$ by another orbifold VOA $(V_{\sqrt{2}A_2}^\tau)$, where $\tau$ is a lift of a fixed point free isometry of order $3$ in $O(\sqrt{2}A_2)$, and study certain extensions of the VOA $(V_{\sqrt{2}A_2}^\tau)^{\otimes n}$. We first study a certain integral lattice $\LCD$ that are constructed from an $\mathbb{F}_4$-code $\cC$ and an  $\mathbb{F}_3$-code $\cD$ as an extension of the lattice $(\sqrt{2}A_2)^{\oplus n}$. We also study the irreducible  modules for the orbifold VOA $V_{\LCD}^\tau$. As our main result, we determine the quantum dimensions and the fusion rules for all irreducible $V_{\LCD}^\tau$-modules. In particular, we show that all irreducible $V_{\LCD}^\tau$-modules are simple current modules if the $\mathbb{F}_4$-code $\cC$ is self-dual. Moreover, the fusion ring for $V_{\LCD}^\tau$ is isomorphic to a group ring of an elementary 
abelian $3$-group and the set of all inequivalent irreducible $V_{\LCD}^\tau$-modules forms a quadratic space over $\mathbb{F}_3$ if $\cC$ is self-dual.

As an application, we study the case when $\cC$ is isomorphic to the Hexacode in detail. In this case, $L_{\cC\times \{0\}}$ is isomorphic to the Coxeter-Todd lattice $K_{12}$ of rank $12$. We show that the full automorphism group of $V_{K_{12}}^\tau$ is isomorphic to $\Omega^-_8(3).2$. Several $3$-local subgroups of the VOA $V^\sharp$, obtained from
$\mathbb{Z}_3$-orbifold construction using the Leech lattice VOA, are also studied and computed explicitly.

This article is organized as follows. In Section 2, we review some basic properties of the VOA $V_{\sqrt{2}A_2}^\tau$ and the notion of quantum dimensions. In Section 3, we review a construction of the integral lattice $\LCD$ from some $\mathbb{F}_4$ and $\mathbb{F}_3$-codes. Some basic facts about the lattice VOA $V_\LCD$ and its $\mathbb{Z}_3$-orbifold $V_\LCD^\tau$ will also be recalled. In Section 4, we compute the quantum dimensions of the orbifold VOA $ V^ \tau_{\LCD}$. In Section 5, we compute the fusion rules among irreducible $V^ \tau_{\LCD}$-modules.  As an application,   we construct in Section 6 certain $3$-local subgroups inside the automorphism group of the VOA $V^\sharp$, where $V^\sharp$ is a holomorphic VOA obtained by a $\mathbb{Z}_3$-orbifold construction on the Leech lattice VOA.

\section{Preliminaries and basic properties}

\paragraph{\textbf{The VOAs $V_{\sqrt{2}A_2}$ and $ V_{\sqrt{2}A_2}^\tau$}}

In this section we review some facts about the orbifold VOA $V_{\sqrt{2} A_2}^\tau$. For general background concerning  lattice VOA, we refer to~\cite{FLM,LL}.

Let $ \alpha_1, \alpha_2$ be the simple roots of type $A_2$ and set
$ \alpha_0=-( \alpha_1+ \alpha_2)$. Then $ \langle  \alpha_i, \alpha_i \rangle  = 2$ and $ \langle \alpha_i, \alpha_j  \rangle =-1$ if $i \ne j$, $i,j\in \{0,1,2\}$. Set $ \beta_i=\sqrt{2} \alpha_i$ and
let $L= \mathds{Z} \beta_1+ \mathds{Z} \beta_2$ be the lattice spanned by $ \beta_1$ and
$ \beta_2$. Then  $L$ is isometric to $\sqrt{2}A_2$.

Let  $ \GF  = \{ 0, 1, \omega, \bom \}$ denote the Galois field of four elements, where $ \omega$ is a   root of  $ x^2 + x +1 =0$ over $\mathbb{F}_2$. %, whose group structure is isometric to  $ \mathds{Z}/2 \times \mathds{Z}/2 $.
We adopt the similar notation as in \cite{KLY03, DLTYY04} and denote the cosets of $L$ in the dual lattice  $L^\perp = \{ \alpha \in \mathds{Q} \otimes_{ \mathds{Z} }  L \,|\, \langle \alpha, L \rangle   \subset \mathds{Z} \}$, as follows:
\begin{equation*}
L^0=L,\quad  L^1=\frac{- \beta_1+ \beta_2}{3}+L ,\quad
L^2=\frac{ \beta_1- \beta_2}{3}+L,
\end{equation*}
\begin{equation}\label{eq:def:L_i}
L_0=L,\quad L_1=\frac{ \beta_2}{2}+L,\quad
L_ \omega=\frac{ \beta_0}{2}+L,\quad L_{\bom}=\frac{ \beta_1 }{2}+L,
\end{equation}
and
\begin{equation*}
L^{(i,j)} = L_i + L^j,
\end{equation*}
for $i=0,1, \omega, \bom$ and $j=0,1,2$.  Then, $L^{(i,j)},i \in \GF, j\in \mathds{Z}_3 = \{0,1,2\}$
are all the cosets of $L$ in $ L^{\perp }$ and $L^\perp/L \cong
\mathds{Z}_2 \times \mathds{Z}_2 \times \mathds{Z}_3$.
It is shown in ~\cite{Dong93} that there are exactly $12$ isomorphism classes of
irreducible $V_L$-modules, which are given by
$V_{L^{(i,j)}}$, $i=0,1, \omega, \bom$ and $j=0,1,2$.

Consider the isometry $ \tau : L \to L $ defined by
\[
  \beta_1 \mapsto \beta_2 \mapsto \beta_0 \mapsto \beta_1.
\]
Then $ \tau$ is fixed point free of order three and can be lifted naturally to an automorphism of $V_L$ by mapping
\begin{equation*}
 a^1( -n_1) \cdot a^k( - n_k) e^b \mapsto (\tau a^1)( -n_1) \cdot ( \tau a^k) ( - n_k) e^{ \tau b}.
\end{equation*}
By abuse of notation, we also use $ \tau$ to denote the lift.

In~\cite{KLY03}, it was shown that  there are exactly three
irreducible $ \tau$-twisted $V_L$-modules and  three irreducible $ \tau^2$-twisted $V_L$-modules, up to isomorphism. They are denoted by $ V_L^{ T, j}( \tau) $ or $ V_L^{ T, j}( \tau^2) $ for $j = 0, 1,2$.

The automorphism $ \tau$ acts on the set of inequivalent irreducible $V_L$-modules by $ V_{L^{(i, j)}} \circ \tau$. Note also that $ V_{L^{(i, j)}} \circ \tau \cong V_{L^{( \bar{\omega} i, j)}}$.  We denote
\begin{align*}
  U[\varepsilon] = \{u \in U\,|\, \tau u =\exp(2\pi\sqrt{-1}\varepsilon/3)  u\},
\end{align*}
for any $\tau$-invariant $V_L$-module $U$ and $\varepsilon=0,1,2$. Irreducible modules for the orbifold VOA $V_L^ \tau$ are classified by Tanabe and Yamada~\cite{TY07} and the following result was proved.

\begin{prop}\cite{TY07}
 The VOA $V_{L}^{\tau}$ is a simple, rational, $C_2$-cofinite, and of CFT type.
There are exactly $30$ inequivalent irreducible $V_{L}^{\tau}$-modules.
They are represented as follows.
\begin{enumerate}[label=(\roman*)]
   \item  $V_{L^{(0,j)}}[\varepsilon]$ for $j,\varepsilon=0,1,2$.
   \item  $V_{L^{( \omega,j)}}$ for $j=0,1,2$.
   \item  $V_L^{T,j}(\tau^i)[ \varepsilon]$  for $i=1,2$ and $j,\varepsilon=0,1,2$.
\end{enumerate}
Weights of these modules are given by given by (see Tanabe and Yamada\cite[(5,10)]{TY07}):
  \begin{align*}
 \wt V_{ L^{0, j }} [ \varepsilon ] &\in  \frac{ 2 j^2 }{3} + \mathds{Z}, \\
 \wt V_L^{T, j}( \tau^i ) [ \varepsilon ] &\in  \frac{ 10 -  3 (  j^2 + \varepsilon ) }{9 } + \mathds{Z},
\end{align*}
for $ i = 1, 2, j, \varepsilon \in \mathds{Z}_3$.	
\end{prop}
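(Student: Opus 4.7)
The plan is to split the classification into structural, combinatorial, and computational pieces, using general cyclic orbifold theory to reduce the problem to the concrete module theory of $V_L$ and its $\tau^i$-twisted modules.

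First, for the structural properties of $V_L^\tau$ (simplicity, rationality, $C_2$-cofiniteness, CFT type), I would appeal to the cyclic orbifold theorems of Miyamoto and Carnahan--Miyamoto: since $V_L$ is itself a simple, rational, $C_2$-cofinite VOA of CFT type and $\tau$ has finite order three, each of these properties is inherited by the fixed subalgebra $V_L^\tau$.

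Second, I would enumerate the irreducibles by splitting into the untwisted sector and the $\tau^i$-twisted sectors for $i=1,2$. In the untwisted sector, the general orbifold duality of Dong--Li--Mason guarantees that every irreducible $V_L^\tau$-module arises either as a $\tau$-eigenspace of a $\tau$-stable irreducible $V_L$-module or as the restriction of a non-$\tau$-stable one. Using $V_{L^{(i,j)}} \circ \tau \cong V_{L^{(\bom i, j)}}$, the $\tau$-action on the twelve irreducible $V_L$-modules has three fixed points (those with $i=0$), each splitting into three $\tau$-eigenspaces $V_{L^{(0,j)}}[\varepsilon]$ and contributing $9$ irreducibles, together with three length-three orbits represented by $V_{L^{(\omega,j)}}$ for $j=0,1,2$ contributing $3$ more; this yields $12$ untwisted irreducibles. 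For the $\tau^i$-twisted sector, \cite{KLY03} supplies three irreducibles $V_L^{T,j}(\tau^i)$, each $\tau$-stable since $\tau$ commutes with $\tau^i$, and each splits into three $\tau$-eigenspaces $V_L^{T,j}(\tau^i)[\varepsilon]$, producing $2\cdot 3\cdot 3=18$ twisted irreducibles, for a grand total of $30$.

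Third, for the conformal weights, the untwisted case is immediate from $L(0)\, e^\alpha = \tfrac{1}{2}\langle\alpha,\alpha\rangle\, e^\alpha$, so
\[
\wt V_{L^{(0,j)}}[\varepsilon] \in \min\!\left\{\tfrac{1}{2}\langle\alpha,\alpha\rangle : \alpha\in L^{(0,j)}\right\} + \mathds{Z},
\]
and a short computation in $L^\perp/L$ identifies this coset with $\tfrac{2j^2}{3}+\mathds{Z}$. For the twisted modules, I would invoke the Dong--Lepowsky formula for the minimum $L(0)$-weight of a twisted lattice module, which combines the oscillator shift from the nontrivial $\tau$-eigenspaces on $L\otimes\mathds{C}$ with the quadratic contribution of the twist data $(j,\varepsilon)$ produced in \cite{KLY03}; evaluating over the nine choices of $(j,\varepsilon)$ yields $\tfrac{10-3(j^2+\varepsilon)}{9}\pmod{\mathds{Z}}$.

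The hard part is exhaustion, i.e., verifying that no further irreducibles exist beyond the $30$ listed: abstract orbifold Schur--Weyl duality only guarantees pairwise inequivalent irreducibility of the listed family, not that the family is complete. Following Tanabe--Yamada I would close the argument via a direct analysis of Zhu's algebra $A(V_L^\tau)$ to obtain an upper bound matching the lower bound from the enumeration above; alternatively, the modern cyclic orbifold completeness theorem yields the same conclusion once the first step has supplied the required hypotheses.
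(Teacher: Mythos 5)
The paper offers no proof of this proposition at all: it is quoted verbatim from Tanabe--Yamada \cite{TY07}, so there is nothing internal to compare your argument against. That said, your sketch is a sound reconstruction of how the result is actually established, and it is worth recording how it relates to the machinery the paper does invoke elsewhere. Your untwisted count (three $\tau$-fixed modules $V_{L^{(0,j)}}$ splitting into nine eigenspaces, plus three length-three orbits contributing $V_{L^{(\omega,j)}}$) and your twisted count ($2\cdot 3\cdot 3=18$ eigenspaces of the six twisted modules from \cite{KLY03}) correctly reproduce the total of $30$, and you rightly isolate exhaustion as the genuinely hard step; quantum Galois theory only gives irreducibility and pairwise inequivalence of the listed family, not completeness. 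In \cite{TY07} the exhaustion is carried out by a direct analysis of the module category (the route you attribute to them), whereas the present paper, when it needs the analogous completeness statement for $V_{\LCD}^\tau$, appeals instead to Miyamoto's theorem \cite[Thm.~A]{Mi13a} that every simple module of a $C_2$-cofinite $\mathbb{Z}_3$-fixed-point subVOA occurs inside some twisted or ordinary module of the big VOA --- this is precisely the ``modern cyclic orbifold completeness theorem'' you mention as an alternative, so either closing move is legitimate here. Two small cautions: the general Carnahan--Miyamoto rationality theorem for cyclic orbifolds postdates both \cite{TY07} and this paper, so if you want a self-contained argument in the spirit of the sources actually cited you should take rationality from \cite{TY07} directly rather than from the general theorem; and for the twisted weights the formula $\frac{10-3(j^2+\varepsilon)}{9}\bmod\mathbb{Z}$ depends on both the vacuum shift of the $\tau$-twisted Heisenberg sector and the grading of the eigenspace label $\varepsilon$, so the ``short computation'' genuinely requires the explicit construction of $V_L^{T,j}(\tau^i)$ from \cite{KLY03} rather than only the abstract Dong--Lepowsky minimum-weight formula. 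Neither point is a gap in the logic, only in the attribution of where the inputs come from.
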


\paragraph{\textbf{Quantum Dimension}}

 We now review the notion of quantum dimension introduced by Dong et al.~\cite{DJX12}.

Let $V$ be a VOA of central charge $c$ and let $ M = \oplus_{ n \in \mathds{Z}_+} M_{ \lambda +n} $ be a $V$-module, where $ \lambda$ is the lowest conformal weight of $M$. The \emph{normalized character} of $M$ is defined as
\begin{equation*}
     \ch M(q) := q^{\lambda-c/24}\sum_{n\in \mathds{Z}_+} \dim {M_{ \lambda +n}} q^n,
\end{equation*}
where $ q=e^{2\pi \sqrt{-1} z }$ and $ z = x +  \sqrt{-1} y$ is in the complex upper half-plane $ \mathbb{H}$.

The following notion of quantum dimension is introduced by Dong et al.~\cite{DJX12}.
\begin{defi}
   Suppose $  \ch V(q) $ and $  \ch M(q) $ exist. The \emph{quantum dimension of $M$ over $V$} is defined as
   \begin{equation}
      \qdim_V M := \lim_{ y \to 0^+} \frac{ \ch M (\sqrt{-1} y)}{ \ch V ( \sqrt{-1} y)},
   \end{equation}
where $ y$ is a positive real number.
\end{defi}

From now on, we will omit the variable $q$ and write the character $ \ch M(q)$ as $ \ch M$ instead.
Fundamental properties of quantum dimension are also proved in their paper.

\begin{prop}\cite{DJX12}\label{thm:prop_qdim}
Let $V$ be a simple, rational,  $C_2$-cofinite VOA of  CFT-type and  $V \cong V^\prime$.    Let $W, W^1, W^2$ be $V$-modules. Then
\begin{enumerate}[label=(\roman*)]
   \item $  \qdim_{V} W \ge 1$.
   \item $\qdim_V$ is multiplicative, that is $ \qdim_V ( W^1 \times W^2) =\qdim_V  W^1 \cdot \qdim_V W^2$, where $ W^1 \times W^2$ denotes the fusion product.
   \item A $V$-module $W$ is a simple current  if and only if $ \qdim_V W^1 =1$.
   \item  $ \qdim_V W = \qdim_V W^\prime$, where $ W ^\prime $ is the contragredient dual of $W$.
\end{enumerate}
\end{prop}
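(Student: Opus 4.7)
The plan is to derive all four items from Zhu's modular invariance theorem together with the Verlinde formula. Under the stated hypotheses on $V$, Zhu's theorem ensures that the normalized characters $\ch M$ of the finitely many irreducible $V$-modules span an $SL_2(\mathbb{Z})$-invariant space, giving rise to an $S$-matrix $(S_{M,N})$ indexed by isomorphism classes of irreducible $V$-modules satisfying $\ch M(-1/\tau) = \sum_N S_{M,N}\,\ch N(\tau)$.

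The key intermediate step is to identify the quantum dimension as
\[
\qdim_V M = \frac{S_{M,V}}{S_{V,V}}.
\]
To establish this, I would apply modular inversion to both numerator and denominator of the defining ratio and let $y \to 0^+$ along $\tau = \sqrt{-1}y$, so that the transformed variable tends to $\sqrt{-1}\infty$ and the character expansions are dominated by their leading $q$-powers. Because $V$ is of CFT type, the vacuum has strictly smallest conformal weight among all irreducible modules, so only the $V$-term survives in the limit, producing the stated formula; in particular the quantum dimensions exist and are positive.

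With this formula in hand, the four assertions follow by standard fusion-algebra arguments. Item (iv) is immediate from $S_{M',V} = \overline{S_{M,V}}$ together with the positivity of $S_{M,V}/S_{V,V}$. For (ii), the Verlinde formula combined with the unitarity of $S$ shows that the assignment $M \mapsto S_{M,V}/S_{V,V}$ is a one-dimensional character of the fusion ring, which is precisely multiplicativity. For (i), I would invoke (ii) and (iv) to obtain
\[
(\qdim_V M)^2 = \qdim_V M \cdot \qdim_V M' = \sum_N N_{M,M'}^N \,\qdim_V N \;\ge\; N_{M,M'}^V = 1,
\]
since $V$ occurs with multiplicity one in $M \times M'$; positivity of $\qdim_V M$ then gives $\qdim_V M \ge 1$. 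Finally, (iii) follows from the same identity: if $M$ is a simple current then $M \times M' \cong V$ forces $\qdim_V M = 1$, and conversely if $\qdim_V M = 1$ then $\sum_N N_{M,M'}^N \qdim_V N = 1$, with nonnegative integer coefficients and each $\qdim_V N \ge 1$, forces $M \times M' \cong V$, so $M$ is a simple current.

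The main obstacle is the asymptotic identification $\qdim_V M = S_{M,V}/S_{V,V}$; this relies on Zhu's modular invariance for rational $C_2$-cofinite VOAs and on a careful analysis of the dominant term in the character expansion as $\tau \to \sqrt{-1}\infty$. Once this formula is secured, the remaining items reduce to formal consequences of the Verlinde formula and Perron--Frobenius positivity.
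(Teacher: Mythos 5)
The paper gives no proof of this proposition---it is quoted verbatim from [DJX12]---and your argument is essentially the one in that reference: identify $\qdim_V M$ with $S_{M,V}/S_{V,V}$ via modular inversion, then read off (i)--(iv) from Huang's Verlinde formula (the same $S$-matrix machinery the paper records later in Theorem~\ref{Verlinde} and equation~\eqref{eq:qdim=Si0/S00}). One step deserves a flag: you assert that CFT type forces the vacuum to have strictly smallest conformal weight among all irreducible modules. CFT type only constrains $V$ itself ($V_0=\mathbb{C}\mathbf{1}$, $V_n=0$ for $n<0$); it says nothing about the lowest weights $\lambda_i$ of the other irreducibles, and the asymptotic dominance of the vacuum character---hence the formula $\qdim_V M=S_{M,V}/S_{V,V}$ and the positivity you lean on in (i) and (iii)---requires the additional hypothesis $\lambda_i>0$ for all non-vacuum irreducibles (equivalently, that the effective central charge equals $c$). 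This is exactly the hypothesis [DJX12] imposes and which the present paper repeats when it states $\qdim_V M^i=S_{i,0}/S_{0,0}$. With that hypothesis supplied, the rest of your argument is sound: (iv) follows most directly from $S_{i',j}=(S^{-1})_{i,j}$ and $0'=0$ without invoking unitarity, (ii) is the Verlinde character argument, (i) uses $N_{M,M'}^V=1$ together with positivity, and for the converse in (iii) you should make explicit the standard deduction that $M\times M'\cong V$ implies $M\times W$ is irreducible for every irreducible $W$ (via associativity and comparison of quantum dimensions), which is how [DJX12] closes that implication.
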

\begin{rmk}
   Recall that    a simple $V$-module $M$ is a \emph{simple current} module if and only if for every simple $V$-module $W$, $M \times W$ exists and is also a simple $V$-module.
\end{rmk}

Quantum dimensions of irreducible $V_L^\tau$-modules are computed in~\cite{C13a}.
\begin{prop}\cite{C13a}\label{thm:qd_VL0}
We have
 \begin{enumerate}[label=(\roman*)]
   \item  $\qdim_{V_L^\tau} V_{L^{(0,j)}}[\varepsilon]=1$ for $j,\varepsilon=0,1,2$.
   \item  $\qdim_{V_L^\tau} V_{L^{( \omega,j)}}=3$ for $j=0,1,2$.
   \item  $\qdim_{V_L^\tau} V_L^{T,j}(\tau^i)[ \varepsilon]=2$ for $i=1,2$ and $j,\varepsilon=0,1,2$.
\end{enumerate}
\end{prop}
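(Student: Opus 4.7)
The approach is to compute the normalized character of each irreducible $V_L^\tau$-module explicitly, apply the modular $S$-transformation $\tau'\mapsto -1/\tau'$ to convert the limit $y\to 0^+$ (where $q\to 1$) into an asymptotic analysis at the cusp $i\infty$ on the transformed side, and read off each quantum dimension as a ratio of leading coefficients.

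First I would assemble the character formulas. For an untwisted $V_L$-module one has $\ch V_{L^{(i,j)}}(q)=\Theta_{L^{(i,j)}}(q)/\eta(q)^{2}$. For a $\tau$-stable module $V_{L^{(0,j)}}$ the $\tau$-eigenspace character comes from the projection
\[
\ch V_{L^{(0,j)}}[\varepsilon]=\frac{1}{3}\sum_{k=0}^{2}\omega^{-k\varepsilon}\,\tr_{V_{L^{(0,j)}}}\!\bigl(\tau^{k}q^{L(0)-c/24}\bigr);
\]
because $\tau$ acts fixed-point freely on $L$, the Heisenberg contribution to $\tr\tau^k$ for $k\neq 0$ collapses to a $\Gamma_0(3)$ eta-quotient such as $\eta(q)/\eta(q^3)$, while the lattice sum in the trace picks up only the $\tau$-fixed element $0$, giving $\delta_{j,0}$. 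For the orbit-type module $V_{L^{(\omega,j)}}$ the character is the ordinary lattice character $\Theta_{L^{(\omega,j)}}(q)/\eta(q)^{2}$, and for the $\tau^{i}$-twisted modules $V_{L}^{T,j}(\tau^{i})[\varepsilon]$ one uses the eta-quotient expressions already computed in \cite{KLY03,TY07} (no theta sum is present because $L^{\tau}=0$ makes the twisted lattice trivial).

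Next I would feed these expressions into the $S$-transform, using $\eta(-1/\tau')=\sqrt{-i\tau'}\,\eta(\tau')$ together with Poisson summation
\[
\Theta_{L+\mu}(-1/\tau')=\frac{(-i\tau')^{n/2}}{\sqrt{|L^{\perp}/L|}}\sum_{\nu\in L^{\perp}/L}e^{-2\pi i\langle\mu,\nu\rangle}\,\Theta_{L+\nu}(\tau')
\]
and the analogous level-$3$ transformation of $\eta(\tau')/\eta(3\tau')$ and its conjugates. Setting $\tau'=iy$ and letting $y\to 0^{+}$ sends the argument on the right to $i\infty$, so each transformed character behaves like its leading term $q^{\lambda-c/24}$ with $\lambda$ the lowest conformal weight. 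The common prefactor $(-i\tau')^{n/2}$ cancels between the numerator $\ch M(iy)$ and the denominator $\ch V_L^{\tau}(iy)$, and the quotient collapses to a finite ratio of $S$-matrix entries $S_{V_L^{\tau},M}/S_{V_L^{\tau},V_L^{\tau}}$. A short arithmetic check on the cube-root-of-unity sums produces the asserted values $1$, $3$, and $2$ in parts (i), (ii), (iii), respectively.

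The main technical obstacle is bookkeeping: one has to keep track of the modular transformations of the twisted traces $\tr\tau^{k}$ on the untwisted sectors and of the $\tau^{i}$-twisted characters simultaneously at the cusps $0$ and $i\infty$, and ensure the various $24$-th roots of unity and eta-quotient shifts cancel cleanly across all $30$ modules. Once these modular data are tabulated uniformly, the three asserted quantum dimensions follow by direct inspection of the leading coefficients.
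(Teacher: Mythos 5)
The paper offers no proof of this proposition: it is imported wholesale from \cite{C13a}, so there is no internal argument to measure your proposal against. Taken on its own terms, your strategy is the standard one --- compute each normalized character, apply the $S$-transformation, and read off $\qdim_{V_L^\tau}M=S_{M,0}/S_{0,0}$ (the paper itself records this identity as \eqref{eq:qdim=Si0/S00}) --- and your structural inputs are correct: for $k\neq 0$ the trace of $\tau^k$ on $V_{L^{(0,j)}}$ picks up the lattice vector $0$ only when $j=0$ and reduces the Heisenberg factor to $\eta(q)/\eta(q^3)$; the orbit modules $V_{L^{(\omega,j)}}$ retain the full lattice character $\Theta_{L^{(\omega,j)}}/\eta^2$; and the twisted sectors are pure eta-quotients because $\tau$ is fixed point free. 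After Poisson summation every $\Theta/\eta^2$ term grows like $|L^\perp/L|^{-1/2}e^{\pi/(6y)}$ while $\eta(iy)/\eta(3iy)$ is exponentially smaller, which immediately gives the ratios $1$ and $3$ in (i) and (ii), and the leading term $3^{-1/2}e^{\pi/(6y)}$ of the twisted eta-quotient gives $(\tfrac13\cdot 3^{-1/2})/(\tfrac13\cdot 12^{-1/2})=2$ in (iii).

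The one point you should not dismiss as bookkeeping is the eigenspace projection in part (iii): the three characters $\ch V_L^{T,j}(\tau^i)[\varepsilon]$, $\varepsilon=0,1,2$, are genuinely distinct functions (the paper itself distinguishes $T_0(q)$ from $T_1(q)$ in Section 4), so you must actually verify that the $k\neq 0$ terms $\tr_{V_L^{T,j}(\tau^i)}\varphi(\tau^k)\,q^{L(0)-c/24}$ become subexponential relative to $e^{\pi/(6y)}$ under $z\mapsto -1/z$; only then does each eigenspace inherit exactly one third of the leading asymptotics. This is true (the relevant level-$9$ eta-quotients are cusp-suppressed after the transformation), but it is the substantive step, not the cube-root-of-unity arithmetic. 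As an independent check that your three values are correct, note that $\glob(V_L^\tau)=9\cdot 1^2+3\cdot 3^2+18\cdot 2^2=108=3^2\,\glob(V_L)$, which is precisely the relation the paper verifies for the general $V^\tau_\LCD$ at the end of Section 4.
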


\section{The VOAs $\VLCD$ and $ V^\tau_\LCD$}
\paragraph{\textbf{$ \mathds{Z}_3$ and $\GF$-codes}}
We first review the coding  theory concerned in this paper. All codes mentioned in this paper are linear codes. From now on, we fix $ \ell \in \mathds{N}$.

Let $\bm{ \lambda } = ( \lambda_1, \cdots, \lambda_\ell)$ be a codeword of length $\ell$, its \emph{support} is defined to be
$\supp( \bm{ \lambda }) = \{ i\,|\, \lambda_i \ne 0\}$. The cardinality of
$\supp( \bm{ \lambda })$, denoted by $\wt( \bm{ \lambda })$, is called the \emph{(Hamming) weight} of $ \bm{ \lambda }$.
 %We denote the weight of $ \bm{ \lambda }$ by .
A code $\cS$ is said to be \emph{even} if $\wt( \bm{ \lambda })$ is even for every $ \bm{ \lambda } \in \cS$.
Let $ \cS $ be a code of length $ \ell$. The \emph{(Hamming) weight enumerator} of $ \cS$ is defined to be
   \begin{equation}
      W_{ \cS} ( X, Y) = \sum_{ \bm{ \lambda} \in \cS} X^{ \ell - \wt( \bm{ \lambda})} Y^{ \wt(  \bm{ \lambda})},  %= \sum_{ i =0}^n A_i x^{ n -i } y^i
   \end{equation}
which is a homogeneous polynomial of degree $\ell$.

We consider the inner products for codes over  $ \GF$ and $  \mathds{Z}_3$  as follows.
For codes over  $ \GF$, we use the Hermitian inner product, i.e,
$$ \bm{ x } \cdot \bm{ y } := \sum_{i=1}^\ell x_i  \bar{ y_i}$$
for $ \bm{x} = ( x_1, \cdots, x_\ell), \bm{y} = (y_1, \cdots, y_\ell) \in \GF^\ell$, where $ \bar{x} = x^2$ is the  \emph{conjugate} of $ x \in \GF$. For $\mathbb{Z}_3$-codes, we use the usual Euclidean inner product: $$ \bm{ x } \cdot \bm{ y } := \sum_{i=1}^\ell x_i   y_i \quad \text{ for } \bm{ x } , \bm{ y } \in \mathds{Z}_3^\ell.$$

%   Let  $ x \cdot y := x \bar{y} + \bar{x} y$ for $ x, y \in \GF$ and define the inner product on $ \cC$ by , where . One can see that $ \bm{x} \cdot \bm{y} \in \mathds{Z}_2$. For $\cD$, we let $  x \cdot y := xy$ for $ x, y \in \mathds{Z}_3 =\{ 0,1,2 \}$ and de

Let $ \cK = \GF $ or $ \mathds{Z}_3$. For a $ \cK$-code $ \cS$ of length $ \ell$ with inner product given as above, we define its dual code by
\begin{align*}
 \cS^\perp = \{ \bm{ \lambda}  \in \cK^\ell \mid  \bm{   \lambda } \cdot \bm{  \mu}   = 0 \textrm{ for all }  \bm{ \mu}  \in \cS \}.
\end{align*}
A $\cK$-code $\cS$ is said to be \emph{self-orthogonal} if $\cS \subset \cS^\perp$
and \emph{self-dual} if $\cS = \cS^\perp$.

%From now now, we will let $ \cC$ be a  self-orthogonal $ \GF$-code of length $ \ell$ and $ \cD$ be a self-orthogonal $ \mathds{Z}_3$-code of the same length. We shall study the VOA theory associated to the code $ \cC \times \cD$.

\begin{rmk}
By ~\cite[Thm.1.4.10]{HP:FEC}, an $\mathbb{F}_4$-code $\cC$  is even if and only if $\cC$ is Hermitian self-orthogonal. Note that the underlying ``additive" group structure of $\GF$ is $\mathbb{Z}_2\times \mathbb{Z}_2$. Therefore, an even $\mathbb{F}_4$-code $\cC$ is also an even ``additive'' $ \mathds{Z}_2 \times \mathds{Z}_2$ code. Moreover, $\cC$ is $\tau$-invariant since it is $\mathbb{F}_4$-linear.
In Tanabe and Yamada~\cite{TY07}, even $\tau$-invariant $ \mathds{Z}_2 \times \mathds{Z}_2$ codes are used. Instead of the Hermitian inner product, they used the trace Hermitian inner product defined by $ \bm{ x } \cdot \bm{ y } := \sum_{i=1}^\ell x_i  \bar{ y_i} + \bar{x_i} y_i$.

In the notation of ~\cite{RS98}, our code $ \cC$   belongs to the family $ 4^{ H}$, while Tanabe and Yamada's code belongs to the family $ 4^{ H+}$. If the code $ \cC$ is also linear, then its dual $ \cC ^\perp$ in $ 4^{ H+}$ coincides with the dual of $ \cC$ in $ 4^{ H}$. %with the inner product given by $  x \cdot y = x \bar{y}$.
Therefore, these two notions are essentially the same and almost all theorems we proved in this paper have analogous statements in their setting.
\end{rmk}

\paragraph{\textbf{The lattice $L_{\cC\times \cD}$ and the VOAs $\VLCD$ and $ V^\tau_\LCD$}} In this paper, we use  a boldface  lowercase letter $ \bm{ x }$ to denote a vector or a sequence of length $ \ell$ and its $i$-th coordinate is denoted by $ x_i$. That is
\begin{align*}
 \bm{ x } = ( x_1, \cdots, x_ \ell).
\end{align*}

From now on, we let $ \cC$ be a  self-orthogonal $ \GF$-code of length $ \ell$ and let $ \cD$ be a self-orthogonal $ \mathds{Z}_3$-code of the same length. First we review a construction of an even lattice from $\cC$ and $\cD$ \cite{KLY03,TY06}.
%some properties of VOAs $\VLCD$ and $ V^{ \tau}_\LCD$.  For detail we refer to~\cite{TY06}.

For $ \bm{ \lambda} = ( \lambda_1, \cdots, \lambda_\ell) \in \GF^\ell$ and $ \bm{   \delta }  = ( \delta_1, \cdots, \delta_\ell) \in \mathds{Z}_3^\ell$, we define
\begin{equation*}
   L_{ \bm{ \lambda} \times \bm{  \delta } } := \{ (x_1,\dots, x_\ell)\in ( L ^\perp)^{ \oplus \ell}\mid x_i\in L^{( \lambda_i, \delta_i)}, i=1, \dots, \ell\}.
\end{equation*}
For subsets $ \bm{  P} \subset \GF^\ell$ and $ \bm{ Q} \subset  \mathds{Z}_3^\ell$,  we define 
\begin{equation*}
   L_{\bm{  P} \times \bm{ Q}} := \bigcup_{ \bm{ \lambda} \in \bm{  P}, \bm{  \delta } \in \bm{ Q}} L_{ \bm{ \lambda}  \times \bm{   \delta }} \subset ( L ^\perp)^{ \oplus \ell}.
\end{equation*}
Let $ \tau$ acts diagonally on $ (L ^\perp)^{ \oplus \ell}$ and hence it induces an action on $ V_{ (L ^\perp)^{ \oplus \ell}}$. The purpose of this paper is to determine  quantum dimensions and fusion rules of  irreducible $ V^\tau_\LCD$-modules. 

\begin{prop}[\cite{TY06}]
Let $ \cC$ be a  self-orthogonal $ \GF$-code of length $ \ell$ and $ \cD$ be a self-orthogonal $ \mathbb{Z}_3$-code of the same length.  Then the subset $ \LCD$ is an even sublattice of $( L ^\perp)^{ \oplus \ell}$. Moreover, the dual lattice $(\LCD)^\perp = L_{\cC^\perp \times \cD^\perp}$.
\end{prop}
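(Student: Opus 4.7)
The plan is to verify closure under addition, then evenness of the norm, and finally the dual-lattice identity via one inclusion plus an index count.

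Closure under addition follows from the identity
\begin{equation*}
L_{\cll\times\cdd} + L_{\cll'\times\cdd'} = L_{(\cll+\cll')\times(\cdd+\cdd')},
\end{equation*}
which reflects the fact that the cosets $L^{(i,j)}$ are parametrized by the group $L^\perp/L\cong \mathds{Z}_2\times\mathds{Z}_2\times\mathds{Z}_3$; linearity of $\cC$ and $\cD$ keeps $(\cll+\cll',\cdd+\cdd')$ in $\cC\times\cD$. Since the zero codeword lies in both codes, $L^{\oplus\ell}\subset\LCD\subset (L^\perp)^{\oplus\ell}$, so $\LCD$ is indeed a sublattice.

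The technical heart is evenness. Fixing representatives $r_\lambda\in L_\lambda$ and $s^\delta\in L^\delta$, I would write each coordinate $x_i\in L^{(\lambda_i,\delta_i)}$ as $r_{\lambda_i}+s^{\delta_i}+y_i$ with $y_i\in L$. Using $\langle\beta_i,\beta_i\rangle=4$ and $\langle\beta_i,\beta_j\rangle=-2$ ($i\ne j$), a direct check yields
\begin{equation*}
\langle L_\lambda,L_\lambda\rangle\subset 1+2\mathds{Z}\ (\lambda\ne 0),\quad \langle L^\delta,L^\delta\rangle\subset\tfrac{4}{3}+2\mathds{Z}\ (\delta\ne 0),\quad \langle L_\lambda,L^\delta\rangle\subset\mathds{Z},
\end{equation*}
so the $2$- and $3$-contributions to the norm decouple modulo $2\mathds{Z}$. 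Summing over $i$ produces
\begin{equation*}
\langle x,x\rangle\equiv \wt(\cll)+\tfrac{4}{3}\wt(\cdd)\pmod{2\mathds{Z}}.
\end{equation*}
Hermitian self-orthogonality of $\cC$ is equivalent to evenness of $\cC$ by the preceding remark, so $\wt(\cll)\in 2\mathds{Z}$. Euclidean self-orthogonality of $\cD$ gives $\sum_i\delta_i^2\equiv 0\pmod 3$, and since every nonzero square in $\mathds{Z}_3$ equals $1$ this forces $\wt(\cdd)\in 3\mathds{Z}$. Together, the right-hand side is an even integer.

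For the dual, the inclusion $L_{\cC^\perp\times\cD^\perp}\subset (\LCD)^\perp$ follows from the same coordinate-wise analysis modulo $\mathds{Z}$: the mixed pairings $\langle L_\mu,L^\delta\rangle$ are integral, the $2$-part contributes $\tfrac{1}{2}\tr(\mu_i\bar{\lambda_i})$ (the trace from $\GF$ to $\mathds{F}_2$), and the $3$-part contributes $\tfrac{\epsilon_i\delta_i}{3}$. Hermitian self-orthogonality between $\cC^\perp$ and $\cC$ kills the sum of the former and Euclidean orthogonality between $\cD^\perp$ and $\cD$ kills the latter. Equality then follows from the general index identity $[\Lambda^\perp:L^{\oplus\ell}]\cdot[\Lambda:L^{\oplus\ell}]=[(L^\perp)^{\oplus\ell}:L^{\oplus\ell}]=12^\ell$ applied to $\Lambda=\LCD$, combined with $|\cC|\cdot|\cC^\perp|=4^\ell$ and $|\cD|\cdot|\cD^\perp|=3^\ell$; the two candidate lattices have the same cardinality above $L^{\oplus\ell}$, forcing equality. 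The main obstacle is the evenness step: the $2$- and $3$-contributions to the norm must decouple modulo $2\mathds{Z}$ so that the hypotheses on $\cC$ and $\cD$ can be applied independently.
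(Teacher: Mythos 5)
Your argument is correct. Note that the paper itself gives no proof of this proposition --- it is imported verbatim from Tanabe--Yamada [TY13], so there is no in-text argument to compare against; what you have written is the standard verification one finds in that reference and in [KLY03], and it is complete. The key points all check out: closure follows from $L^{(i,j)}+L^{(i',j')}=L^{(i+i',j+j')}$ in $L^\perp/L\cong\mathds{Z}_2\times\mathds{Z}_2\times\mathds{Z}_3$ together with linearity of the codes; the norm of a coordinate $r+s+y$ with $r\in L_{\lambda_i}$, $s\in L^{\delta_i}$, $y\in L$ reduces mod $2\mathds{Z}$ to $\langle r,r\rangle+\langle s,s\rangle$ because $\langle r,s\rangle\in\mathds{Z}$, $\langle r,y\rangle,\langle s,y\rangle\in\mathds{Z}$ and $L$ is even, giving $\langle x,x\rangle\equiv\wt(\cll)+\tfrac{4}{3}\wt(\cdd)\pmod{2\mathds{Z}}$; Hermitian self-orthogonality forces $2\mid\wt(\cll)$ (the paper's own remark citing [HP03]) and Euclidean self-orthogonality over $\mathds{Z}_3$ forces $3\mid\wt(\cdd)$ since every nonzero square is $1$; and the dual statement follows from the one inclusion plus $\det(L^{\oplus\ell})=12^\ell$ and $|\cC||\cC^\perp|=4^\ell$, $|\cD||\cD^\perp|=3^\ell$. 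Two small presentational cautions: the displayed assertion $\langle L_\lambda,L_\lambda\rangle\subset 1+2\mathds{Z}$ should be read as a statement about diagonal norms $\langle x,x\rangle$ only (off-diagonal pairings of two distinct elements of $L_\lambda$ can be even integers), and in the duality step it is worth saying explicitly that $\sum_i\tr(\mu_i\bar{\lambda_i})=\tr(\bm{\mu}\cdot\bm{\lambda})=0$ in $\mathds{F}_2$, which is why the Hermitian (rather than trace-Hermitian) orthogonality suffices.
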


\begin{prop} \cite{Dong93,Dl96,TY06} Let $ \cC$ be a  self-orthogonal $ \GF$-code of length $ \ell$ and $ \cD$ be a self-orthogonal $ \mathds{Z}_3$-code of the same length. Let $\VLCD$ be the lattice VOA associated to $\LCD$. Then we have the following.
\begin{enumerate}[label=(\roman*)]
%\item The subset $ \LCD$ is an even sublattice of $( L ^\perp)^{ \oplus \ell}$. It induces a lattice %VOA $\VLCD$.
   \item The set of all inequivalent irreducible $\VLCD$-modules is given by
    \begin{equation*}
       \{  V_{ L_{  ( \bm{ \lambda} +  \cC) \times  ( \bm{    \delta  } +    \cD) }} \mid \bm{ \lambda} + \cC \in \cC ^\perp / \cC, \bm{ \delta}  + \cD \in \cD ^\perp/ \cD \}.
    \end{equation*}
    \item We have $ V_{ L_{  ( \bm{ \lambda} +  \cC) \times  ( \bm{  \delta } +    \cD) }} \circ \tau \cong V_{ L_{  ( \tau ^{-1}( \bm{ \lambda} ) +  \cC) \times  (   \bm{ \delta } +    \cD) }}$.
    \item For $ i = 1,2$, there are exactly $ \left| \cD ^\perp / \cD \right|$ inequivalent irreducible  $ \tau^i$-twisted $\VLCD$-modules. They are represented by $ ( V^{T,\bm{ \eta}}_\LCD( \tau^i), Y^{ \tau^i})$ for $ \bm{ \eta}  \in \cD ^\perp  \bmod \cD $.
\end{enumerate}
\end{prop}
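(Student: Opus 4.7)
The plan is to address the three parts in turn. Part (i) follows by direct application of Dong's classification \cite{Dong93}: irreducible modules of a lattice VOA $V_{L'}$ correspond bijectively to cosets in the discriminant group $(L')^\perp/L'$. Setting $L' = \LCD$ and using the identification $(\LCD)^\perp = L_{\cC^\perp \times \cD^\perp}$ from the preceding proposition, a coordinate-by-coordinate unpacking yields
\[
L_{\cC^\perp \times \cD^\perp}/\LCD \;\cong\; (\cC^\perp/\cC) \times (\cD^\perp/\cD),
\]
which gives the parameterization of the listed modules $V_{L_{(\bm\lambda + \cC) \times (\bm\delta + \cD)}}$.

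For part (ii), I would compute the $\tau$-action on the cosets $L^{(i,j)}$ directly from the prescription $\beta_1 \mapsto \beta_2 \mapsto \beta_0 \mapsto \beta_1$ and the representatives defining $L_i$ and $L^j$. A short calculation shows that $\tau$ preserves each $L^j$ while permuting $L_1 \to L_\omega \to L_{\bar\omega} \to L_1$; identifying the first factor of $L^\perp/L$ with the additive group of $\GF$, this is exactly multiplication by $\omega$ on the $\GF$-factor and the identity on the $\mathds{Z}_3$-factor. Extending diagonally to $(L^\perp)^{\oplus\ell}$ and invoking the standard lattice-module isomorphism $V_M \circ \tau \cong V_{\tau^{-1}(M)}$ then yields the claim; the $\cC$-label is well-defined on cosets because $\cC$ is $\GF$-linear and hence $\tau$-invariant, while the $\cD$-label is untouched.

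For part (iii), I would apply the classification of irreducible twisted modules for lattice VOAs from \cite{Dl96} to $\LCD$ with its diagonal $\tau$-action, and then match the output with the explicit construction of \cite{TY06}. The abstract count is governed by the $\tau$-fixed part of the discriminant group: by part (ii), $\tau$ acts as multiplication by $\omega$ on $\cC^\perp/\cC$, which is fixed-point-free since $\omega - 1 = \bar\omega$ is a unit in $\GF$ and $\cC$ is $\GF$-stable; on $\cD^\perp/\cD$ the action is trivial. Hence the $\tau$-fixed discriminant is exactly $\{0\} \times (\cD^\perp/\cD)$, of cardinality $|\cD^\perp/\cD|$, and $\bm\eta \in \cD^\perp/\cD$ labels the twisted modules $V^{T,\bm\eta}_\LCD(\tau^i)$. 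The main obstacle lies here: one must verify that each coset $\bm\eta$ produces a unique irreducible $\tau^i$-twisted module with no additional multiplicity arising from the simple-current extension $V_{L^{\oplus\ell}} \subset V_\LCD$, which requires careful bookkeeping of the lift of $\tau$ through the central extension defining $V_\LCD$; this is the technical content of \cite{TY06}.
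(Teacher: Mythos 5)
Your proposal is correct, and it reconstructs exactly what the paper intends: the paper states this proposition without proof, citing \cite{Dong93} for the untwisted classification via the discriminant group $(\LCD)^\perp/\LCD \cong (\cC^\perp/\cC)\times(\cD^\perp/\cD)$, and \cite{Dl96,TY06} for the twisted modules. Your verification that $\tau$ acts as multiplication by $\omega$ on the $\GF$-factor (fixed-point-freely on $\cC^\perp/\cC$ since $\omega+1=\bar\omega$ is a unit) and trivially on the $\mathds{Z}_3$-factor, so that the $\tau$-stable data reduce to $\cD^\perp/\cD$, is the standard argument underlying the cited count of $\lvert \cD^\perp/\cD\rvert$ twisted modules; you correctly defer the remaining bookkeeping of the lift of $\tau$ to \cite{TY06}.
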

\begin{rmk}
    As $ \tau$-twisted $V_{L^{ \oplus L} }$-modules,
    \begin{align*}
  V^{T,\bm{ \eta}}_\LCD( \tau ) \cong \bigoplus_{ \cgg \in \cD} V_{L^{ \oplus L}}^{T, \cee - \cgg}( \tau),
\end{align*}
for $ i =1,2.$
Furthermore, we have the following  decomposition of 
$V_{L^{\oplus\ell}}^{T, \cee}(\tau)$ into a direct sum of 
irreducible $(V_L^{\tau})^{\otimes\ell}$-modules.
\begin{align}
V_{L^{\oplus\ell}}^{T,\cee }(\tau)&\cong\bigoplus_{( \varepsilon_1, \ldots, \varepsilon_\ell)
\in \mathds{Z}_3^\ell} \vt{L}{\eta_1}{\tau}{ \varepsilon_1} \otimes \cdots
\otimes \vt{L}{ \eta_\ell}{\tau}{ \varepsilon_\ell}.
\label{eq:deco-T}
\end{align}
Similar for $ \tau^2$-twisted modules. 
\end{rmk}

Since $ \tau$ acts trivially on $ \cD$,
\begin{align*}
V_{ L_{  ( \bm{ \lambda} +  \cC) \times  ( \bm{  \delta} +    \cD) }}  \cong V_{ L_{  ( \bm{ \lambda ^\prime} +  \cC) \times  ( \bm{  \delta ^\prime} +    \cD) }}
\end{align*}
if and only if
(1) $ \bm{ \lambda} + \cC $ and  $ \bm{ \lambda ^\prime} +  \cC$ belong to the same $ \tau$-orbit of $ \cC ^\perp$; and  (2) $ \bm{  \delta } +    \cD  = \bm{  \delta ^\prime} +    \cD $ in $ \cD ^\perp/ \cD$.
Let  $ \cC ^\perp_{ \equiv \tau}$ denote the set of all $ \tau$-orbits in $ \cC ^\perp$. Then
\begin{align*}
\{   V_{ L_{    \cC \times  ( \bm{  \delta } +    \cD) }} [ \varepsilon ], V_{ L_{  ( \bm{ \lambda} +  \cC) \times  ( \bm{  \delta} +    \cD) }} \mid \cZ \neq \bm{ \lambda} + \cC \in \cC _{ \equiv \tau}^\perp\bmod \cC,  \bm{  \delta}  + \cD \in \cD ^\perp/\cD \}
\end{align*}
is a set of inequivalent irreducible $ V^\tau_\LCD$-modules, which are obtained from the irreducible (untwisted) $ V_\LCD$-modules.

It is usually very difficult to classify all irreducible modules of an orbifold VOA. Recently,  Miyamoto gave a classification in the $ \mathds{Z}_3$-orbifold case.

\begin{prop}~\cite[Thm.A]{Mi13a}
Let $V$ be a simple VOA of CFT-type. Assume $ V \cong V ^\prime$, its contragredient dual, and all $V$-modules are completely reducible. If  $ \sigma$ is an automorphism of $V$ of   order three and a fixed point
subVOA  $V^ \sigma$ is  $C_2$-cofinite, then all $V^ \sigma$-modules are completely reducible. Moreover, every
simple $V^ \sigma$-module appears as a $ V^ \sigma$-submodule of some $ \sigma^j$-twisted (or ordinary) $V$-module.
\end{prop}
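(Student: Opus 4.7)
The plan is to reduce the classification of simple $V^\sigma$-modules to the (already understood) classification of ordinary and $\sigma^j$-twisted $V$-modules by exploiting the $\mathbb{Z}_3$-grading coming from $\sigma$. The key mechanism is the $\sigma$-eigenspace decomposition of each (twisted) $V$-module into three $V^\sigma$-submodules, and the converse realization of every simple $V^\sigma$-module as such an eigenspace summand.

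First I would establish the basic structural results. Since $V$ is simple and $\sigma$ has finite order, $V^\sigma$ is itself a simple VOA of CFT type: a nonzero ideal $I \subset V^\sigma$ generates the $\sigma$-invariant ideal $I + \sigma(I) + \sigma^2(I)$ in $V$, which must coincide with $V$. By Dong--Li--Mason, the complete reducibility hypothesis on $V$-modules extends to $\sigma^j$-twisted modules, and the $C_2$-cofiniteness of $V^\sigma$ guarantees via Zhu's theory that the Zhu algebra $A(V^\sigma)$ is finite dimensional, so at most finitely many simple $V^\sigma$-modules occur. Each simple $\sigma^j$-twisted $V$-module $M$ decomposes as $M = \bigoplus_{\varepsilon \in \mathbb{Z}_3} M[\varepsilon]$ into $\sigma$-eigenspaces, and each $M[\varepsilon]$ is a $V^\sigma$-module. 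Let $\mathcal{F}$ be the family of simple $V^\sigma$-modules arising as summands of some such eigenspace.

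The heart of the argument is to show that $\mathcal{F}$ exhausts the simple $V^\sigma$-modules. Given a simple $V^\sigma$-module $W$, one forms the induced object $\widetilde{W} := V \boxtimes_{V^\sigma} W$ inside the Huang--Lepowsky tensor category of $V^\sigma$-modules (which is available because $V^\sigma$ is $C_2$-cofinite). The grading $V = V^\sigma \oplus V[1] \oplus V[2]$ equips $\widetilde{W}$ with a natural order-three automorphism, and one constructs on $\widetilde{W}$ a $\sigma^j$-twisted vertex operator, where $j \in \{0,1,2\}$ is determined by the way $W$ couples to the graded pieces $V[\varepsilon]$ through intertwining operators. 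Since twisted $V$-modules are completely reducible, $\widetilde{W}$ breaks into simple summands, and $W$ appears as a $V^\sigma$-direct summand of one of them, so $W \in \mathcal{F}$. Complete reducibility of arbitrary $V^\sigma$-modules then follows once each simple constituent is realized inside a semisimple module (a twisted $V$-module), which forces $A(V^\sigma)$ to be semisimple.

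The principal obstacle is the construction of the twisted vertex operator on $\widetilde{W}$ and the verification of the twisted Jacobi identity. One must simultaneously show that the intertwining operators assembling $V \otimes W \to \widetilde{W}$ satisfy a $\sigma$-twisted Jacobi identity and that the twist $j$ is determined consistently from $W$. This demands careful monodromy analysis of the formal multi-valued factors $z^{r/3}$ that appear precisely in the order-three setting, and is where the $\mathbb{Z}_3$ orbifold theory becomes genuinely harder than the $\mathbb{Z}_2$ case that was previously settled. A secondary technicality is a counting/duality check: using $V \cong V^\prime$ and the pairing between a twisted module and its inverse twist, one verifies that the construction lands in the correct twisted sector and that no simple $V^\sigma$-module is counted twice.
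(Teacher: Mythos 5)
The first thing to note is that the paper offers no proof of this statement: it is quoted verbatim as Theorem A of Miyamoto's paper \cite{Mi13a} and used purely as a black box to classify the irreducible $V^\tau_\LCD$-modules. So there is no internal argument to compare yours against; the relevant benchmark is Miyamoto's original proof, of which your proposal is a plausible outline but not a proof.

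Measured against that benchmark, two steps in your plan have genuine gaps. First, your simplicity argument for $V^\sigma$ does not work as written: an ideal $I$ of $V^\sigma$ already satisfies $\sigma(I)=I$, and the ideal it generates inside $V$ need not meet $V^\sigma$ in $I$, so concluding anything from the simplicity of $V$ requires more; the standard route is the quantum Galois theory of Dong--Mason \cite{DM97}, which exhibits $V$ as a direct sum of inequivalent simple $V^\sigma$-modules indexed by the characters of $\langle\sigma\rangle$ and yields simplicity of $V^\sigma$ as a corollary. Second, and more seriously, forming $\widetilde{W}=V\boxtimes_{V^\sigma}W$ inside ``the Huang--Lepowsky tensor category of $V^\sigma$-modules'' presupposes exactly the properties of $V^\sigma$ (semisimplicity of its module category, existence and associativity of the tensor product) that the theorem is supposed to establish, so as stated the step is circular. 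Miyamoto instead works directly with intertwining operators among $C_1$-cofinite $\mathbb{N}$-gradable modules, using Huang's differential-equation and associativity results which require only $C_2$-cofiniteness and not rationality, and the bulk of his paper is devoted to precisely the construction you defer to as a ``principal obstacle'': building the $\sigma^j$-twisted vertex operator on the induced space, pinning down the twist $j$, and verifying the twisted Jacobi identity. As a roadmap your proposal points in the right direction, but these two items are where the mathematical content lies, so the proposal cannot be accepted as a proof of the cited theorem.
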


By this proposition,  we can classify irreducible $ V^ \tau_\LCD$-modules.
\begin{prop}
 An irreducible $ V^ \tau_\LCD$-module must belong to one of the following types.
 \begin{align*}
 \mathrm{ (i) }& V_{ L_{    \cC \times  (  \cdd +    \cD) }} [ \varepsilon ],&
 \mathrm{ (ii) }& V_{ L_{     ( \bm{ \lambda} +  \cC)  \times  ( \cdd  +    \cD) }},&
 \mathrm{ (iii) }&V^{T, \bm{ \eta} }_\LCD( \tau^i) [ \varepsilon],
\end{align*}
%  \begin{enumerate}[label=(\roman*)]
%    \item  $ V_{ L_{    \cC \times  ( \bm{  \gamma } +    \cD) }} [ \varepsilon ]$,
%    \item $V_{ L_{     ( \bm{ \lambda} +  \cC)  \times  ( \bm{  \gamma} +    \cD) }}$,
%    \item  $ V^{T, \bm{ \eta} }_\LCD [ \varepsilon]$,
% \end{enumerate}
where $i =1,2$, $ \varepsilon \in \mathds{Z}_3$,   $ \cZ \neq \bm{ \lambda}  + \cC \in  \cC ^\perp_{\equiv \tau} \bmod \cC$  , $ \bm{  \eta}  \in     \cD ^\perp (\bmod \cD)$ and $\bm{  \delta } +    \cD \in  \cD ^\perp/\cD$.
 In particular, there is no modules of the second type (ii)  if $ \cC$ is self-dual.
\end{prop}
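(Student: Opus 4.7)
The plan is to apply the classification theorem of Miyamoto stated just above and then to analyze how each irreducible $\tau^i$-twisted or untwisted $\VLCD$-module restricts to the fixed-point subVOA $V^\tau_\LCD$. First I would verify the hypotheses of Miyamoto's theorem: the lattice VOA $\VLCD$ is simple, of CFT-type, isomorphic to its contragredient, and rational; the $C_2$-cofiniteness of $V^\tau_\LCD$ is available from the corresponding results on $\mathds{Z}_3$-orbifolds of lattice VOAs. With these inputs, every irreducible $V^\tau_\LCD$-module appears as a submodule of an irreducible (twisted or untwisted) $\VLCD$-module, and the classification of the latter recalled above furnishes the full list to work with.

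For the ordinary modules $V_{L_{(\bm{\lambda}+\cC)\times(\cdd+\cD)}}$ parametrized by $(\bm{\lambda}+\cC,\cdd+\cD)\in\cC^\perp/\cC\times\cD^\perp/\cD$, the action of $\tau$ on isomorphism classes is $(\bm{\lambda}+\cC,\cdd+\cD)\mapsto(\tau^{-1}\bm{\lambda}+\cC,\cdd+\cD)$, trivial in the second coordinate because $\tau$ fixes $\cD$. Since $\tau$ acts on $\GF$ as multiplication by $\bom$ and $\bom-1\in\GF^\times$, a coset $\bm{\lambda}+\cC$ is $\tau$-fixed if and only if $\bm{\lambda}\in\cC$. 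The standard dichotomy for an order-three automorphism now gives: the single $\tau$-invariant class contributes three irreducible $V^\tau_\LCD$-submodules via the $\tau$-eigenspace decomposition $V_{L_{\cC\times(\cdd+\cD)}}[\varepsilon]$, yielding type (i); while each $\tau$-orbit of size three in $(\cC^\perp/\cC)\setminus\{\cC\}$ restricts irreducibly to a single $V^\tau_\LCD$-module, yielding type (ii), indexed by one representative per orbit, i.e.\ by $\cZ\neq\bm{\lambda}+\cC\in\cC^\perp_{\equiv\tau}\bmod\cC$.

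For the $\tau^i$-twisted modules $V^{T,\bm{\eta}}_\LCD(\tau^i)$, each is $\tau$-stable because $\tau$ commutes with $\tau^i$ and preserves the twisting datum (the parameter $\bm{\eta}\in\cD^\perp/\cD$ is fixed since $\tau$ acts trivially on $\cD$). Hence each such module decomposes into three irreducible $V^\tau_\LCD$-submodules as the $\tau$-eigenspaces $V^{T,\bm{\eta}}_\LCD(\tau^i)[\varepsilon]$, $\varepsilon\in\mathds{Z}_3$, giving type (iii). Finally, when $\cC$ is self-dual one has $\cC^\perp/\cC=\{\cZ\}$, so there are no non-trivial orbits and type (ii) is empty.

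The main obstacle is of a bibliographic/technical nature rather than a logical one: verifying the $C_2$-cofiniteness of $V^\tau_\LCD$ so that Miyamoto's theorem applies. Once that is in place, the remainder is a standard orbifold bookkeeping exercise relying on the fact that $\tau$ has order three and fixes only the trivial coset in $\cC^\perp/\cC$, so every non-trivial orbit has maximal size, and no module can escape the three families listed.
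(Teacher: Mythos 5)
Your proposal is correct and follows exactly the route the paper intends: invoke Miyamoto's $\mathbb{Z}_3$-orbifold classification theorem (whose hypotheses are supplied by Proposition \ref{thm:orbV:self_dual} and \cite[Thm B]{Mi13a} for $C_2$-cofiniteness), then sort the irreducible twisted and untwisted $V_{L_{\cC\times\cD}}$-modules by their $\tau$-orbits, with the $\tau$-fixed classes splitting into three eigenspace summands and the size-three orbits restricting to a single irreducible. The paper leaves this bookkeeping implicit, and your write-up — including the observation that $\bm{\lambda}+\cC$ is $\tau$-fixed iff $\bm{\lambda}\in\cC$ because $\tau$ acts by a unit of $\mathbb{F}_4$ minus the identity, and that self-duality of $\cC$ kills type (ii) — fills it in correctly.
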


In this paper, our calculations depend heavily on the decomposition of the irreducible  $ V^ \tau_\LCD$-modules as $ \VLtl$-modules.
\begin{prop}[\cite{KLY03,TY06}]\label{thm:decomp:into:L}
As modules of $\VLtl$, we have the following decomposition.
\begin{subequations}
\begin{align}
&  \mathrm{ (i)}&    V_{L_{ \cC \times ( \bm{ \delta} + \cD)}}[ \varepsilon]   &\cong
  V_{L_{\cZ \times ( \bm{ \delta} + \cD)}}[ \varepsilon] \oplus \bigoplus_{ \cZ \neq \bm{ \gamma} \in   \Ceqt} V_{L_{ \bm{ \gamma}  \times ( \bm{ \delta}  + \cD)}};\\
&  \mathrm{ (ii)}& V^{ T, \bm{\eta}}_\LCD ( \tau^i)  [ \varepsilon ] &\cong
    \bigoplus_{  \cdd  \in \cD}
    \bigoplus_{e_1 +  \cdots +  e_\ell \equiv \varepsilon  \bmod 3 }
    V_L^{ T, \eta_1 - i \delta_1} ( \tau^i) [ e_1] \otimes \cdots \otimes V_L^{ T, \eta_\ell - i \delta_\ell} ( \tau^i) [ e_\ell],
\end{align}
\end{subequations}
where $ \cdd = ( \delta_1, \cdots, \delta_ \ell)\in \mathbb{Z}_3^\ell$ and  $ \cC_{ \equiv \tau}$ denote the set of all $ \tau$-orbits in $ \cC$. 
 \end{prop}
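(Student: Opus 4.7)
The strategy for both parts is to reduce to the known decompositions of $V_{L^{\oplus\ell}}$-modules into $\VLtl$-modules and then isolate the $\varepsilon$-eigenspace of $\tau$.

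For part~(i), I first observe from the definition of $\LCD$ that, as a $V_{L^{\oplus\ell}}$-module,
\[
V_{L_{\cC\times(\bm{\delta}+\cD)}} \;=\; \bigoplus_{\bm{\gamma}\in\cC}V_{L_{\bm{\gamma}\times(\bm{\delta}+\cD)}}.
\]
From Section~2, $\tau$ cyclically permutes $\beta_0,\beta_1,\beta_2$ and so acts on the cosets by $L_0\mapsto L_0$, $L_1\mapsto L_\omega\mapsto L_{\bom}\mapsto L_1$, which coincides with multiplication by $\omega$ on $\GF$; and it acts trivially on the $\cD$-coordinate. Consequently it permutes the summands above by $V_{L_{\bm{\gamma}\times(\bm{\delta}+\cD)}}\mapsto V_{L_{\omega\bm{\gamma}\times(\bm{\delta}+\cD)}}$. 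The only $\omega$-fixed element of $\GF^\ell$ is $\cZ$, so the $\tau$-orbits on $\cC$ are $\{\cZ\}$ together with free $3$-orbits $\{\bm{\gamma},\omega\bm{\gamma},\bom\bm{\gamma}\}$, parametrized by the nonzero classes in $\Ceqt$. Passing to the $\varepsilon$-eigenspace of $\tau$ orbit by orbit, the fixed orbit contributes $V_{L_{\cZ\times(\bm{\delta}+\cD)}}[\varepsilon]$ by definition, while for each free orbit the $\varepsilon$-eigenspace on $V_{L_{\bm{\gamma}\times(\bm{\delta}+\cD)}}\oplus V_{L_{\omega\bm{\gamma}\times(\bm{\delta}+\cD)}}\oplus V_{L_{\bom\bm{\gamma}\times(\bm{\delta}+\cD)}}$ is, via projection onto the first summand, isomorphic as a $\VLtl$-module to $V_{L_{\bm{\gamma}\times(\bm{\delta}+\cD)}}$, with no residual $\varepsilon$-dependence. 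Summing over the orbits yields (i).

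For part~(ii), I start from the remark preceding the statement, which (with the appropriate factor of $i$ in the shift, coming from the standard identification of $\tau^i$-twisted modules used in \cite{KLY03,TY06}) presents $V^{T,\bm{\eta}}_\LCD(\tau^i)$ as a direct sum
\[
\bigoplus_{\bm{\delta}\in\cD} V_{L^{\oplus\ell}}^{T,\bm{\eta}-i\bm{\delta}}(\tau^i)
\]
of $\tau^i$-twisted $V_{L^{\oplus\ell}}$-modules. Applying the tensor decomposition~\eqref{eq:deco-T} to each summand splits it further as
\[
\bigoplus_{(e_1,\dots,e_\ell)\in\mathds{Z}_3^\ell}\vt{L}{\eta_1-i\delta_1}{\tau^i}{e_1}\otimes\cdots\otimes\vt{L}{\eta_\ell-i\delta_\ell}{\tau^i}{e_\ell}.
\]
Since $\tau$ acts on the $j$-th tensor factor by the scalar $\exp(2\pi\sqrt{-1}\,e_j/3)$, the $\varepsilon$-eigenspace of $\tau$ is obtained by restricting the outer sum to those tuples with $e_1+\cdots+e_\ell\equiv\varepsilon\pmod 3$, which is exactly the formula in (ii).

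The principal technical point lies in part~(i): one must verify that the cyclic permutation induced by $\tau$ on each free orbit is compatible with the lattice VOA cocycle on $L^{\oplus\ell}$, so that projection onto any single summand really gives a $\VLtl$-module isomorphism independent of $\varepsilon$. The analogous subtlety in part~(ii)---tracking the $-i\bm{\delta}$ shift under the standard identification of $\tau^i$-twisted modules---is routine once the conventions of \cite{KLY03,TY06} are unpacked.
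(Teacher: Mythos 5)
The paper gives no proof of this proposition; it is quoted from \cite{KLY03,TY06}. Your argument is the standard one those references use: decompose over the $\tau$-orbits of $\cC$ (respectively apply the remark preceding the statement together with \eqref{eq:deco-T}) and then extract the $\varepsilon$-eigenspace, using that on each free orbit the eigenspace projects $\VLtl$-isomorphically onto a single summand. The one point you flag but do not settle --- that the natural lift of $\tau$ has order three and permutes the summands compatibly with the cocycle --- does hold here, since $\tau$ is fixed-point free on $\sqrt{2}A_2$ and the standard $2$-cocycle can be chosen $\tau$-invariant, so your proof is correct and consistent with the cited sources.
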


In order to using properties about quantum dimensions, we need  the following proposition.
\begin{prop}\label{thm:orbV:self_dual}
The VOAs $V_{\LCD } $ and $ V^ \tau_{\LCD }$
   are  simple, rational, $C_2$-cofinite VOAs of CFT type and are isomorphic  to  their contragredient dual, respectively.
\end{prop}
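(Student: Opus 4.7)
The plan is to treat the lattice VOA $V_\LCD$ and its orbifold $V^\tau_\LCD$ separately, invoking the classical theory of lattice vertex operator algebras for the first half and the now-standard theory of cyclic orbifolds for the second.

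For $V_\LCD$: by the previous proposition, $\LCD$ is a positive definite even integral lattice, so the classical structure theorems of Frenkel--Lepowsky--Meurman \cite{FLM} and Dong \cite{Dong93} apply directly. These results give that $V_\LCD$ is simple, of CFT type with $(V_\LCD)_0=\mathbb{C}\idd$, rational, and $C_2$-cofinite. Self-duality of $V_\LCD$ as a module over itself follows from the existence of a nondegenerate invariant bilinear form on a lattice VOA, built from the underlying lattice pairing.

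For $V^\tau_\LCD$ I would proceed property by property. The automorphism $\tau$ has order three on $V_\LCD$ since it is the lift of a fixed-point-free order-three isometry of $\LCD$. Simplicity of $V^\tau_\LCD$ is then the standard orbifold fact that a fixed point subalgebra under a faithful action of a finite cyclic group on a simple VOA is simple. The CFT type property is automatic: since $\tau\idd=\idd$ and $(V_\LCD)_0=\mathbb{C}\idd$, we have $(V^\tau_\LCD)_0=\mathbb{C}\idd$ with no negatively graded components. For $C_2$-cofiniteness and rationality of this cyclic orbifold I would appeal to Miyamoto's $C_2$-cofiniteness theorem \cite{Mi13a} and the corresponding cyclic-orbifold rationality theorem of Carnahan--Miyamoto; both require precisely the hypotheses already verified for $V_\LCD$.

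Finally, for self-duality of $V^\tau_\LCD$, I would restrict the invariant bilinear form $\langle\cdot,\cdot\rangle$ on $V_\LCD$ to the fixed point subspace. Writing $\zeta=\exp(2\pi\sqrt{-1}/3)$, the identity $\langle\tau u,\tau v\rangle=\langle u,v\rangle$ forces $\langle V_\LCD[\varepsilon],V_\LCD[\varepsilon']\rangle=0$ whenever $\varepsilon+\varepsilon'\not\equiv 0\pmod{3}$. Since the total form is nondegenerate, each of the two remaining pairings, $V_\LCD[0]\times V_\LCD[0]$ and $V_\LCD[1]\times V_\LCD[2]$, must itself be nondegenerate; in particular the restriction to $V^\tau_\LCD=V_\LCD[0]$ yields a nondegenerate invariant bilinear form and the desired isomorphism $V^\tau_\LCD\cong (V^\tau_\LCD)^\prime$. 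The only nontrivial input is the cyclic-orbifold rationality result, which is the deepest ingredient; the remaining pieces reduce to classical lattice VOA theory or a short linear-algebra argument on the bilinear form.
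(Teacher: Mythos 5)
Your proposal is correct, but it differs from the paper's proof in two of its four ingredients, so a comparison is worthwhile. The paper disposes of the proposition almost entirely by citation: the lattice VOA assertions are called well-known, simplicity and rationality of $V^\tau_\LCD$ are attributed to Tanabe--Yamada \cite{TY06}, $C_2$-cofiniteness to \cite[Thm.~B]{Mi13a} (as you also do), and self-duality of $V^\tau_\LCD$ to Li's criterion \cite[Cor.~3.2]{L94}, i.e.\ to the fact that a simple VOA of CFT type with $L(1)V_1=0$ admits a nondegenerate invariant bilinear form. You instead obtain simplicity from quantum Galois theory, rationality from the general cyclic-orbifold theorem (for order $3$ this is already covered by \cite[Thm.~A]{Mi13a}, which the paper quotes, so the appeal to the later Carnahan--Miyamoto result is not strictly needed here and is somewhat anachronistic relative to the paper's references), and self-duality by restricting the invariant form of $V_\LCD$ to the fixed-point subalgebra. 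That last argument is sound: the uniqueness of the invariant form on the simple VOA $V_\LCD$ forces $\langle\tau u,\tau v\rangle=\langle u,v\rangle$, the eigenspace computation $\langle u,v\rangle=\zeta^{\varepsilon+\varepsilon'}\langle u,v\rangle$ kills all pairings except $V_\LCD[0]\times V_\LCD[0]$ and $V_\LCD[1]\times V_\LCD[2]$, and nondegeneracy of the whole form then forces nondegeneracy on $V^\tau_\LCD=V_\LCD[0]$; the restriction is still invariant because the invariance identity only involves vertex operators of elements of the subalgebra. Your route is more self-contained and makes the mechanism visible, at the cost of a short argument; the paper's route via Li is shorter but requires checking the hypothesis $L(1)(V^\tau_\LCD)_1=0$, which holds since $(V^\tau_\LCD)_1\subset (V_\LCD)_1$ and lattice VOAs satisfy $L(1)V_1=0$. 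No gap in either version.
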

Assertions about lattice VOA is well-known.
The  simplicity  and rationality of  $ V^ \tau_{\LCD }$ are proved in \cite{TY06}.  The  $C_2$-cofiniteness of  $ V^ \tau_{\LCD }$ is given in \cite[Thm B]{Mi13a}. By~\cite[Cor.3.2]{L94}, $ V^ \tau_{\LCD }$ is self-dual.

% \begin{proof}
% It is easy to see that $V$ is a simple VOA of CFT type. By a result of ~\cite{Bu02} and the fact that $V_L$ is $C_2$-cofinite, we know $V$ is $C_2$-cofinite.
%This is a direct consequence of the fact that VOA $ V_L$ has these properties. We omit the proof.
%     It is clear that $ L(1) V_1 = 0 $. By~\cite[Cor.3.2]{L94}, $V_L$ has a symmetric contragredient form and hence $V \cong V^\prime$.
%    \end{proof}

\section{Quantum dimensions of irreducible $ V^ \tau_{\LCD}$-modules}
In this section, we compute the quantum dimensions of irreducible $ V^\tau_{\LCD}$-modules. We will first consider the case when $ \cD = \{ \cZ\}$ is the trivial $ \mathds{Z}_3$-code. Results in this case are summarized in Theorem~\ref{thm:qd:CZ}.    The general case will be considered in Section~\ref{sec:qd:CD} (see Theorem~\ref{thm:qd:CD}). In addition, we verify one conjecture about global dimensions proposed by Dong et. al. for the VOA $  V^\tau_{\LCD }$ in this section.

\medskip

\paragraph{\textbf{Weight enumerators}}

For $ \varepsilon = 0,1,2$,  let
\begin{align*}
  S_\varepsilon := \{  \bm{ x } := ( x_1, \cdots, x_\ell) \in  \mathds{Z}_3^\ell \mid \sum x_i \equiv \varepsilon \bmod 3 \}.
\end{align*}
We define the \emph{weight enumerator} as follows. Let
\begin{equation}\label{eq:def_WE:congr}
   W_{\varepsilon}(X,Y) : =\sum_{  \bm{ x } \in S_ \varepsilon} X^{ \ell -\wt(  \bm{ x }  )} Y^ { \wt ( \bm{ x }  )},
\end{equation}
where $ \wt ( \bm{ x }) $ is the number of nonzero coordinates of $ \bm{ x }$. That is,
\begin{equation*}
   \wt ( \bm{ x })  := \# \{ x_i \mid x_i \neq 0 \textrm{ for  } 1 \le i \le \ell \}.
\end{equation*}

We   also consider a weight enumerator induced from an $\GF$-code $ \cC$.
Let $ W_{ \cC} ( X, Y)  $ denote the Hamming weight enumerator, we define
\begin{equation}\label{eq:def:W/tau}
   W_{\cC}^\prime (X, Y) := \frac{1}{3} ( W_{ \cC} ( X, Y) - X^\ell).
\end{equation}
Note that $ W_ \varepsilon(X,Y), W_{\cC}^\prime(X,Y) $ are homogeneous polynomials in $ X, Y$ of the same degree $\ell$.

\begin{lem}\label{thm:W_3:11}
   The self-orthogonal $\GF$-code  $ \cC$ is self-dual if and only if  $ W_{\cC}^\prime(1,1) = \frac{ 2^\ell -1}{3} $.
\end{lem}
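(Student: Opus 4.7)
The plan is to observe that $W_{\cC}(1,1)$ simply counts the codewords of $\cC$, so the evaluation $W_{\cC}^\prime(1,1) = \frac{1}{3}(|\cC|-1)$ reduces the claimed identity to the purely numerical condition $|\cC| = 2^\ell$. The problem therefore becomes: show that a self-orthogonal Hermitian $\mathbb{F}_4$-code of length $\ell$ is self-dual if and only if $|\cC| = 2^\ell$.

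First I would unpack the definitions. By \eqref{eq:def:W/tau},
\begin{equation*}
W_{\cC}^\prime(1,1) = \frac{1}{3}\bigl(W_{\cC}(1,1) - 1\bigr) = \frac{|\cC|-1}{3},
\end{equation*}
so the identity $W_{\cC}^\prime(1,1) = (2^\ell-1)/3$ is equivalent to $|\cC| = 2^\ell$. (Since $\cC$ is an $\mathbb{F}_4$-subspace, $|\cC|=4^k$ for some $k$, and $4^k-1$ is always divisible by $3$, so the division is clean.)

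Next I would invoke the standard duality count. Because the Hermitian inner product on $\mathbb{F}_4^\ell$ is non-degenerate, $\dim_{\mathbb{F}_4}\cC + \dim_{\mathbb{F}_4}\cC^\perp = \ell$, i.e.\ $|\cC|\cdot|\cC^\perp| = 4^\ell$. Combined with the assumed inclusion $\cC \subseteq \cC^\perp$, self-duality $\cC = \cC^\perp$ is equivalent to $|\cC|=|\cC^\perp|$, which is equivalent to $|\cC|^2 = 4^\ell$, i.e.\ $|\cC|=2^\ell$.

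Chaining the two equivalences finishes the proof. There is no real obstacle here; the only thing to double-check is the divisibility by $3$ so that the defining expression for $W_{\cC}^\prime$ actually takes the value claimed as an integer, and the fact that Hermitian duality is perfect, so that $|\cC|\cdot|\cC^\perp|=4^\ell$ with no extra factors. Both are immediate.
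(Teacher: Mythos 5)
Your proof is correct and follows essentially the same route as the paper: evaluate $W_{\cC}(1,1)=|\cC|$ to reduce the claim to $|\cC|=2^\ell$, then use $\dim\cC+\dim\cC^\perp=\ell$ together with $\cC\subseteq\cC^\perp$ to see that this cardinality condition is exactly self-duality. The extra remarks on divisibility by $3$ and non-degeneracy of the Hermitian form are harmless confirmations of points the paper leaves implicit.
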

 \begin{proof}
 First we note that $W_{\cC}(1,1)=|\cC|$ is equal to the number of elements in $\cC$; hence
\[
       W_{\cC}^\prime(1,1) = \frac{ W_{\cC}(1,1) -1 }{3} =  \frac{ \left| \cC \right| -1 }{3}.
\]
Since $ \cC$ is self-orthogonal,  we know $ \cC ^\perp \supset \cC$ and $ \dim \cC ^\perp + \dim \cC = \ell$. Therefore, $|\cC|\leq 2^\ell$ and the equality holds if and only if $\cC$ is self-dual.  The lemma now follows.
 \end{proof}

The following lemmas   explain why we introduce these weight enumerators.
%Since $ \cZ \times \cZ \in \cC \times \cD$ is the only $\tau$-fixed codeword,
Recall that the module $ \VLCd[ \varepsilon]$ admits a decomposition of $ ( V^ \tau_L) ^{ \otimes \ell }$-modules as
\begin{equation}
 \VLCd[ \varepsilon] \cong V_{L_{\cZ \times \bm{ \delta}}}[ \varepsilon] \oplus \bigoplus_{ \cZ \neq \bm{ \gamma} \in   \Ceqt} V_{L_{ \bm{ \gamma} \times \bm{ \delta}}},
\end{equation}
where    $ \cdd   \in  \mathds{Z}_3^\ell$ and $ \Ceqt  $ denotes the set of all orbits of $ \tau $ in $ \cC$. In particular, when $ \cdd =\cZ$,  we have   $ V_{L_{ \cZ \times \cZ}}[ 0] \cong \big( V_{L ^{ \otimes \ell}}  \big) ^\tau$, which should not be confused with the subVOA $ \VLtl \subsetneq \big( V_{L ^{ \otimes \ell}}  \big) ^\tau$.

\begin{lem} For $ \varepsilon = 0,1,2$,
%we have %and $  \cZ \neq  \bm{ \delta}  \in  \mathds{Z}_3^ \ell$,
% we have a decomposition.
%    \begin{equation}
%       V_{ L^{ \otimes \ell }} [ \epsilon ]=
%       \bigoplus_{ \sum r_i \equiv \epsilon \bmod 3} V_L[ r_1] \otimes \cdots \otimes V_L[ r_\ell].
%    \end{equation}
   %where  $ \bm{ r }= ( r_1, \cdots, r_\ell) \in (\mathds{Z}_3)^\ell$.
   the character of $ V_{L_{ \cZ \times  \cZ}} [ \varepsilon] $ is given by
 \begin{align*}
\ch V_{ L^{ \otimes \ell }} [ \varepsilon ] =& W_{ \varepsilon}( Z_0(q), Z_1(q)),
%\ch V_{L_{\cZ \times \bm{ \delta}}} [ \varepsilon ] =& W_{ \varepsilon}( Z ^\prime_0(q), Z ^\prime_1(q)),
\end{align*}
where
\begin{equation}
\begin{aligned}
Z_0(q):=&\ch V_L[0],&   Z_1(q):=&\ch V_L[1] =\ch V_L[2].
% Z ^\prime_0(q):=&\ch V_{ L^{(0,1)}}[0] = \ch V_{ L^{(0,2)}}[0],&
%  Z ^\prime_1(q):=&\ch V_{ L^{(0,1)}}[1] = \ch V_{ L^{(0,i)}}[j]  \textrm{ for } i, j = 1,2.
\end{aligned}
\end{equation}
\end{lem}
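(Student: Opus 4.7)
The plan is to reduce the computation of $\ch V_{L^{\otimes\ell}}[\varepsilon]$ to the characters of the three $\tau$-eigenspaces of $V_L$ via the tensor factorization.

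First I would decompose $V_L^{\otimes\ell}$ according to the joint eigenspaces of the tensor factors. For any tuple $(\varepsilon_1,\ldots,\varepsilon_\ell)\in\mathbb{Z}_3^\ell$, the subspace $V_L[\varepsilon_1]\otimes\cdots\otimes V_L[\varepsilon_\ell]$ is an eigenspace for the diagonal $\tau$ with eigenvalue $\exp(2\pi\sqrt{-1}(\varepsilon_1+\cdots+\varepsilon_\ell)/3)$, so the $\varepsilon$-eigenspace is
\begin{equation*}
V_{L^{\otimes\ell}}[\varepsilon]\;=\;\bigoplus_{\vec{\varepsilon}\in S_\varepsilon} V_L[\varepsilon_1]\otimes\cdots\otimes V_L[\varepsilon_\ell].
\end{equation*}
Taking normalized characters and using multiplicativity under tensor product, each summand contributes $\prod_{i=1}^\ell \ch V_L[\varepsilon_i]$.

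The one nontrivial ingredient is the identity $\ch V_L[1]=\ch V_L[2]$. I would justify this by observing that $V_L[1]$ and $V_L[2]$ are interchanged by complex conjugation of the eigenvalue $\exp(2\pi\sqrt{-1}/3)\leftrightarrow\exp(-2\pi\sqrt{-1}/3)$; equivalently, since $V_L$ admits a real structure (the lattice VOA is defined over $\mathbb{R}$) while $\tau$ acts $\mathbb{R}$-linearly, the graded trace of $\tau$ on each $V_L$ homogeneous piece is real, forcing the two complex-conjugate eigenspaces to have equal graded dimensions. Alternatively one can invoke the isomorphism of $V_L^\tau$-modules $V_L[2]\cong V_L[1]'$ together with the fact that contragredient duals have the same character (noting $V_L[1]$ has integral conformal weights, so no overall $q$-shift intervenes). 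With this identity in hand, the character of a summand indexed by $\vec\varepsilon$ depends only on the Hamming weight $\wt(\vec\varepsilon)$, namely it equals $Z_0(q)^{\ell-\wt(\vec\varepsilon)}Z_1(q)^{\wt(\vec\varepsilon)}$.

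Finally I would sum over $\vec\varepsilon\in S_\varepsilon$ and recognize the right-hand side as $W_\varepsilon(Z_0(q),Z_1(q))$ by the definition \eqref{eq:def_WE:congr}. The only real obstacle is the symmetry $\ch V_L[1]=\ch V_L[2]$; once that is established the statement is a direct book-keeping argument, and no further computation with the lattice structure of $L$ is required.
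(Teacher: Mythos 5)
Your proposal is correct and follows essentially the same route as the paper: decompose $V_{L^{\otimes\ell}}[\varepsilon]$ into the tensor-product eigenspaces indexed by $S_\varepsilon$, use multiplicativity of characters, and recognize the resulting sum as $W_\varepsilon(Z_0,Z_1)$. The only difference is that you supply a justification for $\ch V_L[1]=\ch V_L[2]$ (which is sound, e.g.\ via the reality of the graded trace of $\tau$ or via $V_L[2]\cong V_L[1]'$), whereas the paper simply asserts this equality in the definition of $Z_1$.
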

\begin{proof}
For $ \varepsilon = 0,1,2$, we have a decomposition of $ (V_L^\tau)^ { \otimes \ell}$-modules:
   \begin{equation}
      V_{ L^{ \otimes \ell }} [ \varepsilon ]=
      \bigoplus_{ \sum r_i \equiv \varepsilon \bmod 3} V_L[ r_1] \otimes \cdots \otimes V_L[ r_\ell].
   \end{equation}
 We also know
   \begin{equation*}
      \ch \left(V_L[ r_1] \otimes \cdots \otimes V_L[ r_\ell]\right) = \ch V_L[ r_1] \times \cdots \times \ch V_L[ r_\ell] = Z_0^{ \ell -r} Z_1^{ r},
   \end{equation*}
   where $ r $ is the weight of $ \bm{ r } := ( r_1, \cdots, r_\ell) \in  \mathds{Z}_3^ \ell$.
Using the definition of weight enumerator given in~\eqref{eq:def_WE:congr}, we can rewrite
\begin{equation*}\begin{aligned}
\ch V_{ L^{ \otimes \ell }}[ \varepsilon] &=
\sum_{ \sum r_i \equiv \varepsilon \bmod 3} \ch V_L[ r_1] \times \cdots \times \ch V_L[ r_\ell] \\
&= \sum_{ \bm{ r }  \in S_\varepsilon }  Z_0^{ \ell  - \wt ( \bm{ r }) } Z_1^{ \wt( \bm{ r })}    = W_ \varepsilon ( Z_0, Z_1)
 \end{aligned} \end{equation*}
as desired.
% Similarly, we have a decomposition of $ (V_L^\tau)^ { \otimes \ell}$-modules:
%    \begin{equation}
%       V_{ L_{  0 \times \bm{ \delta}}} [ \epsilon ]=
%       \bigoplus_{ \sum r_i \equiv \epsilon \bmod 3} V_{L^{ ( 0, \delta_1 )}}[ r_1] \otimes \cdots \otimes V_{L^{ ( 0, \delta_1 )}} [ r_\ell].
%    \end{equation}
% The same argument  proves the second assertion.
\end{proof}

\begin{lem}
   We have the character
   \begin{equation*}\begin{aligned}
&\ch \left(\bigoplus_{ \cZ \neq \bm{ \gamma} \in   \Ceqt} V_{L_{ \bm{ \gamma} \times \cZ}}\right)
      = W_{\cC}^\prime( Y_0, Y_1),
%       \\
%       &\ch \bigoplus_{ \cZ \neq \gamma \in   \Ceqt} V_{L_{ \gamma \times  \delta }}
%       = W_{\cC}^\prime( Y ^\prime_0, Y ^\prime_1),
 \end{aligned} \end{equation*}
   where
$Y_0 (q) :=  \ch V_{L^{( 0 ,  0 )}}$, and $Y_1(q) := \ch V_{L^{( 1,  0 )}}.$
% \\
% Y^\prime_0 (q) :=&  \ch V_{L^{( 0 ,  1 )}},&  Y^\prime_1(q) :=& \ch V_{L^{( \omega,  1 )}}.
\end{lem}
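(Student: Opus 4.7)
The plan is to expand the left-hand side by first decomposing each summand as a $V_{L^{\otimes\ell}}$-module, then matching the resulting sum to $W_{\cC}^{\prime}(Y_0,Y_1)$ by an orbit-counting argument. Directly from the definition of $L_{\bm{\gamma}\times\cZ}$, each module $V_{L_{\bm{\gamma}\times\cZ}}$ is an irreducible $V_{L^{\otimes\ell}}$-module and decomposes as a tensor product
\[
V_{L_{\bm{\gamma}\times\cZ}} \;\cong\; V_{L^{(\gamma_1,0)}}\otimes \cdots \otimes V_{L^{(\gamma_\ell,0)}},
\]
so $\ch V_{L_{\bm{\gamma}\times\cZ}} = \prod_{i=1}^\ell \ch V_{L^{(\gamma_i,0)}}$.

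Next I would note that $\tau$ fixes $L^{(0,0)}=L$ and cyclically permutes the three remaining cosets $L^{(i,0)}$ for $i\in\GF\setminus\{0\}$: from $\tau\beta_1=\beta_2,\ \tau\beta_2=\beta_0,\ \tau\beta_0=\beta_1$ one checks $L_{\bar\omega}\to L_1\to L_\omega\to L_{\bar\omega}$. Since $\tau$ is a lattice isometry, it preserves the weight grading on the associated lattice VOA modules, whence
\[
\ch V_{L^{(i,0)}} = Y_1 \quad\text{for every } i\in\GF\setminus\{0\}, \qquad \ch V_{L^{(0,0)}} = Y_0.
\]
Combining these gives $\ch V_{L_{\bm{\gamma}\times\cZ}} = Y_0^{\ell-\wt(\bm{\gamma})}\, Y_1^{\wt(\bm{\gamma})}$.

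Finally I would analyze the $\tau$-orbits in $\cC$. The induced action of $\tau$ on $\GF$ is multiplication by $\omega$, and since $\cC$ is $\mathbb{F}_4$-linear it is stable under this action; for any $\bm{\gamma}\neq\cZ$ the orbit $\{\bm{\gamma},\omega\bm{\gamma},\bar{\omega}\bm{\gamma}\}$ has size exactly $3$, and all three members share a common Hamming weight. Summing over non-zero orbit representatives and multiplying by $3$ recovers the sum over all non-zero codewords:
\[
\sum_{\cZ\neq\bm{\gamma}\in\Ceqt} Y_0^{\ell-\wt(\bm{\gamma})}Y_1^{\wt(\bm{\gamma})}
\;=\;\frac{1}{3}\sum_{\cZ\neq\bm{\gamma}\in\cC} Y_0^{\ell-\wt(\bm{\gamma})}Y_1^{\wt(\bm{\gamma})}
\;=\;\frac{1}{3}\bigl(W_{\cC}(Y_0,Y_1)-Y_0^{\ell}\bigr),
\]
which equals $W_{\cC}^{\prime}(Y_0,Y_1)$ by \eqref{eq:def:W/tau}. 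The only conceptual ingredient is the character equality $\ch V_{L^{(i,0)}}=Y_1$ for every $i\in\GF\setminus\{0\}$, which rests on the $\tau$-symmetry of $L^\perp/L$; the rest is bookkeeping with weight enumerators, so I do not anticipate a substantive obstacle.
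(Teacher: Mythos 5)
Your proposal is correct and follows essentially the same route as the paper's own proof: factor $\ch V_{L_{\bm{\gamma}\times\cZ}}$ as a product over coordinates, use the $\tau$-symmetry of the cosets $L^{(i,0)}$ to identify all nonzero-coordinate factors with $Y_1$, and then pass from the sum over nonzero $\tau$-orbits to $\tfrac13$ of the sum over nonzero codewords via the constancy of the Hamming weight on each size-three orbit. Your write-up in fact makes explicit two small points the paper leaves implicit (why $\ch V_{L^{(i,0)}}=Y_1$ for all $i\neq 0$, and why each nonzero orbit has size exactly $3$), but the argument is the same.
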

\begin{proof}
We first note that $ Y_1(q)  = \ch V_{L^{( x,  0 )}} $ for $ x = 1, \omega, \bom \in \GF.$
Let $ \cZ \neq \bm{ \gamma}   \in \Ceqt$. Then
\begin{equation*}
   \ch V_{L_{ \bm{ \gamma}  \times \cZ}} = \prod_{i} \ch V_{L^{ (\gamma_i, 0 )}} = Y_0^{ \ell - \wt( \bm{ \gamma})} Y_1 ^{\wt( \bm{ \gamma} ) }.
\end{equation*}
 We know the $ \tau$-orbit of $ \bm{ \gamma}  $ is the set $ \{ \bm{ \gamma} , \omega \bm{ \gamma}, \omega^2 \bm{ \gamma}  \}$, where $ \omega \bm{ \gamma} := ( \omega \gamma_1, \cdots, \omega \gamma_\ell)$. Note that   $ \omega \gamma_i = 0$ if and only if $ \gamma_i =0$. This means $ \wt \bm{ \gamma}  = \wt \tau \bm{ \gamma}$ and hence
 \begin{equation*}
    \ch V_{L_{ \bm{ \gamma}  \times \cZ}}  = \ch V_{L_{ \omega \bm{ \gamma} \times \cZ}}  =\ch V_{L_{  \omega^2 \bm{ \gamma}  \times \cZ}}.
 \end{equation*}
 Therefore, by the definition of ~\eqref{eq:def:W/tau} we have
 \begin{equation}
    \ch \left(\bigoplus_{ \cZ \neq \bm{ \gamma} \in   \Ceqt} V_{L_{ \bm{ \gamma} \times \cZ}}\right)
= \frac{1}{3} \sum_{ \cZ \neq \bm{ \gamma} \in \cC} \ch V_{L_{ \bm{ \gamma} \times \cZ}}
= \frac{1}{3} \sum_{ \cZ \neq \bm{ \gamma} \in \cC}  Y_0^{ \ell - \wt( \bm{ \gamma} )} Y_1 ^{\wt( \bm{ \gamma} ) }
    = W_{\cC}^\prime( Y_0, Y_1).
 \end{equation}
% Observed that  $ Y^\prime_0 (q)  =   \ch V_{L^{( 0 ,  i )}} $ and $ Y^\prime_1(q)  =  \ch V_{L^{( x,  i )}} $ for any $ i\in \mathds{Z}_3$ and $ 0 \neq x \in \GF$.  The same argument also proves the second assertion.
\end{proof}

\begin{prop}\label{thm:qd_VLCe}
   We have
   \begin{equation*}\begin{aligned}
 \ch \VLC[ \varepsilon]  =& W_ \varepsilon ( Z_0, Z_1) + W_{\cC}^\prime( Y_0, Y_1),
%  \\
%       \ch \VLCd[ \varepsilon]  =& W_ \varepsilon ( Z_0, Z_1) + W_{\cC}^\prime( Y ^\prime_0, Y ^\prime _1),
%
\end{aligned} \end{equation*}
   for $ \varepsilon = 0,1,2$.  Moreover, we have
   \begin{equation}
      W_\varepsilon( 1, 1)  = 3^{ \ell -1}.
   \end{equation}
\end{prop}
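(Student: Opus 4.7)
The plan is to assemble the first identity directly from the two preceding lemmas, and then to evaluate $W_\varepsilon(1,1)$ by a straightforward counting argument.

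For the character formula, I would start from the $\VLtl$-module decomposition of $\VLC[\varepsilon]$ recorded in Proposition~\ref{thm:decomp:into:L}(i), specialized to $\bm{\delta}=\cZ$:
\begin{equation*}
   \VLC[\varepsilon] \;\cong\; V_{L_{\cZ\times\cZ}}[\varepsilon] \;\oplus\; \bigoplus_{\cZ\neq \bgg\in \Ceqt} V_{L_{\bgg\times\cZ}}.
\end{equation*}
Since the normalized character is additive on direct sums, applying the two lemmas immediately above yields
\begin{equation*}
   \ch \VLC[\varepsilon] \;=\; \ch V_{L^{\otimes\ell}}[\varepsilon] + \ch\Bigl(\bigoplus_{\cZ\neq\bgg\in\Ceqt} V_{L_{\bgg\times\cZ}}\Bigr) \;=\; W_\varepsilon(Z_0,Z_1) + W_{\cC}^\prime(Y_0,Y_1),
\end{equation*}
which is the claimed identity. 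No further computation is needed at this step.

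For the evaluation $W_\varepsilon(1,1)=3^{\ell-1}$, I would read off from the definition~\eqref{eq:def_WE:congr} that
\begin{equation*}
   W_\varepsilon(1,1) \;=\; \sum_{\bm{x}\in S_\varepsilon} 1 \;=\; \abs{S_\varepsilon},
\end{equation*}
so the task reduces to counting the number of vectors $\bm{x}\in\mathds{Z}_3^\ell$ with $\sum_{i=1}^\ell x_i\equiv \varepsilon\pmod{3}$. The map $\mathrm{sum}\colon \mathds{Z}_3^\ell\to \mathds{Z}_3$ sending $\bm{x}\mapsto \sum x_i$ is a surjective $\mathds{Z}_3$-linear map (assuming $\ell\geq 1$), so each of its three fibers has the same cardinality $3^\ell/3=3^{\ell-1}$. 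In particular $\abs{S_\varepsilon}=3^{\ell-1}$ for each $\varepsilon\in\{0,1,2\}$, as claimed.

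There is essentially no obstacle here: the first part is a bookkeeping consequence of the tensor-product decomposition together with the two lemmas, and the second part is a one-line orbit-counting remark about the sum-of-coordinates map on $\mathds{Z}_3^\ell$. The only mild point to watch is the trivial case $\ell=0$ (where $S_0$ would have size $1$, not $3^{-1}$), but this is excluded by the standing assumption $\ell\in\mathds{N}$ fixed at the start of Section~3.
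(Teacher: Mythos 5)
Your proposal is correct and follows essentially the same route as the paper: the character identity is read off directly from the decomposition and the two preceding lemmas, and the count $W_\varepsilon(1,1)=3^{\ell-1}$ is obtained by showing the three fibers of the sum-of-coordinates map are equinumerous (the paper phrases this as the translation $S_\varepsilon=(\varepsilon,0,\dots,0)+S_0$, which is the same observation as your equal-fibers argument).
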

\begin{proof}
The first statement on  characters follows directly from the above two lemmas. Using a basic  combinatorial argument, it is easy to
show  $ W_i(1,1)= 3^{ \ell -1}$ for all $ 0\le i \le 2$; note that $S_\varepsilon= (\varepsilon, \dots,0)+ S_0$ for any
$\varepsilon=1,2$.
\end{proof}

\paragraph{\textbf{Quantum dimensions of  $V^\tau_\LC$-Modules}} %\label{sec:qd:CZ}
We first compute the quantum dimensions of irreducible $V^\tau_\LC$-modules in  the case that the code $\cD = \{ \cZ \} \in  \mathds{Z}_3^\ell$ is the trivial code.
Note that in this case $ \cD ^\perp =  \mathds{Z}_3^\ell $.

\begin{prop}\label{thm:SD:VLCe:SC}
 For $ \varepsilon \in \mathds{Z}_3$ and $  \cdd \in  \mathds{Z}_3^ \ell$ ,  the  irreducible $ \VLC[0]$-module $ \VL{ \cC \times \cdd}[ \varepsilon] $ has the quantum dimension one.
\end{prop}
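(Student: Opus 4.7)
The plan is to compute the quantum dimension by first passing to the tensor subVOA $(V_L^\tau)^{\otimes \ell} \subset V_{\LC}^\tau$, whose module quantum dimensions are completely determined by Proposition~\ref{thm:qd_VL0}. For any $V_{\LC}^\tau$-module $M$, the identity
\[
\qdim_{V_{\LC}^\tau} M \;=\; \frac{\qdim_{(V_L^\tau)^{\otimes \ell}} M}{\qdim_{(V_L^\tau)^{\otimes \ell}} V_{\LC}^\tau}
\]
follows immediately from the definition of quantum dimension as a limit of ratios of characters, applied along the chain $(V_L^\tau)^{\otimes \ell} \subset V_{\LC}^\tau$. Since the denominator is the special case $(\bm{\delta},\varepsilon)=(\cZ,0)$ of the numerator, it suffices to show that $\qdim_{(V_L^\tau)^{\otimes \ell}} V_{L_{\cC\times \bm{\delta}}}[\varepsilon]$ is independent of $\bm{\delta}$ and $\varepsilon$.

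To compute this, I would combine Proposition~\ref{thm:decomp:into:L}(i) (with $\cD=\{\cZ\}$, so $\bm{\delta}+\cD=\{\bm{\delta}\}$),
\[
V_{L_{\cC\times \bm{\delta}}}[\varepsilon] \;\cong\; V_{L_{\cZ\times \bm{\delta}}}[\varepsilon] \;\oplus\; \bigoplus_{\cZ\neq \bm{\gamma}\in\Ceqt} V_{L_{\bm{\gamma}\times \bm{\delta}}},
\]
with the tensor factorization $V_{L_{\bm{\gamma}\times \bm{\delta}}}\cong \bigotimes_{i=1}^\ell V_{L^{(\gamma_i,\delta_i)}}$ into irreducible $V_L^{\otimes\ell}$-modules, the further $V_L^\tau$-splitting $V_{L^{(0,j)}}=V_{L^{(0,j)}}[0]\oplus V_{L^{(0,j)}}[1]\oplus V_{L^{(0,j)}}[2]$ (only relevant when $\gamma_i=0$), and the analogous decomposition of $V_{L_{\cZ\times\bm{\delta}}}[\varepsilon]$ into $3^{\ell-1}$ summands $\bigotimes_i V_{L^{(0,\delta_i)}}[r_i]$ indexed by $\bm{r}\in S_\varepsilon$. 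Proposition~\ref{thm:qd_VL0} gives $\qdim_{V_L^\tau} V_{L^{(0,j)}}[\varepsilon']=1$; and for $x\neq 0$, $V_{L^{(x,j)}}$ lies in the $\tau$-orbit of $V_{L^{(\omega,j)}}$ and hence is $V_L^\tau$-isomorphic to it, with quantum dimension $3$. Multiplicativity under tensor products (Proposition~\ref{thm:prop_qdim}(ii)) and additivity under direct sums then yield $\qdim V_{L_{\bm{\gamma}\times\bm{\delta}}}=3^\ell$ for every $\cZ\neq \bm{\gamma}\in\cC$, and $\qdim V_{L_{\cZ\times\bm{\delta}}}[\varepsilon]=3^{\ell-1}$.

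Since $\tau$ is fixed-point free on $\GF^\ell\setminus\{\cZ\}$, every nonzero $\tau$-orbit in $\cC$ has size $3$, so summing the contributions gives
\[
\qdim_{(V_L^\tau)^{\otimes \ell}} V_{L_{\cC\times \bm{\delta}}}[\varepsilon] \;=\; 3^{\ell-1} + \frac{|\cC|-1}{3}\cdot 3^\ell \;=\; 3^{\ell-1}\,|\cC|,
\]
which is manifestly independent of $\bm{\delta}$ and $\varepsilon$. Substituting back into the extension formula yields $\qdim_{V_{\LC}^\tau}V_{L_{\cC\times \bm{\delta}}}[\varepsilon]=1$, as desired. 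The main subtlety I foresee is the bookkeeping in the middle paragraph: the quantum dimension of each $V_{L_{\bm{\gamma}\times\bm{\delta}}}$ over $(V_L^\tau)^{\otimes \ell}$ turns out to be $3^\ell$ regardless of $\wt(\bm{\gamma})$, since each tensor factor contributes $3$ either as an irreducible $V_L^\tau$-module (when $\gamma_i\neq 0$) or as a sum of three qdim-$1$ eigenspaces (when $\gamma_i=0$); one has to keep these two regimes straight to avoid spurious weight-dependent factors.
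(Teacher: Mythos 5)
Your proof is correct and follows essentially the same route as the paper: decompose $V_{L_{\cC\times\bm{\delta}}}[\varepsilon]$ into $(V_L^\tau)^{\otimes\ell}$-modules, use the quantum dimensions $1$ and $3$ from Proposition~\ref{thm:qd_VL0} to get the value $3^{\ell-1}|\cC|$ independently of $\bm{\delta}$ and $\varepsilon$, and divide. The only difference is cosmetic — the paper first disposes of the case $\bm{\delta}=\cZ$ by citing a simple-current argument from \cite{C13a} and then runs the character-ratio computation for $\bm{\delta}\neq\cZ$, whereas you treat all cases uniformly, which is if anything slightly cleaner.
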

\begin{proof}
Using the same proof as in ~\cite[Lemma 3.2]{C13a},  the irreducible $ V^\tau_\LC$-modules $\VLC[ \varepsilon]$, $\varepsilon =
0,1,2$, are simple current and hence have quantum dimension  1.  Thus
 \begin{equation*}
  1= \qdim_{ \VLC[0] } \VLC[ \varepsilon ]    = \lim_{ y \to 0 ^+} \frac{ \ch \VLC[ \varepsilon]}{\ch \VLC[ 0 ]}.
 \end{equation*}
%We already compute the quantum dimension of  $V_{ L_{    \cC \times    \cZ  }} [ \varepsilon ] $ in the proof of
%Prop.~\ref{thm:qd_VLCe}.

If $ \bm{ \delta} \neq  \cZ$, the computation is similar  with some modification.

 Fix $ 0\le \varepsilon \le 2$ and $ \cZ \neq \bm{ \delta} \in \cD^\perp$. Let
 \begin{equation*}\begin{aligned}
    &Z(q) := \frac{\ch V_{ L_{    \cC \times    \bm{ \delta}  }} [ \varepsilon ]   }{ \ch V_{ L_{    \cC \times    \cZ  }} [0]  }
    =\frac{ \ch V_{L_{\cZ \times \bm{ \delta}}}[ \varepsilon] + \sum_{ \cZ \neq \bm{ \gamma} \in   \Ceqt} \ch V_{L_{ \bm{ \gamma} \times \bm{ \delta}}}   }{ \ch V_{L_{\cZ \times \cZ}}[ 0 ] + \sum_{ \cZ \neq \bm{ \gamma} \in   \Ceqt} \ch V_{L_{ \bm{ \gamma}  \times \cZ}}  } \\
    =& \frac{ \sum_{ \bm{ r } \in S_{ \varepsilon }} \ch V_{L^{( 0, \delta_1)}}[ r_1] \times \cdots \times \ch V_{L^{( 0, \delta_1)}} [ r_\ell]  +  \frac{1}{3 } \sum_{ \cZ \neq \bm{ \gamma} \in \cC} \ch V_{ L^{ ( r_1, \delta_1)}} \times \cdots \times  \ch V_{ L^{ ( r_\ell, \delta_\ell)}} }
    { \sum_{  \bm{ r } \in S_{ 0} } \ch V_L[ r_1] \times \cdots \times \ch V_L [ r_\ell]  + \frac{1}{3 } \sum_{ \cZ \neq \bm{ \gamma}  \in \cC} \ch V_{ L^{ ( r_1, 0)}} \times \cdots \times  \ch V_{ L^{ ( r_\ell, 0)}}}.
 \end{aligned} \end{equation*}
 Dividing both denominator and numerator by $  (\ch V_L[0])^\ell$. We have
  \begin{equation*}\begin{aligned}
   Z(q)=& \frac{ \sum_{  \bm{ r } \in S_{\varepsilon }} \frac{\ch V_{L^{( 0, \delta_1)}}[ r_1]}{\ch V_L[0]} \times \cdots \times \frac{\ch V_{L^{( 0, \delta_1)}} [ r_\ell]}{\ch V_L[0]}
    +  \frac{1}{3 } \sum_{ \cZ \neq \bm{ \gamma}  \in \cC} \frac{\ch V_{ L^{ ( r_1, \delta_1)}}}{\ch V_L[0]}  \times \cdots \times  \frac{\ch V_{ L^{ ( r_\ell, \delta_\ell)}}}{\ch V_L[0]}  }
    { \sum_{ \bm{ r } \in S_{0}  } \frac{\ch V_L[ r_1]}{\ch V_L[0]}  \times \cdots \times \frac{\ch V_L [ r_\ell]}{\ch V_L[0]}   + \frac{1}{3 } \sum_{ \cZ \neq \bm{ \gamma} \in \cC} \frac{\ch V_{ L^{ ( r_1, 0)}} }{\ch V_L[0]}  \times \cdots \times
     \frac{\ch V_{ L^{ ( r_\ell, 0)}}}{\ch V_L[0]} }.
 \end{aligned} \end{equation*}
 Recalling  the quantum dimensions of $V_L[0]$-modules given in Prop.~\ref{thm:qd_VL0}, we have
 \begin{equation*}\begin{aligned}
\qdim_{ \VLC[0]}{ V_{ L_{    \cC \times    \bm{ \delta}  }} [ \varepsilon ]} = \lim_{ y \to 0^+} Z(q)
= \frac{ W_ \varepsilon (1,1) + W_{\cC}^\prime(1,1)}{ W_0 (1,1)   + W_{\cC}^\prime(1,1)}
= \frac{ 3^{ \ell -1} + W_{\cC}^\prime(1,1)}{ 3^{ \ell -1} + W_{\cC}^\prime(1,1)} =1
 \end{aligned} \end{equation*}
 as desired.
\end{proof}

\begin{prop}
  Let $ \varepsilon = 0,1,2$ and $ \cee \in \mathds{Z}_3 ^ \ell \comm{\cD ^\perp \bmod \cD}$.
  The irreducible $ \VLC[0]$-module  $   V ^{ T, \bm{ \eta} }_\LC ( \tau^i)  [ \varepsilon ] $ has the quantum dimension $ \frac{ 2^\ell}{  \left| \cC \right| }$ .
\end{prop}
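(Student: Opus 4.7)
The plan is to evaluate $\lim_{y\to 0^+}\ch V^{T,\bm{\eta}}_\LC(\tau^i)[\varepsilon]/\ch \VLC[0]$ by dividing both numerator and denominator by $Z_0(q)^\ell=(\ch V_L^\tau)^\ell$ and then applying the known $V_L^\tau$-quantum dimensions of Proposition~\ref{thm:qd_VL0} term by term. The key inputs are the tensor decomposition of a $\tau^i$-twisted module in Proposition~\ref{thm:decomp:into:L}(ii) and the weight-enumerator formula for $\ch\VLC[0]$ in Proposition~\ref{thm:qd_VLCe}.

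For the numerator, since $\cD=\{\cZ\}$ only the term $\cdd=\cZ$ survives in Proposition~\ref{thm:decomp:into:L}(ii), yielding
\begin{equation*}
\ch V^{T,\bm{\eta}}_\LC(\tau^i)[\varepsilon]=\sum_{\bm{e}\in S_\varepsilon}\prod_{k=1}^\ell \ch V_L^{T,\eta_k}(\tau^i)[e_k].
\end{equation*}
By Proposition~\ref{thm:qd_VL0}(iii), each factor $\ch V_L^{T,\eta_k}(\tau^i)[e_k]/Z_0(q)$ tends to $2$ as $y\to 0^+$, and $|S_\varepsilon|=3^{\ell-1}$ (already used in Proposition~\ref{thm:qd_VLCe}). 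Multiplying these limits over each summand, the numerator divided by $Z_0^\ell$ tends to $3^{\ell-1}\cdot 2^\ell$.

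For the denominator, Proposition~\ref{thm:qd_VLCe} gives $\ch\VLC[0]=W_0(Z_0,Z_1)+W_\cC^\prime(Y_0,Y_1)$. Proposition~\ref{thm:qd_VL0} yields $Z_1/Z_0\to 1$; moreover $Y_0/Z_0\to 3$ (since $V_L=V_L[0]\oplus V_L[1]\oplus V_L[2]$ has quantum dimension $3$ over $V_L^\tau$) and $Y_1/Z_0\to 3$ (since $V_{L^{(1,0)}}$, $V_{L^{(\omega,0)}}$, $V_{L^{(\bom,0)}}$ form a single $\tau$-orbit with equal characters, and $\qdim_{V_L^\tau}V_{L^{(\omega,0)}}=3$). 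Using $W_\cC(3,3)=|\cC|\cdot 3^\ell$, homogeneity of both enumerators yields
\begin{equation*}
\lim_{y\to 0^+}\frac{\ch\VLC[0]}{Z_0(q)^\ell}=W_0(1,1)+W_\cC^\prime(3,3)=3^{\ell-1}+3^{\ell-1}(|\cC|-1)=3^{\ell-1}|\cC|.
\end{equation*}

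Taking the ratio yields $\qdim_{\VLC[0]}V^{T,\bm{\eta}}_\LC(\tau^i)[\varepsilon]=2^\ell/|\cC|$, as required. The computation is essentially bookkeeping; the only step needing real care is pinning down the limit of $Y_1/Z_0$, which relies on identifying $V_{L^{(1,0)}}$ with $V_{L^{(\omega,0)}}$ at the level of $V_L^\tau$-characters so that a quantum dimension of $3$ can be invoked. Once that identification is made, the rest is a direct application of the enumerator formulas.
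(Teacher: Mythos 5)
Your argument is correct and follows essentially the same route as the paper: the tensor decomposition of $V^{T,\bm{\eta}}_\LC(\tau^i)[\varepsilon]$ into $(V_L^\tau)^{\otimes\ell}$-modules, division by $(\ch V_L^\tau)^\ell$, term-by-term use of the $V_L^\tau$-quantum dimensions, and the weight-enumerator evaluation $W_0(1,1)+W_\cC^\prime(3,3)=3^{\ell-1}|\cC|$ for the denominator. The paper packages the numerator as $W_\varepsilon(T_0,T_1)\to W_\varepsilon(2,2)=2^\ell\cdot 3^{\ell-1}$ via homogeneity, which is the same count you obtain directly from $|S_\varepsilon|=3^{\ell-1}$.
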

\begin{proof}
   We know  an irreducible $ \VLC[0]$-module of twisted type admits a decomposition of $ (V_L^\tau)^ { \otimes \ell}$-modules:
   \begin{equation}\label{dec:VLC:twisted}
   V ^{ T, \bm{ \eta} }_\LC ( \tau^i)  [ \varepsilon ] \cong
   \bigoplus_{ \bm{ e } \in S_\varepsilon }
    V_L^{ T, \eta_1 } ( \tau^i) [ e_1] \otimes \cdots \otimes V_L^{ T, \eta_\ell } ( \tau^i) [ e_\ell].
\end{equation}
It was shown in \cite{C13a} that  $ \ch {V_L^{ T, j} ( \tau^i)[1] }  = \ch {V_L^{ T, j} ( \tau^k) [2]}$ and $ \ch {V_L^{ T, j} ( \tau^i)[0] } =
\ch {V_L^{ T, 0} ( \tau)[0] }$ for $ i,k =1,2$ and $ j = 0, 1,2$. Denote   $ T_0 (q):= \ch  {V_L^{ T, j} ( \tau^i)[0] }$ and $ T_1(q) := \ch
{V_L^{ T, j} ( \tau^i)[1] }$. Similar to the untwisted case, we have
\begin{equation}
   \ch V^{ T, \bm{  \eta} }_\LC ( \tau^i)  [ \varepsilon ]  = W_ \varepsilon ( T_0, T_1).
\end{equation}
%
% From the construction of  twisted $V_L$-modules, we   know that characters $    \ch {V_L^{ T, j} ( \tau^i) }(q) $
% are all identical for $ i =1,2$ and $ j = 0, 1,2$.
% We also know that $   \ch {V_L^{ T, j} ( \tau^i)[1] }(q)  = \ch {V_L^{ T, j} ( \tau^i) [2]}(q) $.
% As a consequence,  $ \ch {V_L^{ T, j} ( \tau^i)[0] }(q)   $ are all identical for all $i, j$. So we  can denote   $ T_0 (q):= \ch  {V_L^{ T, j} ( \tau^i)[0] }$ and $ T_1(q) := \ch {V_L^{ T, j} ( \tau^i)[1] }$.

Since $ W_ \varepsilon$ are homogeneous polynomials of degree $ \ell$, we have
\begin{equation}\begin{aligned}
&\qdim_{ V^\tau_\LC}  V^{ T, \bm{ \eta} }_\LC ( \tau^i)  [ \varepsilon ]  =
  \lim_{ y \to 0^+} \frac{ W_ \varepsilon ( T_0, T_1)}{ W_0(Z_0, Z_1) + W_{\cC}^\prime ( Y_0, Y_1)}\\
= &\frac{ W_ \varepsilon( 2,2)}{ W_0(1,1) + W_{\cC}^\prime (3,3)} = \frac{ 2^\ell W_ \varepsilon(1,1)}{ W_0(1,1 ) + 3^ \ell W_{\cC}^\prime (1,1)}.
 \end{aligned} \end{equation}
Note that all $V_L ^\tau$-modules  $V_L^{ T, j} ( \tau^i)[\varepsilon]$ have   quantum dimension 2.

Now by Thm.~\ref{thm:qd_VLCe} and Lemma~\ref{thm:W_3:11} we know
\begin{align*}\label{eq:qdim_twist_LC}
\qdim_{ V^\tau_\LC}   V^{ T, \bm{ \eta} }_\LC ( \tau^i)  [ \varepsilon ]
= \frac{ 2^\ell \cdot 3^{ \ell -1}}{ 3^{ \ell -1} +  3^{ \ell -1}   ( \left| \cC \right| -1)}
= \frac{ 2^\ell}{  \left| \cC \right|}.
\end{align*}
 \end{proof}

 \begin{rmk}
 Note that  $ \frac{ 2^\ell}{  \left| \cC \right|}  = \sqrt{ \left| \cC ^\perp/ \cC \right|}$ since $ | \cC  ^\perp |\cdot | \cC | = \GF^ \ell = ( 2^\ell)^2$.
\end{rmk}

\begin{cor}\label{thm:VLC:simp_curr:iff}
 Let $ \cC$ be a self-dual code.  Then all irreducible $ V^\tau_{\LC}$-modules are simple current modules. % if and only if the code $ \cC$ is self-dual.
\end{cor}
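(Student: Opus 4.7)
The plan is to invoke the classification of irreducible $V^\tau_{\LC}$-modules together with the quantum dimension computations already carried out, and then read off that every irreducible module has quantum dimension one, which by Proposition~\ref{thm:prop_qdim}(iii) is equivalent to being a simple current.

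More precisely, first I would recall that when $\cC$ is self-dual, the classification proposition for irreducible $V^\tau_{\LCD}$-modules (specialized to $\cD=\{\cZ\}$, so $\cD^\perp=\mathbb{Z}_3^\ell$) shows there are no modules of type (ii), i.e.\ no modules of the form $V_{L_{(\bm{\lambda}+\cC)\times\bm{\delta}}}$ with $\bm{\lambda}+\cC\neq\cZ$. Thus every irreducible $V^\tau_{\LC}$-module is either of the form $V_{L_{\cC\times\bm{\delta}}}[\varepsilon]$ or of the twisted form $V^{T,\bm{\eta}}_{\LC}(\tau^i)[\varepsilon]$.

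Next I would apply Proposition~\ref{thm:SD:VLCe:SC} to handle the untwisted type: it asserts $\qdim_{V^\tau_{\LC}} V_{L_{\cC\times\bm{\delta}}}[\varepsilon]=1$ for every $\varepsilon\in\mathbb{Z}_3$ and every $\bm{\delta}\in\mathbb{Z}_3^\ell$, with no self-duality hypothesis needed. For the twisted type, the preceding proposition gives
\[
  \qdim_{V^\tau_{\LC}} V^{T,\bm{\eta}}_{\LC}(\tau^i)[\varepsilon]=\frac{2^\ell}{|\cC|}.
\]
When $\cC$ is self-dual, $|\cC|=|\cC^\perp|$ and $|\cC|\cdot|\cC^\perp|=|\GF^\ell|=4^\ell$, so $|\cC|=2^\ell$ and this quantum dimension equals $1$.

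Finally, invoking Proposition~\ref{thm:prop_qdim}(iii), every irreducible $V^\tau_{\LC}$-module has quantum dimension one and is therefore a simple current module. There is no real obstacle here: the corollary is a direct combination of the classification result and the two quantum dimension computations, the only genuine input being the numerical identity $|\cC|=2^\ell$ for a self-dual $\GF$-code of length $\ell$.
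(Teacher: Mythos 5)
Your proposal is correct and follows essentially the same route as the paper: rule out type (ii) modules by self-duality, use Proposition~\ref{thm:SD:VLCe:SC} for the untwisted modules, and observe that $2^\ell/|\cC|=1$ for the twisted ones (the paper cites Lemma~\ref{thm:W_3:11} for this, which amounts to your counting argument $|\cC|\cdot|\cC^\perp|=4^\ell$). No substantive difference.
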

\begin{proof}
If $ \cC$ is self-dual, then $ V^\tau_{\LC}$ has only two types of irreducible modules. Moreover,
\begin{align*}
\qdim_{ V^\tau_\LC}   V^{ T, \bm{ \eta} }_\LC ( \tau^i)  [ \varepsilon ] = \frac{ 2^\ell}{  \left| \cC \right|} =1
\end{align*}
by Lemma ~\ref{thm:W_3:11}. That means all irreducible modules of the type $ V^{ T, \bm{ \eta} }_\LC ( \tau^i)  [ \varepsilon ] $
are simple current modules.  By Proposition \ref{thm:SD:VLCe:SC}, the irreducible modules of the type $ \VL{ \cC \times \cdd}[
\varepsilon] $ are simple current modules, also.
%
%   If $ \cC$ is self-orthogonal but not self-dual, then by Lemma ~\ref{thm:W_3:11} we know $ W_{\cC}^\prime (1,1) \neq \frac{2 ^\ell -1}{3}$. Which means irreducible $V^\tau_\LC$-modules of twisted type do not have quantum dimension 1 and are not simple current.
\end{proof}

Now suppose $ \cC$ is self-orthogonal but not self-dual.  Then the quantum dimension of the  $ V^ \tau_{\LCD }$-module $V^{ T,
\bm{ \eta} }_\LC ( \tau^i)  [ \varepsilon ]$ is strictly greater than 1. In addition,   $ V^ \tau_{\LCD }$ has irreducible modules of
the type $V_{ L_{ ( \bm{ \lambda} +  \cC)  \times   \bm{  \delta} }}$.

\begin{prop}
   Let $  \cll + \cC \in \cC ^\perp/\cC$ and $ \cdd \in \mathds{Z}_3 ^ \ell$, we have
   \begin{align*}
\qdim_{ V^\tau_\LC} \VL{     ( \cll +  \cC)  \times   \cdd}  =  3.
\end{align*}
\end{prop}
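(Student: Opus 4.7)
The plan is to compute $\qdim_{V^\tau_\LC} V_{L_{(\cll+\cC)\times\cdd}}$ directly from the definition, paralleling the strategy of Propositions~\ref{thm:SD:VLCe:SC} and the preceding result. First I would decompose $V_{L_{(\cll+\cC)\times\cdd}}$ as a $(V_L)^{\otimes\ell}$-module,
\[
V_{L_{(\cll+\cC)\times\cdd}} \cong \bigoplus_{\bgg \in \cC} V_{L^{(\lambda_1+\gamma_1,\delta_1)}} \otimes \cdots \otimes V_{L^{(\lambda_\ell+\gamma_\ell,\delta_\ell)}},
\]
so that
\[
\ch V_{L_{(\cll+\cC)\times\cdd}} = \sum_{\bgg \in \cC} \prod_{i=1}^\ell \ch V_{L^{(\lambda_i+\gamma_i,\delta_i)}}.
\]

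Next I would divide both numerator and denominator of the quantum-dimension limit by $Z_0(q)^\ell = (\ch V_L^\tau)^\ell$ and let $y \to 0^+$. The key observation is that Proposition~\ref{thm:qd_VL0} gives $\qdim_{V_L^\tau} V_{L^{(x,y)}} = 3$ for \emph{every} $(x,y) \in \GF \times \mathds{Z}_3$: for $x=0$ the module splits into three $[\varepsilon]$-pieces each of quantum dimension $1$, while for $x \neq 0$ the quantum dimension is $3$ directly (the three modules $V_{L^{(1,y)}},V_{L^{(\omega,y)}},V_{L^{(\bom,y)}}$ share the same character by $\tau$-conjugacy). Consequently every factor $\ch V_{L^{(\lambda_i+\gamma_i,\delta_i)}}/Z_0$ tends to $3$ uniformly, whence
\[
\lim_{y \to 0^+} \frac{\ch V_{L_{(\cll+\cC)\times\cdd}}}{Z_0^\ell} = |\cC|\cdot 3^\ell.
\]

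For the denominator I would invoke Proposition~\ref{thm:qd_VLCe}, which gives $\ch V^\tau_\LC = W_0(Z_0,Z_1) + W_{\cC}^\prime(Y_0,Y_1)$. The same uniform argument yields $Y_0/Z_0,\ Y_1/Z_0 \to 3$ and $Z_1/Z_0 \to 1$, so, using $W_0(1,1)= 3^{\ell-1}$ together with
\[
W_{\cC}^\prime(3,3) = \tfrac{1}{3}\bigl(|\cC|\cdot 3^\ell - 3^\ell\bigr) = 3^{\ell-1}(|\cC|-1),
\]
the denominator tends to $3^{\ell-1}+3^{\ell-1}(|\cC|-1)=|\cC|\cdot 3^{\ell-1}$. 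Taking the ratio,
\[
\qdim_{V^\tau_\LC} V_{L_{(\cll+\cC)\times\cdd}}
= \frac{|\cC|\cdot 3^\ell}{|\cC|\cdot 3^{\ell-1}} = 3.
\]

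I do not foresee any substantive obstacle: the computation runs in complete parallel to the one in Proposition~\ref{thm:SD:VLCe:SC}, and the only small point worth flagging is the uniformity remark that $\ch V_{L^{(x,y)}}/Z_0 \to 3$ regardless of $(x,y)$, which is what causes the $|\cC|$-fold sum in the numerator to collapse into the clean factor $|\cC|\cdot 3^\ell$.
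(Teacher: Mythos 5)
Your proposal is correct and follows essentially the same route as the paper: decompose $V_{L_{(\cll+\cC)\times\cdd}}$ over the cosets $\cll+\bm{\mu}$, $\bm{\mu}\in\cC$, divide numerator and denominator by $(\ch V_L[0])^\ell$, and use that $\qdim_{V_L[0]}V_{L^{(i,j)}}=3$ for all $(i,j)$ together with $\ch V^\tau_\LC = W_0(Z_0,Z_1)+W_{\cC}^\prime(Y_0,Y_1)$ to get $\frac{|\cC|\cdot 3^\ell}{3^{\ell-1}|\cC|}=3$. Your explicit remark on why the limit $\ch V_{L^{(x,y)}}/Z_0\to 3$ holds uniformly in $(x,y)$ is a detail the paper leaves implicit, but the argument is the same.
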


\begin{proof}
 By definition,
 \begin{equation*}\begin{aligned}
 \qdim_{ V^\tau_\LC} V_{ L_{     ( \bm{ \lambda} +  \cC)  \times   \bm{  \delta}  }}
 = \lim_{ y \to 0^+} \frac{ \ch V_{ L_{     ( \bm{ \lambda} +  \cC)  \times   \bm{  \delta}  }} }{ \ch V^\tau_\LC }
 = \lim_{ y \to 0^+} \frac{ \sum_{ \bm{ \mu } \in \cC }\ch V_{ L_{     ( \bm{ \lambda} +  \bm{ \mu })  \times   \bm{  \delta}  }} }{ \ch V^\tau_\LC }.
 \end{aligned} \end{equation*}
Dividing both the denominator and the numerator by $  (\ch V_L[0])^\ell$. Since $ \qdim_{ V_L[0]} V_{ L^{( i,j )}} =3$ for any $ i \in
\GF, j \in \mathds{Z}_3$, we have
\begin{equation*}\begin{aligned}
\qdim_{ V^\tau_\LC} V_{ L_{     ( \bm{ \lambda} +  \cC)  \times   \bm{  \delta}  }}
=  \frac{ \left| \cC \right|  \cdot 3^\ell }{ 3^{ \ell -1} +  3^{ \ell -1}   (  \left| \cC \right|  -1)}
= 3
 \end{aligned} \end{equation*}
 as desired.
\end{proof}

To summarize, we have the theorem.
\begin{thm}\label{thm:qd:CZ}
   The  quantum dimensions for irreducible $ \VLC$-modules are as follows.
   \begin{enumerate}[label=(\roman*)]
 \item $ \qdim_{ V^\tau_\LC} V_{ L_{    \cC \times    \bm{ \delta}  }} [ \varepsilon ]  =  1$;
 \item $ \qdim_{ V^\tau_\LC} V_{ L_{     ( \bm{ \lambda} +  \cC)  \times   \bm{  \delta}  }}=  3$;
 \item $ \qdim_{ V^\tau_\LC}    V^{ T, \bm{ \eta} }_\LC ( \tau^i)  [ \varepsilon ]  = \frac{ 2^\ell}{ \left|  \cC \right|}$,
 \end{enumerate}
   where $i =1,2$, $ \varepsilon \in \mathds{Z}_3$,   $ \cZ \neq \bm{ \lambda}  + \cC \in  \cC ^\perp_{\equiv \tau} \bmod \cC$  and $\cee, \cdd    \in \mathds{Z}_3^ \ell$.
\end{thm}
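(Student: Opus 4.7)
The theorem consolidates the three quantum-dimension computations carried out in the preceding three propositions of this section, so the plan is simply to explain the unified recipe that proves each clause. In every case one proceeds in three steps: (a) decompose the candidate irreducible $V^\tau_\LC$-module into a direct sum of $(V_L^\tau)^{\otimes \ell}$-modules via Proposition~\ref{thm:decomp:into:L}; (b) repackage the resulting character as a homogeneous polynomial of degree $\ell$ in a handful of atomic characters $Z_0, Z_1, Y_0, Y_1, T_0, T_1$ using the enumerators $W_\varepsilon$ and $W'_\cC$ introduced in Section~4; (c) evaluate the limit $y \to 0^+$ of the ratio of characters by plugging in the known quantum dimensions from Proposition~\ref{thm:qd_VL0} and applying the evaluations $W_\varepsilon(1,1) = 3^{\ell-1}$ and $W'_\cC(1,1) = (|\cC|-1)/3$ (the latter via Lemma~\ref{thm:W_3:11}).

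First I fix the denominator. By Proposition~\ref{thm:qd_VLCe}, the character $\ch V^\tau_\LC = \ch V_\LC[0]$ equals $W_0(Z_0,Z_1)+W'_\cC(Y_0,Y_1)$. After dividing by $(\ch V_L[0])^\ell$ and passing to the limit, Proposition~\ref{thm:qd_VL0} gives $Z_i/\ch V_L[0]\to 1$ and $Y_i/\ch V_L[0]\to 3$, so homogeneity of $W_0$ and $W'_\cC$ yields a normalized denominator of $W_0(1,1)+3^\ell W'_\cC(1,1) = 3^{\ell-1}+3^{\ell-1}(|\cC|-1) = 3^{\ell-1}|\cC|$. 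For part~(i) the numerator has the same structure as the denominator, with $\bm{\delta}$ merely shifting the untwisted atomic characters; since each atomic character of $V_{L^{(i,j)}}$ still has quantum dimension $1$ or $3$ as above, the normalized limit is identical and the ratio is $1$. For part~(ii), where $\bm{\lambda}\notin \cC$, the numerator is $\sum_{\bm{\mu}\in\cC}\ch V_{L_{(\bm{\lambda}+\bm{\mu})\times\bm{\delta}}}$, each summand normalizing to $3^\ell$ because every factor is a $V_L^\tau$-module of quantum dimension $3$; the total $|\cC|\cdot 3^\ell$ divided by $3^{\ell-1}|\cC|$ gives $3$. For part~(iii), the twisted decomposition~\eqref{dec:VLC:twisted} rewrites the character as $W_\varepsilon(T_0,T_1)$, where the atomic twisted characters $T_i$ normalize to $2$; by homogeneity the normalized numerator limit is $W_\varepsilon(2,2) = 2^\ell\cdot 3^{\ell-1}$, producing the ratio $2^\ell/|\cC|$.

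The main conceptual step is the encapsulation of a large combinatorial sum (over $\cC$ and over congruence classes $S_\varepsilon$) inside the two homogeneous polynomials $W_\varepsilon$ and $W'_\cC$, which is what makes the limit computation tractable; this hinges on the identity $S_\varepsilon=(\varepsilon,0,\dots,0)+S_0$ together with $|S_0|=3^{\ell-1}$, already recorded in Proposition~\ref{thm:qd_VLCe}. The only technical subtlety is justifying that the limit of a ratio of sums equals the ratio of sums of limits; this is routine since every summand carries the common factor $(\ch V_L[0])^\ell$ and normalizes to a positive real number, so dominated passage to the limit is automatic. There is essentially no obstacle beyond bookkeeping, and the three cases can be written simultaneously by presenting the general formula
\[
\qdim_{V^\tau_\LC} M = \lim_{y\to 0^+} \frac{\ch M/(\ch V_L[0])^\ell}{W_0(1,1)+3^\ell W'_\cC(1,1)}
\]
and specializing the numerator to each of the three module types.
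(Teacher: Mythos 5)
Your proposal is correct and follows essentially the same route as the paper: the theorem is proved there by the three preceding propositions, each of which decomposes the module into $(V_L^\tau)^{\otimes\ell}$-pieces, packages the character via $W_\varepsilon$ and $W'_{\cC}$, and evaluates the normalized limit using $W_\varepsilon(1,1)=3^{\ell-1}$ and $W'_{\cC}(1,1)=(|\cC|-1)/3$. The only cosmetic difference is that in case (i) the paper first invokes the simple-current property of $V_{\LC}[\varepsilon]$ from the cited lemma before doing the character computation for $\bm{\delta}\neq\cZ$, whereas you treat all $\bm{\delta}$ uniformly (and your denominator normalization $W_0(1,1)+3^\ell W'_{\cC}(1,1)=3^{\ell-1}|\cC|$ is the correct one, matching the paper's twisted-case computation).
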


\paragraph{\textbf{Quantum dimension of   $V^\tau_{\LCD}$-modules}}\label{sec:qd:CD}
We now deal with the general case. Let $ \cD$ be a self-orthogonal $ \mathds{Z}_3$-code. The basic idea is to  express the
characters of $V^\tau_{\LCD} $-modules in terms of the characters of $ V^\tau_{\LC}$-modules.

\begin{thm}\label{thm:qd:CD}
  The quantum dimensions of irreducible $V^\tau_{\LCD} $-modules are as follows.
   \begin{enumerate}[label=(\roman*)]
 \item   $ \qdim_{ V^\tau_\LCD} V_{ L_{    \cC \times     ( \cdd + \cD)  }} [ \varepsilon ]  =  1$;
 \item $ \qdim_{ V^\tau_\LCD} V_{ L_{     ( \bm{ \lambda} +  \cC)  \times   ( \bm{  \delta}  + \cD) }}=  3$;
 \item $ \qdim_{ V^\tau_\LCD}    V^{ T, \bm{ \eta} }_\LCD ( \tau^i)  [ \varepsilon ]  = \frac{ 2^\ell}{  \left|  \cC \right|}$,
 \end{enumerate}
   where $i =1,2$, $ \varepsilon \in \mathds{Z}_3$,   $ \cZ \neq \bm{ \lambda}  + \cC \in  \cC ^\perp_{\equiv \tau} \bmod \cC$ ,  $\cee \in \cD ^\perp \bmod \cD $ and $ \cdd  + \cD   \in \cD ^\perp/ \cD$.
\end{thm}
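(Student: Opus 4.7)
The plan is to reduce Theorem~\ref{thm:qd:CD} to the already-established Theorem~\ref{thm:qd:CZ} by viewing $V^\tau_\LCD$ as an extension of the sub-VOA $V^\tau_\LC$ indexed by the $\mathbb{Z}_3$-code $\cD$. The starting observation is the disjoint coset decomposition $\LCD = \bigsqcup_{\cgg \in \cD} L_{\cC \times \cgg}$, which yields
\begin{equation*}
V^\tau_\LCD \;\cong\; \bigoplus_{\cgg \in \cD} V_{L_{\cC \times \cgg}}[0]
\end{equation*}
as a $V^\tau_\LC$-module. The same argument decomposes the three families of irreducible $V^\tau_\LCD$-modules:
\begin{align*}
V_{L_{\cC \times (\cdd + \cD)}}[\varepsilon] &\cong \bigoplus_{\cgg \in \cD} V_{L_{\cC \times (\cdd + \cgg)}}[\varepsilon],\\
V_{L_{(\cll + \cC) \times (\cdd + \cD)}} &\cong \bigoplus_{\cgg \in \cD} V_{L_{(\cll + \cC) \times (\cdd + \cgg)}},\\
V^{T,\bm{\eta}}_\LCD(\tau^i)[\varepsilon] &\cong \bigoplus_{\cgg \in \cD} V^{T,\bm{\eta} - i\cgg}_\LC(\tau^i)[\varepsilon],
\end{align*}
where the twisted case follows by matching the double sum in Proposition~\ref{thm:decomp:into:L}(ii) against its specialization at $\cD = \{\cZ\}$.

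Granted these decompositions, each quantum dimension is obtained by a single one-step reduction. For $M$ of any of the three types, I write
\begin{equation*}
\qdim_{V^\tau_\LCD} M \;=\; \lim_{y \to 0^+} \frac{\ch M}{\ch V^\tau_\LCD}
\end{equation*}
and divide both numerator and denominator by $\ch V^\tau_\LC$. Each of the finitely many resulting summand ratios converges, in the limit $y \to 0^+$, to the corresponding quantum dimension over $V^\tau_\LC$. By Theorem~\ref{thm:qd:CZ} this limit is $1$, $3$, or $2^\ell/|\cC|$ in the numerator according to the type of $M$, and is $1$ in every denominator summand. Both the numerator and the denominator therefore acquire the common factor $|\cD|$, which cancels and leaves the values asserted in (i)--(iii).

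The main point requiring some care is the twisted decomposition: it is not \emph{a priori} obvious that each summand $V^{T,\bm{\eta}-i\cgg}_\LC(\tau^i)[\varepsilon]$ should appear as a $V^\tau_\LC$-submodule of $V^{T,\bm{\eta}}_\LCD(\tau^i)[\varepsilon]$. However, reading off both sides at the finer level of $(V^\tau_L)^{\otimes\ell}$-modules via Proposition~\ref{thm:decomp:into:L}(ii) turns this into a bookkeeping exercise on the summation indices $\cdd \in \cD$ and $(e_1,\dots,e_\ell) \in S_\varepsilon$. Finally, Proposition~\ref{thm:orbV:self_dual} guarantees that all normalized characters and the ratios above have well-defined positive limits as $y \to 0^+$, so the character manipulation is legitimate and the theorem follows.
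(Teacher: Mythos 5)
Your proposal is correct and follows essentially the same route as the paper: the paper's proof also decomposes each irreducible $V^\tau_\LCD$-module over the cosets indexed by $\cD$, observes that every summand contributes the same limit so that both numerator and denominator acquire a common factor $\left|\cD\right|$, and thereby reduces each quantum dimension to the corresponding one over $V^\tau_\LC$ computed in Theorem~\ref{thm:qd:CZ}. The only cosmetic difference is that the paper normalizes the character ratios by $\ch\VLtl$ while you normalize by $\ch V^\tau_\LC$, which does not change the substance of the argument.
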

\begin{proof}
    (i)  For the module $V_{ L_{    \cC \times     ( \cdd + \cD)  }} [ \varepsilon ] $  we have a decomposition of $ ( V_L^\tau)^{ \otimes \ell}$-modules:
\begin{equation}\label{eq:dec:VLCDe}
   V_{L_{ \cC \times ( \bm{ \delta} + \cD)}}[ \varepsilon]   \cong
  V_{L_{\cZ \times ( \bm{ \delta} + \cD)}}[ \varepsilon] \oplus \bigoplus_{ \cZ \neq \bm{ \gamma} \in   \Ceqt} V_{L_{ \bm{ \gamma}  \times ( \bm{ \delta}  + \cD)}}.
\end{equation}
% Next, we know
% \begin{equation*}\begin{aligned}
% \ch V_{L_{ \gamma  \times (\delta + \cD)}} = \sum_{ \Delta \in \cD} \ch V_{L_{ \gamma  \times (\delta +  \Delta)}}.
%  \end{aligned} \end{equation*}
Although  the characters  $\ch V_{L_{ \bm{\gamma}  \times {\Delta} }} $ may vary as $ \Delta$ varies in $ \cD$,  we still have
\begin{equation*}\begin{aligned}
\lim_{ y \to 0^+} \frac {\ch V_{L_{  \bm{\gamma}  \times ( \bm{\delta} +  \Delta)}}}{ \ch \VLtl}
= \prod_{ i =1}^ \ell  \lim_{ y \to 0^+} \frac { \ch   V_{ L^{( \gamma^i, \cdd^i + \Delta^i )} }}{ \ch V_L^ \tau}
= \prod_{ i =1}^ \ell \qdim V_{ L^{( \gamma^i, \cdd^i + \Delta^i )}}
= \lim_{ y \to 0^+} \frac{\ch V_{L_{  \bm{\gamma}  \times \bm{\delta} }}}{\ch \VLtl},
 \end{aligned} \end{equation*}
 for all $\Delta \in \cD $.
This implies
\begin{equation}\label{eq:lim:VLgd:VLtl}\begin{aligned}
\lim_{ y \to 0^+} \frac {\ch V_{L_{  \bm{\gamma}  \times ( \bm{\delta} +  \cD)}}}{ \ch \VLtl}
= \lim_{ y \to 0^+} \frac{ \sum_{ \Delta \in \cD }\ch V_{L_{  \bm{\gamma}  \times  \Delta }}}{\ch \VLtl}
= \left|  \cD \right|\lim_{ y \to 0^+} \frac{\ch V_{L_{  \bm{\gamma}  \times  \bm{\delta} }}}{\ch \VLtl}.
 \end{aligned} \end{equation}

Similarly, we have
\begin{equation*}\begin{aligned}
\lim_{ y \to 0^+} \frac {\ch V_{L_{ \cZ \times (  \bm{\delta} + \Delta)}}[ \varepsilon]    }{ \ch \VLtl}
= \lim_{ y \to 0^+} \frac{\ch  V_{L_{ \cZ \times   \bm{\delta} }}[ \varepsilon]    }{\ch \VLtl}, \textrm{ for all }   \Delta \in \cD.
 \end{aligned} \end{equation*}
 Therefore,
 \begin{equation}\label{eq:lim:VLzg}\begin{aligned}
\lim_{ y \to 0^+} \frac {\ch V_{L_{ \cZ \times (  \bm{\delta} + \cD)}}[ \varepsilon]    }{ \ch \VLtl}
= \lim_{ y \to 0^+}  \frac { \sum_{ \Delta \in \cD }\ch V_{L_{ \cZ \times (  \bm{\delta} + \Delta)}}[ \varepsilon]    }{ \ch \VLtl}
= \left| \cD \right|\lim_{ y \to 0^+} \frac{\ch  V_{L_{ \cZ \times   \bm{\delta} }}[ \varepsilon]    }{\ch \VLtl}.
 \end{aligned} \end{equation}
  Thus by ~\eqref{eq:dec:VLCDe}, ~\eqref{eq:lim:VLgd:VLtl} and ~\eqref{eq:lim:VLzg} we know
\begin{align*}
&\lim_{ y \to 0^+} \frac {\ch \VL{ \cC \times (  \bm{\delta} + \cD)} [ \varepsilon]    }{ \ch \VLtl}
= \lim_{ y \to 0^+} \frac {\ch \VL{ \cZ \times (  \bm{\delta} + \cD)}[ \varepsilon]  + \ch \bigoplus_{ \cZ \neq \bm{ \gamma} \in   \Ceqt} \VL{ \bm{ \gamma}  \times ( \bm{ \delta}  + \cD)} }    { \ch \VLtl}\\
=& \lim_{ y \to 0^+} \frac { \left| \cD \right|\ch \VL{ \cZ \times   \bm{\delta} }[ \varepsilon]  + \left| \cD \right|\ch \bigoplus_{ \cZ \neq \bm{ \gamma} \in   \Ceqt} \VL{ \bm{ \gamma}  \times  \bm{ \delta}  } }    { \ch \VLtl}
= \lim_{ y \to 0^+} \frac { \left| \cD \right|\ch \VL{ \cC \times   \bm{\delta} } [ \varepsilon]    }{ \ch \VLtl}.
\end{align*}
Moreover,
 \begin{equation*}\begin{aligned}
&\qdim_{ V^\tau_\LCD}  V_{L_{ \cC \times (  \bm{\delta} + \cD)}}[ \varepsilon]
= \lim_{ y \to 0^+} \frac{ \ch V_{L_{ \cC \times (  \bm{\delta} + \cD)}}[ \varepsilon] }{ \ch V^\tau_\LCD}
= \lim_{ y \to 0^+} \frac{ \frac{1}{ \ch \VLtl} \ch V_{L_{ \cC \times (  \bm{\delta} + \cD)}}[ \varepsilon] }{ \frac{1}{ \ch \VLtl} \ch V^\tau_\LCD} \\
=& \lim_{ y \to 0^+} \frac{ \left| \cD \right|\ch V_{L_{ \cC \times   \bm{\delta} }}[ \varepsilon] }{ \left| \cD \right|\ch V_\LC[0] }
=  \qdim_{ V^\tau_\LC}  V_{L_{ \cC \times   \bm{\delta} }}[ \varepsilon] =1.
 \end{aligned} \end{equation*}
 \medskip
 (ii) By the similar arguments as (i), we have
 \begin{align*}
 \lim_{ y \to 0^+} \frac{ \ch \VL{     ( \bm{ \lambda} +  \cC)  \times   ( \bm{  \delta}  + \cD) }}{ \ch \VLtl }
 = \lim_{ y \to 0^+} \frac{ \ch\left( \bigoplus_{ \Delta \in \cD} \VL{     ( \bm{ \lambda} +  \cC)  \times   ( \bm{  \delta}  + \Delta ) }\right)}{ \ch \VLtl }
 = \lim_{ y \to 0^+} \frac{  \left| \cD \right| \ch   \VL{     ( \bm{ \lambda} +  \cC)  \times    \bm{  \delta}    }}{ \ch \VLtl },
\end{align*}
and hence
\begin{align*}
 \qdim_{ V^\tau_\LCD} V_{ L_{     ( \bm{ \lambda} +  \cC)  \times   ( \bm{  \delta}  + \cD) }}
 =   \qdim_{ V^\tau_\LC} V_{ L_{     ( \bm{ \lambda} +  \cC)  \times    \bm{  \delta}   }} =3.
\end{align*}
\medskip
(iii) For the irreducible $V^\tau_{\LCD} $-modules  of the type $V^{ T, \bm{\eta}}_\LCD ( \tau^i)  [ \varepsilon ] $,   we have the
decomposition of $ ( V_L^\tau)^{ \otimes \ell}$-modules:
 \begin{equation*}
   V^{ T, \bm{\eta}}_\LCD ( \tau^i)  [ \varepsilon ] \cong
    \bigoplus_{ \cgg \in \cD}
    \bigoplus_{  \bm{e} \in S_\varepsilon  }
    V_L^{ T, \eta_1 - i \gamma_1} ( \tau^i) [ e_1] \otimes \cdots \otimes V_L^{ T, \eta_\ell - i \gamma_\ell} ( \tau^i) [ e_\ell].
\end{equation*}
Fix $ \bm{ e } \in \mathds{Z}_3 ^ \ell$; the characters $\ch V_L^{ T, \eta_1 - i \gamma_1} ( \tau^i) [ e_1] \otimes \cdots \otimes
V_L^{ T, \eta_\ell - i \gamma_\ell} ( \tau^i) [ e_\ell]$ are all the same  for any  $ ( \gamma_1, \cdots, \gamma_\ell) \in \cD $ .
Thus,
\begin{align*}
  & \ch V^{ T, \bm{  \eta}}_\LCD   ( \tau^i)  [ \varepsilon ]
   = \left| \cD \right|
    \bigoplus_{  \bm{e} \in S_\varepsilon  }
    \ch V_L^{ T, \eta_1 - i \gamma_1} ( \tau^i) [ e_1] \otimes \cdots \otimes V_L^{ T, \eta_\ell - i \gamma_\ell} ( \tau^i) [ e_\ell]\\
   =& \left| \cD \right|  \ch V^{ T, \eta}_\LC ( \tau^i)  [ \varepsilon ].
\end{align*}
 As before we have
\begin{equation*}
   \qdim_{ V^\tau_\LCD}V^{ T, \bm{ \eta}}_\LCD   ( \tau^i)  [ \varepsilon ]  = \qdim_{ V^\tau_\LC}  V^{ T, \bm{ \eta} }_\LC   ( \tau^i)  [ \varepsilon ],
\end{equation*}
which is $2^\ell /|\cC|$.
\end{proof}

\paragraph{ \textbf{Global Dimension}}

   Let $V$ be a VOA with only finitely many irreducible modules, the \emph{global dimension} of V \cite{DJX12}  is defined as
   \begin{align}\label{eq:def:glob}
\glob (V ) := \sum_{ M \in \mathrm{Irr}(V )} \qdim(M)^2.
\end{align}
Assume $G$ is a finite subgroup of $ \Aut(V)$, it is conjectured in~\cite{DJX12} that
\begin{align*}
\left|  G \right|^2 \glob(V)= \glob (V^ G).
\end{align*}
We will verify this conjecture in our case, \textit{ i.e.,\/} $ V= V_\LCD$ and $ G = \langle \tau \rangle$.

Since all irreducible $ V_\LCD$-modules are simple current, we have
\begin{align*}
\glob ( V_\LCD) =  \left| \cC ^\perp/ \cC  \right| \left| \cD ^\perp/ \cD \right| \cdot 1^2.
\end{align*}

The global dimension of  $V^ \tau_\LCD$ will be  computed below. We count the number of irreducibles that have the same quantum dimensions.
 \begin{enumerate}[label=(\roman*)]
   \item $ \qdim_{ V^\tau_\LCD} V_{ L_{    \cC \times     ( \cdd + \cD)  }} [ \varepsilon ]  =  1$.  There are $ \left| \cD ^\perp/ \cD \right| \cdot 3 $ irreducible modules of this type.
   \item $ \qdim_{ V^\tau_\LCD} V_{ L_{     ( \bm{ \lambda} +  \cC)  \times   ( \bm{  \delta}  + \cD) }}=  3$ if $ \cZ \neq \bm{ \lambda}  + \cC \in  \cC ^\perp_{\equiv \tau} \bmod \cC $. There are $\left| \cD ^\perp/ \cD \right|\cdot \frac{ \left| \cC ^\perp/ \cC \right| -1}{3}$ irreducible modules of this type.
   \item $ \qdim_{ V^\tau_\LCD}    V^{ T, \bm{ \eta} }_\LC ( \tau^i)  [ \varepsilon ]  = \frac{ 2^\ell}{  \left| \cC \right|}$. There are $\left| \cD ^\perp/ \cD \right| \cdot 3 \cdot 2$
%\cdot \big(  \frac{ 2^\ell}{ \left| \cC \right| } \big) ^2
% = 6 \cdot \left| \cD ^\perp/ \cD \right|  \frac{ 4 ^ \ell }{ \left| \cC \right|^2}
% =   6 \cdot \left| \cD ^\perp/ \cD \right|  \frac{  \left| \cC ^\perp \right|\left| \cC \right|}{ \left| \cC \right|^2} \\
%=   6 \cdot \left| \cD ^\perp/ \cD \right| \left| \cC ^\perp/ \cC \right|
%\end{align*}
irreducible modules of this type.
\end{enumerate}
Note that $\big(   2^\ell/\left| \cC \right|  \big) ^2 =  \left| \cC ^\perp/ \cC \right|$. Therefore,
   \begin{align*}
   \glob  V^ \tau_\LCD &=   \left| \cD ^\perp/ \cD \right|
   \bigg( 3 +  \frac{ \left| \cC ^\perp/ \cC \right| -1}{3} \cdot 3^2 +  6   \left| \cC ^\perp/ \cC \right| \bigg)
   = 9  \left| \cC ^\perp/ \cC \right|\left| \cD ^\perp/ \cD \right|.
\end{align*}
Hence we  have $ \glob ( V_\LCD) \cdot 3^2 =\glob ( V^ \tau_\LCD)$. This  verified the  conjecture of Dong, Jiao and Xu in this special case.

\section{Fusion Rules}
 In this section, we  compute  the  fusion rules of $ V^ \tau_\LCD$-modules.  The next three propositions are crucial to our calculations.

\begin{prop}[{\cite[Prop.4.5]{TY06}}] \label{thm:fusionL}
Let $\varepsilon,\varepsilon_1,\varepsilon_2,j,j_1,j_2,k\in \mathds{Z}_3$ and $i=1,2$.
Then
\begin{enumerate}[label=(\roman*)]
 \item $V_{L^{(0,j_1)}}[\varepsilon_1]\times V_{L^{(0,j_2)}}[\varepsilon_2]
                 = V_{L^{(0,j_1+j_2)}}[\varepsilon_1+\varepsilon_2]$;
\item $V_{L^{(0,j_1)}}[\varepsilon]\times V_{L^{(c,j_2)}}
                    = V_{L^{(c,j_1+j_2)}}    $;
\item $V_{L^{(c,j_1)}}\times V_{L^{(c,j_2)}}
                 = \sum_{\rho=0}^{2}V_{L^{(0,j_1+j_2)}}[\rho]+2V_{L^{(c,j_1+j_2)}};  $
\item  $V_{L^{(0,j)}}[\varepsilon_1]\times \vt{L}{k}{\tau^i}{\varepsilon_2} %V_L^{ \tau^i,k}[ \varepsilon_2]
                        = \vt{L}{k-ij}{\tau^i}{i\varepsilon_1+\varepsilon_2}; $
\item $V_{L^{(c,j)}}\times \vt{L}{k}{\tau^i}{\varepsilon}
                                 = \sum_{\rho=0}^{2}\vt{L}{k-ij}{\tau^i}{\rho}. $
 \end{enumerate}
\end{prop}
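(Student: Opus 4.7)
My plan is to derive these fusion rules from the known fusion rules for $V_L$-modules and $\tau^i$-twisted $V_L$-modules by tracking how each module decomposes into $\tau$-eigenspaces, using the quantum dimensions from Proposition~\ref{thm:qd_VL0} as a sharp constraint on multiplicities. Concretely, I would first record the underlying $V_L$-fusion rules: by~\cite{Dong93}, $V_{L^{(i_1,j_1)}}\times_{V_L}V_{L^{(i_2,j_2)}}=V_{L^{(i_1+i_2,j_1+j_2)}}$ with addition in $\mathbb{F}_4\times\mathbb{Z}_3$, and fusion with a $\tau^i$-twisted module satisfies $V_{L^{(i_1,j_1)}}\times_{V_L}V_L^{T,k}(\tau^i)=V_L^{T,k-ij_1}(\tau^i)$, the shift $-ij_1$ coming from the lattice twist. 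Each $V_L^\tau$-intertwining operator will be realized as the restriction of a $V_L$-intertwining operator, with attention to which $\tau$-eigenspaces of source and target are involved.

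For (i) and (ii), the module $V_{L^{(0,j)}}[\varepsilon]$ has quantum dimension $1$, hence it is a simple current and its fusion product with any irreducible is again irreducible. The specific target is identified by computing the lowest conformal weight via the weight formulas in Proposition~2.1 together with the $\tau$-eigenvalue of the top-level vector: in (i) the $\tau$-eigenvalues multiply, producing the index $\varepsilon_1+\varepsilon_2$; in (ii) the module $V_{L^{(c,j)}}$ carries no $[\varepsilon]$ index since $\tau$ sends $V_{L^{(c,j)}}$ isomorphically to $V_{L^{(\omega c,j)}}$ and no nontrivial fixed eigenspace exists.

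For (iii), the $V_L$-fusion gives $V_{L^{(c,j_1)}}\times_{V_L}V_{L^{(c,j_2)}}=V_{L^{(0,j_1+j_2)}}$ because $c+c=0$ in $\mathbb{F}_4$. As a $V_L^\tau$-module this equals $\bigoplus_{\rho=0}^{2}V_{L^{(0,j_1+j_2)}}[\rho]$, supplying the first three summands. The extra $2V_{L^{(c,j_1+j_2)}}$ arises because the $\tau$-orbit $\{V_{L^{(c,\cdot)}},V_{L^{(\omega c,\cdot)}},V_{L^{(\bar\omega c,\cdot)}}\}$ collapses to a single $V_L^\tau$-isomorphism class; composition with $\tau$ and $\tau^2$ yields two additional independent $V_L^\tau$-intertwining operators, and the quantum dimension balance $3\cdot 3 = 3\cdot 1 + 2\cdot 3 = 9$ confirms no further terms appear. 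For (iv) and (v) I would apply the same scheme to the twisted $V_L$-fusion: the index shift $k\to k-ij$ is inherited directly, while in (iv) the factor $i$ in the output eigenvalue $i\varepsilon_1+\varepsilon_2$ reflects that on a $\tau^i$-twisted module the action of $\tau$ is altered by the twist, so conjugation of the intertwining operator by $\tau$ multiplies by $\exp(2\pi\sqrt{-1}\,i\varepsilon_1/3)$ rather than $\exp(2\pi\sqrt{-1}\varepsilon_1/3)$. In (v), the absence of a $[\varepsilon]$ index on $V_{L^{(c,j)}}$ forces a sum over all $\rho$, and the quantum dimension balance $3\cdot 2=3\cdot 2=6$ again matches.

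\textbf{Main obstacle.} The most delicate point is case (iii): proving that the multiplicity of $V_{L^{(c,j_1+j_2)}}$ is exactly $2$. The two independent $V_L^\tau$-intertwining operators built from $\tau$-twists provide a lower bound, while the quantum dimension identity provides the upper bound; making both bounds rigorous (in particular the linear independence argument) is the technical heart of the proof. A secondary hurdle is the twist-factor $i$ in the $\tau$-eigenvalue in (iv), which requires careful bookkeeping in the twisted intertwining operator formalism of~\cite{DL96} to keep track of how $\tau$-conjugation interacts with the $\tau^i$-twist.
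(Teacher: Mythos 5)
First, note that the paper does not prove this proposition at all: it is quoted verbatim from \cite[Prop.\ 4.5]{TY06}, so there is no in-paper argument to compare yours against. Your proposed strategy --- lower bounds on fusion coefficients by restricting $V_L$-intertwining operators \`a la Proposition~\ref{thm:inj:fusion}, upper bounds by multiplicativity of quantum dimensions --- is exactly the strategy this paper uses in Section~5 for the $V^\tau_{\LCD}$-fusion rules, and it is a viable route here as well. Your quantum-dimension counts ($9=3+6$ in (iii), $6=6$ in (v)) are correct, and there is no circularity in invoking Proposition~\ref{thm:qd_VL0}, since those quantum dimensions are computed from characters rather than from fusion rules. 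One small logical clarification: once rigorous lower bounds are in place whose quantum dimensions already sum to $\qdim W^1\cdot\qdim W^2$, the ``no further terms'' conclusion is automatic; the entire burden of the proof therefore sits on the lower bounds, not on a separate upper-bound argument.

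That is where the genuine gap lies, and you have correctly located it but not filled it. In (iii) you need \emph{two} linearly independent elements of $I_{V_L^\tau}\binom{V_{L^{(c,j_1+j_2)}}}{V_{L^{(c,j_1)}},\,V_{L^{(c,j_2)}}}$. The natural candidates are the restrictions of the $V_L$-intertwining operators of types $\binom{V_{L^{(\bar\omega c,j_1+j_2)}}}{V_{L^{(c,j_1)}},\,V_{L^{(\omega c,j_2)}}}$ and $\binom{V_{L^{(\omega c,j_1+j_2)}}}{V_{L^{(c,j_1)}},\,V_{L^{(\bar\omega c,j_2)}}}$, after identifying all three $\tau$-conjugates $V_{L^{(\omega^h c,j)}}$ with the single $V_L^\tau$-module $V_{L^{(c,j)}}$. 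Proposition~\ref{thm:inj:fusion} gives injectivity of each restriction map separately, for a \emph{fixed} triple of $V_L$-modules; it does not say that the images of two restriction maps coming from two different $V_L$-module structures on the same $V_L^\tau$-modules intersect trivially, and since source and target are already $V_L^\tau$-irreducible you cannot decompose further to separate them. Without this independence the lower bound is $1$, the quantum-dimension count does not close, and the multiplicity $2$ is unproved. (Similarly, to get all three summands $V_{L^{(0,j_1+j_2)}}[\rho]$ you should first establish (i)--(ii) and then use associativity with the simple current $V_{L^{(0,0)}}[1]$ to show the multiplicities are $\rho$-independent; a single restricted intertwining operator only guarantees one value of $\rho$.) A secondary, fixable imprecision: the eigenvalue bookkeeping in (i) and (iv) requires choosing compatible lifts of $\tau$ on all modules so that $\tau\,\mathcal{Y}(u,z)\,\tau^{-1}=\mathcal{Y}(\tau u,z)$, and the factor $i$ in $i\varepsilon_1+\varepsilon_2$ then has to be extracted from the twisted-operator conventions of \cite{Dl96}; your explanation is plausible but is asserted rather than derived. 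In practice these gaps are closed either by the explicit constructions in \cite{TY06,TY07} or by the $S$-matrix/Verlinde-formula method used in \cite{C13a} and in Section~5 of this paper for Case~(V).
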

%{\color{red} We have not used the following prop yet.  }
\begin{prop}\cite{C13a}
 \label{thm:fusion:L:twisted}
We have the following fusion rules among irreducible $V_L^ \tau$-modules of twisted type.
\begin{enumerate}[label=(\roman*)]
   %\item $V_L^{ T, i}( \tau^l)[ \varepsilon] \times V_L^{ T, i}( \tau^l )[ \varepsilon]  =   V_L^{ T, i}( \tau^{2l} )[ \varepsilon] + V_L^{ T, i}( \tau^{2l} )[ \varepsilon +2] $;
   %\item $V_L^{ T, i}( \tau^l)[ \varepsilon] \times V_L^{ T, i}( \tau^l )[ \varepsilon +1 ]  =   V_L^{ T, i}( \tau^{2l} )[ \varepsilon +1 ] + V_L^{ T, i}( \tau^{2l} )[ \varepsilon +2] $;
   \item $V_L^{ T, i}( \tau^l)[ \varepsilon] \times V_L^{ T, j}( \tau^l )[ \varepsilon ^\prime]  =   V_L^{ T, -( i +j) }( \tau^{2l} )[ -( \varepsilon+ \varepsilon ^\prime) ] + V_L^{ T, -( i +j) }( \tau^{2l} )[   2 -( \varepsilon+ \varepsilon ^\prime)] $;
   %\item $V_L^{ T, i}( \tau^l)[ \varepsilon] \times V_L^{ T, j}( \tau^l )[ \varepsilon +1 ]  =   V_L^{ T, -( i +j) }( \tau^{2l} )[ \varepsilon + 1] + V_L^{ T, -( i +j)}( \tau^{2l} )[ \varepsilon + 2] $;
   \item $V_L^{ T, i}( \tau)[ \varepsilon] \times V_L^{ T, j}( \tau^2 )[ \varepsilon^\prime  ]  = V_{L^{ (0, i + 2j)} } [ \varepsilon + 2 \varepsilon^\prime] + V_{L^{ (c, i + 2j)} }$,
\end{enumerate}
where $l\in \{1,2\}$, $ i, j,   \varepsilon, \varepsilon ^\prime \in \{ 0,1,2 \} $. %and $ \{ i, j, k \} = \{ 0,1,2 \}$ in (iii) and (iv).
\end{prop}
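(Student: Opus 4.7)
The plan is to combine a quantum-dimension count, the simple-current action of the untwisted $V_L^\tau$-modules, and a direct determination in a few base cases. By Proposition~\ref{thm:qd_VL0}, each factor on the left of (i) or (ii) has quantum dimension $2$, so the fusion product has total quantum dimension $4$. In (i) the product must be $\tau^{2l}$-twisted, and since every irreducible $\tau^{2l}$-twisted module has qdim $2$, the right side is a sum of exactly two twisted modules. In (ii) the product is untwisted; since the irreducible untwisted modules have qdims $1$ or $3$, the only admissible shapes are $3+1$ and $1+1+1+1$.

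Next, I would use Proposition~\ref{thm:fusionL}(iv), together with the fact that each module $V_{L^{(0,j)}}[\varepsilon]$ is a simple current, to reduce the statement to a single base case per part. Fusing the first factor with $V_{L^{(0,j_1)}}[\varepsilon_1]$ shifts its labels as $V_L^{T,k}(\tau^i)[\eta]\mapsto V_L^{T,k-ij_1}(\tau^i)[i\varepsilon_1+\eta]$, and similarly for the second factor; since every irreducible twisted module is a simple-current translate of $V_L^{T,0}(\tau^i)[0]$, it suffices to prove (i) and (ii) in the base cases $V_L^{T,0}(\tau^l)[0]\times V_L^{T,0}(\tau^l)[0]$ and $V_L^{T,0}(\tau)[0]\times V_L^{T,0}(\tau^2)[0]$. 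Tracking the shifts through each base case then produces the index formulas $-(i+j)$, $-(\varepsilon+\varepsilon')$ in (i) and $i+2j$, $\varepsilon+2\varepsilon'$ in (ii), and also rules out the $1+1+1+1$ shape in (ii): since a nonzero multiplicity of any $V_{L^{(0,\cdot)}}[\cdot]$ summand in a base case would propagate, by simple-current translates, to infinitely many inconsistent fusion rules, the fusion in (ii) must be of shape $3+1$.

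To settle the base cases, I would use two tools. First, the multiplicity of the vacuum $V_L^\tau$ in $M\times M'$ equals $1$ if $M'$ is the contragredient dual of $M$ and $0$ otherwise, which pins down the $\tau^i$-index $k$ of one summand via contragredient pairing. Second, intertwining operators preserve conformal weights modulo $\mathbb{Z}$; using the weight formulas for $V_L^{T,j}(\tau^i)[\varepsilon]$ in Proposition~2.2, which place each lowest weight in $\frac{1}{9}\mathbb{Z}$, one can eliminate every candidate summand whose weight modulo $\mathbb{Z}$ is incompatible with that of the product. A conceptually cleaner alternative is to exploit that $V_L\supset V_L^\tau$ is a $\mathbb{Z}_3$-simple-current extension, so by Kirillov--Ostrik-style induction/restriction the $V_L^\tau$-fusion among twisted sectors is controlled by the fully known fusion rules of $V_L$ and its twisted modules recorded in \cite{Dong93,KLY03}.

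The main obstacle is precisely this last step: pinning down the internal $\varepsilon$-labels of the two summands in (i), and distinguishing the $V_{L^{(0,\cdot)}}[\cdot]$ piece from the $V_{L^{(c,\cdot)}}$ piece in (ii). The quantum-dimension and simple-current arguments give the global shape and reduce the problem to base cases, but to distinguish the stated pattern $[-(\varepsilon+\varepsilon')]+[2-(\varepsilon+\varepsilon')]$ from alternative $\varepsilon$-patterns requires a genuine computation. The conformal-weight comparison modulo $\mathbb{Z}$, together with the contragredient argument, is the most direct way to force the asserted decompositions uniquely.
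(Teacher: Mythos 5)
This proposition is quoted from \cite{C13a} and the paper supplies no proof of it, so the only internal point of comparison is how the paper resolves the exactly analogous ambiguity one level up (Propositions \ref{thm:fusion:V} and \ref{thm:D_selfdual}): there the ``$\varepsilon$-labels'' of the twisted summands are pinned down by computing rows of the $S$-matrix from Dong--Li--Mason modular invariance of twisted trace functions and applying the Verlinde formula, not by any weight-compatibility argument. Your overall architecture --- quantum dimensions give the total size $4$, and fusing with the simple currents $V_{L^{(0,j)}}[\varepsilon]$ via Proposition \ref{thm:fusionL}(iv) reduces everything to one base case per part --- is sound and matches the spirit of the paper's Case (V).

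The genuine gap is in your final step. The claim that ``intertwining operators preserve conformal weights modulo $\mathbb{Z}$'' is not a valid constraint on fusion products: for a rational VOA there is no requirement that $h_3-h_1-h_2\in\mathbb{Z}$ for a nonzero intertwining operator of type $\binom{W^3}{W^1\,W^2}$ (already in the Ising model $\sigma\times\sigma=\mathbf{1}+\epsilon$ the two summands have weights $0$ and $1/2$). Worse, in the present case the tool would refute the statement you are proving: by the weight formulas from \cite{TY07} recalled in Section 2, changing the bracket label $\varepsilon$ by $2$ shifts the lowest weight of $V_L^{T,k}(\tau^{2l})[\,\cdot\,]$ by $2/3$ modulo $\mathbb{Z}$, so the two asserted summands $V_L^{T,-(i+j)}(\tau^{2l})[-(\varepsilon+\varepsilon')]$ and $V_L^{T,-(i+j)}(\tau^{2l})[2-(\varepsilon+\varepsilon')]$ are \emph{not} congruent in weight mod $\mathbb{Z}$, and a weight-elimination argument would discard one of them. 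The contragredient/vacuum-multiplicity argument only controls the coefficient of $V_L^\tau$ in case (ii); in case (i) the product is twisted, contains no vacuum, and moving modules across the fusion coefficient via $N\binom{C}{A,B}=N\binom{B'}{A,C'}$ just returns another coefficient of the same type, so it fixes nothing about the $\varepsilon$-labels. Two smaller unaddressed points: $\qdim=4$ is also consistent with a single summand of multiplicity $2$, and the assertion that the product of two $\tau^l$-twisted pieces lies in the $\tau^{2l}$-twisted (resp.\ untwisted) sector needs a word of justification. To close the argument you need the modular data --- the $S$-matrix entries (and the phases coming from the $T$-transformation, which is where the weights legitimately enter) combined with the Verlinde formula, exactly as in Proposition \ref{thm:D_selfdual} and in \cite{C13a} itself.
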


\begin{prop}[{\cite[Prop 2.9]{ADL05}}]\label{thm:inj:fusion}
Let $V$ be a vertex operator algebra and let $M^1$, $M^2$, $M^3$ be
$V$-modules  among which $M^1$ and $M^2$ are irreducible.
Suppose that $U$ is a vertex operator subalgebra
of $V$ (with the same Virasoro element) and that
$N^1$ and $N^2$ are irreducible $U$-submodules of
$M^1$ and $M^2$, respectively.
Then the restriction map from $I_V\fusion{M^1}{M^2}{M^3}$
to $I_U\fusion{N^1}{N^2}{M^3}$
is injective. In particular,
\begin{align}%\label{fusion_res=inj}
\dim I_V\fusion{M^1}{M^2}{M^3}\leq\dim I_U\fusion{N^1}{N^2}{M^3}.
\end{align}
\end{prop}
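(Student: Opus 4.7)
The plan is to prove injectivity of the restriction map by showing that any $\mathcal{Y} \in I_V\fusion{M^1}{M^2}{M^3}$ whose restriction to $N^1 \otimes N^2$ vanishes must itself be the zero intertwining operator. The two key ingredients are the $V$-irreducibility of $M^1$ and $M^2$, which yields $M^i = \mathrm{span}\{a^{(1)}_{n_1}\cdots a^{(r)}_{n_r} w : a^{(j)} \in V,\ n_j \in \mathbb{Z},\ w \in N^i\}$, together with the Jacobi identity for intertwining operators, which lets one move $V$-modes between the three slots of $\mathcal{Y}$.

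Concretely, assuming $\mathcal{Y}|_{N^1 \otimes N^2} = 0$, I would first introduce the subspace $S := \{x \in M^1 : \mathcal{Y}(x, z) y = 0 \text{ for every } y \in M^2\}$. Taking residues in the Jacobi identity for $\mathcal{Y}$ yields the commutator formula $[Y_{M^3}(a, z_1), \mathcal{Y}(x, z_2)] = \mathrm{Res}_{z_0}\, z_2^{-1}\delta\bigl((z_1-z_0)/z_2\bigr)\, \mathcal{Y}(Y_{M^1}(a, z_0) x, z_2)$, together with an analogous identity relating $\mathcal{Y}(x, z_2)$ to $Y_{M^2}(a, z_1)$. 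Reading these two identities off component-wise shows immediately that $S$ is stable under every mode $a_n$ with $a \in V$; hence $S$ is a $V$-submodule of $M^1$, and by irreducibility either $S = 0$ or $S = M^1$.

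The main obstacle is then to verify $S \neq 0$, i.e.\ $N^1 \subseteq S$. I would attack this by studying $T := \{y \in M^2 : \mathcal{Y}(w, z) y = 0 \text{ for every } w \in N^1\}$, which contains $N^2$ and which, using that $N^1$ is a $U$-submodule of $M^1$, is easily seen to be stable under the modes $a_n$ with $a \in U$: for such $a$ one has $a_k w \in N^1$, so the Jacobi identity rewrites $\mathcal{Y}(w, z_2)(a_n y)$ as $a_n \mathcal{Y}(w, z_2) y$ plus a finite combination of terms $\mathcal{Y}(a_k w, z_2) y$, all of which vanish. The delicate step is to promote this $U$-invariance to $V$-invariance, which I plan to carry out by a nested induction on a ``word-length'' measure of $y \in M^2$: when $a$ is only in $V$ the extra terms $\mathcal{Y}(a_k w, z_2) y$ now have inner argument in $M^1 \setminus N^1$, and I would eliminate them by a further application of the Jacobi identity in the first slot, peeling off one mode at a time until the inner argument is back in $N^1$ and the outer argument is a strictly shorter element of $M^2$ for which vanishing is already known. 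Once $T = M^2$ is established, $N^1 \subseteq S$ follows, hence $S = M^1$ and $\mathcal{Y} = 0$. The principal technical nuisance throughout is the formal-variable bookkeeping with the non-integer power expansions $(z_1-z_2)^h$ that appear because $\mathcal{Y}$ is an intertwining operator rather than a module vertex operator, but once the relevant residue identities are set up, the inductions themselves are routine.
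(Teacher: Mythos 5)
Your overall architecture is sound and the two easy submodule claims are correct: $S=\{x\in M^1: \mathcal{Y}(x,z)y=0 \text{ for all } y\in M^2\}$ is a $V$-submodule (the iterate formula pushes the mode $a_n$ onto the second slot, where it stays inside $M^2$), and $T=\{y\in M^2:\mathcal{Y}(w,z)y=0\text{ for all }w\in N^1\}$ is a $U$-submodule containing $N^2$. The gap is exactly at the step you yourself flag as delicate: the induction does not close. Write $y=a_ny'$ with $a\in V$. The commutator formula gives $\mathcal{Y}(w,z)(a_ny') = a_n\mathcal{Y}(w,z)y' - \sum_{i\ge0}\binom{n}{i}z^{n-i}\mathcal{Y}(a_iw,z)y'$, and when you peel off the offending first-slot modes via the iterate formula, $\mathcal{Y}(a_iw,z)y' = \sum_{j\ge0}\binom{i}{j}(-z)^ja_{i-j}\mathcal{Y}(w,z)y' - \sum_{j\ge0}\binom{i}{j}(-z)^{i-j}\mathcal{Y}(w,z)(a_jy')$, the surviving terms are $\mathcal{Y}(w,z)(a_jy')$ with $j\ge0$. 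These have the \emph{same} word length as $a_ny'$ (one mode applied to $y'$), not a strictly smaller one, so the inductive hypothesis does not cover them; and since $a_j$ with $0\le j<\mathrm{wt}(a)-1$ can raise the weight, no auxiliary weight induction rescues it. Indeed, substituting the two identities into one another just returns $\mathcal{Y}(w,z)(a_ny')=\mathcal{Y}(w,z)(a_ny')$ (the binomial double sum telescopes), so this circle of componentwise manipulations contains no new information.

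The missing ingredient is weak associativity rather than the componentwise commutator/iterate identities: for fixed $a\in V$ and $v\in M^2$ there is $k\ge0$ with $a_jv=0$ for $j\ge k$ such that $(z_0+z_2)^kY_{M^3}(a,z_0+z_2)\mathcal{Y}(u,z_2)v=(z_0+z_2)^k\mathcal{Y}(Y_{M^1}(a,z_0)u,z_2)v$ for all $u\in M^1$. If $\mathcal{Y}(u,z_2)v=0$ the left side vanishes, and since multiplication by $(z_0+z_2)^k$ is injective on the relevant space of lower-truncated formal series, one concludes $\mathcal{Y}(a_nu,z_2)v=0$ for every $a\in V$ and $n\in\mathds{Z}$. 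Combining this with the fact that an irreducible $V$-module is spanned by the single modes $a_nu$ of any fixed nonzero vector $u$ (Dong--Mason), one gets $\mathcal{Y}(x,z)v=0$ for all $x\in M^1$; then your own second-slot $V$-submodule argument finishes, since $\{y\in M^2:\mathcal{Y}(x,z)y=0\ \text{for all } x\in M^1\}$ is a $V$-submodule containing $v\ne0$, hence equals $M^2$, so $\mathcal{Y}=0$. This is essentially the argument behind the cited result in \cite{ADL05}, and it shows that only the nonvanishing of $N^1$ and $N^2$ is used for injectivity; the $U$-module structure matters only for the restriction map to land in $I_U\fusion{N^1}{N^2}{M^3}$.
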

In our case, we consider the following chain of subVOAs:
\begin{align*}
 V_\LCD \supset  V^\tau_\LCD \supset V^ \tau_\LC\supset \VLtl.
\end{align*}
% For simplicity, we denote $ I \fusion{-}{-}{-} := \dim I_{ V^\tau_\LCD} \fusion{-}{-}{-} $ and $ J\fusion{-}{-}{-} := \dim I_{    ( V_L^\tau)^{ \otimes \ell} } \fusion{-}{-}{-}  $. Even more, we let $ \fusion{-}{-}{-} := \dim I_{ V^\tau_L} \fusion{-}{-}{-}$.
For simplicity, we denote
\begin{align*}
  \fuCD{-}{-}{-} &=\dim I_{ V_\LCD} \fusion{-}{-}{-},&
  \fuCDt{-}{-}{-} &=\dim I_{ V^\tau_\LCD} \fusion{-}{-}{-},\\
 \fult{-}{-}{-} &=\dim I_{ ( V_L^\tau)^{ \otimes \ell} } \fusion{-}{-}{-},&
 \fut {-}{-}{-} &=\dim I_{  V_L^\tau  } \fusion{-}{-}{-}.
\end{align*}

The basic idea is to  use Proposition~\ref{thm:inj:fusion} and the quantum dimensions of $V_L^ \tau$-modules to show that many fusion
coefficients are zero. This gives some  inequalities on fusion rules. Next by using quantum dimensions,  we show that these
inequalities are actually equalities.

Let $ \cll +\cC, \cll^1+\cC, \cll^2 +\cC \in \cC ^\perp/ \cC$, $ \cdd+ \cD, \cdd^1+ \cD, \cdd^2+ \cD \in \cD ^\perp/ \cD $, $ \cee,
\cee^1, \cee^2 \in \cD ^\perp \bmod \cD$ and $ \varepsilon, \varepsilon^1, \varepsilon^2 \in \mathds{Z}_3$.
We will compute
fusion rules separately in the following cases:
\begin{enumerate}[label=(\Roman*)]
   \item Fusion rules of the form $  \VL{    \cC \times   ( \cdd + \cD )    } [ \varepsilon ]  \times M$ for any irreducible module $M$(see Prop.~\ref{thm:fusion:I});
   \item Fusion rules of the form $ \VL{     ( \cll^1 +  \cC)  \times   ( \cdd^1 + \cD )} \times \VL{     ( \cll^2 +  \cC)  \times   ( \cdd^2 + \cD )}$ (see Prop.~\ref{thm:fusion:II});
   \item Fusion rules of the form  $  \VL{     ( \cll +  \cC)}  \times V^{ T, \cee }_\LCD ( \tau^i)  [ \varepsilon ]  $ (see  Prop.~\ref{thm:fusion:III});
   \item Fusion rules of the form  $ V^{ T, \cee^1 }_\LCD ( \tau)  [ \varepsilon^1 ]  \times V^{ T, \cee^2 }_\LCD ( \tau^2)  [ \varepsilon^2 ] $ (see Prop.~\ref{thm:fusion:IV});
   \item Fusion rules of the form $ V^{ T, \cee^1 }_\LCD ( \tau^i)  [ \varepsilon^1 ]  \times V^{ T, \cee^2 }_\LCD ( \tau^i)  [ \varepsilon^2 ] $.  
   In this case, we first determine the fusion coefficients up to a permutation in  Prop.~\ref{thm:fusion:V}. Then we use modular invariance property of trace functions to get an explicit result in ~Prop.\ref{thm:fusionV:general}.
\end{enumerate}

We start with Case (I).
\begin{prop}\label{thm:fusion:I}
We have the following fusion rules.
\begin{subequations}\begin{align}
 &  \mathrm{(i)}&\VL{    \cC \times    ( \cdd^1 + \cD )   } [ \varepsilon^1 ]  \times \VL{    \cC \times     ( \cdd^2 + \cD )  } [ \varepsilon^2 ]  &= \VL{    \cC \times    ( \cdd^1 + \cdd^2 + \cD   )} [ \varepsilon^1 + \varepsilon^2 ] ;&&&\label{eq:fusion:VCe:VCe2}\\
& \mathrm{(ii)}&\VL{    \cC \times    ( \cdd^1 + \cD )   } [ \varepsilon^1 ]   \times \VL{     ( \cll +  \cC)  \times    ( \cdd^2 + \cD )  } &= \VL{     ( \cll +  \cC)  \times   ( \cdd^1 + \cdd^2 + \cD )   } ;&&& \label{eq:fusion:VCe:Vg+C}\\
  & \mathrm{(iii)}&\VL{    \cC \times    ( \cdd^1 + \cD )   } [ \varepsilon^1 ]    \times  V^{ T, \cdd^2 }_\LCD ( \tau^i)  [ \varepsilon^2 ]    &= V^{ T, \cdd^2  - i  \cdd^1 }_\LCD ( \tau^i)  [ i \varepsilon^1 +  \varepsilon^2 ],&\label{eq:fusion:VCe:VCtwist}
 \end{align} \end{subequations}
 where $ \cdd^1 + \cD, \cdd^2+ \cD \in \cD ^\perp/ \cD $, $ \cZ \neq \bm{ \lambda}  + \cC \in  \cC ^\perp_{\equiv \tau} \bmod \cC$ and $ \varepsilon^1, \varepsilon^2 \in \mathds{Z}_3$.
\end{prop}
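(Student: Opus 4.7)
The plan is to exploit two ingredients: multiplicativity of quantum dimensions together with the simple-current property (Proposition~\ref{thm:prop_qdim}(ii)--(iii)), and the injectivity of fusion-rule restriction (Proposition~\ref{thm:inj:fusion}) along the chain $\VLtl \subset V^\tau_\LCD$. In all three parts the first factor $\VL{\cC \times (\cdd^1 + \cD)}[\varepsilon^1]$ is a simple current by Theorem~\ref{thm:qd:CD}(i), so the fusion product on the left-hand side is a \emph{single} irreducible $V^\tau_\LCD$-module whose quantum dimension equals that of the second factor. Consulting Theorem~\ref{thm:qd:CD}, this restricts the type of the answer to $\VL{\cC \times (\cdd + \cD)}[\varepsilon]$ in case (i), to $\VL{(\bm{\lambda}' + \cC) \times (\cdd + \cD)}$ in case (ii), and to $V^{T, \cee}_\LCD(\tau^i)[\varepsilon]$ in case (iii); only the parameters remain to be determined.

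To pin down the parameters I would choose an irreducible $\VLtl$-submodule $N^j \subset M^j$ for $j = 1, 2$ using the decomposition in Proposition~\ref{thm:decomp:into:L}, and compute the single-module product $P := N^1 \times N^2$ over $\VLtl$ tensor-factor by tensor-factor via Proposition~\ref{thm:fusionL}. For instance in case (i) one may take $N^j = V_{L^{(0,\delta^j_1)}}[\varepsilon^j] \otimes V_{L^{(0,\delta^j_2)}}[0] \otimes \cdots \otimes V_{L^{(0,\delta^j_\ell)}}[0]$, sitting in the $V_{L_{\cZ \times \cdd^j}}[\varepsilon^j]$ summand of Proposition~\ref{thm:decomp:into:L}(i), and Proposition~\ref{thm:fusionL}(i) then yields $P = V_{L^{(0,\delta^1_1 + \delta^2_1)}}[\varepsilon^1 + \varepsilon^2] \otimes V_{L^{(0,\delta^1_2 + \delta^2_2)}}[0] \otimes \cdots$. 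Case (ii) is handled identically except that $V_{L^{(\lambda_i, \delta^2_i)}}$ is placed in $N^2$ at positions where $\lambda_i \neq 0$, triggering Proposition~\ref{thm:fusionL}(ii). For case (iii) one takes $N^2 = V_L^{T,\delta^2_1}(\tau^i)[\varepsilon^2] \otimes V_L^{T,\delta^2_2}(\tau^i)[0] \otimes \cdots$ inside the $\cgg = \cZ$ summand of Proposition~\ref{thm:decomp:into:L}(ii) and applies Proposition~\ref{thm:fusionL}(iv) at each slot.

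Given $P$, Proposition~\ref{thm:inj:fusion} forces $P$ to occur as a $\VLtl$-submodule of the (irreducible) fusion product $X$: otherwise $\dim I_{\VLtl}\fusion{N^1}{N^2}{X} = 0$, contradicting the simple-current equality $\dim I_{V^\tau_\LCD}\fusion{M^1}{M^2}{X} = 1$. Reading off from Proposition~\ref{thm:decomp:into:L} which irreducible $V^\tau_\LCD$-module of the correct quantum dimension contains $P$ as a $\VLtl$-constituent then recovers $(\cdd^1 + \cdd^2 + \cD, \varepsilon^1 + \varepsilon^2)$ in case (i), $(\cll + \cC, \cdd^1 + \cdd^2 + \cD)$ in case (ii), and $(\cdd^2 - i\cdd^1, i\varepsilon^1 + \varepsilon^2)$ in case (iii).

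The main obstacle is ensuring that $P$ appears in only one of the listed irreducible $V^\tau_\LCD$-modules of the given quantum dimension rather than ambiguously in several. This is settled by noting that Proposition~\ref{thm:decomp:into:L} displays distinct cosets $\bm{\lambda} + \cC$, distinct $\cD$-cosets $\cdd + \cD$, and distinct $\tau$-eigenvalue labels $\varepsilon$ as disjoint lists of $\VLtl$-constituents. A minor additional point in case (iii) is tracking the $\varepsilon$-grading: the output weight $\sum e_i \equiv i\varepsilon^1 + \varepsilon^2 \bmod 3$ produced by the iterated Proposition~\ref{thm:fusionL}(iv) must match the index set $S_\varepsilon$ in Proposition~\ref{thm:decomp:into:L}(ii), which forces exactly $\varepsilon = i\varepsilon^1 + \varepsilon^2$.
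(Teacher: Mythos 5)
Your overall strategy --- simple-current reduction via quantum dimensions followed by restriction to $\VLtl$ and slot-wise use of Proposition~\ref{thm:fusionL} --- is essentially the paper's, and your treatment of parts (i) and (iii) is sound. One caveat: when $\cC$ is self-dual the quantum dimension $2^\ell/|\cC|=1$ does \emph{not} by itself separate the twisted type from the type $\VL{\cC\times(\cdd+\cD)}[\varepsilon]$, so your opening claim that Theorem~\ref{thm:qd:CD} already ``restricts the type of the answer'' is too strong; the paper rules out the untwisted possibility in (iii) by a separate contradiction argument. Your containment-of-$P$ step does repair this (a twisted $\VLtl$-module cannot occur in an untwisted $V^\tau_\LCD$-module), but that repair should be made explicit rather than attributed to quantum dimensions.

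The genuine gap is in part (ii). Your uniqueness claim --- that distinct cosets $\cll+\cC$ contribute disjoint lists of $\VLtl$-constituents --- is false. As $V_L^\tau$-modules one has $V_{L^{(1,j)}}\cong V_{L^{(\omega,j)}}\cong V_{L^{(\bom,j)}}$ (these form one $\tau$-orbit, represented by the single module $V_{L^{(\omega,j)}}$ in the classification), so an irreducible $\VLtl$-constituent of a type-(ii) module remembers only $\supp(\cll)$ and the $\mathds{Z}_3$-labels, not $\cll$ itself. For instance, with $\cC=\{\cZ\}$ and $\ell=2$ the elements $(1,1)$ and $(1,\omega)$ lie in different $\tau$-orbits, so $V_{L_{(1,1)\times\cdd}}$ and $V_{L_{(1,\omega)\times\cdd}}$ are inequivalent $V^\tau_\LC$-modules, yet both restrict to $V_{L^{(\omega,\delta_1)}}\otimes V_{L^{(\omega,\delta_2)}}$. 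Hence the product $P=N^1\times N^2$ computed over $\VLtl$ cannot distinguish $\VL{(\cll+\cC)\times(\cdd^1+\cdd^2+\cD)}$ from $\VL{(\cll''+\cC)\times(\cdd^1+\cdd^2+\cD)}$ whenever $\cll$ and $\cll''$ have compatible supports, and your argument only shows that the fusion product is \emph{one} of these candidates. The paper closes this by a different route: the lattice VOA fusion rule gives
\begin{align*}
\fuCD{ \VL{\cC\times(\cdd^1+\cD)} }{ \VL{(\cll+\cC)\times(\cdd^2+\cD)} }{ \VL{(\cll+\cC)\times(\cdd^1+\cdd^2+\cD)} }=1,
\end{align*}
Proposition~\ref{thm:inj:fusion} transfers this inequality to $V^\tau_\LCD$, and the equality of quantum dimensions ($1\cdot 3=3$) then forces the product to equal exactly that module. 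You should replace your $\VLtl$-computation in case (ii) by this step (or supply some other input that sees the actual coset rather than its support).
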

\begin{proof}
 (i)  Observe that $ \qdim  \big( \VL{    \cC \times    ( \cdd^1 + \cD )   } [ \varepsilon^1 ]  \times \VL{    \cC \times     ( \cdd^2 + \cD )  } [ \varepsilon^2 ]    \big)  =1$; therefore the fusion product $ \VL{    \cC \times    ( \cdd^1 + \cD )   } [ \varepsilon^1 ]  \times \VL{    \cC \times     ( \cdd^2 + \cD )  } [ \varepsilon^2 ]  $  is irreducible.

Recall the fusion rules of $\VL{ \cC \times \cD }$-modules:
\begin{align*}
 1 = \fuCD{\VL{ \cC \times ( \cdd^1 + \cD) } }{ \VL{ \cC \times  ( \cdd^2 + \cD) }}{\VL{ \cC \times (   \cdd^1 + \cdd^2 + \cD) }}.
\end{align*}
By restricting to $ V^ \tau_{ L_{ \cC \times \cD } }$-modules and using Prop.~\ref{thm:inj:fusion},  we have
\begin{align*}
  1 \le \fuCDt{ \VL{    \cC \times    ( \cdd^1 + \cD )   } [ \varepsilon^1 ]  }{ \VL{    \cC \times    ( \cdd^2 + \cD )   } [ \varepsilon^2 ] }{ \VL{ \cC \times (   \cdd^1 + \cdd^2 + \cD) }}
  = \sum_{ \varepsilon =0}^2 \fuCDt { \VL{    \cC \times    ( \cdd^1 + \cD )   } [ \varepsilon^1 ]  }{ \VL{    \cC \times    ( \cdd^2 + \cD )   } [ \varepsilon^2 ] }{ \VL{ \cC \times (   \cdd^1 + \cdd^2 + \cD) }[ \varepsilon]}.
\end{align*}
Therefore, we know
\begin{align*}
  \VL{    \cC \times    ( \cdd^1 + \cD )   } [ \varepsilon^1 ]  \times \VL{    \cC \times     ( \cdd^2 + \cD )  } [ \varepsilon^2 ]  &= \VL{    \cC \times    ( \cdd^1 + \cdd^2 + \cD   )} [ \varepsilon],
\end{align*}
for some $ \varepsilon \in \mathds{Z}_3$.
For simplicity, we let $M^i := \VL{    \cC \times    ( \cdd^i + \cD )   } [ \varepsilon^i ]$ and $M := \VL{    \cC \times    ( \cdd^1 + \cdd^2 + \cD   )} [ \varepsilon]    $.

Recall the decompositions %of $ \VLtl$-modules
\begin{align*}
  \VL{    \cC \times    ( \cdd + \cD )   } [ \varepsilon ]
  = \VL{\cZ \times ( \cdd + \cD) }[ \varepsilon] \oplus
  \bigoplus_{ \cZ \neq \cgg \in \cC_{\Ceqt }} \VL{ \cgg \times ( \cdd + \cD)};
\end{align*}
%
%In particular,   $\VL{\cZ \times  \cdd  }[ \varepsilon]  $ has a decomposition of  $\VLtl$-submodules:
\begin{align*}
  \VL{\cZ \times  \cdd  }[ \varepsilon]
  = \bigoplus_{ \bm{ e } \in S_ \varepsilon} V_{L^{ ( 0, \delta^i_1 )}}[  e^i_1] \otimes      \cdots \otimes V_{L^{ ( 0, \delta^i_ \ell )}} [ e^i_ \ell].
\end{align*}
Now  fix an irreducible $  ( V_L^\tau)^{ \otimes \ell}$-submodule %s of $M^i$
\begin{align*}
  N^i :=  V_{L^{ ( 0, \delta^i_1 )}}[  e^i_1] \otimes      \cdots \otimes V_{L^{ ( 0, \delta^i_ \ell )}} [ e^i_ \ell]
  \subset \VL{\cZ \times  ( \cdd^i + \cD)}[ \varepsilon^i] \subset M^i
\end{align*}
for some $ \bm{ e }^i := ( e^i_1, \cdots, e^i_ \ell) \in S_ { \varepsilon_i}$.
Since %  Recall the decomposition
   \begin{equation}
  M := \VL{    \cC \times    ( \cdd^1 + \cdd^2 + \cD   )} [ \varepsilon]   \cong
  \VL{\cZ \times ( \cdd^1 + \cdd^2  + \cD)}[ \varepsilon] \oplus \bigoplus_{ \cZ \neq  \cgg \in   \Ceqt} \VL{  \cgg  \times (  \cdd^1 + \cdd^2 + \cD)},
\end{equation}
we have the fusion coefficient
\begin{align}\label{eq:not:2}
 1 = \fuCDt{M^1}{M^2}{M} \le   \fult{N^1}{N^2}{ \VL{\cZ \times ( \cdd^1 + \cdd^2  + \cD)}[ \varepsilon] }
 + \sum_{ \cZ \neq  \cgg \in   \Ceqt}   \fult{N^1}{N^2}{ \VL{  \cgg  \times (  \cdd^1 + \cdd^2 + \cD)} }.
\end{align}
We claim that
\begin{align*}
 \fult{N^1}{N^2}{ \VL{  \cgg  \times (  \cdd^1 + \cdd^2 + \cD)} }=0 \quad \text{ for all } \cZ \neq  \cgg \in   \Ceqt.
\end{align*}

% If not, we may assume
% \begin{align*}
% \fult{N^1}{N^2}{ \VL{  \cgg  \times (  \cdd^1 + \cdd^2 + \cD)} }\geq 1,
% \end{align*}
% for some $ \cgg =( \gamma_1, \cdots, \gamma_ \ell) \neq \cZ$.
We have
\begin{align}\label{eq:not:1}
      \fult{N^1}{N^2}{ \VL{  \cgg  \times (  \cdd^1 + \cdd^2 + \cD)} }
  = \sum_{ \Delta \in \cD} \prod_{ k =1}^ \ell \fut{ V_{L^{ ( 0, \delta^1_k )}}[  e^1_k] }{ V_{L^{ ( 0, \delta^2_k )}}[ e^2_k] }{  V_{L^{  ( \gamma_k ,     \delta_k^1 + \delta_k^2 + \Delta_k)}} }.
\end{align}
Now, since $ \cgg \neq \cZ$ we have $ \gamma_h \neq 0$ for some $ 1 \le h \le \ell$ and hence
 \begin{align*}
  \fut{ V_{L^{ ( 0, \delta^1_h )}}[  e^1_h] }{ V_{L^{ ( 0, \delta^2_h )}}[ e^2_h] }{  V_{L^{  ( \gamma_h ,     \delta_h^1 + \delta_h^2 + \Delta_h)}} } =0.
\end{align*}
% It is a contradiction by equation ~\eqref{eq:not:1}.  \textbf{There is  no need to use the method of contradiction. The above equality force the fusion coefficient to be zero.}
This proves our claim and equation ~\eqref{eq:not:2} becomes
\begin{align}\label{eq:not:I-i:1}
 1 \le  \fult{N^1}{N^2}{ \VL{\cZ \times ( \cdd^1 + \cdd^2  + \cD)}[ \varepsilon] }.
\end{align}
Now, we set $ ( e^i_1, \cdots, e^i_ \ell) = ( \varepsilon^i, 0, \cdots, 0)$ for $i =1,2$. Then we have
\begin{align*}
   1\le&  \fult{N^1}{N^2}{ \VL{\cZ \times ( \cdd^1 + \cdd^2  + \cD)}[ \varepsilon] }
   = \sum_{ \Delta \in \cD} \fult{N^1}{N^2}{ \VL{\cZ \times ( \cdd^1 + \cdd^2  + \Delta )}[ \varepsilon] } \\
   =&  \sum_{ \Delta \in \cD} \fult{ V_{L^{ ( 0, \delta^1_1 )}}[ \varepsilon^1] \otimes  V_{L^{ ( 0, \delta^1_2 )}} [ 0] \otimes  \cdots \otimes V_{L^{ ( 0, \delta^1_ \ell  )}} [ 0]}{ N^2 }
   { \sum_{ \bm{ r } \in S_ \varepsilon } V_{L^{ ( 0, \delta^1_1 + \delta^2_1 + \Delta_1 )}}[ r_1 ] \otimes  \cdots \otimes V_{L^{ ( 0, \delta^1_ \ell + \delta^2_ \ell + \Delta_ \ell )}}[ r_ \ell ]   }   \\
   = &  \sum_{ \overset{\bm{ r } \in S_ \varepsilon }{ \Delta \in \cD}} \bigg(
   \fut{ V_{L^{ ( 0, \delta^1_1 )}}[ \varepsilon^1]}{ V_{L^{ ( 0, \delta^2_1 )}}[ \varepsilon^2]}{ V_{L^{ ( 0, \delta^1_1 + \delta^2_1 + \Delta_1 )}}[ r_1 ] } \prod_{ k =2}^ \ell \fut{ V_{L^{ ( 0, \delta^1_k )}}[ 0]}{ V_{L^{ ( 0, \delta^2_k )}}[ 0 ]}{ V_{L^{ ( 0, \delta^1_k + \delta^2_k + \Delta_k )}}[ r_k ] }\bigg).
\end{align*}
By Prop.~\ref{thm:fusionL} we know if $ ( r_2, \cdots, r_ \ell) \neq (0, \cdots, 0)$ then
\begin{align*}
  \fut{ V_{L^{ ( 0, \delta^1_1 )}}[ \varepsilon^1]}{ V_{L^{ ( 0, \delta^2_1 )}}[ \varepsilon^2]}{ V_{L^{ ( 0, \delta^1_1 + \delta^2_1 + \Delta_1 )}}[ r_1 ] } \prod_{ k =2}^ \ell \fut{ V_{L^{ ( 0, \delta^1_k )}}[ 0]}{ V_{L^{ ( 0, \delta^2_k )}}[ 0 ]}{ V_{L^{ ( 0, \delta^1_k + \delta^2_k + \Delta_k )}}[ r_k ] } =0.
\end{align*}
Thus only $ \bm{ r } = (r_1, 0, \cdots, 0 ) \in S_ \varepsilon$ contributes a nonzero summand. Therefore
 \begin{align*}
 1 &\le \fult{N^1}{N^2}{ \VL{\cZ \times ( \cdd^1 + \cdd^2  + \cD)}[ \varepsilon] }\\
&= \sum_{   \Delta \in \cD} \bigg(
   \fut{ V_{L^{ ( 0, \delta^1_1 )}}[ \varepsilon^1]}{ V_{L^{ ( 0, \delta^2_1 )}}[ \varepsilon^2]}{ V_{L^{ ( 0, \delta^1_1 + \delta^2_1 + \Delta_1 )}}[  r_1 ] } \prod_{ k =2}^ \ell \fut{ V_{L^{ ( 0, \delta^1_k )}}[ 0]}{ V_{L^{ ( 0, \delta^2_k )}}[ 0 ]}{ V_{L^{ ( 0, \delta^1_k + \delta^2_k + \Delta_k )}}[ 0 ] }\bigg).
\end{align*}
Since $ \bm{ r } \in S_ \varepsilon$, we must have $ r_1 = \varepsilon = \varepsilon^1 + \varepsilon^2$. This proves (i).

 (ii) We know the fusion coefficient of $ V_\LCD$-modules:
 \begin{align*}
 1 =  \fuCD{ \VL{    \cC \times    ( \cdd^1 + \cD )   }}{ \VL{     ( \cll +  \cC)  \times    ( \cdd^2 + \cD )  }}{ \VL{     ( \cll +  \cC)  \times    ( \cdd^1 + \cdd^2 + \cD )  }}.
\end{align*}
By restricting to $ V^ \tau_\LCD$-modules, we have
\begin{align*}
  1 \le  \fuCDt{ \VL{    \cC \times    ( \cdd^1 + \cD )}[ \varepsilon^1]   }   { \VL{     ( \cll +  \cC)  \times    ( \cdd^2 + \cD )  }} { \VL{     ( \cll +  \cC)  \times    ( \cdd^1 + \cdd^2 + \cD )  }}.
 \end{align*}
 Since $ \qdim \VL{     ( \cll +  \cC)  \times    ( \cdd^1 + \cdd^2 + \cD ) }  = \qdim \big( \VL{    \cC \times    ( \cdd^1 + \cD )}[ \varepsilon^1]    \times  { \VL{     ( \cll +  \cC)  \times    ( \cdd^2 + \cD )  }}\big)$, we prove (ii).

(iii)
Since $\VL{    \cC \times    ( \cdd^1 + \cD )   } [ \varepsilon^1 ]  $ is simple current and $ \qdim \big( \VL{    \cC \times    ( \cdd^1 + \cD )   } [ \varepsilon^1 ]    \times  V^{ T, \cdd^2 }_\LCD ( \tau^i)  [ \varepsilon^2 ] \big) = \frac{ 2^ \ell}{ \left| \cC \right|} $, we know the fusion product $ \VL{    \cC \times    ( \cdd^1 + \cD )   } [ \varepsilon^1 ]    \times  V^{ T, \cdd^2 }_\LCD ( \tau^i)  [ \varepsilon^2 ]   $ is either $ V^{ T, \cdd }_\LCD ( \tau^j)  [ \varepsilon ]  $ for some $ \cdd + \cC \in \cC ^\perp/ \cC , \varepsilon \in \mathds{Z}_3$ and $j = 1,2$ or $ \VL{    \cC \times    ( \cdd + \cD )   } [ \varepsilon ] $ if $ \left| \cC \right| = 2 ^ \ell$.

Assume
\begin{align*}
  \VL{    \cC \times    ( \cdd^1 + \cD )   } [ \varepsilon^1 ]    \times  V^{ T, \cdd^2 }_\LCD ( \tau^i)  [ \varepsilon^2 ]
  = \VL{    \cC \times    ( \cdd + \cD )   } [ \varepsilon ].
\end{align*}
Then we have
\begin{align*}
   & \VL{    \cC \times    ( - \cdd^1 + \cD )   } [  - \varepsilon^1 ]  \times \VL{    \cC \times    ( \cdd^1 + \cD )   } [ \varepsilon^1 ]    \times  V^{ T, \cdd^2 }_\LCD ( \tau^i)  [ \varepsilon^2 ]
  = \VL{    \cC \times    ( - \cdd^1 + \cD )   } [ -  \varepsilon^1 ]  \times \VL{    \cC \times    ( \cdd + \cD )   } [ \varepsilon ],
\intertext{and hence  by ~\eqref{eq:fusion:VCe:VCe2} }
& V^{ T, \cdd^2 }_\LCD ( \tau^i)  [ \varepsilon^2 ]
=
   \VL{    \cC \times      \cD    } [  0 ]    \times  V^{ T, \cdd^2 }_\LCD ( \tau^i)  [ \varepsilon^2 ]
   = \VL{    \cC \times    ( \cdd - \cdd^1 + \cD )   } [ \varepsilon -  \varepsilon^1 ],
\end{align*}
a contradiction.  Therefore,
\begin{align*}
  \VL{    \cC \times    ( \cdd^1 + \cD )   } [ \varepsilon^1 ]    \times  V^{ T, \cdd^2 }_\LCD ( \tau^i)  [ \varepsilon^2 ]
  =   V^{ T, \cdd^3 }_\LCD ( \tau^j)  [ \varepsilon^3 ],
\end{align*}
for some $j = 1,2$, $ \cdd^h + \cC\in \cC ^\perp/ \cC $, $ \varepsilon^h \in \mathds{Z}_3$, for $h = 1,2,3$.

Similar to (i), we pick the following irreducible $ \VLtl$-modules
\begin{align*}
   & V_{L^{ ( 0, \delta^1_1 )}}[  e^1_1] \otimes      \cdots \otimes V_{L^{ ( 0, \delta^1_ \ell )}} [ e^1_ \ell]
  \subset \VL{\cZ \times  ( \cdd^1 + \cD)}[ \varepsilon^1];\\
  & V_L^{ T, \delta_1^2 } ( \tau^i) [  e^2_1 ] \otimes \cdots \otimes V_L^{ T, \delta^2_\ell } ( \tau^i) [ e^2_\ell] \subset V^{ T, \cdd^2 }_\LCD ( \tau^i)  [ \varepsilon^2 ];
%   \\
%   & V_L^{ T, \delta_1^3 } ( \tau^i) [  e^3_1 ] \otimes \cdots \otimes V_L^{ T, \delta^3_\ell } ( \tau^i) [ e^3_\ell] \subset V^{ T, \cdd^3 }_\LCD ( \tau^j)  [ \varepsilon^3 ],
\end{align*}
  of $M^i$ for some $ \bm{ e }^h := ( e^h_1, \cdots, e^h_ \ell) \in S_ { \varepsilon_h}$, $ h = 1,2$.

Prop.~\ref{thm:inj:fusion} suggests that
\begin{align*}
 1 =& \fuCDt {\VL{    \cC \times    ( \cdd^1 + \cD )   } [ \varepsilon^1 ] }{  V^{ T, \cdd^2 }_\LCD ( \tau^i)  [ \varepsilon^2 ] }{ V^{ T, \cdd^3 }_\LCD ( \tau^j)  [ \varepsilon^3 ]}\\
 &\le N^ \circ\fusion{ V_{L^{ ( 0, \delta^1_1 )}}[  e^1_1] \otimes      \cdots \otimes V_{L^{ ( 0, \delta^1_ \ell )}} [ e^1_ \ell] }{ V_L^{ T, \delta_1^2 } ( \tau^i) [  e^2_1 ] \otimes \cdots \otimes V_L^{ T, \delta^2_\ell } ( \tau^i) [ e^2_\ell]}{ \sum_{ \bm{ e^3 } \in S_{ \varepsilon_3 } } V_L^{ T, \delta_1^3 } ( \tau^i) [  e^3_1 ] \otimes \cdots \otimes V_L^{ T, \delta^3_\ell } ( \tau^i) [ e^3_\ell]}\\
 =&  \sum_{ \bm{ e^3 } \in S_{ \varepsilon_3 } } \prod_{ k =1}^ \ell \fut{ V_{L^{ ( 0, \delta^1_1 )}}[  e^1_k]  }{ V_L^{ T, \delta_k^2 } ( \tau^i) [  e^2_k ]  }{ V_L^{ T, \delta_k^3 } ( \tau^j) [  e^3_k ]}.
\end{align*}
If $ j \neq i$, then Prop.~\ref{thm:fusionL} gives $ 1 \le 0$, a contradiction. Therefore $ j=i$.
If  there exists $ 1 \le k \le \ell$ such that $ \delta_k^3 \neq \delta_k^2 - i \delta_k^1$ or $ e^3_k \neq i e^1_k + e^2_k$, again Prop.~\ref{thm:fusionL} gives $ 1 \le 0$, a contradiction.

Therefore, we must have $ \delta_k^3 =  \delta_k^2 - i \delta_k^1$ and  $ e^3_k  =  i e^1_k + e^2_k$ for all $k$. This gives
$ \cdd^3 = \cdd^2 - i \cdd^1$ and $ \varepsilon_3 \equiv \sum_{ k =1}^ \ell e^3_k  =  \sum_{ k =1}^ \ell i e^1_k + e^2_k \equiv  i \varepsilon^1 + \varepsilon^2 \bmod 3$. This completes the proof.
\end{proof}

Using the above proposition, we can find the contragredient dual of  irreducible modules. Recall   there are natural isomorphisms between the following fusion rules:
\begin{align*}
 N\fusion{A}{B}{C} =  N\fusion{B}{A}{C} = N\fusion{A}{C ^\prime} {B ^\prime},
\end{align*}
for every $V$-modules $A, B$ and $C$.

\begin{prop}\label{thm:dual}
   The contragredient dual of   irreducible $V^ \tau_\LCD$-modules is listed below.
   \begin{enumerate}[label=(\roman*)]
   \item $ \big(   \VL{    \cC \times  (  \cdd +    \cD) } [ \varepsilon ] \big) ^\prime = \VL{    \cC \times  (  -\cdd +    \cD) } [ -\varepsilon ]$;
   \item $ \big(  \VL{     ( \bm{ \lambda} +  \cC)  \times  ( \cdd  +    \cD) } \big) ^\prime = \VL{     (  - \bm{ \lambda} +  \cC)  \times  (  - \cdd  +    \cD) } = \VL{     (   \bm{ \lambda} +  \cC)  \times  (  - \cdd  +    \cD) }   $;
   %\item $ \big(  V^{T,  \cZ }_\LCD( \tau^i) [ 0 ] \big) ^\prime =    V^{T,  \cZ }_\LCD( \tau^{ 2i }) [ 0 ]$,
    \item $ \big(  V^{T,  \cee }_\LCD( \tau^i) [  \varepsilon ] \big) ^\prime  =   V^{T,  \cee }_\LCD( \tau^{ 2 i}) [  \varepsilon ] $.
\end{enumerate}
\end{prop}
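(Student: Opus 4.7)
The plan is, in each case, to verify the candidate dual $M^{*}$ either by exhibiting a fusion product $M\times M^{*}$ containing the vacuum $V^{\tau}_{\LCD}$, or by transporting the already-known duality from a related VOA; uniqueness of the contragredient dual then identifies $M'=M^{*}$.

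Part (i) would follow immediately from Proposition~\ref{thm:fusion:I}(i): substituting gives
\[
\VL{ \cC \times ( \cdd + \cD ) }[\varepsilon] \times \VL{ \cC \times ( -\cdd + \cD ) }[-\varepsilon]
\;=\; \VL{ \cC \times \cD }[0] \;=\; V^{\tau}_{\LCD},
\]
and since the factors are simple currents of quantum dimension $1$, this fusion product is irreducible and the two factors are necessarily mutually dual.

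For (ii) I would invoke the general principle that for a subVOA inclusion $U\subset V$ sharing the same Virasoro element, the contragredient dual commutes with restriction: $(M')|_{U}=(M|_{U})'$ for every $V$-module $M$. Applying this with $V=V_{\LCD}$ and $U=V^{\tau}_{\LCD}$, the lattice-type $V_{\LCD}$-module $V_{L_{(\bm{\lambda}+\cC)\times(\cdd+\cD)}}$ has contragredient dual $V_{L_{(-\bm{\lambda}+\cC)\times(-\cdd+\cD)}}$ by the standard duality for lattice VOA coset modules. Because $\bm{\lambda}\notin \cC$, its $\tau$-orbit in $\cC^{\perp}/\cC$ has three elements, so this $V_{\LCD}$-module is not $\tau$-stable; its restriction to $V^{\tau}_{\LCD}$ is the irreducible $\VL{(\bm{\lambda}+\cC)\times(\cdd+\cD)}$, and the same restriction sends its dual to $\VL{(-\bm{\lambda}+\cC)\times(-\cdd+\cD)}$. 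The identity $-\bm{\lambda}=\bm{\lambda}$ in $\GF^{\ell}$ (characteristic $2$) yields the stated equality.

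For (iii), the contragredient dual of a $\tau^{i}$-twisted module is $\tau^{-i}=\tau^{2i}$-twisted, so $(V^{T,\cee}_{\LCD}(\tau^{i})[\varepsilon])'$ must have the form $V^{T,\cee'}_{\LCD}(\tau^{2i})[\varepsilon']$. To pin down $\cee'$ and $\varepsilon'$ I would decompose both sides as $\VLtl$-modules via Proposition~\ref{thm:decomp:into:L}(ii) and identify the dual on each tensor factor. Proposition~\ref{thm:fusion:L:twisted}(ii) gives
\[
V_{L}^{T,j}(\tau)[e] \times V_{L}^{T,j'}(\tau^{2})[e'] \;=\; V_{L^{(0,j+2j')}}[e+2e'] \;+\; V_{L^{(c,j+2j')}},
\]
and the vacuum $V_{L}^{\tau}=V_{L^{(0,0)}}[0]$ appears on the right precisely when $j+2j'\equiv 0$ and $e+2e'\equiv 0\pmod 3$, i.e.\ $j'=j$ and $e'=e$. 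Hence each factor $V_{L}^{T,j}(\tau^{i})[e]$ has dual $V_{L}^{T,j}(\tau^{2i})[e]$. Reassembling the tensor factors and noting that $\cgg\mapsto 2\cgg$ is a bijection on $\cD$ while the summation index $\bm{e}\in S_{\varepsilon}$ is preserved shows $\cee'=\cee$ and $\varepsilon'=\varepsilon$.

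The main subtle point is (iii), where the twist index, the auxiliary labels $\cee$ and $\varepsilon$, and the summation indices $\cgg\in\cD$ and $\bm{e}\in S_{\varepsilon}$ must all be tracked consistently through the tensor decomposition and the dualization simultaneously. Once the restriction principle is available, (ii) reduces to a bookkeeping exercise, and (i) is a direct corollary of the fusion rule already established in Proposition~\ref{thm:fusion:I}.
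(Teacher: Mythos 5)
Your proposal is correct, and parts (i) and (iii) follow essentially the paper's own route: in (i) you identify the dual through the simple-current fusion rule of Proposition~\ref{thm:fusion:I} together with the characterization of $M'$ by the occurrence of the vacuum in $M\times M'$, and in (iii) you dualize the $\VLtl$-decomposition of Proposition~\ref{thm:decomp:into:L}(ii) factor by factor and match the result against the unique irreducible with that decomposition, exactly as the paper does; your explicit extraction of $\big(V_L^{T,j}(\tau^i)[e]\big)'=V_L^{T,j}(\tau^{2i})[e]$ from Proposition~\ref{thm:fusion:L:twisted}(ii) is just a more detailed version of the paper's appeal to quantum dimensions and fusion rules of $V_L^\tau$. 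The genuine divergence is in (ii): the paper stays inside the fusion calculus, first pinning down the type of the dual via $\qdim M=\qdim M'$ and then reading off the labels from the fusion rule~\eqref{eq:fusion:VCe:Vg+C}, whereas you invoke the compatibility of contragredient duals with restriction to a conformal subVOA, $(M')|_U\cong (M|_U)'$, combined with the standard duality $(V_{N+x})'\cong V_{N-x}$ for cosets of the even lattice $N=\LCD$; since $\cll\notin\cC$ forces the coset module to restrict irreducibly to $V^\tau_\LCD$, this gives the answer at once, with $-\cll=\cll$ in characteristic $2$ supplying the final identification. Your route is cleaner and avoids fusion computations, and it is consistent with the paper, which tacitly uses the same restriction principle in its own proof of (iii) when it ``takes the contragredient dual as $\VLtl$-modules.'' If you write this up, the two points worth making explicit are the justification of $(M')|_U\cong(M|_U)'$ (the contragredient action on the graded dual is defined by a formula involving only $Y_M(v,z)$ for $v\in U$, and $U$ shares the Virasoro element, so the gradings agree) and the observation that $L_{(\cll+\cC)\times(\cdd+\cD)}$ is a single coset of $\LCD$ in $L_{\cC^\perp\times\cD^\perp}$, so the lattice-VOA duality applies verbatim.
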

\begin{proof}
   It is discussed in Prop.~\ref{thm:orbV:self_dual} that $ \big(   \VL{    \cC \times     \cD } [ 0 ]\big) ^\prime \cong \VL{    \cC \times     \cD } [ 0 ] $ is self-dual. We know the fusion rule:
   \begin{align*}
1 = \fuCDt{\VL{    \cC \times     \cD } [ 0 ] } { \VL{    \cC \times  (  \cdd +    \cD) } [ \varepsilon ] } {  \VL{    \cC \times  (  \cdd +    \cD) } [ \varepsilon ] }
= \fuCDt { \big(   \VL{    \cC \times  (  \cdd +    \cD) } [ \varepsilon ] \big) ^\prime} {  \VL{    \cC \times  (  \cdd +    \cD) } [ \varepsilon ] } { \big(   \VL{    \cC \times     \cD } [ 0 ]\big) ^\prime }.
\end{align*}
Since $ \qdim M = \qdim M ^\prime$ for any module $M$, by Prop.~\ref{thm:fusion:I} we may assume   $ \big( \VL{    \cC \times  (  \cdd +    \cD) } [ \varepsilon ] \big) ^\prime \cong \VL{    \cC \times  (  \cdd ^\prime +    \cD) } [ \varepsilon ^\prime ] $  for some $ \cdd ^\prime, \varepsilon ^\prime$. Now using equation  ~\eqref{eq:fusion:VCe:VCe2} we must have
\begin{align*}
\big(   \VL{    \cC \times  (  \cdd +    \cD) } [ \varepsilon ] \big) ^\prime = \VL{    \cC \times  (  -\cdd +    \cD) } [ -\varepsilon ].
\end{align*}
This proves (i). Similarly, using equation ~\eqref{eq:fusion:VCe:Vg+C} we have (ii).

(iii)
We take a different approach. We first  consider the contragredient dual of the irreducible $ V^ \tau_L$-modules of twisted type.
We know $ V^ \tau_L$ is self-dual. Let $ i, \varepsilon \in \mathds{Z}_3$, then
\begin{align*}
1= \fut{V^ \tau_L  }{ V_L^{ T, i}( \tau^l)[ \varepsilon]  }{ V_L^{ T, i}( \tau^l)[ \varepsilon]  }
=  \fut{ V_L^{ T, i}( \tau^l)[ \varepsilon]  }{ \big(   V_L^{ T, i}( \tau^l)[ \varepsilon] \big) ^\prime  } {V^ \tau_L  }.
\end{align*}
 By quantum dimensions and fusion rules of $ V^ \tau_L$-modules, we must have
\begin{align*}
 \big(  V_L^{ T, i}( \tau^l)[ \varepsilon]  \big) ^\prime = V_L^{ T, i}( \tau^{2l} )[ \varepsilon].
\end{align*}

Now, consider the decomposition of  $ \VLtl$-modules:
\begin{align*}
V^{ T, \bm{\eta}}_\LCD ( \tau^i)  [ \varepsilon ] &\cong
    \bigoplus_{  \cdd  \in \cD}
    \bigoplus_{e_1 +  \cdots +  e_\ell \equiv \varepsilon  \bmod 3 }
    V_L^{ T, \eta_1 - i \delta_1} ( \tau^i) [ e_1] \otimes \cdots \otimes V_L^{ T, \eta_\ell - i \delta_\ell} ( \tau^i) [ e_\ell].
\end{align*}

Taking contragredient dual as $ \VLtl$-modules, we have
\begin{align*}
\big(   V^{ T, \bm{\eta}}_\LCD ( \tau^i)  [ \varepsilon ] \big) ^\prime
&\cong
    \bigoplus_{  \cdd  \in \cD}
    \bigoplus_{e_1 +  \cdots +  e_\ell \equiv \varepsilon  \bmod 3 }
    V_L^{ T, \eta_1 - i \delta_1} ( \tau^{2i}) [ e_1] \otimes \cdots \otimes V_L^{ T, \eta_\ell - i \delta_\ell} ( \tau^{ 2i}) [ e_\ell].
\end{align*}

Since $ V^{ T, \bm{\eta}}_\LCD ( \tau^{ 2i})  [ \varepsilon ] $ is the only irreducible $ V^ \tau_\LCD$-module admitting the above decomposition of $ \VLtl$-modules,  we must have
\begin{align*}
\big(   V^{ T, \bm{\eta}}_\LCD ( \tau^i)  [ \varepsilon ] \big) ^\prime  \cong V^{ T, \bm{\eta}}_\LCD ( \tau^{ 2i})  [ \varepsilon ] .
\end{align*}

%
% By Prop.~\ref{thm:fusion:I} we have
% \begin{align*}
% V^{T,  \cdd }_\LCD( \tau^i) [  \varepsilon ] = \VL{ \cC \times ( - i \cdd + \cD)}[ i \varepsilon] \times V^{T,  \cZ }_\LCD( \tau^i) [  0 ].
% \end{align*}
% Therefore
% \begin{align*}
% & \big(  V^{T,  \cdd }_\LCD( \tau^i) [  \varepsilon ]   \big) ^\prime
% = \big(  \VL{ \cC \times ( - i \cdd + \cD)}[ i \varepsilon] \times V^{T,  \cZ }_\LCD( \tau^i) [  0 ]  \big) ^\prime \\
% =& \VL{ \cC \times ( i \cdd + \cD)}[ - i \varepsilon] \times V^{T,  \cZ }_\LCD( \tau^{ 2i}) [  0 ]
% =   V^{T,  \cZ - ( 2 i) i \cdd }_\LCD( \tau^{ 2i}) [  2i ( - i \varepsilon) ]
% = V^{T,  \cdd }_\LCD( \tau^{ 2i}) [  \varepsilon ]
% \end{align*}
%

\end{proof}

\noindent \textbf{Case (II):}  $ \VL{     ( \cll^1 +  \cC)  \times   ( \cdd^1 + \cD )} \times \VL{     ( \cll^2 +  \cC)  \times   ( \cdd^2 + \cD )}$

\begin{prop}\label{thm:fusion:II}
We have the fusion rules
\begin{align*}
  \VL{     ( \cll^1 +  \cC)  \times   ( \cdd^1 + \cD )} \times \VL{     ( \cll^2 +  \cC)  \times   ( \cdd^2 + \cD )}
  = \bigoplus_{h = 0}^2  \VL{     ( \cll^1 + \omega^h \cll^2 +  \cC)  \times   ( \cdd^1 + \cdd^2+ \cD )}.
\end{align*}
\end{prop}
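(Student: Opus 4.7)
The plan is to combine the simple-current fusion rules for the lattice VOA $\VLCD$, the injection inequality in Proposition~\ref{thm:inj:fusion}, and the quantum dimension data in Theorem~\ref{thm:qd:CD}. Write $M^i := \VL{(\cll^i+\cC)\times(\cdd^i+\cD)}$ regarded as a $V^\tau_\LCD$-module; each $M^i$ is irreducible of quantum dimension $3$, so by multiplicativity $\qdim_{V^\tau_\LCD}(M^1\times M^2)=9$. On the right-hand side each summand $\VL{(\cll^1+\omega^h\cll^2+\cC)\times(\cdd^1+\cdd^2+\cD)}$ also contributes quantum dimension $3$: either it is $V^\tau_\LCD$-irreducible of quantum dimension $3$, or, in the degenerate case $\cll^1+\omega^h\cll^2\in\cC$, it splits as $\bigoplus_\varepsilon \VL{\cC\times(\cdd^1+\cdd^2+\cD)}[\varepsilon]$ with total quantum dimension $1+1+1$. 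Thus the two sides already agree on total quantum dimension, $3+3+3=9$.

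The key observation is that the three cosets $\omega^h\cll^2+\cC$ (for $h=0,1,2$) form a single $\tau$-orbit in $\cC^\perp/\cC$, so
\[
   \VL{(\omega^h\cll^2+\cC)\times(\cdd^2+\cD)} \cong M^2 \qquad \textrm{as $V^\tau_\LCD$-modules, for each } h.
\]
Since irreducible $\VLCD$-modules are simple currents, the $\VLCD$-fusion rule yields for each $h\in\{0,1,2\}$ the identity
\[
   \fuCD{\VL{(\cll^1+\cC)\times(\cdd^1+\cD)}}{\VL{(\omega^h\cll^2+\cC)\times(\cdd^2+\cD)}}{\VL{(\cll^1+\omega^h\cll^2+\cC)\times(\cdd^1+\cdd^2+\cD)}}=1.
\]
Applying Proposition~\ref{thm:inj:fusion} with $V=\VLCD$, $U=V^\tau_\LCD$ and the irreducible $V^\tau_\LCD$-submodules $N^1=M^1$, $N^2\cong M^2$, I would obtain
\[
   1 \le \dim I_{V^\tau_\LCD}\fusion{M^1}{M^2}{\VL{(\cll^1+\omega^h\cll^2+\cC)\times(\cdd^1+\cdd^2+\cD)}}
\]
for each $h$; equivalently, the irreducible $V^\tau_\LCD$-constituents of each of the three targets contribute jointly at least one to the fusion product $M^1\times M^2$.

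In the generic situation where the three targets are pairwise non-isomorphic $V^\tau_\LCD$-irreducibles (which holds whenever $\cll^1$ lies outside the union of $\tau$-orbits of the $\omega^h\cll^2$ modulo $\cC$), the three lower bounds combine to give $M^1\times M^2$ quantum dimension at least $9$, exactly matching the cap. Therefore each inequality must be an equality, no further irreducibles occur, and the asserted fusion identity follows in the Grothendieck ring.

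The main obstacle is the degenerate cases: when some $\cll^1+\omega^h\cll^2$ lies in $\cC$, or when two of the three $\tau$-orbits $\{\omega^k(\cll^1+\omega^h\cll^2)\}_k$ coincide (so that two targets become $V^\tau_\LCD$-isomorphic after restriction). In those cases the ADL inequality alone yields only one copy of each distinct isomorphism class, whereas the right-hand side may demand a higher multiplicity. I expect to close the gap by considering the full array of nine $\VLCD$-fusion identities obtained from all pairs $(\omega^{h_1}\cll^1,\omega^{h_2}\cll^2)$ and regrouping them by the invariant $h_2-h_1\pmod 3$: each such orbit consists of three $\tau$-conjugate $\VLCD$-modules which restrict to a single $V^\tau_\LCD$-module, and a careful quantum-dimension accounting at the level of the $\mathds{Z}_3$-extension $\VLCD\supset V^\tau_\LCD$ will then force the precise multiplicities. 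This bookkeeping, rather than any new conceptual ingredient, is the real technical hurdle.
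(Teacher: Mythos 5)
Your proposal is essentially the paper's own proof: for each $h$ the $\VLCD$-fusion coefficient equals $1$, Proposition~\ref{thm:inj:fusion} together with the isomorphism $\VL{(\omega^h\cll^2+\cC)\times(\cdd^2+\cD)}\cong \VL{(\cll^2+\cC)\times(\cdd^2+\cD)}$ of $V^\tau_\LCD$-modules gives the lower bound $1$ for each of the three targets, and the quantum-dimension identity $3\cdot 3=3+3+3$ forces equality. The degenerate cases you flag are real, but the published proof does not treat them either: it simply asserts that the three classes $\cll^1+\omega^h\cll^2+\cC$ are distinct in $\cC^\perp_{\equiv\tau}\bmod\cC$ (which fails precisely when $\cll^1+\cC$ and $\cll^2+\cC$ lie in a common $\tau$-orbit, for example when squaring a single module, in which case two targets coincide and the ADL bound yields multiplicity $\geq 1$ rather than the required $2$), and it relegates the case $\cll^1+\omega^h\cll^2\in\cC$ to a remark after the proof. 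So your write-up is at least as complete as the paper's, and the extra multiplicity bookkeeping you outline is exactly what would be needed to make either argument airtight in those cases.
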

\begin{proof}
 Fix $ 0 \le h \le 2 $ we have  the fusion rules of $V_\LCD$-modules:
 \begin{align*}
 1 = \fuCD{ \VL{     ( \cll^1 +  \cC)  \times   ( \cdd^1 + \cD )}  }{  \VL{     ( \omega^h \cll^2 +  \cC)  \times   ( \cdd^2 + \cD )}  }{ \VL{     ( \cll^1 + \omega^h \cll^2 +  \cC)  \times   ( \cdd^1 + \cdd^2+ \cD )}  }
 \le \fuCDt{ \VL{     ( \cll^1 +  \cC)  \times   ( \cdd^1 + \cD )}  }{  \VL{     ( \omega^h \cll^2 +  \cC)  \times   ( \cdd^2 + \cD )}  }{ \VL{     ( \cll^1 + \omega^h \cll^2 +  \cC)  \times   ( \cdd^1 + \cdd^2+ \cD )}  }.
\end{align*}
Since $  \omega^h  \cll^2 +  \cC$, $ 0 \le h\le 2$, are identical in $\cC ^\perp_{\equiv \tau} \bmod \cC $, there is an isomorphism of $ V^ \tau_\LCD$-modules
\begin{align*}
 \VL{     (   \cll^2 +  \cC)  \times   ( \cdd^2 + \cD )}
 \cong \VL{     ( \omega \cll^2 +  \cC)  \times   ( \cdd^2 + \cD )}
 \cong \VL{     ( \omega^2 \cll^2 +  \cC)  \times   ( \cdd^2 + \cD )}.
\end{align*}
Therefore, we  can write
\begin{align*}
 1 \le  \fuCDt{ \VL{     ( \cll^1 +  \cC)  \times   ( \cdd^1 + \cD )}  }{  \VL{     ( \cll^2 +  \cC)  \times   ( \cdd^2 + \cD )}  }{ \VL{     ( \cll^1 + \omega^h \cll^2 +  \cC)  \times   ( \cdd^1 + \cdd^2+ \cD )}  } ,
\end{align*}
for all $ h$.
Since $ \cll^1 + \omega^h \cll^2 +  \cC $, $ 0 \le h \le 2$, are distinct in $\cC ^\perp_{\equiv \tau} \bmod \cC$, by counting quantum dimensions, we prove
\begin{align*}
  \VL{     ( \cll^1 +  \cC)  \times   ( \cdd^1 + \cD )} \times \VL{     ( \cll^2 +  \cC)  \times   ( \cdd^2 + \cD )}
  = \bigoplus_{h = 0}^2  \VL{     ( \cll^1 + \omega^h \cll^2 +  \cC)  \times   ( \cdd^1 + \cdd^2+ \cD )}.
\end{align*}
This completes the proof.
\end{proof}
\begin{rmk}
 Note that if $ \cll^1 + \omega^h \cll^2 = 0 $ for some $h$, then the module $ \VL{     ( \cll^1 + \omega^h \cll^2 +  \cC)  \times   ( \cdd^1 + \cdd^2+ \cD )}$ is not irreducible and admits a decomposition of irreducible modules of $V^ \tau_\LCD$-modules:
 \begin{align*}
 \VL{     ( \cll^1 + \omega^h \cll^2 +  \cC)  \times   ( \cdd^1 + \cdd^2+ \cD )}
 = \sum_{ \varepsilon =0}^2 \VL{     ( \cll^1 + \omega^h \cll^2 +  \cC)  \times   ( \cdd^1 + \cdd^2+ \cD )}[ \varepsilon].
\end{align*}
\end{rmk}

\noindent\textbf{Case (III): } $\VL{ (\cgg + \cC) \times   ( \cdd^1  + \cD) } \times V ^{ T, \cdd^2}_\LCD ( \tau^i)  [  \varepsilon  ]$

\begin{prop}\label{thm:fusion:III}
  We have
  \begin{subequations}\begin{align}
   &\mathrm{(i)}&&  \VL{ (\cgg + \cC) \times   \cD } \times V ^{ T, \cZ}_\LCD ( \tau^i)  [  0  ]
  =  \bigoplus_{  \rho = 0}^2 V ^{ T, \cZ}_\LCD ( \tau^i)  [  \rho  ]; \\
    &\mathrm{(ii)}&&  \VL{ (\cgg + \cC) \times   ( \cdd^1  + \cD) } \times V ^{ T, \cdd^2}_\LCD ( \tau^i)  [  \varepsilon  ]
  = \bigoplus_{  \rho = 0}^2 V ^{ T,  \cdd^2   }_\LCD ( \tau^i)[    \rho],
\end{align}\end{subequations}
           where $ \cdd^1 + \cD, \cdd^2+ \cD \in \cD ^\perp/ \cD $, $ \cZ \neq \bm{ \lambda}  + \cC \in  \cC ^\perp_{\equiv \tau} \bmod \cC$.
 \end{prop}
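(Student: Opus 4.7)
The plan is to follow the two-step strategy already deployed in Propositions~\ref{thm:fusion:I} and~\ref{thm:fusion:II}: obtain a lower bound on the relevant fusion coefficients via the injectivity of restriction (Proposition~\ref{thm:inj:fusion}), and then upgrade to equality by matching total quantum dimensions (Proposition~\ref{thm:prop_qdim}). By Theorem~\ref{thm:qd:CD}, the LHS of (ii) has quantum dimension $3\cdot(2^\ell/|\cC|)$, which agrees with that of the RHS (three copies of a twisted module of quantum dimension $2^\ell/|\cC|$); so once each of the three twisted summands is exhibited as an irreducible constituent of the fusion product, the proof is complete.

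To exhibit these summands I would restrict to the subVOA $\VLtl$ and apply Proposition~\ref{thm:decomp:into:L}. Since $\cgg \neq \cZ$, I may pick a representative of the $\tau$-orbit with at least one nonzero coordinate $\gamma_h$, and consider the irreducible $\VLtl$-submodule
\[
N^1 := V_{L^{(\gamma_1,\delta^1_1)}} \otimes \cdots \otimes V_{L^{(\gamma_\ell,\delta^1_\ell)}} \;\subset\; \VL{(\cgg+\cC)\times(\cdd^1+\cD)},
\]
together with $N^2 := V_L^{T,\delta^2_1}(\tau^i)[e_1] \otimes \cdots \otimes V_L^{T,\delta^2_\ell}(\tau^i)[e_\ell] \subset V^{T,\cdd^2}_\LCD(\tau^i)[\varepsilon]$ for a $\bm e \in S_\varepsilon$ I am free to vary. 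The componentwise fusion rules of Proposition~\ref{thm:fusionL}(iv)--(v) describe $N^1\times N^2$: each factor with $\gamma_k=0$ produces a single irreducible twisted $V_L^\tau$-module, whereas each factor with $\gamma_k\neq 0$ produces $\bigoplus_{\rho_k=0}^{2} V_L^{T,\cdot}(\tau^i)[\rho_k]$. Matching these with the $\VLtl$-decomposition of the twisted $V^\tau_\LCD$-modules given by Proposition~\ref{thm:decomp:into:L}(ii), and then invoking Proposition~\ref{thm:inj:fusion}, one obtains
\[
\fuCDt{\VL{(\cgg+\cC)\times(\cdd^1+\cD)}}{V^{T,\cdd^2}_\LCD(\tau^i)[\varepsilon]}{V^{T,\,\cdot}_\LCD(\tau^i)[\rho]} \;\ge\; 1
\]
for every $\rho\in\mathds{Z}_3$, and the quantum-dimension comparison then forces equality and excludes any other constituent. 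Part (i) is the special case $\cdd^1=\cdd^2=\cZ$, $\varepsilon=0$, handled by the same argument.

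The main technical obstacle will be the bookkeeping of the $\varepsilon$-gradings together with the twisted labels $\cee$ across all tensor factors: one must check that by varying $\bm e\in S_\varepsilon$ and exploiting the threefold freedom in $\rho_h$ at the coordinate where $\gamma_h\neq 0$, each total grading $\rho\in\mathds{Z}_3$ is actually realized on the twisted side. The existence of such an index $h$, which is forced by $\cgg\neq\cZ$, is precisely what makes the threefold direct sum on the RHS appear and is where this hypothesis is used in an essential way; by contrast, when $\cgg=\cZ$ the fusion is already governed by Proposition~\ref{thm:fusion:I}(iii) and yields a single twisted summand, consistent with the quantum dimension being $2^\ell/|\cC|$ rather than $3\cdot 2^\ell/|\cC|$.
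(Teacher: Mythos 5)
Your overall skeleton (a lower bound of $1$ for each twisted summand, then a quantum--dimension count to force equality) would close the argument if the lower bounds could actually be obtained, but the step where you claim them is invalid, and this is exactly the difficulty that distinguishes Case (III) from Cases (I) and (II). Proposition~\ref{thm:inj:fusion} says the restriction map on intertwining operators is \emph{injective}, i.e.\ it bounds the fusion coefficient of the larger VOA from \emph{above} by that of the subVOA: $\fuCDt{M^1}{M^2}{M^3}\le \fult{N^1}{N^2}{M^3}$. So computing $N^1\times N^2$ componentwise over $\VLtl$ via Proposition~\ref{thm:fusionL}(v) and finding every grading $\rho$ realized there tells you nothing from below about $\fuCDt{\VL{(\cgg+\cC)\times(\cdd^1+\cD)}}{V^{T,\cdd^2}_\LCD(\tau^i)[\varepsilon]}{V^{T,\cdd^2}_\LCD(\tau^i)[\rho]}$. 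In Propositions~\ref{thm:fusion:I} and~\ref{thm:fusion:II} the lower bounds do not come from $\VLtl$ at all: they come from the known nonzero fusion rules of the \emph{larger} lattice VOA $V_\LCD$, restricted down to $V^\tau_\LCD$. That device is unavailable here because $V^{T,\cdd^2}_\LCD(\tau^i)[\varepsilon]$ lives inside a \emph{twisted} $V_\LCD$-module, so there is no ordinary $V_\LCD$-fusion coefficient to restrict from.

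The paper therefore argues in the opposite direction. Using upper-bound restrictions together with Propositions~\ref{thm:dual}, \ref{thm:fusion:I} and~\ref{thm:fusion:II} applied to the dual form of the fusion coefficient, it first shows that the only possible constituents of $\VL{(\cgg+\cC)\times\cD}\times V^{T,\cZ}_\LCD(\tau^i)[0]$ are $V^{T,\cZ}_\LCD(\tau^i)[\rho]$, $\rho=0,1,2$, with unknown multiplicities $n_\rho$. At that point quantum dimensions only give $n_0+n_1+n_2=3$, which admits $(3,0,0)$ and $(2,1,0)$ as well as $(1,1,1)$; so even if your setup were granted, the final quantum--dimension comparison cannot by itself ``exclude any other constituent'' and fix each multiplicity at $1$. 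The missing ingredient is a symmetry argument: fusing the identity $\VL{(\cgg+\cC)\times\cD}\times V^{T,\cZ}_\LCD(\tau^i)[0]=\bigoplus_\rho n_\rho V^{T,\cZ}_\LCD(\tau^i)[\rho]$ with the order-three simple current $\VL{\cC\times\cD}[h]$ fixes the left-hand side (Proposition~\ref{thm:fusion:I}(ii)) while cyclically shifting the gradings on the right (Proposition~\ref{thm:fusion:I}(iii)), forcing $n_0=n_1=n_2$ and hence $n_\rho=1$. Part (ii) is then deduced from part (i) by writing $V^{T,\cdd^2}_\LCD(\tau^i)[\varepsilon]$ as a simple current fused with $V^{T,\cZ}_\LCD(\tau^i)[0]$ and using associativity, rather than by rerunning the argument. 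You would need to add both the elimination step and the equal-multiplicity step for your proof to close.
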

 \begin{proof}
   (i) Similar to Prop.~\ref{thm:fusion:I}(iii), we quickly have
   \begin{align*}
 0 = \fuCDt{ \VL{ (\cgg + \cC) \times   \cD }}{ V ^{ T, \cZ}_\LCD ( \tau^i)  [  0  ]  }{ V ^{ T, \cdd}_\LCD ( \tau^{ j })},
\end{align*}
when (1) $ i =j $ and  $ \cdd \neq \cZ$ or (2) $ i \neq j$.
Also, by Prop.~\ref{thm:dual} , Prop.~\ref{thm:fusion:I} and Prop.~\ref{thm:fusion:II}
we have
\begin{align*}
& \fuCDt{ \VL{ (\cgg + \cC) \times   \cD }}{ V ^{ T, \cZ}_\LCD ( \tau^i)  [  0  ]  }  { \VL{      \cC  \times   ( \cdd + \cD )} [ \varepsilon] }
= \fuCDt{ \VL{ (\cgg + \cC) \times   \cD }}  { \VL{      \cC  \times   ( -\cdd + \cD )} [ -\varepsilon] } { V ^{ T, \cZ}_\LCD ( \tau^{ 2 i })  [  0  ]  }  =0,\\
& \fuCDt{ \VL{ (\cgg + \cC) \times   \cD }}{ V ^{ T, \cZ}_\LCD ( \tau^i)  [  0  ]  } { \VL{     ( \cll +  \cC)  \times   ( \cdd + \cD )} }
= \fuCDt{ \VL{ (\cgg + \cC) \times   \cD }} { \VL{     ( -\cll +  \cC)  \times   ( - \cdd + \cD )} }{ V ^{ T, \cZ}_\LCD ( \tau^{ 2 i})  [  0  ]  } =0.
\end{align*}
Therefore
\begin{align} \label{eq:fusion:III:1}
  \VL{ (\cgg + \cC) \times   \cD } \times V ^{ T, \cZ}_\LCD ( \tau^i)  [  0  ]
  =& \bigoplus_{  \rho = 0}^2  n_ \rho V ^{ T, \cZ}_\LCD ( \tau^i)  [  \rho  ],
\end{align}
for some $ n_ \rho \in \mathds{N}$. Multiply the equation ~\eqref{eq:fusion:III:1}  by $ \VL{  \cC \times   \cD}[ h] $, $ h =1,2$,  we have
\begin{align*}
   \big(  \VL{  \cC \times   \cD}[ h]  \times   \VL{ (\cgg + \cC) \times   \cD }   \big) \times V ^{ T, \cZ}_\LCD ( \tau^i)  [  0  ]
   =   \VL{  \cC \times   \cD}[ h]  \times  \bigoplus_{  \rho = 0}^2  n_ \rho V ^{ T, \cZ}_\LCD ( \tau^i)  [  \rho  ].
\end{align*}
By Prop.~\ref{thm:fusion:I}, the left hand side is equal to
\[
\VL{ (\cgg + \cC) \times   \cD }   \times V ^{ T, \cZ}_\LCD ( \tau^i)  [  0  ]
=\bigoplus_{  \rho = 0}^2  n_ \rho V ^{ T, \cZ}_\LCD ( \tau^i)  [  \rho  ]
\]
while the right hand side is $\bigoplus_{  \rho = 0}^2  n_ \rho V ^{ T, \cZ}_\LCD ( \tau^i)  [  \rho  + h]$; thus,
we have
\begin{align*}
 \bigoplus_{  \rho = 0}^2  n_ \rho V ^{ T, \cZ}_\LCD ( \tau^i)  [  \rho  ] =  \bigoplus_{  \rho = 0}^2  n_ \rho V ^{ T, \cZ}_\LCD ( \tau^i)  [  \rho  + h],
\end{align*}
for all $ 0 \le h \le 2$. This gives $ n_0 = n_1 =n_2$. Finally, by comparing the quantum dimensions of both sides of ~\eqref{eq:fusion:III:1}, we have $ 3(2^\ell/|\cC|) = ( n_0 + n_1 + n_2)(2^\ell/|\cC|)$ and hence $ n_0 = n_1 =n_2  =1 $. This proves (i).

(ii) By Prop. ~\ref{thm:fusion:I} we have
\begin{align*}
  V ^{ T, \cdd^2}_{ L_{ \cC \times \cD }} ( \tau^i)  [  \varepsilon  ]  = \VL{    \cC \times  (  ( -1)^i \cdd^2 + \cD) } [  (-1)^{ i+1} \varepsilon ]   \times V ^{ T, \cZ}_{L_{ \cC \times \cD }} ( \tau^i)  [  0  ].
\end{align*}
 Therefore,
\begin{align*}
   & \VL{ (\cgg + \cC) \times   ( \cdd^1  + \cD) } \times V ^{ T, \cdd^2}_\LCD ( \tau^i)  [  \varepsilon  ]
   =  \VL{ (\cgg + \cC) \times   ( \cdd^1  + \cD) } \times \big(   \VL{    \cC \times  (  ( -1)^i \cdd^2 + \cD) } [  (-1)^{ i+1} \varepsilon ]  \times  V ^{ T, \cZ}_\LCD ( \tau^i)  [  0  ]  \big)\\
   =&  \VL{    \cC \times  (  ( -1)^i \cdd^2 + \cD) } [  (-1)^{ i+1} \varepsilon ]  \times
   \big( \VL{ (\cgg + \cC) \times   ( \cdd^1  + \cD) } \times V ^{ T, \cZ}_{L_{ \cC \times \cD }} ( \tau^i)  [  0  ]   \big)\\
   =&  \VL{    \cC \times  (  ( -1)^i \cdd^2 + \cD) } [  (-1)^{ i+1} \varepsilon ]  \times   \bigoplus_{  \rho = 0}^2 V ^{ T, \cZ}_\LCD ( \tau^i)  [  \rho  ]   \\
   =& \bigoplus_{  \rho = 0}^2 V ^{ T, - i  ( -1)^i \cdd^2   }_\LCD ( \tau^i)[ i  (-1)^{ i+1} \varepsilon + \rho]
   = \bigoplus_{  \rho = 0}^2 V ^{ T,  \cdd^2   }_\LCD ( \tau^i)[    \rho - \varepsilon ]
   = \bigoplus_{  \rho = 0}^2 V ^{ T,  \cdd^2   }_\LCD ( \tau^i)[    \rho].
\end{align*}
This completes the proof.
 \end{proof}

\noindent\textbf{Case (IV):} $ V^{ T, \cee^1 }_\LCD ( \tau)  [ \varepsilon^1 ]  \times V^{ T, \cee^2 }_\LCD ( \tau^2)  [ \varepsilon^2 ] $

  \begin{prop}\label{thm:fusion:IV}
 We have the fusion rules:
 \begin{subequations}\begin{align}
    & \mathrm{(i)}& V ^{ T, \cZ}_\LCD ( \tau)  [  0  ]    \times V ^{ T, \cZ}_\LCD ( \tau^2)  [  0 ]
=& V_{ L_{    \cC \times   \cD }} [  0 ]
\oplus \bigoplus_{ \cZ \neq \cgg \in \cC ^\perp_{ \equiv_ \tau} \bmod \cC} \VL{ (  \cgg + \cC) \times   \cD }\\
 & \mathrm{(ii)}& V ^{ T, \cee^1}_\LCD  ( \tau)  [  \varepsilon^1  ]    \times V ^{ T, \cee^2}_\LCD  ( \tau^2)  [  \varepsilon^2 ]
=& \VL{    \cC \times    ( \cee^2 -  \cee^1 + \cD )  } [    \varepsilon^1 - \varepsilon^2  ]\notag \\
&&&\oplus \bigoplus_{ \cZ \neq \cgg \in \cC ^\perp_{ \equiv_ \tau} \bmod \cC} \VL{ (   \cgg + \cC) \times   (  \cee^2 -  \cee^1 +\cD)  } .
\end{align}\end{subequations}
In particular, if $ \cC$ is self-dual, we have
\begin{align*}
V ^{ T, \cee^1}_\LCD  ( \tau)  [  \varepsilon^1  ]    \times V ^{ T, \cee^2}_\LCD  ( \tau^2)  [  \varepsilon^2 ]
=& \VL{    \cC \times    ( \cee^2 -  \cee^1 + \cD )  } [    \varepsilon^1 - \varepsilon^2  ].
\end{align*}
\end{prop}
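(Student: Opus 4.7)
The plan is to prove (i) first by the familiar restriction-plus-quantum-dimension technique, and then to deduce (ii) from (i) by multiplying with untwisted simple currents.

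For (i), I consider the chain $V^\tau_\LCD \supset \VLtl$ and, via Proposition~\ref{thm:decomp:into:L}(ii), pick the irreducible $\VLtl$-submodules
\begin{align*}
N^1 = V_L^{T,0}(\tau)[0]^{\otimes \ell} \subset V^{T,\cZ}_\LCD(\tau)[0],\qquad
N^2 = V_L^{T,0}(\tau^2)[0]^{\otimes \ell} \subset V^{T,\cZ}_\LCD(\tau^2)[0]
\end{align*}
(take the $\cgg=\cZ\in\cD$ and $e_i=0$ components). By Proposition~\ref{thm:fusion:L:twisted}(ii), each tensor factor satisfies $V_L^{T,0}(\tau)[0]\times V_L^{T,0}(\tau^2)[0] = V_{L^{(0,0)}}[0] + V_{L^{(c,0)}}$, so $N^1\times N^2 = \bigotimes_{i=1}^\ell \bigl(V_{L^{(0,0)}}[0] + V_{L^{(c,0)}}\bigr)$ decomposes into a sum of \emph{untwisted} irreducible $\VLtl$-modules indexed by subsets $S\subset\{1,\ldots,\ell\}$. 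Since any $\tau^i$-twisted irreducible $V^\tau_\LCD$-module restricts to a sum of $\tau^i$-twisted $\VLtl$-modules, Proposition~\ref{thm:inj:fusion} forces the fusion product on the left-hand side to contain no twisted summand.

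Next I would identify which untwisted modules actually occur. First, because $V^{T,\cZ}_\LCD(\tau^2)[0]\cong\bigl(V^{T,\cZ}_\LCD(\tau)[0]\bigr)'$ by Proposition~\ref{thm:dual}(iii), the adjoint module $V_\LCD[0]$ appears exactly once in the fusion product. Second, for each nontrivial orbit $\cZ\neq\cgg+\cC\in\cC^\perp_{\equiv_\tau}\bmod\cC$, the irreducible module $V_{L_{(\cgg+\cC)\times\cD}}$, when restricted to $\VLtl$, contains (from the $\bm{\mu}=\cZ\in\cC$, $\Delta=\cZ\in\cD$ summand) the constituent $\bigotimes_{i\in\mathrm{supp}(\cgg)} V_{L^{(c,0)}}\otimes \bigotimes_{i\notin\mathrm{supp}(\cgg)} V_{L^{(0,0)}}[0]$, which is precisely the $S=\mathrm{supp}(\cgg)$ summand of $N^1\times N^2$; Proposition~\ref{thm:inj:fusion} then yields fusion multiplicity at least one. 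To upgrade these lower bounds to equalities and simultaneously to rule out every other untwisted module (notably those with $\cdd\not\equiv\cZ\bmod\cD$ or $\varepsilon\neq 0$), I compare quantum dimensions: by Theorem~\ref{thm:qd:CD},
\begin{align*}
\qdim_{V^\tau_\LCD}\bigl(V^{T,\cZ}_\LCD(\tau)[0]\times V^{T,\cZ}_\LCD(\tau^2)[0]\bigr) = \bigl(2^\ell/|\cC|\bigr)^2 = |\cC^\perp/\cC|,
\end{align*}
whereas the proposed right-hand side totals $1+3\cdot(|\cC^\perp/\cC|-1)/3 = |\cC^\perp/\cC|$, which forces equality. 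The main obstacle really lies in this step, namely matching restricted constituents precisely enough to get the lower bounds and then relying on the quantum-dimension identity to rule out spurious summands.

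For (ii), Proposition~\ref{thm:fusion:I}(iii) lets me write the general twisted modules as simple-current shifts of the special ones in (i):
\begin{align*}
V^{T,\cee^1}_\LCD(\tau)[\varepsilon^1] &= V_{L_{\cC\times(-\cee^1+\cD)}}[\varepsilon^1]\times V^{T,\cZ}_\LCD(\tau)[0],\\
V^{T,\cee^2}_\LCD(\tau^2)[\varepsilon^2] &= V_{L_{\cC\times(\cee^2+\cD)}}[2\varepsilon^2]\times V^{T,\cZ}_\LCD(\tau^2)[0].
\end{align*}
Regrouping the resulting fourfold fusion by associativity and commutativity, combining the two untwisted simple currents via Proposition~\ref{thm:fusion:I}(i), substituting the formula from (i), and finally distributing the untwisted simple current through the sum via Proposition~\ref{thm:fusion:I}(i)--(ii) yields the asserted decomposition, using $\varepsilon^1+2\varepsilon^2\equiv\varepsilon^1-\varepsilon^2\pmod 3$ and $-\cee^1+\cee^2=\cee^2-\cee^1$. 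The self-dual case is then immediate, since $\cC^\perp_{\equiv_\tau}\bmod\cC$ reduces to $\{\cZ\}$ when $\cC=\cC^\perp$.
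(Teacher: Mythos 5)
Your strategy for part (i) --- restricting to $\VLtl$, computing $N^1\times N^2$ coordinatewise via Proposition~\ref{thm:fusion:L:twisted}, excluding twisted summands, and closing with a quantum-dimension count --- departs from the paper's argument, and it has a genuine gap at the step where you claim that each $\VL{(\cgg+\cC)\times\cD}$ occurs in the product with multiplicity at least one. Proposition~\ref{thm:inj:fusion} says only that the restriction map $I_{V^\tau_\LCD}\fusion{M^1}{M^2}{M^3}\to I_{\VLtl}\fusion{N^1}{N^2}{M^3}$ is \emph{injective}, i.e.
\begin{align*}
\fuCDt{M^1}{M^2}{M^3}\;\le\;\fult{N^1}{N^2}{M^3}.
\end{align*}
This inequality runs the wrong way for your purpose: exhibiting the constituent indexed by $S=\supp(\cgg)$ inside both $N^1\times N^2$ and the restriction of $\VL{(\cgg+\cC)\times\cD}$ shows that the right-hand side is positive, but says nothing about the left-hand side. (Your uses of the same proposition to exclude twisted summands are fine, since there an upper bound is exactly what is needed, as is your duality argument that the adjoint module occurs exactly once.) Without the lower bounds, the identity $\qdim\big(V^{T,\cZ}_\LCD(\tau)[0]\times V^{T,\cZ}_\LCD(\tau^2)[0]\big)=|\cC^\perp/\cC|$ cannot determine which untwisted modules actually occur, so the proof of (i) does not close.

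The paper obtains these coefficients exactly, rather than merely bounding them below, by the symmetry $N\fusion{A}{B}{C}=N\fusion{A}{C'}{B'}$ together with Proposition~\ref{thm:dual} and the previously established Propositions~\ref{thm:fusion:I} and~\ref{thm:fusion:III}:
\begin{align*}
\fuCDt{V^{T,\cZ}_\LCD(\tau)[0]}{V^{T,\cZ}_\LCD(\tau^2)[0]}{\VL{(\cgg+\cC)\times\cD}}
=\fuCDt{V^{T,\cZ}_\LCD(\tau)[0]}{\VL{(-\cgg+\cC)\times\cD}}{V^{T,\cZ}_\LCD(\tau)[0]}=1,
\end{align*}
and likewise the coefficient of $V_{L_{\cC\times\cD}}[\varepsilon]$ equals $\delta_{\varepsilon,0}$; the quantum-dimension count then finishes as you describe. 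Replacing your restriction step by this duality computation repairs the argument. Your part (ii) is correct and coincides with the paper's proof: the identities $V^{T,\cee^1}_\LCD(\tau)[\varepsilon^1]=V_{L_{\cC\times(-\cee^1+\cD)}}[\varepsilon^1]\times V^{T,\cZ}_\LCD(\tau)[0]$ and $V^{T,\cee^2}_\LCD(\tau^2)[\varepsilon^2]=V_{L_{\cC\times(\cee^2+\cD)}}[2\varepsilon^2]\times V^{T,\cZ}_\LCD(\tau^2)[0]$ follow from Proposition~\ref{thm:fusion:I}(iii), and distributing the untwisted simple current through (i) gives the stated formula.
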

\begin{proof}
   (i) By Prop.~\ref{thm:dual} we have
   \begin{align*}
\fuCDt{ V ^{ T, \cZ}_\LCD ( \tau)  [  0  ]  } {  V ^{ T, \cZ}_\LCD ( \tau^2)  [  0 ]} {  V_{ L_{    \cC \times   \cD }} [   \varepsilon ]}
= \fuCDt{ V ^{ T, \cZ}_\LCD ( \tau)  [  0  ]  }{  V_{ L_{    \cC \times   \cD }} [   2\varepsilon ]} {  V ^{ T, \cZ}_\LCD ( \tau)  [  0 ]}.
\end{align*}
By Prop.~\ref{thm:fusion:I} we have
\begin{align*}
\fuCDt{ V ^{ T, \cZ}_\LCD ( \tau)  [  0  ]  } {  V ^{ T, \cZ}_\LCD ( \tau^2)  [  0 ]} {  V_{ L_{    \cC \times   \cD }} [   \varepsilon ]}
= \begin{cases}
     1 & \textrm{ if } \varepsilon =0;\\
    0 & \textrm{ if } \varepsilon =1 ,2.
  \end{cases}
\end{align*}

Similarly, by Prop.~\ref{thm:dual} and Prop.~\ref{thm:fusion:III} we have
\begin{align*}
\fuCDt{ V ^{ T, \cZ}_\LCD ( \tau)[0]} {  V ^{ T, \cZ}_\LCD ( \tau^2)  [  0 ]} { \VL{ (  \cgg + \cC) \times   \cD } }
 =\fuCDt{ V ^{ T, \cZ}_\LCD ( \tau)[0]}{ \VL{ (  -\cgg + \cC) \times   \cD } }  {  V ^{ T, \cZ}_\LCD ( \tau)  [  0 ]}
= 1.
\end{align*}
Therefore,
\begin{align*}
 V ^{ T, \cZ}_\LCD ( \tau)  [  0  ]    \times V ^{ T, \cZ}_\LCD ( \tau^2)  [  0 ] \ge
V_{ L_{    \cC \times   \cD }} [  0 ]
\oplus \bigoplus_{ \cZ \neq \cgg \in \cC ^\perp_{ \equiv_ \tau} \bmod \cC} \VL{ (  \cgg + \cC) \times   \cD }.
\end{align*}
Recall that %We compute the quantum dimensions of modules on both sides of the above equation:
\begin{align*}
&\qdim \big( V ^{ T, \cZ}_\LCD ( \tau)  [  0  ]    \times V ^{ T, \cZ}_\LCD ( \tau^2)  [  0 ] \big)
= \big(  \frac{ 2^ \ell}{ \left| \cC \right|} \big)^2 = \left| \cC ^\perp/ \cC \right|  ;\\
 & \qdim  V_{ L_{    \cC \times   \cD }} [  0 ] =1.
\end{align*}
Moreover,
\begin{align*}
\qdim \bigg( \bigoplus_{ \cZ \neq \cgg \in \cC ^\perp_{ \equiv_ \tau} \bmod \cC} \VL{ (  \cgg + \cC) \times   \cD } \bigg)
= \# \{ \cZ \neq \cgg \in \cC ^\perp_{ \equiv_ \tau} \bmod \cC\}   \cdot 3.
\end{align*}
Since
\begin{align*}
\# \{ \cZ \neq \cgg \in \cC ^\perp_{ \equiv_ \tau} \bmod \cC\}   = \frac{   1}{3 }   \big( \left| \cC ^\perp/ \cC \right|- 1 \big)  ,
\end{align*}
we know
\begin{align*}
& \qdim \bigg( V_{ L_{    \cC \times   \cD }} [  0 ]
\oplus \bigoplus_{ \cZ \neq \cgg \in \cC ^\perp_{ \equiv_ \tau} \bmod \cC} \VL{ (  \cgg + \cC) \times   \cD } \bigg)
= \left| \cC ^\perp/ \cC \right|\\
=& \qdim \big( V ^{ T, \cZ}_\LCD ( \tau)  [  0  ]    \times V ^{ T, \cZ}_\LCD ( \tau^2)  [  0 ] \big).
\end{align*}
This proves (i).

(ii) By Prop.~\ref{thm:fusion:I} we have
\begin{align*}
  V ^{ T, \cee^i}_{L_{ \cC \times ( \cgg^i + \cD) }} ( \tau^i)  [  \varepsilon^i  ]  = \VL{    \cC \times  ( ( -1)^i \cee^i + \cD)  } [  (-1)^{ i+1} \varepsilon^i ]   \times V ^{ T, \cZ}_{L_{ \cC \times \cD }} ( \tau^i)  [  0  ].
\end{align*}
Therefore,
\begin{align*}
 &V ^{ T, \cee^1}_{L_{ \cC \times ( \cgg^1 + \cD) }} ( \tau)  [  \varepsilon^1  ]    \times V ^{ T, \cee^2}_{L_{ \cC \times ( \cgg^2 + \cD)  }} ( \tau^2)  [  \varepsilon^2 ]\\
 =&  \VL{    \cC \times    ( -  \cee^1 + \cD) } [    \varepsilon^1  ]   \times \VL{    \cC \times     ( \cee^2 + \cD) } [   -  \varepsilon^2  ]
 \times V ^{ T, \cZ}_{L_{ \cC \times \cZ }} ( \tau)  [  0  ] \times V ^{ T, \cZ}_{L_{ \cC \times \cZ }} ( \tau^2)  [  0  ] \\
 =&  \VL{    \cC \times    ( \cee^2 -  \cee^1 + \cD)  } [    \varepsilon^1 - \varepsilon^2  ]  \times \bigg(  V_{ L_{    \cC \times   \cZ }} [  0 ]
\oplus \bigoplus_{ \cZ \neq \cgg \in \cC ^\perp_{ \equiv_ \tau} \bmod \cC} \VL{ (  \cgg + \cC) \times   \cD } \bigg) \\
=& \VL{    \cC \times    ( \cee^2 -  \cee^1 + \cD)  } [    \varepsilon^1 - \varepsilon^2  ] \oplus \bigoplus_{ \cZ \neq \cgg \in \cC ^\perp_{ \equiv_ \tau} \bmod \cC} \VL{ ( \cee^2 -  \cee^1 + \cgg + \cC) \times   \cD } .
\end{align*}
This proves (ii).
\end{proof}

\noindent\textbf{Case (V):} $ V ^{ T, \cdd^1}_\LCD ( \tau^i)  [ \varepsilon^1  ]    \times V ^{ T, \cdd^2}_\LCD ( \tau^i)  [  \varepsilon^2 ]$

We first consider the case $ \cdd^1= \cdd^2= \cZ$ and $ \varepsilon^1 = \varepsilon^2 =0$.
By the similar analysis as in the previous few cases, we can show quickly that many fusion coefficients are zero.
Assume
\begin{equation}
 \label{eq:fusion:TT:try1}
 \begin{aligned}
  &V ^{ T, \cZ}_\LCD ( \tau^i)  [  0  ] \times V ^{ T, \cZ}_\LCD ( \tau^i)  [  0 ]\\
 =&   x V ^{ T, \cZ}_\LCD ( \tau^{ 2i})  [  0 ]  +  y  V ^{ T, \cZ}_\LCD ( \tau^{ 2i})  [  1]
 + z   V ^{ T, \cZ}_\LCD ( \tau^{ 2i})  [  2],
\end{aligned}
 \end{equation}
for some $ x, y, z \in \mathds{Z}_{ \ge 0}$.

For simplicity, we denote
\begin{align*}
  T[j] :=&V ^{ T, \cZ}_\LC ( \tau)  [  j  ];& \check{T}[j] :=&V ^{ T, \cZ}_\LC ( \tau^2)  [  j  ];&  S[j]:=&V_\LCD[j].
\end{align*}
Equation ~\eqref{eq:fusion:TT:try1} then becomes
\begin{align*}
 T[0] \times T[0] = x \check{T}[0] + y \check{T}[1] +z \check{T}[2]
 = \check{T}[0] \times \big(  x S[0] + y S[2] + z S[1] \big).
\end{align*}

We multiply this equation by $\check{T}[0] $ and get
\begin{align}\label{eq:fu:V:TT:1}
  \check{T}[0]  \times  T[0] \times T[0]
  = \check{T}[0]  \times \check{T}[0] \times \big(  x S[0] + y S[2] + z S[1] \big).
\end{align}
On the left hand side of ~\eqref{eq:fu:V:TT:1}, by Prop.~\ref{thm:fusion:I} and Prop.~\ref{thm:fusion:III}, we have
\begin{equation}\label{eq:fu:V:1}\begin{aligned}
    & \check{T}[0] \times  T[0] \times T[0]  =  \big(  \check{T}[0] \times  T[0]  \big) \times  T[0]
 = \bigg(  S[0] \oplus \bigoplus_{ \cZ \neq \cgg \in \cC ^\perp_{ \equiv_ \tau} \mod \cC}   \VL{ (  \cgg + \cC) \times   \cD }   \bigg) \times T[0]\\
 =& T[0] + Q \big(  T[0] + T[1] + T[2] \big),
\end{aligned}\end{equation}
where $Q = \# \{ \cZ \neq \cgg \in \cC ^\perp_{ \equiv_ \tau} \mod \cC \}= \frac{ 4^{ \ell - 2d} -1} {3} $ and  $ d = \dim \cC$.

% On the other hand,
% Assume
% \begin{align*}
% \check{T}[0] \times \check{T}[0] = \check{x }  T[0] +  \check{y } T[1] +  \check{z} T[2].
% \end{align*}
% Then we have
% \begin{align*}
% x = \fuCDt{ S[0]]}{ T[0] \times T[0]} { \check{T} [0] }
% =  \fuCDt{ S[0]]}{   T [0] }  {  \big(   T[0] \times T[0]\big) ^\prime  }
% = \fuCDT{ S[0]]}{   T [0] }  { ???}
% \end{align*}

 By symmetry between automorphisms $ \tau$ and $ \tau^2$, we can rewrite ~\eqref{eq:fusion:TT:try1} as
\begin{align*}
  \check{T}[0] \times \check{T}[0] = x  T[0] + y T[1] +z T[2].
\end{align*}
Therefore, the right hand side of ~\eqref{eq:fu:V:TT:1} becomes
\begin{equation}\begin{aligned}\label{eq:fu:V:2}
    &(\check{T}[0]  \times  \check{T}[0] ) \times \big(  x S[0] + y S[2] + z S[1] \big)\\
  =&  \big(  x  T[0] + y T[1] +z T[2]  \big) \times \big(  x S[0] + y S[2] + z S[1] \big)\\
  =& \big(  x^2 + y^2  + z ^2 \big) T[0] +
 \big( xy + yz + zx  \big) T[1] +  \big( xy + yz + zx  \big) T[2].
\end{aligned}\end{equation}
Comparing ~\eqref{eq:fu:V:1} and ~\eqref{eq:fu:V:2} we have
\begin{subequations}\label{eq:sys_eq:twisted}
\begin{align}
  & x + y + z = \qdim  T[0]    =2^{ \ell - 2d};\label{eq:x+y+z}\\
    &   xy + yz + zx  = \frac{ 4^{ \ell - 2d} -1} {3};\\
    &  x^2 + y^2  + z ^2 =  xy + yz + zx +1 \label{eq:sys_eq:twisted:c}
\end{align}
\end{subequations}
 From ~\eqref{eq:sys_eq:twisted:c} we know
\begin{align*}
  ( x -y)^2 + ( y -z)^2 + ( z-x)^2 =2.
\end{align*}
Assuming $ x \ge y \ge z \ge 0$, then we have  $ x - z =1$ and thus $ z+1 \ge y \ge z$. Therefore,
\begin{align*}
 x &= y= \frac{ 2^{ \ell - 2d} +1 }{3},&   z =&  \frac{ 2^{ \ell - 2d}  - 2 }{3},& \textrm{ if }   2^{ \ell - 2d}  \equiv 2 \mod 3;\\
 x &=  \frac{ 2^{ \ell - 2d} + 2 }{3},&  y=z =&  \frac{ 2^{ \ell - 2d}  - 1 }{3},& \textrm{ if }    2^{ \ell - 2d}  \equiv 1 \mod 3,
\end{align*}
where $ d =\dim \cC$.
Note that $ 2^{ \ell - 2d}  \equiv 2^ \ell \mod 3 $.
As a summary, we have the proposition.

\begin{prop}\label{thm:fusion:V}
We have fusion rules:
\begin{equation}\begin{aligned}
    &V ^{ T, \cZ}_\LCD ( \tau^i)  [  0  ] \times V ^{ T, \cZ}_\LCD ( \tau^i)  [  0 ]\\
 =&   x ^\prime V ^{ T, \cZ}_\LCD ( \tau^{ 2i})  [  0 ]  +  y ^\prime  V ^{ T, \cZ}_\LCD ( \tau^{ 2i})  [  1]
 + z ^\prime   V ^{ T, \cZ}_\LCD ( \tau^{ 2i})  [  2],
\end{aligned}\end{equation}
for some $ x ^\prime, y ^\prime, z ^\prime \in \mathds{N}$.

If $ ( x\ge y \ge z) $ is  the decreasing reordering of $ ( x ^\prime, y^\prime, z ^\prime)$, then we have
\begin{align*}
 x &= y= \frac{ 2^{ \ell - 2d} +1 }{3},&   z =&  \frac{ 2^{ \ell - 2d}  - 2 }{3},& \textrm{ if  \quad }  \ell  \textrm{ is odd }; &&&\\
 x &=  \frac{ 2^{ \ell - 2d} + 2 }{3},&  y=z =&  \frac{ 2^{ \ell - 2d}  - 1 }{3},& \textrm{ if \quad}    \ell  \textrm{ is even },
\end{align*}
where $ d =\dim \cC$.  Note that $ 2^{ \ell - 2d} = \sqrt{ \left| \cC ^\perp/\cC \right|} $.
\end{prop}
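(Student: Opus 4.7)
My plan is to reduce this to a short system of polynomial equations in three unknowns and solve it. Abbreviate $T[j] := V^{T,\cZ}_\LCD(\tau^i)[j]$, $\check T[j] := V^{T,\cZ}_\LCD(\tau^{2i})[j]$, and $S[j] := V_\LCD[j]$. The first task is to establish the ansatz
\[
T[0] \times T[0] = x\,\check T[0] + y\,\check T[1] + z\,\check T[2]
\]
for some $x,y,z \in \mathds{Z}_{\ge 0}$. Every other candidate simple summand is excluded as in the preceding cases: Proposition~\ref{thm:dual} allows one to move modules across $\fuCDt{-}{-}{-}$, and then Propositions~\ref{thm:fusion:I}, \ref{thm:fusion:III}, and~\ref{thm:fusion:IV} obstruct the appearance of any untwisted or $\tau^i$-twisted irreducible. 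Among the $\tau^{2i}$-twisted modules, fusing both sides by the simple currents $S[\varepsilon]$ and the $\VL{\cC \times (\cdd + \cD)}[0]$ from Proposition~\ref{thm:fusion:I} fixes the twist parameter $\cee$ to $\cZ \bmod \cD$ and cuts the problem down to the three listed summands.

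With the ansatz in hand, the decisive step is to compute both sides of
\[
\check T[0] \times (T[0] \times T[0]) \;=\; (\check T[0] \times \check T[0]) \times (x\,S[0] + y\,S[2] + z\,S[1]),
\]
where the right side uses the rewriting $\check T[j] = \check T[0] \times S[2j]$ from Proposition~\ref{thm:fusion:I}(iii). On the left, Proposition~\ref{thm:fusion:IV}(i) gives $\check T[0] \times T[0] = S[0] \oplus \bigoplus_{\cZ \neq \cgg \in \cC^\perp_{\equiv\tau}/\cC} \VL{(\cgg + \cC) \times \cD}$, and then Proposition~\ref{thm:fusion:III} produces $T[0] + Q\bigl(T[0] + T[1] + T[2]\bigr)$ with $Q = \tfrac{1}{3}(4^{\ell - 2d} - 1)$. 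Taking contragredient duals in the ansatz (via Proposition~\ref{thm:dual}(iii), which identifies $T[j]' = \check T[j]$) gives $\check T[0] \times \check T[0] = xT[0] + yT[1] + zT[2]$ with the \emph{same} coefficients, and expanding the right side using $S[j] \times T[k] = T[j+k]$ yields $(x^2+y^2+z^2)T[0] + (xy+yz+zx)(T[1]+T[2])$.

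Comparing coefficients and matching quantum dimensions (which forces $x+y+z = \qdim T[0] = 2^{\ell - 2d}$) produces the system
\begin{align*}
x + y + z &= 2^{\ell - 2d},\\
x^2 + y^2 + z^2 &= xy + yz + zx + 1.
\end{align*}
The second equation rewrites as $(x-y)^2 + (y-z)^2 + (z-x)^2 = 2$, so after reordering decreasingly the consecutive gaps are either $(1,0)$ or $(0,1)$. Reducing $2^{\ell - 2d} \pmod 3$ (equal to $2$ when $\ell$ is odd, to $1$ when $\ell$ is even) selects between the two patterns and yields the explicit triples in the statement.

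The main obstacle I anticipate is the first step: rigorously excluding every irreducible $V^\tau_\LCD$-module that is not of the form $\check T[j]$. Unlike the earlier cases, here one is determining the $\tau^{2i}$-twisted sector on both sides, so no reference fusion product is available and the exclusions must be chained out of Cases~I--IV via Proposition~\ref{thm:dual}. Once the ansatz is secured, the $\check T[0]$-multiplication identity and the resulting Diophantine problem are forced, and the computation above finishes the proof.
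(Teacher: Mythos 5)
Your proposal is correct and follows essentially the same route as the paper: establish the ansatz by excluding all non-$\tau^{2i}$-twisted summands, multiply by $\check T[0]$, evaluate the left side via Propositions~\ref{thm:fusion:IV}(i) and \ref{thm:fusion:III}, equate with the expansion of $(\check T[0]\times\check T[0])\times(xS[0]+yS[2]+zS[1])$, and solve the resulting system $(x-y)^2+(y-z)^2+(z-x)^2=2$, $x+y+z=2^{\ell-2d}$ according to $2^{\ell-2d}\bmod 3$. The only cosmetic difference is that you justify $\check T[0]\times\check T[0]=xT[0]+yT[1]+zT[2]$ by contragredient duality ($N(A,B;C)=N(A',B';C')$ with $T[j]'=\check T[j]$) where the paper appeals to the symmetry between $\tau$ and $\tau^2$, and you drop the (redundant) equation $xy+yz+zx=\tfrac{4^{\ell-2d}-1}{3}$.
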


\paragraph{ $S$-\textbf{matrix and Verlinde formula}}
Next we compute the exact fusion rules using  Verlinde formula and $S$-matrix. First we review Dong-Li-Mason's theory on trace functions\cite{DLM00}.

Let $ V$ be a rational VOA and $ g, h \in \Aut V$ be commuting automorphisms of finite orders. Let $M$ be a $g$-twisted $h$-stable $V$-module. There exists a linear isomorphism $ \varphi(h)$ of $M$  such that
\begin{align*}
\varphi(h) Y_M ( u, z) &= Y_M( h u, z ) \varphi(h).
\end{align*}
For a homogeneous $ v \in V$  with $L(1) v =0$ we define  the trace function
\begin{equation*}
T_M(v, g,h; z ) :=\tr_{_M}\varphi(h) o( v) q^{L(0)-c/24}= q^{\lambda-c/24}\sum_{n\in\frac{1}{ \mid g \mid  } \mathds{Z}_+}\tr_{_{M_{ \lambda +n}}} o( v) \varphi(h)q^{n},
\end{equation*}
where  $o( v)$ is the degree zero operator of $v$, $ \lambda $ is the conformal weight of $M$, $ c$ is the central charge of $V$ and $ q = e^{( 2 \pi \sqrt{-1} z )}$.

\begin{prop}\cite{DLM00}
Let $ C_1 (g, h)$ be the $ \mathds{C}$-vector space
\begin{align*}
C_1 (g, h) := \mathrm{Span}_ \mathds{C} \{  T_M(v,g,h; z ) \mid M \mbox{ is a $g$-twisted $h$-stable $V$-module} \}.
\end{align*}
   Then
   \begin{enumerate}[label=(\roman*)]
   \item $ C_1(g, h)$ has a basis:
   \begin{align*}
 \{  T_M(v,g,h; z ) \mid M \mbox{ is an irreducible $g$-twisted $h$-stable $V$-module} \}.
\end{align*}
\item Modular invariance: Let $ T_M(v,g,h; z ) \in C_1( g, h)$ and $ \Gamma =  \begin{psmallmatrix} a & b \\ c & d \end{psmallmatrix}\in SL_2( \mathds{Z})$. Then we have $ T_M(v,g,h;  \Gamma \circ z ) \in C_1( g, h) \circ \Gamma$ in the sense that
\begin{align*}
T_M(v,  g,h ; \frac{ a z + b }{ c z + d}) \in C_1 ( g^a h^c, g^b h^d).
\end{align*}
In fact, if $M $ is a  $g$-twisted $h$-stable $V$-module, then
\begin{align*}
  T_M(v,  g,h ; \frac{ a z + b }{ c z + d}) = \sum S_N^{(g,h )} T_{N} (v, g, h; z ),
\end{align*}
where $N$ runs over irreducible $ g^a h^c$-twisted $ g^b h^d$-stable $V$-module, and the coefficients $S_N^{(g,h )}$ are independent of $v$.
\end{enumerate}
\end{prop}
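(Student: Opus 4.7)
Since this proposition is a cited result of Dong--Li--Mason that extends Zhu's modular invariance from (untwisted) characters to the $g$-twisted, $h$-equivariant setting, the plan is to sketch their strategy rather than attempt a wholly new proof.

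For part (i), the plan is to invoke the theory of the $g$-twisted Zhu algebra $A_g(V)$. Each irreducible $g$-twisted $V$-module $M$ is determined by its lowest-conformal-weight space $M(0)$, which carries an irreducible $A_g(V)$-module structure. The $h$-stability condition forces $\varphi(h)$ to act on $M(0)$, so the leading $q$-coefficient of $T_M(v,g,h;z)$ is $q^{\lambda_M - c/24}\tr_{M(0)}\varphi(h)\,o(v)$. Rationality together with $C_2$-cofiniteness yields finiteness of the set of irreducibles, and distinct irreducible $M$ are separated either by their lowest weights $\lambda_M$ or, when these coincide, by the functional $v \mapsto \tr_{M(0)}\varphi(h)\,o(v)$; hence the indicated family is linearly independent and spans, so it is a basis.

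For part (ii) the approach is Zhu's analytic recursion. First, I would introduce $n$-point genus-one $(g,h)$-correlation functions and establish recursion formulas that express an $n$-point function in terms of $(n-1)$-point functions with coefficients given by Weierstrass/Eisenstein series $P_k(q)$ and their twisted analogues, weighted by roots of unity of order $|g|$ that reflect the monodromy of a $g$-twisted field. Combined with the fact that $o(v)$ acts trivially on $M(0)$ for $v\in O_g(V)$ and with the finite-dimensionality of $A_g(V)$, these recursions yield a modular-invariant ordinary differential equation in $z$ satisfied by each trace function; its solution space is finite-dimensional and $SL_2(\mathbb{Z})$-stable. To identify the target, one tracks how $\Gamma = \bigl(\begin{smallmatrix} a & b \\ c & d \end{smallmatrix}\bigr)$ transforms the two periods of the punctured torus: the vertical period encoding the $g$-twist and the horizontal period encoding the $\varphi(h)$ insertion transform by $(g,h)\mapsto (g^ah^c, g^bh^d)$, giving exactly the stated image space.

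The main obstacle is the twisted ODE step: the trace functions have fractional $q$-expansions in $q^{1/|g|}$, and the untwisted Zhu recursion must be replaced by its twisted version using Eisenstein-like series twisted by characters of $\langle g\rangle$. Establishing the convergence, the modular transformation behaviour, and the precise form of these twisted Eisenstein series, and then showing that the resulting ODE is genuinely modular invariant, constitutes the technical core of the argument in \cite{DLM00}, and is what one would have to reproduce in order to give a self-contained proof.
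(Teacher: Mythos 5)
The paper does not prove this proposition at all: it is quoted verbatim from Dong--Li--Mason \cite{DLM00} as background, so there is no in-paper argument to compare against. Your sketch is a broadly faithful outline of the strategy actually used in \cite{DLM00}: part (i) does rest on the twisted Zhu algebra $A_g(V)$ (more precisely the family $A_{g,n}(V)$), with linear independence of the trace functions of inequivalent irreducible $h$-stable modules coming from the independence of the top-level functionals $v\mapsto \tr_{M(0)}\varphi(h)o(v)$ together with the distinct leading powers of $q$; and part (ii) does go through twisted Zhu-type recursions, twisted Eisenstein series, and a modular-invariant differential equation forced by $C_2$-cofiniteness. One structural refinement worth noting: in \cite{DLM00} the modular transformation statement is not proved one trace function at a time from its ODE, but by axiomatizing an abstract space of genus-one $(g,h)$-correlation functions, checking directly that the $SL_2(\mathbb{Z})$-action permutes these axiom systems according to $(g,h)\mapsto(g^ah^c,g^bh^d)$, and only then using the recursion/ODE machinery to show the abstract space is spanned by the honest trace functions; this is what makes the coefficients $S_N^{(g,h)}$ independent of $v$. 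Your proposal is a plan rather than a proof --- you defer exactly the convergence and twisted-Eisenstein analysis that constitutes the technical core --- but as a citation of an external theorem that is entirely appropriate here, and nothing in your outline is wrong. (You may also wish to note that the displayed transformation formula in the statement contains a typographical slip: the right-hand side should read $T_N(v,g^ah^c,g^bh^d;z)$.)
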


In particular, when $ g = h = \mathrm{ id }$, $ \Gamma = \begin{psmallmatrix} 1 & 0 \\ 0 & -1 \end{psmallmatrix}$ and $ V=M^0, \cdots, M^d$ are all inequivalent irreducible $V$-modules, we have  \begin{equation}\label{eq:S-def}
T_{M^i}(v, \mathrm{ id }, \mathrm{ id } ;-\frac{1}{z })= \sum_{j=0}^d S_{i,j} T_{M^j}(z).
\end{equation}

For simplicity, we denote
\begin{align*}
{M} (g, h; z) =&  Z_{M} (g, h; z) := T_M(\idd, g,h; z ),
%M(v,  g, h; z ) &:= T_{M} (v,  g, h; z),
\shortintertext{and}
M(z) := & Z_M(  \mathrm{ id }, \mathrm{ id };z) = \ch M(z).
\end{align*}

\begin{defi}
The matrix $S=(S_{i,j})$ defined in equation \eqref{eq:S-def} is called the $S$-matrix.
\end{defi}

%The following theorem which plays an essential role in this section is proved in \cite{H}.
\begin{thm}\cite{H08}\label{Verlinde} Let $V$ be a rational and $C_2$-cofinite simple VOA of CFT type and assume $V\cong V'.$ Let $S=(S_{i,j})_{i,j=0}^d$ be the $S$-matrix as defined in \eqref{eq:S-def}. Then
\begin{enumerate}[label=(\roman*)]
   \item $(S^{-1})_{i,j}=S_{i,j'}=S_{i',j},$ and $S_{i',j'}=S_{i,j}; $
   \item $S$ is symmetric and $S^2=(\delta_{i,j'});$
   \item $N_{i,j}^k=\sum_{s=0}^d\frac{S_{j,s}S_{i,s}S^{-1}_{s,k}}{S_{0,s}};$
   \item The $S$-matrix diagonalizes the fusion matrix $N(i)=(N_{i,j}^k)_{j,k=0}^d$ with diagonal entries $\frac{S_{i,s}}{S_{0,s}},$ for  $i, s=0,\cdots,d.$ More explicitly, $S^{-1}N(i)S= \mathrm{diag} (\frac{S_{i,s}}{S_{0,s}})_{s=0}^d.$ In particular, $S_{0,s}\neq 0$ for $s=0,\cdots,d.$
\end{enumerate}
\end{thm}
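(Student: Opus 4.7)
The plan is to follow Huang's program \cite{H08} of equipping the category of $V$-modules with the structure of a modular tensor category, and then reading the Verlinde formula off from the resulting representation of $SL_2(\mathds{Z})$ on characters. The first step is to invoke the modular invariance theorem of Zhu (and its twisted generalization by Dong--Li--Mason): the span of the normalized characters $\ch M^i$, $i=0,\dots,d$, carries a linear representation of $SL_2(\mathds{Z})$, and the matrix of $\left(\begin{smallmatrix}0&-1\\1&0\end{smallmatrix}\right)$ in this basis is by definition the $S$-matrix of \eqref{eq:S-def}.

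For parts (i) and (ii), the relation $S^2=(\delta_{i,j'})$ is extracted from $S^2=-I$ in $PSL_2(\mathds{Z})$ together with the identity $\ch M^i(-z)=\ch (M^i)'(z)$, the hypothesis $V\cong V'$ fixing the unit object. The symmetry $S_{i,j}=S_{j,i}$ is obtained by reinterpreting $S$-matrix entries as genus-one two-point invariants and using the non-degenerate pairing between $M^i$ and $(M^i)'$. Once $S^2=C$ and $S=S^T$ are in hand, the identities $(S^{-1})_{i,j}=S_{i,j'}=S_{i',j}$ and $S_{i',j'}=S_{i,j}$ follow by a purely formal manipulation.

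Part (iii) is the heart of the statement. One considers genus-one conformal blocks with two marked points; the sewing basis and the fusion-decomposition basis give two natural bases of this space, and the $S$-transformation relates them. Using Huang's associativity theorem for compositions of intertwining operators, one translates the matching of the two bases into an identity that expresses each fusion matrix $N(i)=(N_{i,j}^k)$ as a polynomial in $S$. Equivalently, the $N(i)$ are simultaneously diagonalized in the $S$-basis with eigenvalues $S_{i,s}/S_{0,s}$, which is exactly (iv); conjugating back then yields the explicit formula in (iii). The nonvanishing $S_{0,s}\neq 0$ is forced by the existence of the unit eigenvector attached to $N(0)=I$, since $S$ is invertible.

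The main obstacle is supplying the analytic input that legitimizes the sewing-versus-factorization comparison on the torus underlying (iii): one needs convergence and associativity of compositions of intertwining operators, together with a rigorous genus-one factorization of conformal blocks. The modular invariance of characters and the algebraic manipulations for (i)--(ii) are comparatively standard, but step (iii) is precisely the deep content established in \cite{H08} under the rationality, $C_2$-cofiniteness, CFT-type, and self-duality hypotheses, and there appears to be no appreciable shortcut bypassing this analytic machinery.
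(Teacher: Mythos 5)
The paper offers no proof of this theorem: it is quoted directly from Huang \cite{H08}, so there is nothing internal to compare your argument against. Your outline is an accurate high-level summary of Huang's actual proof and correctly locates the irreducible analytic content (convergence and associativity of compositions of intertwining operators, together with genus-one modular invariance for two-point correlation functions) in \cite{H08} itself; the one small imprecision is that $S^2=(\delta_{i,j'})$ cannot be extracted from the characters alone, since $\ch M=\ch M'$ makes the characters blind to charge conjugation, and one must instead use the full one-point trace functions $T_{M}(v,\cdot)$ of \eqref{eq:S-def} with general $v$ to distinguish $M^i$ from $(M^i)'$.
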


\begin{prop}\cite{DJX12}
    Let $V$ be a simple, rational and $C_2$-cofinite VOA of CFT type. Let $M^0,\ M^1, \cdots ,\ M^d$ be as before with the corresponding conformal weights $ \lambda_i>0$ for $0<i\leq d$. Then $0<\qdim_{V}M^i<\infty$ for any $0\leq i\leq d.$ In fact, we have
    \begin{equation}\label{eq:qdim=Si0/S00}
       \qdim_V M^i  = \frac{S_{i, 0}}{S_{0, 0}}.
    \end{equation}
\end{prop}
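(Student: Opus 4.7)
The plan is to exploit Zhu's modular invariance of characters together with the asymptotic expansion of normalised characters as the nome tends to zero. Starting from the definition
\[
\qdim_V M^i = \lim_{y\to 0^+}\frac{\ch M^i(\sqrt{-1}\, y)}{\ch V(\sqrt{-1}\, y)},
\]
I would apply the $S$-transformation $\tau \mapsto -1/\tau$ to both numerator and denominator and study the leading asymptotic on the transformed side, where the new nome $\tilde q := e^{-2\pi/y}$ tends to $0$ as $y\to 0^+$. The mechanism is standard: on the original side the nome $q=e^{-2\pi y}$ tends to $1$, so the characters do not have convergent power-series expansions there, whereas after the $S$-transformation the expansions in $\tilde q$ are genuinely small and one reads off the dominant term.

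First, by the modular invariance theorem for rational $C_2$-cofinite VOAs (the statement that gives the matrix in~\eqref{eq:S-def}), for each $j$ one has
\[
\ch M^j(\sqrt{-1}\, y) = \sum_{k=0}^{d} S_{j,k}\, \ch M^k(\sqrt{-1}/y).
\]
Plugging in the $q$-expansion $\ch M^k(\sqrt{-1}/y) = \tilde q^{\lambda_k - c/24}\bigl(\dim (M^k)_{\lambda_k} + O(\tilde q)\bigr)$ with $\tilde q\to 0^+$, the hypotheses $\lambda_0 = 0$ and $\lambda_k > 0$ for $k > 0$ single out $k=0$ as the leading contribution. Consequently
\[
\ch M^j(\sqrt{-1}\, y) \;\sim\; S_{j,0}\, (\dim V_0)\, e^{\pi c/(12y)}\qquad (y\to 0^+),
\]
and the factor $e^{\pi c/(12y)} \dim V_0$ is common to numerator and denominator. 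Dividing and cancelling gives the desired formula $\qdim_V M^i = S_{i,0}/S_{0,0}$. The strict inequality $0 < \qdim_V M^i < \infty$ then follows because $S_{0,0}\neq 0$ by Theorem~\ref{Verlinde}(iv); positivity of the ratio can be obtained either from Proposition~\ref{thm:prop_qdim}(i) applied to the limit, or more intrinsically from Perron--Frobenius, since the fusion matrix $N(i)$ has nonnegative integer entries and $S_{i,0}/S_{0,0}$ is its Perron--Frobenius eigenvalue by Theorem~\ref{Verlinde}(iv).

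The main obstacle will be the rigorous justification that the terms with $k>0$ contribute negligibly. Concretely one needs to bound $\sum_{k>0} S_{j,k}\, \ch M^k(\sqrt{-1}/y)$ uniformly in $y$ and show it is $o(\tilde q^{-c/24})$. The ratio $\tilde q^{\lambda_k - c/24}/\tilde q^{-c/24} = \tilde q^{\lambda_k}$ decays to $0$ faster than any polynomial in $1/y$ since $\lambda_k>0$, so in principle the leading behaviour is clear; however the tails $O(\tilde q)$ inside each $\ch M^k$ must be controlled uniformly across the finite list of modules, which is guaranteed by $C_2$-cofiniteness (polynomial growth of graded dimensions). With that technical input in place the asymptotic manipulations become rigorous and no ingredient beyond the stated hypotheses is needed.
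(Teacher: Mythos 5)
The paper states this proposition as a citation from \cite{DJX12} and gives no proof of its own; your argument is exactly the one in that reference: apply the $S$-transformation so the nome $\tilde q=e^{-2\pi/y}$ tends to $0$, use $\lambda_0=0<\lambda_k$ to isolate the $k=0$ term, cancel the common factor $e^{\pi c/(12y)}$, and invoke $S_{0,0}\neq 0$ (and $S_{i,0}=S_{0,i}\neq 0$) from Theorem~\ref{Verlinde} for finiteness and positivity. The proof is correct and essentially the same as the source's.
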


\paragraph{ \textbf{The case that  $ \cD$ is self-dual}}

Denote  $ \xi = e^{( 2 \pi \sqrt{-1} )/3 }$  a primitive cubic root of unity.

We define a function $ \Xi : \mathds{Z} \to \{ -1, 2 \}$ by
\begin{align*}
 \Xi ( n) := \xi^n + \xi^{ 2n },\quad \text{ for }  n \in \mathbb{Z}.
\end{align*}
Note that
\begin{align*}
 \Xi ( n ) =
 \begin{cases}
2 & \textrm{ if } n \equiv 0 \mod 3,\\
-1 & \textrm{ otherwise}.
\end{cases}
\end{align*}

\begin{prop}\label{thm:D_selfdual}
 Let $ \cD$ be a self-dual $ \mathds{Z}_3$-code of length $ \ell$, then we have
 \begin{align*}
T[0] \times T[0] =
 &\frac{ 2^{ \ell - 2d}+ \Xi( \ell) }{3} \check{T}[0]
 + \frac{ 2^{ \ell - 2d}+ \Xi( \ell+ 2) }{3} \check{T}[1]
 +\frac{ 2^{ \ell - 2d}+ \Xi( \ell  + 1) }{3} \check{T}[2]\\
 = &\sum_{ \varepsilon = 0,1,2} \frac{ 2^{ \ell - 2d}+ \Xi( \ell - \varepsilon ) }{3} \check{T}[ \varepsilon ] .
\end{align*}
\end{prop}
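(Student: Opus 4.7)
The plan is to combine Proposition~\ref{thm:fusion:V}, which already determines the multiset of coefficients in the decomposition of $T[0]\times T[0]$, with the Verlinde formula and the modular $S$-transformation of twisted trace functions in order to match each coefficient to the correct $\check{T}[\varepsilon]$. Proposition~\ref{thm:fusion:V} yields $T[0]\times T[0]=x'\check{T}[0]+y'\check{T}[1]+z'\check{T}[2]$ with multiset $\{x',y',z'\}=\{(2^{\ell-2d}+2)/3,\,(2^{\ell-2d}-1)/3,\,(2^{\ell-2d}-1)/3\}$; here $\ell$ is forced to be even by self-duality of $\cD$, since $|\cD|^{2}=3^{\ell}$. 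The remaining task is to identify which $\varepsilon$ carries the distinguished value $(2^{\ell-2d}+2)/3$, equivalently to show that this $\varepsilon$ is the residue of $\ell$ modulo $3$.

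To that end I will invoke the Verlinde formula (Theorem~\ref{Verlinde}),
\[
N_{T[0],T[0]}^{\check{T}[\varepsilon]}=\sum_{s}\frac{S_{T[0],s}^{\,2}\,S_{s,\check{T}[\varepsilon]}^{-1}}{S_{0,s}},
\]
after computing the relevant $S$-matrix entries. The route to those entries is to decompose each character over its $\tau$-eigenspaces,
\[
\ch T[\varepsilon](z)=\frac{1}{3}\sum_{a=0}^{2}\xi^{-a\varepsilon}\,Z_{V^{T,\cZ}_\LCD(\tau)}(\tau,\tau^{a};z),
\]
and similarly for $\ch\check{T}[\varepsilon]$, where $Z_{M}(g,h;z)$ denotes the trace of $\varphi(h)q^{L(0)-c/24}$ on a $g$-twisted $h$-stable module $M$. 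Applying $z\mapsto-1/z$ and appealing to the Dong--Li--Mason modular invariance transforms $Z_{V^{T,\cZ}_\LCD(\tau)}(\tau,\tau^{a};z)$ into a combination of trace functions on $\tau^{a}$-twisted $\tau^{-1}$-stable modules, and via the $\VLtl$-decomposition of Proposition~\ref{thm:decomp:into:L}(ii) and the self-duality of $\cD$ the computation factorises as $\sum_{\bm{\delta}\in\cD}\prod_{i=1}^{\ell}Z_{V_{L}^{T,-\delta_{i}}(\tau)}(\tau,\tau^{a};z)$. Each one-site factor is controlled by the known $V_{L}^{\tau}$-theory and contributes a cocycle phase of the form $\xi^{ca}$, so the $\ell$ factors together accumulate into a phase $\xi^{a\ell}$, with the remaining normalisation absorbed into the $S_{0,s}$.

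Substituting into the Verlinde sum, the phase $\xi^{-a\varepsilon}$ from the $\tau$-eigenspace decomposition of $\check{T}[\varepsilon]$ combines with $\xi^{a\ell}$ from the tensor-product $S$-transformation to produce $\xi^{a(\ell-\varepsilon)}$; summing over $a=0,1,2$ yields $1+\xi^{(\ell-\varepsilon)}+\xi^{2(\ell-\varepsilon)}=1+\Xi(\ell-\varepsilon)$, and after normalisation the coefficient of $\check{T}[\varepsilon]$ emerges as $(2^{\ell-2d}+\Xi(\ell-\varepsilon))/3$, as claimed.

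The main obstacle will be the careful bookkeeping of the cocycle phases in the modular $S$-transformation of the twisted trace functions and verifying that the $\ell$ one-site phases aggregate cleanly into $\xi^{a\ell}$; some care is needed because the one-site trace $Z_{V_{L}^{T,j}(\tau)}(\tau,\tau^{a};z)$ depends on $j$ and the sum over $\bm{\delta}\in\cD$ must conspire with the self-duality of $\cD$ to give the clean accumulated phase. A useful sanity check is the identity $\sum_{\varepsilon}(2^{\ell-2d}+\Xi(\ell-\varepsilon))/3=2^{\ell-2d}$, which matches the quantum-dimension constraint $\sum_{\varepsilon}N_{T[0],T[0]}^{\check{T}[\varepsilon]}=\qdim T[0]$ forced by multiplicativity of $\qdim$.
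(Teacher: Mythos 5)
Your overall strategy---reduce via Proposition~\ref{thm:fusion:V} to identifying which $\varepsilon$ carries the distinguished coefficient, then pin that down with modular invariance of twisted trace functions and the Verlinde formula---is the same strategy the paper uses, and your reduction is sound: self-duality of $\cD$ does force $\ell$ even, hence $2^{\ell-2d}\equiv 1 \bmod 3$, so the multiset of coefficients is $\{(2^{\ell-2d}+2)/3,(2^{\ell-2d}-1)/3,(2^{\ell-2d}-1)/3\}$ and only the location of the large coefficient remains. Your closing sanity check against $\qdim T[0]=2^{\ell-2d}$ is also correct.

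The gap is that the one step which actually decides the answer---showing that the relevant phase is $\xi^{a\ell}$, equivalently that the distinguished $\varepsilon$ is $\ell \bmod 3$---is asserted rather than proved. You propose to extract it by factorising $Z_{T}(\tau,\tau^{a};z)$ over the $\ell$ tensor factors of $\VLtl$ and claiming each site contributes a cocycle phase $\xi^{ca}$ with a site-independent $c$; but the one-site twisted trace functions $Z_{V_L^{T,j}(\tau)}(\tau,\tau^a;z)$ genuinely depend on $j$, the sum over $\bm{\delta}\in\cD$ mixes different $j$'s at each site, and you give no argument that these conspire to a clean $\xi^{a\ell}$. That is precisely where the content of the proposition lives, so the proof is incomplete as written. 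For comparison, the paper avoids the site-by-site bookkeeping entirely: it writes the unknown $S$-matrix rows for $T[0],T[1],T[2]$ in terms of three scalars $\lambda_1,\mu_1,\mu_2$, fixes $|\lambda_1|$ and the sign of $S_{0,0}$ from quantum dimensions, global dimension and positivity of the fusion coefficients, derives $\mu_1\mu_2=1$ from $S_{T[0],\check{T}[0]}=1$, and then determines $\mu_1=e^{-4\pi\sqrt{-1}\,\ell/9}$ (hence $\mu_1^3=\xi^{\ell}$) from the $T$-transformation $z\mapsto z+1$ using the conformal weight $\wt T[\varepsilon]\in \ell/9+\varepsilon/3+\mathds{Z}$ and the fact that the rank $N=2\ell$ satisfies $8\mid N$ (ternary self-dual codes require $4\mid\ell$, not merely $\ell$ even --- a hypothesis your argument would also need at this point). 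If you want to complete your route, you must either carry out the cocycle computation in full or replace it by an argument of the paper's type based on the conformal weight of the twisted sector modulo $\frac{1}{3}\mathds{Z}$.
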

\begin{proof}
We mimic the proof of \cite[Lemma 18]{Mi13a}.

   Let $V$ denote the lattice VOA $ \VLCD$. Since $ \cD$ is self-dual, we know $V$ is the  unique $ \tau$-stable irreducible module. Thus,  $V$ has exactly one $ \tau^i$-twisted module  for each $ i =1,2$. We denoted them by $ T$ and $ \check{T}$, respectively.  Let
   \begin{align*}
M^i :=& V[ i],&  M^{ 3 +i} :=& T[ i],&  M^{ 6 +i} :=&  \check{T}[ i],&
\end{align*}
for $ i = 0,1,2$. Then we know $ M^j,  ( j = 0, \cdots , 8)$ are irreducible $ V^ \tau$-modules. Note that there are also irreducible $V^ \tau$-modules which are not $\tau$-stable, but we won't need in the proof.

Denote $ C_1(g, h)$ the vector space generated by trace functions of  $g$-twisted and $h$-stable $V$-modules. By \cite{DLM00} we know the modular transformation $ \Gamma :z \mapsto \frac{-1}{ z}$ maps $ C_1 (g, h) $ to $ C_1 ( h, g ^{-1})$. In particular, $ \Gamma $ sends $ C_1( \tau, \tau^j ) $ to $ C_1( \tau^j , \tau^2 ) $ for $ j = 0,1,2$.

First, we know $ Z_{ T } ( \tau , 1  ; \frac{-1}{z}) \in C_1( 1, \tau^2)$ which is spanned by $  Z_V ( 1, 1; z)$. Therefore, we can write
\begin{align*}
Z_{ T } ( \tau , 1  ; \frac{-1}{z}) &= \lambda_1 Z_V(1,1; z),
\end{align*}
for some $ \lambda_1 \in \mathds{C}$.

Denote $W^i(g,h,z) = Z_{M^i}(g,h;z)$ for any $i$. Then we have
\begin{align}\label{eq:S_tau:1}
 W^3(  \frac{-1}{z} ) + W^4( \frac{-1}{z} ) + W^5 ( \frac{-1}{z} )
&= \lambda_1  \big(  W^0( z)  + W^1( z)  + W^2( z)  \big).
\end{align}

Similarly, using $ Z_{ T  } ( \tau , \tau^j   ; \frac{-1}{z}) \in C_1( \tau^j, \tau^2)$ for $ j =1,2$, we can write
\begin{equation}\label{eq:tau_decomp:1}
\begin{gathered}
W^3( 1, \tau , \frac{-1}{z} )  + W^4( 1, \tau , \frac{-1}{z} ) + W^5 ( 1, \tau, \frac{-1}{z} )
  =  \mu_1  \bigg(  W^3 ( 1, \tau^2 ; z)   + W^4( 1, \tau^2 ; z)  + W^5( 1, \tau^2; z)  \bigg),
  \shortintertext{and}
W^3( 1, \tau^2 , \frac{-1}{z} )  + W^4( 1, \tau^2 , \frac{-1}{z} ) + W^5 ( 1, \tau^2, \frac{-1}{z} )
 = \mu_2  \bigg(  W^6 ( 1, \tau^2 ; z)  + W^7( 1, \tau^2 ; z)  + W^8( 1, \tau^2; z)  \bigg),
\end{gathered}
\end{equation}
for some $ \mu_1, \mu_2 \in \mathds{C}$.

We can define a linear isomorphism $ \varphi( \tau^j)$ as following:
\begin{align*}
\varphi( \tau^j ) =& \xi^{  ij} \textrm{ on } M^{ 3 +i } \textrm{ and } M^{ 6 + i }.
\end{align*}
Therefore we can rewrite the  equation \eqref{eq:tau_decomp:1}  as
\begin{equation}\label{eq:tau_decomp:2}\begin{aligned}
  W^3( \tau, 1; \frac{-1}{z} ) + \xi W^4 ( \tau, 1; \frac{-1}{z} ) + \xi^2 W^5 ( \tau, 1; \frac{-1}{z} )
=&  \mu_1  \bigg(   W^3( \tau, 1; z ) + \xi^2  W^4( \tau, 1; z ) + \xi W^5( \tau, 1; z ) \bigg), \\
 W^3( \tau, 1; \frac{-1}{z} ) + \xi^2 W^4 ( \tau, 1; \frac{-1}{z} ) + \xi W^5( \tau, 1; \frac{-1}{z} )
=&  \mu_2  \bigg(   W^6 ( \tau^2, 1;z ) + \xi^2  W^7( \tau^2, 1; z ) + \xi W^8( \tau^2, 1; z ) \bigg).
\end{aligned}\end{equation}

Solving equations ~\eqref{eq:S_tau:1} and ~\eqref{eq:tau_decomp:2}, we know
\begin{align*}
   W^3( \frac{-1}{z} )  =&
 \frac{ \lambda_1}{3} \big( W^0(z )  + \xi^2 W^1 (z ) + \xi W^2(z ) \big)\\
 &+ \frac{ \mu_1}{3} \big( W^3 (z )  + \xi^2  W^4 (z ) + \xi  W^5 (z ) \big)
 + \frac{ \mu_2}{3} \big( W^6 (z )  + \xi   W^7 (z ) + \xi^2  W^8  (z ) \big), \\
  W^4( \frac{-1}{z} )  =&
 \frac{ \lambda_1}{3} \big( W^0(z )  + \xi^2 W^1 (z ) + \xi W^2(z ) \big)\\
 &+ \frac{ \mu_1}{3} \big( \xi^2 W^3 (z )  + \xi  W^4 (z ) +   W^5 (z ) \big)
 + \frac{ \mu_2}{3} \big( \xi W^6 (z )  + \xi^2   W^7 (z ) +   W^8  (z ) \big), \\
  W^5( \frac{-1}{z} )  =&
 \frac{ \lambda_1}{3} \big( W^0(z )  + \xi^2 W^1 (z ) + \xi W^2(z ) \big)\\
 &+ \frac{ \mu_1}{3} \big( \xi W^3 (z )  +  W^4 (z ) +   \xi^2 W^5 (z ) \big)
 + \frac{ \mu_2}{3} \big( \xi^2 W^6 (z )  +       W^7 (z ) +  \xi  W^8  (z ) \big).
\end{align*}

In other words, the rows $ S_{i, j}$ for $ i = 3,4,5$ are given by
\begin{align*}
\frac{1}{3}
\begin{pmatrix}
\lambda_1&\xi^2\lambda_1&\xi \lambda_1& \mu_1&\xi^2 \mu_1& \xi \mu_1 &\mu_2&\xi \mu_2&\xi^2 \mu_2 &0 & \cdots &0 \cr
\lambda_1&\xi^2\lambda_1&\xi \lambda_1& \xi^2 \mu_1&\xi \mu_1&\mu_1&\xi \mu_2&\xi^2 \mu_2&\mu_2 &0 &\cdots   &0\cr
\lambda_1&\xi^2\lambda_1&\xi \lambda_1&  \xi \mu_1& \mu_1&\xi^2 \mu_1&\xi^2 \mu_2&\mu_2&\xi \mu_2 &0 &  \cdots&0
 \end{pmatrix}.
\end{align*}

Since $ S^2_{0, 0} \mathrm{glob}(V^ \tau) =1 $, we know
\begin{align*}
& S_{0,0}^2  \cdot 9 \left| \cC ^\perp/ \cC \right| \left| \cD ^\perp/ \cD \right| =1,\\
&S_{0, i}/S_{ 0, 0 } = \qdim M^i,  \\
\shortintertext{and}
& \qdim M^i =
\begin{cases}
1, & \textrm{ if } i = 0,1,2 \\
\frac{ 2^ \ell}{ \left| \cC \right|} , & \textrm{ if }  3 \le i \le 8.
\end{cases}
\end{align*}
 This gives $ S_{0,0} = S_{0,1} = S_{0,2} = \frac{\pm 2^{ 2 d -  \ell }} {3} $ and $ \lambda_1 = 3 S_{ 0, h} = \pm 1 $ for $ 3\le h \le 8$.

By Verlinde formula, we know fusion rules are given by
\begin{align*}
N_{3,3}^6 &= \frac{ 3 \cdot ( \lambda_1/3)^3}{ S_{0,0}} + \frac{ 3 \big(  ( \mu_1/3)^3 + ( \mu_2/3)^3 \big) } { ( \lambda_1/3)}
=
\frac{ \pm \big(   2^{ \ell - 2d }  +    \mu_1^3 +  \mu_2^3   \big) }{3  }, \\
%\shortintertext{similarly}
N_{3,3}^7  &= \frac{   \pm \big(   2^{ \ell - 2d }     +    \xi^2 \mu_1^3 + \xi \mu_2^3    \big)  }{3 }     ,\\
N_{3,3}^8  &= \frac{ \pm \big(   2^{ \ell - 2d }     +    \xi \mu_1^3 + \xi^2 \mu_2^3    \big)  }{3  }   .
\end{align*}
Since
\begin{align*}
N_{3,3}^6 + N_{3,3}^7 + N_{3,3}^8 = 2^{ \ell - 2 d},
\end{align*}
we know
\begin{align*}
 S_{0,0}  &= 2^{ 2 d - \ell },
 \shortintertext{and}
N_{3,3}^6 &= \frac{     2^{ \ell - 2d }  +    \mu_1^3 +  \mu_2^3     }{3  },	 \\
 N_{3,3}^7  &= \frac{        2^{ \ell - 2d }     +    \xi^2 \mu_1^3 + \xi \mu_2^3      }{3 }     ,\\
N_{3,3}^8  &= \frac{      2^{ \ell - 2d }     +    \xi \mu_1^3 + \xi^2 \mu_2^3     }{3  }   .
\end{align*}

By  $ S_{3,6} = 1$  we have
\begin{align*}
9 = 3 \lambda_1 ^2 + 6 \mu_1 \mu_2,
\shortintertext{and hence}
\mu_1 \mu_2 = 1.
\end{align*}

Notice that the weights of irreducible $V^ \tau$-modules of twisted type  are
\begin{align*}
\wt V^{ T, \cZ }_\LCD ( \tau^i)  [ \varepsilon ]   \in   \ell /9   + 1/3 ( \sum_{ \bm{ e } \in S_ \varepsilon} e_i ) + \mathds{Z}
= \varepsilon/3 + \ell /9 + \mathds{Z},
\end{align*}
for $ i =1,2$.
By considering the characters, we have from the above $S$-matrix that 
 \begin{align*}
  Z_V (1,\tau; z)=&
\ch(W^0)+\xi\ch(W^1)+\xi^2\ch(W^2),\\
Z_V(1,\tau;-1/z)=&\lambda_1\{\ch(W^3)+\ch(W^4)+\ch(W^5)\}, \\
Z_V(1,\tau;-1/(z+1))=& e^{2\pi \sqrt{-1}N/24} \cdot e^{2\pi \sqrt{-1} \ell /9} \lambda_1\{\ch(W^3)+\xi\ch(W^4)+\xi^2\ch(W^5)\},\\
Z_V(1,\tau;-1/((-1/z)+1))=&e^{2\pi \sqrt{-1}N/24} \cdot e^{2\pi \sqrt{-1} \ell /9}  \lambda_1\mu_1\{\ch(W^3)+\xi^2\ch(W^4)+\xi\ch(W^5)\},
\end{align*}
where $ N = 2 \ell$ is the rank of the lattice $ \LCD$.  On the other hand, since
\begin{align*}
  Z_V(1,\tau;-1/((-1/z)+1))=&Z_V(1,\tau;-1-\frac{1}{z-1}) \\
=&e^{-2\pi \sqrt{-1}N/24}    Z_V(1,\tau,-1/(z-1)) \\
=&e^{-4\pi \sqrt{-1}N/24} \cdot e^{ -2\pi \sqrt{-1}  \ell /9}  \lambda_1\{\ch(W^3)+\xi^2\ch(W^4)+\xi \ch(W^5)\}
\end{align*}
we have $\mu_1 \cdot e^{ 6\pi \sqrt{-1}N/24}   \cdot e^{4\pi \sqrt{-1}  \ell /9}  =1$. Since $ N = 2 \ell$ and $ \ell$ is a multiple of 4, we know $ 8 | N$ and   $\mu_1 =  e^{ - 4\pi \sqrt{-1} \ell /9}   $.  Using $ \mu_1 \mu_2 =1$, we have $ \mu_1^3 = \xi^ \ell$ and $ \mu^3_2 =   \xi^{ 2 \ell }    $. This gives
\begin{align*}
  & T[0] \times T[0] \\
=& \frac{ 2^{ \ell - 2d}+ \xi^ \ell   + \xi^{ 2 \ell}}{3} \check{T}[0]
 + \frac{ 2^{ \ell - 2d} + \xi^ {  \ell +2 } + \xi^{  2 \ell +1 }   }{3} \check{T}[1 ]
+\frac{ 2^{ \ell - 2d} + \xi^{   \ell+ 1}  + \xi^{  2 \ell + 2 }  }{3} \check{T}[2]\\
= &\frac{ 2^{ \ell - 2d}+ \Xi( \ell) }{3} \check{T}[0]
 + \frac{ 2^{ \ell - 2d}+ \Xi( \ell+ 2) }{3} \check{T}[1]
 +\frac{ 2^{ \ell - 2d}+ \Xi( \ell  + 1) }{3} \check{T}[2],
\end{align*}
and completes the proof.
\end{proof}

\paragraph{ \textbf{General Case}}

Recall the decomposition
\begin{align}\label{eq:decomp_twisted_D_to_Z}
  V^{ T, \bm{\eta}}_\LCD ( \tau^i)  [ \varepsilon ] \cong
    \bigoplus_{ \cgg \in \cD}
       V^{ T, \bm{\eta}  - i \cgg }_\LC ( \tau^i)  [ \varepsilon ].
\end{align}

%In addition, if $ \cee = \cZ$ we will omit this superscript.
In the following,  we will denote
\begin{align*}
  T_{\cC \times \cD}^\cee [ \varepsilon] & \coloneqq V^{ T, \bm{\eta}}_\LCD ( \tau)  [ \varepsilon ], \\
    \check{T}_{\cC \times\cD}^\cee [ \varepsilon] & \coloneqq V^{ T, \bm{\eta}}_\LCD ( \tau^2)  [ \varepsilon ], \\
    \shortintertext{in addition, we let}
     T_{\cC \times\cD} [ \varepsilon] & \coloneqq V^{ T, \cZ }_\LCD ( \tau)  [ \varepsilon ], \\
    \check{T}_{\cC \times\cD} [ \varepsilon] & \coloneqq V^{ T, \cZ }_\LCD ( \tau^2)  [ \varepsilon ].
\end{align*}
We also let $ \cZ$ be the trivial $ \mathds{Z} _3$-code of various length depending on context.

\begin{prop}\label{thm:C:self_dual}
  Let $ \cB$ be a self-dual $ \GF$-code of length 2. Then
\begin{align*}
T_{ \cB \times \cZ} [0] \times T_{ \cB \times \cZ} [0] = \check{T}_{ \cB \times \cZ} [ 2].
\end{align*}
 \end{prop}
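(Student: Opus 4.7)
Applying Proposition~\ref{thm:fusion:V} with $\cC=\cB$ (so that $d=\dim_{\mathbb{F}_4}\cB=1$) and $\ell=2$, one gets $2^{\ell-2d}=1$; since $\ell$ is even, the reordered multiplicity triple $(x,y,z)$ is forced to be $(1,0,0)$. Hence $T_{\cB\times\cZ}[0]\times T_{\cB\times\cZ}[0]\cong\check{T}_{\cB\times\cZ}[\sigma]$ for exactly one $\sigma\in\{0,1,2\}$, and the remaining task is to identify which $\sigma$ occurs.

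To pin down $\sigma$, the plan is to mimic the modular $S$-matrix argument used in the proof of Proposition~\ref{thm:D_selfdual}. The critical inputs are the lowest conformal weight $\ell/9\pmod{\mathbb{Z}}$ of $T_{\cB\times\cZ}[0]$ (for $\ell=2$ this is $2/9$, coming from $\wt V_L^{T,0}(\tau)[0]\in 10/9+\mathbb{Z}$ summed over the two tensor factors with $\sum e_i\equiv 0\pmod 3$) and the central charge $N=2\ell$ of $V_{L_{\cB\times\cZ}}$. Chaining the modular transformations generated by $z\mapsto -1/z$ and $z\mapsto z+1$ on the trace function $Z_V(1,\tau;z)$, as in Proposition~\ref{thm:D_selfdual}, one tracks a scalar $\mu_1$ satisfying $\mu_1\cdot e^{6\pi\sqrt{-1}N/24}\cdot e^{4\pi\sqrt{-1}\ell/9}=1$, giving $\mu_1=e^{-4\pi\sqrt{-1}\ell/9}$ and hence $\mu_1^3=\xi^{\ell}$. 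The Verlinde formula (Theorem~\ref{Verlinde}) then yields that the multiplicity of $\check{T}_{\cB\times\cZ}[\varepsilon]$ in $T_{\cB\times\cZ}[0]\times T_{\cB\times\cZ}[0]$ is $(1+\Xi(\ell-\varepsilon))/3$; for $\ell=2$ this evaluates to $0,0,1$ for $\varepsilon=0,1,2$ respectively, confirming $\sigma=2$.

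The main obstacle is that Proposition~\ref{thm:D_selfdual}'s proof relied on the uniqueness of the $\tau$-stable irreducible $V_{\LCD}$-module, which was a consequence of $\cD$ being self-dual. In our setting $\cD=\cZ$ is not self-dual, so $V_{L_{\cB\times\cZ}}$ admits nine $\tau$-stable irreducible modules $V_{L_{\cB\times\bm{\delta}}}$ for $\bm{\delta}\in\mathbb{Z}_3^2$, and the spaces $C_1(\tau^a,\tau^b)$ of trace functions are correspondingly higher-dimensional. The linear system determining the relevant $S$-matrix entries therefore has many more unknowns and the bookkeeping is heavier. Nevertheless, the scalar $\mu_1$ governing how the $T_{\cB\times\cZ}[0]$-trace function transforms is determined purely by its conformal weight and the central charge (the lattice rank), so after tracking the additional contributions one still extracts $\mu_1^3=\xi^{\ell}$, and the final Verlinde computation produces the same fusion coefficient formula, yielding $\sigma=2$.
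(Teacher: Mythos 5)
Your first step is fine: Proposition~\ref{thm:fusion:V} with $\ell=2$, $d=1$ gives $2^{\ell-2d}=1$ and forces $T_{\cB\times\cZ}[0]\times T_{\cB\times\cZ}[0]=\check{T}_{\cB\times\cZ}[\sigma]$ for a single unknown $\sigma$. The gap is in how you propose to identify $\sigma$. The $S$-matrix computation in the proof of Proposition~\ref{thm:D_selfdual} is not a local statement about the single trace function $Z_{T}(\tau,1;z)$: it depends essentially on $\cD$ being self-dual, which makes $V$ the \emph{unique} $\tau$-stable irreducible module, so that each space $C_1(\tau^a,\tau^b)$ involved is spanned by exactly three trace functions and the relevant $S$-matrix block is the explicit $3\times 9$ array written there. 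From that explicit block one reads off $\lambda_1=\pm1$, derives $\mu_1\mu_2=1$ from $S_{3,6}=1$, fixes $S_{0,0}$ from the global dimension, and only then applies Verlinde. None of these steps survives unchanged when $\cD=\cZ$ of length $2$: there are nine $\tau$-stable untwisted irreducibles and nine $\tau^i$-twisted irreducibles, the Verlinde sum runs over all of them, and the normalizations $\mu_1\mu_2=1$ and $S_{0,0}=2^{2d-\ell}$ have no analogue you have established. Moreover, even the one concrete relation you carry over fails numerically: the step $\mu_1\cdot e^{6\pi\sqrt{-1}N/24}\cdot e^{4\pi\sqrt{-1}\ell/9}=1\Rightarrow\mu_1=e^{-4\pi\sqrt{-1}\ell/9}$ in Proposition~\ref{thm:D_selfdual} uses $8\mid N$, i.e.\ $4\mid\ell$; for $\ell=2$ one has $N=4$ and $e^{6\pi\sqrt{-1}N/24}=-1$, so the same manipulation would give $\mu_1^3=-\xi^{\ell}$, not $\xi^{\ell}$. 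So "after tracking the additional contributions one still extracts $\mu_1^3=\xi^\ell$" is exactly the assertion that needs proof, and it is not clear it is even the right intermediate statement.

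The paper sidesteps all of this by a doubling trick rather than redoing the modular argument. It forms $\cB^2=\cB\oplus\cB$, a self-dual $\GF$-code of length $4$, pairs it with a self-dual $\mathds{Z}_3$-code $\cS$ of length $4$, and applies Proposition~\ref{thm:D_selfdual} there to get $T_{\cB^2\times\cS}[0]\times T_{\cB^2\times\cS}[0]=\check{T}_{\cB^2\times\cS}[1]$. Restricting first to $V^\tau_{L_{\cB^2\times\cZ}}$ (using Propositions~\ref{thm:inj:fusion} and~\ref{thm:fusion:V}) yields $T_{\cB^2\times\cZ}[0]\times T_{\cB^2\times\cZ}[0]=\check{T}_{\cB^2\times\cZ}[1]$, and then restricting to $V^\tau_{L_{\cB\times\cZ}}\otimes V^\tau_{L_{\cB\times\cZ}}$ and using that exactly one of the coefficients $N\fusion{T_{\cB\times\cZ}[0]}{T_{\cB\times\cZ}[0]}{\check{T}_{\cB\times\cZ}[\varepsilon_0]}$ is nonzero forces $\varepsilon_0=1-\varepsilon_0$ in $\mathds{Z}_3$, i.e.\ $\varepsilon_0=2$. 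If you want to salvage your approach, you would either need to carry out the full higher-dimensional $S$-matrix analysis for $V^\tau_{L_{\cB\times\cZ}}$ (including the corrected sign from $N=4$), or adopt a reduction to the self-dual case along the lines above.
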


 \begin{proof}
In this case, all irreducible modules are simple current. It suffices to find the nonzero fusion rules.

 Let $ \cB^2 := \cB \oplus \cB$ be a self-dual code of length 4 and let $ \cS$ be a self-dual $  \mathds{Z}_3$-code of length 4. By Prop.~\ref{thm:D_selfdual}, we know
   \begin{align*}
T_{ \cB^2 \times \cS} [ 0] \times T_{ \cB^2 \times \cS} [ 0]
= \check{T}_{ \cB^2 \times \cS} [  1 ].
\end{align*}

Considering the sub VOA $ V^ \tau_{ L_{ \cB^2 \times \cZ} } \subset V^ \tau_{ L_{ \cB^2 \times \cS} }$, we have the decomposition of $V^ \tau_{ L_{ \cB^2 \times \cZ} }$-modules
\begin{align*}
T_{ \cB^2 \times \cS} [  \varepsilon ] &= \oplus_{ \cee \in \cS } T_{ \cB^2 \times \cZ}^ \cee [  \varepsilon ],\\
\check{T}_{ \cB^2 \times \cS } [  \varepsilon ] &= \oplus_{ \cee \in \cS } \check{T}_{ \cB^2 \times \cZ}^ \cee [  \varepsilon ].
\end{align*}
By Prop. \ref{thm:inj:fusion} we have
\begin{align*}
1 = N\fusion{ T_{ \cB^2 \times \cS} [ 0]}{ T_{ \cB^2 \times \cS} [ 0] }{ \check{T}_{ \cB^2 \times \cS} [ 1]}
\le N\fusion{ T_{ \cB^2 \times \cZ} [ 0] }{ T_{ \cB^2 \times \cZ} [ 0] }{  \oplus_{ \cee \in \cS } \check{T}_{ \cB^2 \times \cZ}^ \cee [ 1]}
=  N\fusion{ T_{ \cB^2 \times \cZ} [ 0] }{ T_{ \cB^2 \times \cZ} [ 0] }{   \check{T}_{ \cB^2 \times \cZ} [  1]},
\end{align*}
where the last equality follows from Prop.~\ref{thm:fusion:V}.
Since $ \cB^2$ is self-dual, then we have
\begin{align*}
  T_{ \cB^2 \times \cZ} [ 0] \times T_{ \cB^2 \times \cZ} [ 0] = \check{T}_{ \cB^2 \times \cZ} [ 1 ].
\end{align*}

Now consider the subVOA $ V^\tau_{ \cB \times \cZ} \otimes V^\tau_{ \cB \times \cZ} \subset  V^ \tau_{ L_{ \cB^2 \times \cZ} } $ and the decomposition
\begin{align*}
T_{ \cB^2 \times \cZ} [ \varepsilon ] =
\oplus_{ \varepsilon_0 = 0,1,2} T_{ \cB \times \cZ} [  \varepsilon_0 ] \otimes  T_{ \cB \times \cZ} [  \varepsilon - \varepsilon_0 ]
\end{align*}
of $ V^\tau_{ \cB \times \cZ} \otimes V^\tau_{ \cB \times \cZ} $-modules.

Using the decomposition we have
\begin{equation}\label{eq:cB_ineq}
\begin{aligned}
    1=& N\fusion{ T_{ \cB^2 \times \cZ} [ 0] }{ T_{ \cB^2 \times \cZ} [ 0] }{   \check{T}_{ \cB^2 \times \cZ}^ \cee [  1 ]}
\le N\fusion{ T_{ \cB\times \cZ} [ 0] \otimes T_{ \cB\times \cZ} [ 0]  }{ T_{ \cB \times \cZ} [ 0] \otimes T_{ \cB\times \cZ} [ 0]  }{  \sum_{ \varepsilon_0 = 0,1,2} \check{T}_{ \cB \times \cZ} [  \varepsilon_0 ] \otimes \check{T}_{ \cB \times \cZ} [  1 - \varepsilon_0 ] } \\
= &  \sum_{ \varepsilon_0 = 0,1,2}  N\fusion{ T_{ \cB\times \cZ} [ 0] }{ T_{ \cB \times \cZ} [ 0] }{   \check{T}_{ \cB \times \cZ}[  \varepsilon_0 ]} N\fusion{ T_{ \cB \times \cZ} [ 0] }{ T_{ \cB \times \cZ} [ 0] }{   \check{T}_{ \cB \times \cZ}  [  1 - \varepsilon_0 ]}.
\end{aligned}\end{equation}

Now since $ \cB$ is self-dual, only one of the fusion rules $ N\fusion{ T_{ \cB\times \cZ} [ 0] }{ T_{ \cB \times \cZ} [ 0] }{   \check{T}_{ \cB \times \cZ}  [  \varepsilon_0 ]} , ( \varepsilon = 0,1,2)$ is nonzero. To have the above inequality \eqref{eq:cB_ineq}, we must have $ N\fusion{ T_{ \cB\times \cZ} [ 0] }{ T_{ \cB \times \cZ} [ 0] }{   \check{T}_{ \cB \times \cZ}  [  \varepsilon_0 ]}  = \delta_{ \varepsilon_0, 2}$. This completes the proof.
 \end{proof}

\begin{prop}\label{thm:fusionV:general}
 Let $ \cC$ and $ \cD$ be self-orthogonal codes of length $ \ell$.
 \begin{enumerate}[label=(\roman*)]
   \item If the length $ \ell $ is even, then we have
\begin{align*}
  T[0] \times T[0] = &\sum_{ \varepsilon = 0,1,2} \frac{ 2^{ \ell - 2d}+ \Xi( \ell - \varepsilon ) }{3} \check{T}[ \varepsilon ] .
\end{align*}

% \item If the length $ \ell \equiv 2 \mod 4$, then we have
% \begin{align*}
%  T[0] \times T[0] =
% &
%   \frac{ 2^{ \ell - 2d} +  \Xi( \ell + 2  )   }{3} \check{T}[0 ]
% +\frac{ 2^{ \ell - 2d} +  \Xi( \ell   )  }{3} \check{T}[1]
% + \frac{ 2^{ \ell - 2d}+   \Xi( \ell + 1 ) }{3} \check{T}[2].
% \end{align*}

   \item If the length $ \ell $ is odd, then we have
\begin{align*}
 T[0] \times T[0] =
 &\sum_{ \varepsilon = 0,1,2} \frac{ 2^{ \ell - 2d} - \Xi( \ell - \varepsilon ) }{3} \check{T}[ \varepsilon ].
\end{align*}

\end{enumerate}
As a summary, we have
\begin{align*}
 T[0] \times T[0] =
 &\sum_{ \varepsilon = 0,1,2} \frac{ 2^{ \ell - 2d} + ( -1)^ \ell  \Xi( \ell - \varepsilon ) }{3} \check{T}[ \varepsilon ].
\end{align*}
It also  implies
\begin{align*}
  V^{ T,  \cee_1 }_\LCD ( \tau^i)  [ \varepsilon_1 ] \times V^{ T,  \cee_2 }_\LCD ( \tau^i)  [ \varepsilon_2 ]
  =&  \sum_{ \varepsilon = 0,1,2} \frac{ 2^{ \ell - 2d} + ( -1)^ \ell  \Xi( \ell - \varepsilon ) }{3}
  V^{ T,  -(\cee_1 + \cee_2) }_\LCD ( \tau^{ 2i})  [  \varepsilon - \varepsilon_1 - \varepsilon_2 ].
\end{align*}
\end{prop}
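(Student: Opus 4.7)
The plan is a two-step reduction: (I) collapse the $\mathbb{F}_3$-code $\cD$ to the trivial code, and (II) extend the $\GF$-code $\cC$ by an auxiliary code so that Proposition~\ref{thm:D_selfdual} becomes applicable. Two preliminary simplifications are immediate: by Proposition~\ref{thm:fusion:V}, the unordered multiset of fusion coefficients $\{x',y',z'\}$ is already determined by $\ell\bmod 2$ and $d=\dim\cC$, so only the distinguished index $\varepsilon_*\in\mathbb{Z}_3$ (the unique entry where the coefficient differs from the other two) remains to be pinned down; and by Proposition~\ref{thm:fusion:I}, the shifted version $V^{T,\cee_1}_\LCD(\tau^i)[\varepsilon_1]\times V^{T,\cee_2}_\LCD(\tau^i)[\varepsilon_2]$ follows from the base case $\cee_1=\cee_2=\cZ$ and $\varepsilon_1=\varepsilon_2=0$ by absorbing the shifts into untwisted factors.

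For Step~(I), I would use the subVOA inclusion $V^\tau_\LC\subset V^\tau_\LCD$ coming from $\LC\subset\LCD$, together with the decomposition after Proposition~\ref{thm:decomp:into:L}, which realizes $V^{T,\cZ}_\LC(\tau^i)[\varepsilon]$ as an irreducible $V^\tau_\LC$-summand of $V^{T,\cZ}_\LCD(\tau^i)[\varepsilon]$ (the summand indexed by $\cgg=\cZ\in\cD$). Restricting to these distinguished summands, Proposition~\ref{thm:inj:fusion} yields, for every $\varepsilon$,
\[
N^{\check{T}_{\cC\times\cD}[\varepsilon]}_{T_{\cC\times\cD}[0],\,T_{\cC\times\cD}[0]}
\;\le\;
N^{\check{T}_{\cC\times\cZ}[\varepsilon]}_{T_{\cC\times\cZ}[0],\,T_{\cC\times\cZ}[0]}.
\]
Both sides sum (over $\varepsilon$) to $2^{\ell-2d}$ by Proposition~\ref{thm:fusion:V}, so every inequality is in fact an equality, and it suffices to prove the formula when $\cD=\cZ$.

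For Step~(II), I choose an auxiliary self-orthogonal $\GF$-code $\cC_1$ of length $\ell_1\in\{0,1,2,3\}$ with $4\mid\ell+\ell_1$, together with a self-dual $\mathbb{F}_3$-code $\cS$ of length $\ell+\ell_1$. Proposition~\ref{thm:D_selfdual} applied to $(\cC\oplus\cC_1,\cS)$, followed by Step~(I) at the length-$(\ell+\ell_1)$ level, determines the fusion coefficients $c_\varepsilon=\bigl(2^{(\ell+\ell_1)-2(d+d_1)}+\Xi(\ell+\ell_1-\varepsilon)\bigr)/3$ exactly. Restricting via Proposition~\ref{thm:inj:fusion} along the tensor embedding $V^\tau_\LC\otimes V^\tau_{L_{\cC_1\times\cZ}}\hookrightarrow V^\tau_{L_{(\cC\oplus\cC_1)\times\cZ}}$, and using
\[
T_{(\cC\oplus\cC_1)\times\cZ}[\varepsilon]\;\supset\;\bigoplus_{\varepsilon_1+\varepsilon_2\equiv\varepsilon\bmod 3}T_{\cC\times\cZ}[\varepsilon_1]\otimes T_{\cC_1\times\cZ}[\varepsilon_2],
\]
one obtains $c_\varepsilon\le\sum_{\varepsilon_1+\varepsilon_2=\varepsilon}a_{\varepsilon_1}b_{\varepsilon_2}$, where $a_\varepsilon$ and $b_\varepsilon$ denote the unknown length-$\ell$ and length-$\ell_1$ multiplicities. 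Since $\sum c_\varepsilon=(\sum a_\varepsilon)(\sum b_\varepsilon)=2^{(\ell+\ell_1)-2(d+d_1)}$, equality holds coordinate-wise, giving the convolution identity $c=a\ast b$ in the group ring $\mathbb{Z}[\mathbb{Z}_3]$. The base cases needed for $b$ are accessible: $\ell_1=1$ gives $b=1+t^2$ by Proposition~\ref{thm:fusion:L:twisted}, $\ell_1=2$ gives $b=t^2$ by Proposition~\ref{thm:C:self_dual}, and $\ell_1=3$ reduces to these by a further tensor split. With $c$ and $b$ both known, solving the convolution produces $a$ and reads off the distinguished index $\varepsilon_*\equiv\ell\pmod 3$.

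The main obstacle is verifying that these convolutions reproduce precisely the $(-1)^\ell$ sign flip in the claim. Extending by $\cC_1$ of odd length $\ell_1$ inverts the parity of $\ell$ and hence should toggle the \emph{large versus small} role of the distinguished coefficient in $a$ relative to $c$; and correspondingly, convolution by $b=1+t^2$ (support $\{0,2\}$, both nonzero) maps a singleton-distinguished $a$ to a $c$ whose distinguished entry is shifted by $1\bmod 3$ and whose role has flipped, exactly as required. Convolution by $b=t^2$ (for $\ell_1=2$) simply shifts the distinguished index by $2\bmod 3$ without toggling, matching the parity-preserving case. Checking these toggles and shifts case-by-case across the residue classes $\ell\bmod 4$ is the combinatorial heart of the proof, and confirms the uniform formula with the factor $(-1)^\ell\Xi(\ell-\varepsilon)$.
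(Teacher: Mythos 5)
Your proposal is correct and follows essentially the same route as the paper's proof: first collapse $\cD$ to the trivial code via Proposition~\ref{thm:inj:fusion} together with the sum identity $x+y+z=2^{\ell-2d}$ from Proposition~\ref{thm:fusion:V}, then pad $\cC$ by an auxiliary factor so that Proposition~\ref{thm:D_selfdual} applies, and peel the auxiliary factor off by the same restriction-plus-sum-matching argument using Proposition~\ref{thm:C:self_dual} and the rank-one twisted fusion rule. The only difference is organizational --- the paper settles $4\mid\ell$, then $\ell\equiv 2\pmod 4$ by tensoring with the length-$2$ self-dual $\GF$-code, and finally reaches odd $\ell$ from the already-established even case at length $\ell+1$, whereas you pad directly to a multiple of $4$ in one step --- but the resulting convolution computations in $\mathbb{Z}[\mathds{Z}_3]$ are identical.
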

\begin{proof}

 (i)  First we assume  $ \ell$ is a multiple of 4. Then there exists a self-dual code $ \cS$ of length $ \ell$.

Restricting to $ V^ \tau_{\LC}$-modules, we know
\begin{align}\label{eq:D:zero_ineq}
 N\fusion{ T_{ \cC \times \cS}[0] } { T_{ \cC \times \cS}[0]  }{ \check{ T}_{ \cC \times \cS} [ \varepsilon] }
 \le N\fusion {  T_{ \cC \times \cZ} [0] } {  T_{ \cC \times \cZ} [0] } { \oplus_{ \cee \in \cS } \check{T}^\cee_{\cC \times \cZ} [ \varepsilon ] }
 = N\fusion {  T_{ \cC \times \cZ} [0] } {  T_{ \cC \times \cZ} [0] } {  \check{T}_{ \cC \times \cZ} [ \varepsilon] }.
\end{align}
On the other hand, we know  from \eqref{eq:x+y+z} that
\begin{align*}
 \sum_{ \varepsilon = 0,1,2 }  N\fusion{ T_{ \cC \times \cS}[0] } { T_{ \cC \times \cS}[0]  }{ \check{ T}_{ \cC \times \cS} [ \varepsilon] }
 = \sum_{ \varepsilon = 0,1,2 } N\fusion {  T_{ \cC \times \cZ} [0] } {  T_{ \cC \times \cZ} [0] } { \check{T}_{ \cC \times \cZ} [ \varepsilon] }.
\end{align*}
Therefore, the inequality in ~\eqref{eq:D:zero_ineq} must attain equality and we prove (i) when $ \cD$ is the trivial code of length  divisible by 4.

Now let $ \cD$ be a self-orthogonal code of length $ \ell$. Similarly, we have
\begin{align*}
  N\fusion{T_{ \cC \times \cD  }[0] } {T_{ \cC \times \cD  }[0]  }{ \check{T}_{ \cC \times \cD  } [ \varepsilon] }
 \le N\fusion {  T_{ \cC \times \cZ} [0] } {  T_{ \cC \times \cZ} [0] }{  T_{ \cC \times \cZ} [ \varepsilon] }.
\end{align*}
The same argument as in the case for $\cD=\mathbf{0}$ shows
\begin{align*}
  N\fusion{T_{ \cC \times\cD  }[0] } {T_{ \cC \times \cD  }[0]  }{ \check{T}_{ \cC \times \cD  } [ \varepsilon] }
 = N\fusion {  T_{ \cC \times \cZ} [0] } {  T_{ \cC \times \cZ} [0] }{  T_{ \cC \times \cZ} [ \varepsilon] }.
\end{align*}
This implies
\begin{align*}
  N\fusion{T_{ \cC \times\cD  }[0] } {T_{ \cC \times \cD  }[0]  }{ \check{T}_{ \cC \times \cD  } [ \varepsilon] }
=   N\fusion{T_{ \cC \times\cS  }[0] } {T_{ \cC \times \cS  }[0]  }{ \check{T}_{ \cC \times \cS  } [ \varepsilon] },
\end{align*}
and proves (i) by ~Prop. \ref{thm:D_selfdual} when $ \ell $ is a multiple of 4.

  Now assume $ \ell \equiv  2 \mod 4 $. Let $ \cB$ be a self-dual $\GF$-code of length 2. Then  $   { \cC \oplus \cB} $ is a self-orthogonal code of length divisible by 4 and $ ( \cD \oplus \cZ) $ is a self-orthogonal code of the same length.

Restricting to
\begin{align*}
V^ \tau_\LCD \otimes V^ \tau_{L_{ \cB \times \cZ} } \subset V^ \tau_{L_{ (  \cC \oplus \cB) \times ( \cD \oplus \cZ) }},
\end{align*}
we know
\begin{align*}
  T_{(  \cC \oplus \cB) \times ( \cD \oplus \cZ) } [ \varepsilon ]  &= \bigoplus_{ \varepsilon_0 = 0,1,2} T_{ \cC \times \cD} [ \varepsilon_0 ] \otimes T_{ \cB \times \cZ} [ \varepsilon - \varepsilon_0];
\intertext{moreover}
    &N\fusion{ T_{(  \cC \oplus \cB) \times ( \cD \oplus \cZ) } [0] }{ T_{(  \cC \oplus \cB) \times ( \cD \oplus \cZ) } [0] } { \check{T} _{(  \cC \oplus \cB) \times ( \cD \oplus \cZ) } [ \varepsilon ] } \\
   \le &\bigoplus_{ \varepsilon_0  = 0,1,2} N\fusion{ T_{\cC \times \cD} [0] } { T_{\cC \times \cD} [0] }{ \check{T} _{\cC \times \cD} [ \varepsilon - \varepsilon_0  ] } N\fusion{ T_{\cB \times \cZ } [0] }{ T_{\cB \times \cZ } [0] }{ \check{T}_{\cB \times \cZ } [   \varepsilon_0 ]}.
\end{align*}
By Prop.~\ref{thm:C:self_dual} we know
\begin{align*}
  T_{\cB \times \cZ } [0] \times T_{\cB \times \cZ } [0] = \check{T}_{\cB \times \cZ } [2 ];
\end{align*}
 therefore the above inequality becomes
\begin{align*}
  N\fusion{ T_{(  \cC \oplus \cB) \times ( \cD \oplus \cZ) } [0] }{ T_{(  \cC \oplus \cB) \times ( \cD \oplus \cZ) } [0] } { \check{T} _{(  \cC \oplus \cB) \times ( \cD \oplus \cZ) } [ \varepsilon ] }
  \le N\fusion{ T_{\cC \times\cD} [0] } { T_{\cC \times\cD} [0] }{ \check{T}_{\cC \times\cD} [   \varepsilon -2 ] }.
\end{align*}
We know $ \sqrt{ \abs{ \frac{  ( \cC \oplus \cB) ^\perp}{ \cC \oplus \cB}}  } = \sqrt{ \abs{ \frac{ \cC ^\perp}{ \cC} } }$ and hence by~\eqref{eq:x+y+z}
\begin{align*}
   \sum_{ \varepsilon = 0,1,2 } N\fusion{ T_{(  \cC \oplus \cB) \times ( \cD \oplus \cZ) } [0] }{ T_{(  \cC \oplus \cB) \times ( \cD \oplus \cZ) } [0] } { \check{T} _{(  \cC \oplus \cB) \times ( \cD \oplus \cZ) } [ \varepsilon ] }
   = \sum_{ \varepsilon = 0,1,2 } N\fusion{ T_{\cC \times\cD} [0] } { T_{\cC \times\cD} [0] }{ \check{T}_{\cC \times\cD} [ \varepsilon  ] }.
\end{align*}
Therefore, we must have
\begin{align*}
   N\fusion{ T_{(  \cC \oplus \cB) \times ( \cD \oplus \cZ) } [0] }{ T_{(  \cC \oplus \cB) \times ( \cD \oplus \cZ) } [0] } { \check{T} _{(  \cC \oplus \cB) \times ( \cD \oplus \cZ) } [ \varepsilon ] }
   =   N\fusion{  T_{\cC \times\cD}[0] } { T_{\cC \times\cD} [0] }{ \check{T}_{\cC \times\cD} [  \varepsilon  - 2  ] }.
\end{align*}
Note that   $ \cC \oplus \cB$ has length $ \ell + 2$, thus we have
 \begin{align*}
  &N\fusion{  T_{\cC \times\cD}[0] } { T_{\cC \times\cD} [0] }{ \check{T}_{\cC \times\cD} [  \varepsilon  ] }
  = N\fusion{ T_{(  \cC \oplus \cB) \times ( \cD \oplus \cZ) } [0] }{ T_{(  \cC \oplus \cB) \times ( \cD \oplus \cZ) } [0] } { \check{T} _{(  \cC \oplus \cB) \times ( \cD \oplus \cZ) } [ \varepsilon +  2 ] }\\
  =&  \frac{ 2^{ \ell - 2d} +  \Xi( \ell +2  - \varepsilon - 2   )   }{3}
  =  \frac{ 2^{ \ell - 2d} +  \Xi( \ell   -  \varepsilon)   }{3}.
\end{align*}
This proves (i) when $ \ell \equiv 2 \mod 4$.

Now assume $ \ell $ is odd, let $ \cC_e :=  \cC \oplus \cZ $ and $ \cD_e := \cD  \oplus \cZ $ be  self-orthogonal codes of even length $ \ell +1$.
Restricting to the subVOA $ V^\tau_{\LCD} \otimes V^\tau_L$, we have decomposition of $ V^\tau_{\LCD} \otimes V^\tau_{ \cZ \times \cZ }$-modules
\begin{align*}
   T_{ \cC_e \times \cD_e} [ 0 ] =     \bigoplus_{ \varepsilon_0 = 0,1,2} T_{ \cC \times \cD} [ \varepsilon_0 ] \otimes T_{ \cZ \times \cZ} [   - \varepsilon_0].
\end{align*}
We know that $ V^\tau_{ \cZ \times \cZ } = V^ \tau_L$. Recall  the fusion rules of $ V^ \tau_L$:
\begin{align*}
  T_{ \cZ \times \cZ}[0] \times T_{ \cZ \times \cZ} [0] = T_{ \cZ \times \cZ} [0] + T_{ \cZ \times \cZ} [2].
\end{align*}

Therefore,
\begin{equation}\label{eq:sum_even_2odd}
\begin{aligned}
N\fusion{ T_{ \cC_e \times \cD_e} [0]  } { T_{ \cC_e \times \cD_e} [0] } { \check{T}_{ \cC_e \times \cD_e} [0] }
 & \le N\fusion{ T_{ \cC \times \cD} [0]  } { T_{ \cC \times \cD} [0]  } { \check{T}_{ \cC \times \cD} [0]  }
  + N\fusion{ T_{ \cC \times \cD} [0]  } { T_{ \cC \times \cD} [0]  } { \check{T}_{ \cC \times \cD} [1]  }, \\
N\fusion{ T_{ \cC_e \times \cD_e} [0]  } { T_{ \cC_e \times \cD_e} [0] } { \check{T}_{ \cC_e \times \cD_e} [1 ] }
 & \le N\fusion{ T_{ \cC \times \cD} [0]  } { T_{ \cC \times \cD} [0]  } { \check{T}_{ \cC \times \cD} [1]  }
  + N\fusion{ T_{ \cC \times \cD} [0]  } { T_{ \cC \times \cD} [0]  } { \check{T}_{ \cC \times \cD} [2]  }, \\
  N\fusion{ T_{ \cC_e \times \cD_e} [0]  } { T_{ \cC_e \times \cD_e} [0] } { \check{T}_{ \cC_e \times \cD_e} [ 2 ] }
 & \le N\fusion{ T_{ \cC \times \cD} [0]  } { T_{ \cC \times \cD} [0]  } { \check{T}_{ \cC \times \cD} [0]  }
  + N\fusion{ T_{ \cC \times \cD} [0]  } { T_{ \cC \times \cD} [0]  } { \check{T}_{ \cC \times \cD} [2]  }.
\end{aligned}\end{equation}

This gives
\begin{align*}
\sum_{ \varepsilon = 0,1,2} N\fusion{ T_{ \cC_e \times \cD_e} [0]  } { T_{ \cC_e \times \cD_e} [0] } { \check{T}_{ \cC_e \times \cD_e} [  \varepsilon ] }
& \le 2 \sum_{ \varepsilon = 0,1,2} N\fusion{ T_{ \cC\times \cD} [0]  } { T_{ \cC \times \cD} [0] } { \check{T}_{ \cC \times \cD} [  \varepsilon ] }.
\end{align*}
From \eqref{eq:x+y+z} we know
\begin{align*}
  \sum_{ \varepsilon = 0,1,2} N\fusion{ T_{ \cC_e \times \cD_e} [0]  } { T_{ \cC_e \times \cD_e} [0] } { \check{T}_{ \cC_e \times \cD_e} [  \varepsilon ] } &= 2^{ \ell +1 - 2d } = 2 \cdot 2^{ \ell - 2d }
= 2 \sum_{ \varepsilon = 0,1,2} N\fusion{ T_{ \cC\times \cD} [0]  } { T_{ \cC \times \cD} [0] } { \check{T}_{ \cC \times \cD} [  \varepsilon ] }.
\end{align*}
Therefore inequalities in ~\eqref{eq:sum_even_2odd} must attain equalities. This gives
\begin{align*}
  N\fusion{ T_{ \cC \times \cD} [0]  } { T_{ \cC \times \cD} [0]  } { \check{T}_{ \cC \times \cD} [0 ]  }
=& 2^{ \ell - 2d } - \bigg(  N\fusion{ T_{ \cC \times \cD} [0]  } { T_{ \cC \times \cD} [0]  } { \check{T}_{ \cC \times \cD} [1]  }
  + N\fusion{ T_{ \cC \times \cD} [0]  } { T_{ \cC \times \cD} [0]  } { \check{T}_{ \cC \times \cD} [2]  }  \bigg) \\
  =&  2^{ \ell - 2d } - N\fusion{ T_{ \cC_e \times \cD_e} [0]  } { T_{ \cC_e \times \cD_e} [0] } { \check{T}_{ \cC_e \times \cD_e} [1 ] }
  =2^{ \ell - 2d } - \frac{ 2^{ \ell+1  - 2d} +  \Xi( \ell + 1 -1  )   }{3} \\
  =&  \frac{ 2^{ \ell  - 2d}  -  \Xi( \ell  )   }{3}.
\end{align*}
Similarly,  we have
\begin{align*}
   N\fusion{ T_{ \cC \times \cD} [0]  } { T_{ \cC \times \cD} [0]  } { \check{T}_{ \cC \times \cD} [1 ]  }
= &  2^{ \ell - 2d } - N\fusion{ T_{ \cC_e \times \cD_e} [0]  } { T_{ \cC_e \times \cD_e} [0] } { \check{T}_{ \cC_e \times \cD_e} [ 2 ] }
  = \frac{ 2^{ \ell  - 2d}  -  \Xi( \ell + 1 -   2 )   }{3},\\
   N\fusion{ T_{ \cC \times \cD} [0]  } { T_{ \cC \times \cD} [0]  } { \check{T}_{ \cC \times \cD} [2 ]  }
= &  2^{ \ell - 2d } - N\fusion{ T_{ \cC_e \times \cD_e} [0]  } { T_{ \cC_e \times \cD_e} [0] } { \check{T}_{ \cC_e \times \cD_e} [0  ] }
=   \frac{ 2^{ \ell  - 2d}  -  \Xi( \ell +1   )   }{3}.
\end{align*}
This proves (ii). The final statement follows immediately from ~Prop.\ref{thm:fusion:I}.
\end{proof}

\section{$ \mathbb{Z}_3$-orbifold construction and the Monster  group}

In this section, we discuss an application of Corollary \ref{thm:VLC:simp_curr:iff} and Proposition
\ref{thm:fusionV:general}. The main purpose is to construct certain $3$-local subgroups of the Monster simple group.
\medskip

\paragraph{ $\mathbb{Z}_3$-\textbf{orbifold of the Leech lattice VOA}}
Let $ \Lambda$ denote the Leech lattice and let $ \tau$ be a fixed point free isometry of $ \Lambda$. It is well-known \cite{DLM00} that the lattice VOA $ V_ \Lambda$ has exactly one irreducible $ \tau^i$-twisted module for $ i = 1,2$. We denote these twisted modules by $V_ \Lambda ^{T,1}$ and $ V_ \Lambda^{ T,2}$, respectively.
We define the $ V_ \Lambda[0]$-module
\begin{align*}
 V^ \sharp :=&   V_ \Lambda[0] \oplus  (V_ \Lambda ^{T, 1})_ \mathds{Z} \oplus  (V_ \Lambda ^{T, 2})_ \mathds{Z},
\end{align*}
where $ (V_ \Lambda ^{T, i})_ \mathds{Z} $ is the subspace of $ V_ \Lambda ^{T, i}$ of integral weights.

% Since the lattice $ \Lambda$ has  rank 24, the weights of $V^{ T, \bm{\eta}}_\LCD ( \tau^i)  [ \varepsilon ]$ are contained in
% \[
% \varepsilon/3 +  24/18 + \mathds{Z},\quad \text{  for } i =1,2.
% \]
% Therefore,
%  \begin{align*}
%   (V_ \Lambda ^{T, i })_ \mathds{Z} = V_{ \Lambda}^{T}( \tau^i ) [2 ].
% \end{align*}

The next proposition is proved in \cite{Mi13a}.

\begin{prop}[\cite{Mi13a}]
The module  $V^\sharp$ has a natural VOA structure. Moreover, it is a $\mathbb{Z}_3$ simple current extension of the VOA $V_\Lambda^\tau$.
\end{prop}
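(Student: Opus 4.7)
The plan is to realize $V^\sharp$ as a $\mathbb{Z}_3$-graded direct sum of three pairwise inequivalent simple current $V_\Lambda^\tau$-modules whose fusion rules realize the cyclic group $\mathbb{Z}_3$; the VOA structure then follows from the standard theory of simple current extensions. First, I would decompose each twisted module as $V_\Lambda^{T,i}=\bigoplus_{\varepsilon\in\mathbb{Z}_3}V_\Lambda^{T,i}[\varepsilon]$ into $\tau$-eigenspaces, each of which is irreducible over $V_\Lambda^\tau$ by Miyamoto's classification \cite{Mi13a} together with the uniqueness of the twisted module. A standard lattice-twisted-module computation for a rank $24$ lattice with a fixed-point-free order-$3$ isometry (acting on $\mathbb{C}\otimes_{\mathbb{Z}}\Lambda$ with eigenvalues $\omega,\bar\omega$ each of multiplicity $12$) shows that all conformal weights in $V_\Lambda^{T,i}$ lie in $\tfrac{1}{3}\mathbb{Z}$, and that the weights within each $\tau$-eigenspace lie in a single fixed coset of $\mathbb{Z}$, with the three cosets obtained from $\varepsilon=0,1,2$ being distinct. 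Consequently exactly one eigenspace $V_\Lambda^{T,i}[\varepsilon_i]$ consists of integer-weight vectors, and
\[
(V_\Lambda^{T,i})_\mathbb{Z}=V_\Lambda^{T,i}[\varepsilon_i]
\]
is a single irreducible $V_\Lambda^\tau$-module.

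Second, I would verify that $V_\Lambda^\tau$, $(V_\Lambda^{T,1})_\mathbb{Z}$ and $(V_\Lambda^{T,2})_\mathbb{Z}$ are simple currents whose fusion product realizes $\mathbb{Z}_3$. Quantum dimensions are computed by the argument of Theorem~\ref{thm:qd:CZ}, with self-duality of $\Lambda$ playing the role of the self-duality assumption on $\cC$; each of the three candidate modules turns out to have quantum dimension $1$, and by Proposition~\ref{thm:prop_qdim}(iii) they are simple currents. The fusion rules
\[
(V_\Lambda^{T,1})_\mathbb{Z}\times(V_\Lambda^{T,1})_\mathbb{Z}=(V_\Lambda^{T,2})_\mathbb{Z},\qquad (V_\Lambda^{T,1})_\mathbb{Z}\times(V_\Lambda^{T,2})_\mathbb{Z}=V_\Lambda^\tau
\]
can then be extracted by the methods of Propositions~\ref{thm:fusion:IV} and~\ref{thm:fusionV:general}: the only candidate summands on the right-hand sides lie in a single simple current orbit, and the integer-weight constraint (together with contragredient duality as in Proposition~\ref{thm:dual}) selects the correct one.

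Finally, once the $\mathbb{Z}_3$ simple current fusion structure is in place, a standard simple current extension theorem applied to a $\mathbb{Z}_3$-graded sum of simple currents over the simple, rational, $C_2$-cofinite, self-dual, CFT-type VOA $V_\Lambda^\tau$ endows $V^\sharp$ with a unique VOA structure extending that of $V_\Lambda^\tau$. The principal obstacle is the explicit fusion rule $(V_\Lambda^{T,1})_\mathbb{Z}\times(V_\Lambda^{T,1})_\mathbb{Z}=(V_\Lambda^{T,2})_\mathbb{Z}$; the cleanest route is to mimic Proposition~\ref{thm:D_selfdual}, namely to compute the relevant $S$-matrix entries (the analogues of $\mu_1,\mu_2$) via modular transformation of the twisted trace functions of $V_\Lambda$, and then to use the integer-weight constraint (forced by rank $24$ and the lowest conformal weight of $V_\Lambda^{T,i}$) to pin down the surviving $\tau$-eigenspace in the fusion product.
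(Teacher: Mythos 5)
This proposition is stated in the paper as a cited result of Miyamoto~\cite{Mi13a} ("The next proposition is proved in \cite{Mi13a}"), so there is no in-paper proof to compare against; what you have written is a reconstruction of the external argument. As a sketch it is essentially the right strategy and is consistent with how the paper itself handles the analogous situation for $V_{\LCD}^\tau$ (indeed Proposition~\ref{thm:D_selfdual} explicitly mimics \cite[Lemma 18]{Mi13a}, which is the key fusion-rule computation you are invoking): the identification of $(V_\Lambda^{T,i})_{\mathds{Z}}$ with a single irreducible $\tau$-eigenspace via the weight grading in $\tfrac13\mathds{Z}$ is correct (the ground state weight is $4/3$, so the three eigenspaces sit in the three distinct cosets of $\mathds{Z}$), the simple-current claim follows most cleanly from holomorphicity of $V_\Lambda$ (nine irreducibles for $V_\Lambda^\tau$, global dimension $9$, so every quantum dimension is $1$ by Proposition~\ref{thm:prop_qdim}(i)) rather than by transplanting Theorem~\ref{thm:qd:CZ}, and the selection of the integral-weight eigenspace in $(V_\Lambda^{T,1})_{\mathds{Z}}\times(V_\Lambda^{T,1})_{\mathds{Z}}$ via the $S$-matrix and weight constraints, together with contragredient duality as in Proposition~\ref{thm:dual}(iii) for the product of the $\tau$- and $\tau^2$-twisted pieces, is exactly Miyamoto's route. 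The one step you are under-weighting is the final one: "a standard simple current extension theorem" presupposes that $V_\Lambda^\tau$ is rational and $C_2$-cofinite (itself \cite[Thm B]{Mi13a}) and that a $\mathds{Z}_3$-graded sum of integral-weight simple currents actually carries an associative vertex operator structure; establishing that associativity is the substantive content of \cite{Mi13a} and not a formal corollary of the fusion data, so your sketch should acknowledge that this existence statement is being imported rather than derived.
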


\begin{rmk}
That $V^\sharp$ has a natural VOA structure was first announced by Dong and Mason~\cite{DM93}. They also claimed that the
full automorphism group of $V^\sharp$ is isomorphic to the Monster and $V^\sharp\cong V^\natural$ as a VOA. However, the
complete proof has not been published.
\end{rmk}

In \cite{SS10}, a $3$-local characterization of the Monster simple group has been obtained.

\begin{thm}\label{thm:3local_monster} \cite[Thm.  1.1]{SS10}
   Let $G$ be a finite group and $ S \in \Syl_3 (G)$. Let $ H_1$ and $H_2$ be subgroups of $G$ containing $S$ such that
\begin{description}
%\begin{enumerate}[label=(\roman*)]
   \item[M1] \label{3Monster:1} $H_1 = N_G( Z( O_3(H_1)))$, $ O_3( H_1)$ is extraspecial group of order $ 3^{13}$, $H_1/{ O_3 (H_1)} \cong 2 \Suz : 2$ and $\Ct_{ H_1} ( O_3 ( H_1)) = Z( O_3(H_1))$.
   \item[M2]\label{3Monster:2} $ H_2/ O_3( H_2) \cong \Omega_8^-(3 )$, $ O_3( H_2)$ is elementary abelian of order $ 3^8$ and $ O_3 (H_2)$ is a natural $ H_2/ O_3( H_2)$-module.
   \item[M3]\label{3Monster:3}  $ ( H_1 \cap H_2) / O_3 ( H_2)$ is an extension of an elementary abelian group of order $ 3^6$ by $ 2 \cdot PSU_4(3) : 2^2$.
  \end{description}
 %\end{enumerate}
Then $G$ is isomorphic to the largest sporadic simple group, the Monster.
\end{thm}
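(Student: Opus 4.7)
The plan is to treat Theorem \ref{thm:3local_monster} as an amalgam identification problem. Starting from the hypotheses, a common Sylow $3$-subgroup $S\le H_1\cap H_2$ is fixed, and one would first read off the precise structure of $S$ by combining M1 (which forces $S/Z(O_3(H_1))$ to sit inside a Sylow $3$-subgroup of $2\Suz{:}2$) with M2 (which places $S/O_3(H_2)$ inside a Sylow $3$-subgroup of $\Omega_8^-(3)$). The compatibility demanded by M3 then pins down $S$ as a specific $3$-group of order $3^{20}$. At this point the amalgam $\mathcal{A}=(H_1,H_2,H_1\cap H_2)$ is completely determined up to isomorphism.

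Next I would identify $\mathcal{A}$ with the known amalgam formed by the normalizers of the two classes of non-trivial $3$-pure elements in the Monster, namely $N_{\mathbb{M}}(3A)\cong 3^{1+12}.2\Suz{:}2$ and $N_{\mathbb{M}}(3B)\cong 3^{8}.\Omega_8^-(3).2$, whose intersection has the structure described by M3. Because the $3$-local data of these Monster subgroups are matched by M1--M3 on the nose, a weak closure / pushing up argument (in the style of Meierfrankenfeld--Stellmacher) should show that $\mathcal{A}$ is \emph{the} Monster amalgam.

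Once $\mathcal{A}$ is identified, the remaining task is to show that the universal completion admits a unique faithful finite quotient, and that this quotient is the Monster. Here one would invoke Thompson-style order formulas via the Bender method to compute $|G|$ from the $3$-local data and compare with $|\mathbb{M}|$, then apply a known characterization of the Monster (for instance Griess's characterization by the centralizer of an involution, reached after producing a $2A$-involution inside $H_1\cap H_2$ and computing its centralizer using M1--M3).

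The main obstacle, and the technical heart of \cite{SS10}, is the passage from the abstract amalgam to a concrete realization: ruling out all non-Monster faithful completions. This requires both careful control of the $2$-local structure (which is not given directly by M1--M3 and must be extracted from the action of $2\Suz{:}2$ and $\Omega_8^-(3).2$ on the respective $O_3$'s), and invocation of the classification of finite simple groups to exclude spurious composition factors in the chief series of $G$. I would expect most of the effort in a detailed write-up to be spent there, whereas the identification of $S$ and of the amalgam is fairly mechanical given M1--M3.
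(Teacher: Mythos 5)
This statement is not proved in the paper at all: it is quoted verbatim from Salarian--Stroth \cite[Thm.~1.1]{SS10} and used as a black box (and, as the authors note in the final remark of Section~6, it is not even applied in the end, since finiteness of $\Aut(V^\sharp)$ is not established). So there is no in-paper argument to compare yours against; the only legitimate ``proof'' within the scope of this article is the citation.

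As a standalone attempt, your proposal is a strategy outline rather than a proof, and you say so yourself: the identification of the amalgam with the Monster amalgam, the exclusion of non-Monster faithful completions, and the extraction of the $2$-local structure from M1--M3 are exactly the content of \cite{SS10}, and you defer all of them. Two concrete issues beyond incompleteness: (i) the order of $S$ is not $3^{20}$ --- from M1 one gets $|S| = 3^{13}\cdot 3^{7} = 3^{20}$ only if a Sylow $3$-subgroup of $2\Suz{:}2$ had order $3^{7}$, which it does ($|{\Suz}|_3 = 3^7$), so that part is fine, but the claim that M1--M3 ``pin down $S$'' and hence the whole amalgam ``up to isomorphism'' is precisely the nontrivial step and cannot be waved through as ``fairly mechanical''; (ii) the involution whose centralizer identifies $G$ as the Monster in this line of argument is a $2$-central one with centralizer of shape $2^{1+24}.\mathrm{Co}_1$, not a $2A$-involution (whose centralizer is $2.B$), so the endgame you describe is pointed at the wrong identification theorem. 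If you intend to use this theorem, the honest course is to cite \cite{SS10}, as the paper does.
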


In this section, we will construct explicitly certain subgroups $H_1$ and $H_2$ of $Aut(V^\sharp)$ such that $H_1$ and $H_2$ satisfy the hypotheses [M1] to [M3] above.

\medskip

\paragraph{ \textbf{Simple Current extension}}
%We first recall the notion of simple current extensions (cf. \cite{SY}). \
Let $V(0)$ be a simple rational $C_2$-cofinite VOA of
CFT type and let $\{ V(\alpha) \mid \alpha \in A\}$ be a set of inequivalent irreducible $V(0)$-modules indexed by an abelian group
$A$. A simple VOA $V=\oplus_{\alpha\in A} V(\alpha)$ is called a {\it $A$-graded extension} of $V(0)$ if $V(0)$ is a full sub VOA of
$V$ and $V$ carries a $A$-grading, i.e., $Y(x^\alpha,z)x^\beta\in V^{\alpha+\beta}$ for any $x^\alpha \in V^\alpha$, $x^\beta \in
V^\beta$. In this case, the group $A^*$ of all irreducible characters of $A$ acts naturally on $V$: for $\chi\in A^*$,  $\chi
(v)=\chi(\alpha) v$, $v\in V(\alpha)$. In other words, $V(\alpha)$ is an eigenspace of $A^*$ for all $\alpha\in A$ and $V(0)$ is the
fixed points of $A^*$.

If all $V^\alpha$, $\alpha\in A$, are simple current $V(0)$-modules, then $V$ is referred to as a {\it $A$-graded simple current
extension} of $V(0)$. The abelian group $A$ is automatically finite since $V(0)$ is rational.

\begin{thm}[\cite{sh2,SY}]\label{thm:uni_lift_auto}
Let $V=\oplus_{\alpha\in A}V(\alpha)$ and $V^\prime=\oplus_{\alpha\in A}V^\prime(\alpha)$ be simple VOAs graded by a finite
abelian group $A$. Suppose that $V(0)= V^\prime(0)$ is a simple rational $C_2$-cofinite VOA of CFT type  and $V(\alpha)$ and
$V'(\alpha)$ are simple current $V(0)$-modules for all $\alpha\in A$. Let $g$ be an automorphism of $V(0)$ which maps the set
of isomorphism classes of $\{V(\alpha)|\ \alpha\in A\}$ to those of $\{V^\prime(\alpha)|\ \alpha\in A\}$. Then there exists an
isomorphism $\tilde{g}$ from $V^\prime$ to $V$ such that $\tilde{g}_{|V(0)}=g$.
\end{thm}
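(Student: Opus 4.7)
The plan is to lift $g$ piece by piece across the $A$-grading, exploiting the rigidity of intertwining operators among simple currents. First, viewing $g \in \Aut(V(0))$ as acting on $V(0)$-modules by pullback, the hypothesis provides a bijection $\sigma : A \to A$ and, for each $\alpha \in A$, a twisted linear isomorphism $\psi_\alpha : V^\prime(\alpha) \to V(\sigma^{-1}(\alpha))$ satisfying $\psi_\alpha \circ Y_{V^\prime(\alpha)}(u,z) = Y_{V(\sigma^{-1}(\alpha))}(g(u),z) \circ \psi_\alpha$ for all $u \in V(0)$. Since every $V(\alpha)$ and every $V^\prime(\alpha)$ is a simple current and $g$ preserves fusion rules, $\sigma$ must intertwine the fusion product; as the fusion on simple currents indexed by $A$ agrees with the group law of $A$, $\sigma$ is automatically a group automorphism of $A$. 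After re-indexing, we may assume $\sigma = \mathrm{id}$.

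Next, assemble the $\psi_\alpha$ into a candidate lift $\tilde g = \bigoplus_\alpha \psi_\alpha : V^\prime \to V$, a linear isomorphism restricting to $g$ on $V(0)$. It remains to verify that $\tilde g$ is a VOA homomorphism, i.e., $\tilde g\bigl(Y_{V^\prime}(x,z)y\bigr) = Y_V\bigl(\tilde g(x),z\bigr)\,\tilde g(y)$ for $x \in V^\prime(\alpha)$ and $y \in V^\prime(\beta)$. For fixed $\alpha, \beta$, both sides define $V(0)$-intertwining operators of type $\fusion{V(\alpha)}{V(\beta)}{V(\alpha+\beta)}$; since $V(\alpha)$ is a simple current, this space is one-dimensional, so the two sides must differ by a scalar $c(\alpha,\beta) \in \mathbb{C}^\times$.

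A standard computation using the associativity of vertex operators shows that $c : A \times A \to \mathbb{C}^\times$ is a $2$-cocycle, and replacing each $\psi_\alpha$ by $\lambda_\alpha \psi_\alpha$ for scalars $\lambda_\alpha \in \mathbb{C}^\times$ alters $c$ by the coboundary $(\alpha,\beta) \mapsto \lambda_\alpha \lambda_\beta / \lambda_{\alpha+\beta}$. The cocycle $c$ captures precisely the discrepancy between the simple current extension structures of $V$ and $V^\prime$ over $V(0)$. Since both are $A$-graded simple current extensions of $V(0)$ by the same collection of simple currents (after identification via the $\psi_\alpha$), and since such an extension is determined up to $V(0)$-module-preserving isomorphism by its fusion data, $c$ must be a coboundary. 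Choosing $\{\lambda_\alpha\}$ to trivialize $c$ then turns $\tilde g$ into the desired VOA isomorphism.

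The main obstacle is this final cocycle-triviality step: it rests on the nontrivial fact that an $A$-graded simple current extension of a rational $V(0)$, when it exists, is unique up to $V(0)$-module-preserving isomorphism, together with a careful bookkeeping of how the intertwining operators that define $Y_V$ and $Y_{V^\prime}$ transform under $\tilde g$. Making this rigorous in the stated generality, with $A$ an arbitrary finite abelian group, is precisely the technical content of the cited works \cite{sh2,SY}; the simple current hypothesis is what keeps the relevant fusion spaces one-dimensional and the whole obstruction theory controlled by $H^2(A,\mathbb{C}^\times)$.
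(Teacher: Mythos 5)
The paper itself gives no proof of this theorem: it is imported verbatim from Shimakura \cite{sh2} (cf.\ Sakuma--Yamauchi \cite{SY}) and used as a black box, so there is no in-paper argument to measure yours against. Your sketch does follow the strategy of the cited source: lift $g$ module-by-module using the hypothesis on isomorphism classes, observe that on each pair of graded pieces the two candidate products live in the one-dimensional space of intertwining operators of type $\fusion{V(\alpha)}{V(\beta)}{V(\alpha+\beta)}$ (one-dimensionality is exactly what the simple current hypothesis buys), and reduce the failure of $\tilde g$ to be multiplicative to a $2$-cocycle $c$ on $A$ to be removed by rescaling the $\psi_\alpha$.

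The gap is in how you kill $c$. Saying the obstruction is ``controlled by $H^2(A,\mathbb{C}^\times)$'' is not enough: for non-cyclic $A$ (e.g.\ $A\cong\mathbb{Z}_n\times\mathbb{Z}_n$; the elementary abelian groups $\mathbb{Z}_3^8$ occurring in this paper's applications are of this kind) one has $H^2(A,\mathbb{C}^\times)\neq 0$, so a generic $2$-cocycle is \emph{not} a coboundary. What actually saves the argument is that $c$ is \emph{symmetric}: skew-symmetry of the vertex operators, $Y(x,z)y=e^{zL(-1)}Y(y,-z)x$, holds in both $V$ and $V'$ and forces $c(\alpha,\beta)=c(\beta,\alpha)$; symmetric $2$-cocycles valued in the divisible group $\mathbb{C}^\times$ are coboundaries because $\mathrm{Ext}^1_{\mathbb{Z}}(A,\mathbb{C}^\times)=0$. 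You never establish this symmetry, and without it the step fails. Your alternative justification --- that an $A$-graded simple current extension is ``determined up to $V(0)$-module-preserving isomorphism by its fusion data'' --- is essentially the $g=\mathrm{id}$ case of the theorem being proved, so invoking it is circular unless you take that uniqueness as prior input from \cite{sh2,SY}; and that uniqueness is itself proved by exactly the symmetric-cocycle computation just described. With the symmetry of $c$ supplied, your outline becomes the standard proof.
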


\begin{nota}
Let $S_A$ be the set of the isomorphism classes of the irreducible $V(0)$-modules $V(\alpha), (\alpha\in A)$.
For an automorphism $g$ of $V(0)$, we set $S_A\circ g=\{W\circ g|\ W\in S_A\}$, where $W\circ g$ denotes the $g$-conjugate of $W$, i.e., $W\circ g=W$ as a vector space and the vertex operator $Y_{W\circ g}( u,z)= Y_W(gu,z)$, for $u\in V$.
\end{nota}

Define
\begin{align*}
H^N_A&=\{h\in\Aut(V(0)) \mid \ S_A\circ h=S_A\},\\
H^C_A&=\{h\in\Aut(V(0)) \mid \ W\circ h=W\ \text{for all}\ W\in S_A\}.
\end{align*}
Then we obtain the restriction homomorphisms
\begin{align*}
\Phi^N_A&:N_{\Aut(V)}(A^*)\to H^N_A,\\
 \Phi^C_A&:C_{\Aut(V)}(A^*)\to H^C_A,
\end{align*}
Applying Theorem  to the case $V=V^\prime$, we show that $\Phi^N_A$ is surjective.
Since each $V(\alpha)$ is irreducible, $\Ker\Phi^N_A=A^*$.
By similar arguments, $\Phi^C_A$ is surjective, and $\Ker\Phi^C_A=A^*$.

%The next theorem can be found in \cite{sh}.
\begin{thm}[\cite{sh2} (cf. \cite{SY})]\label{NCthm}
   Let $V=\oplus_{\alpha\in A}V(\alpha)$ be a simple VOA graded by a finite abelian group $A$.
Suppose that the fusion rule $V(\alpha)\times V(\beta)=V(\alpha+\beta)$ holds for all $\alpha,\beta\in A^*$.
Then the restriction homomorphism $\Phi^N_A$ and $\Phi^C_A$ are surjective and $\Ker\Phi^C_A=\Ker\Phi^N_A=A^*$. That is, we have short exact sequences
\begin{align*}
 0 \longrightarrow A ^\ast \longrightarrow N_{ \Aut(V)}( A ^\ast) \longrightarrow H^N_A \longrightarrow 0,\\
 0 \longrightarrow A ^\ast \longrightarrow C_{ \Aut(V)}( A ^\ast) \longrightarrow H^C_A \longrightarrow 0,
\end{align*}
\end{thm}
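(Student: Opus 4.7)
The plan is to deduce everything from the lifting theorem (Theorem~\ref{thm:uni_lift_auto}) combined with a Schur's-lemma analysis of how automorphisms act on the isotypic summands $V(\alpha)$. For the surjectivity of $\Phi^N_A$, given $g \in H^N_A$, I will apply Theorem~\ref{thm:uni_lift_auto} with $V' = V$ to produce an extension $\tilde g \in \Aut(V)$ with $\tilde g|_{V(0)} = g$. The remaining task is to verify that $\tilde g$ normalises $A^*$. The key observation is that, since $\tilde g|_{V(0)} = g$, the image $\tilde g(V(\alpha))$, viewed as a $V(0)$-submodule of $V$, is isomorphic to the $g^{-1}$-conjugate $V(\alpha)\circ g^{-1}$. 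Because $g \in H^N_A$, this module lies in the isomorphism class of some $V(\sigma(\alpha))$, where $\sigma$ is a permutation of $A$. Multiplicity-one of each irreducible in the decomposition $V = \bigoplus_{\alpha\in A} V(\alpha)$ then forces $\tilde g$ to permute the summands according to $\sigma$. The grading together with the fusion rule $V(\alpha)\times V(\beta) = V(\alpha+\beta)$ imply $\sigma \in \Aut(A)$, so $\tilde g$ conjugates $A^*$ to itself (the character $\chi$ is sent to $\chi\circ\sigma^{-1}$). Thus $\tilde g \in N_{\Aut(V)}(A^*)$ and $\Phi^N_A(\tilde g) = g$. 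For $g \in H^C_A$, the corresponding $\sigma$ is the identity, so $\tilde g$ centralises $A^*$, giving the surjectivity of $\Phi^C_A$ by the same argument.

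Next I will compute the kernels. Take $h \in \Ker \Phi^N_A$, so $h|_{V(0)} = \mathrm{id}$. The conjugation argument above shows that $h$ stabilises each summand $V(\alpha)$ set-wise. Since each $V(\alpha)$ is irreducible as a $V(0)$-module, Schur's lemma forces $h$ to act on $V(\alpha)$ by a scalar $c(\alpha) \in \mathbb{C}^\times$, normalised so that $c(0) = 1$. Compatibility of $h$ with the vertex operator $Y(\cdot,z)\colon V(\alpha)\otimes V(\beta) \to V(\alpha+\beta)(\!(z)\!)$ then forces the multiplicative relation $c(\alpha)c(\beta) = c(\alpha+\beta)$ whenever the image of $Y$ is non-zero. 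Simplicity of $V$ together with the simple-current fusion rule $V(\alpha)\times V(\beta) = V(\alpha+\beta)$ guarantees this non-vanishing for every pair $(\alpha,\beta)$, so $c\in \Hom(A,\mathbb{C}^\times) = A^*$, whence $h \in A^*$. The reverse inclusion $A^*\subseteq \Ker\Phi^N_A$ is immediate, since by definition every element of $A^*$ acts as the trivial character on $V(0)$ and lies in $A^*$. The argument for $\Ker \Phi^C_A$ is identical.

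The main delicate points are (i) checking that the induced permutation $\sigma$ on $A$ is in fact a group automorphism, and (ii) checking that the Schur scalars $c(\alpha)$ assemble into a character of $A$. Both reduce to the assertion that products $Y(v_\alpha,z)v_\beta$, with $v_\alpha\in V(\alpha)$ and $v_\beta\in V(\beta)$, span a non-trivial subspace of $V(\alpha+\beta)$ for every pair $(\alpha,\beta)$. This is where the simple-current hypothesis $V(\alpha)\times V(\beta) = V(\alpha+\beta)$ (as opposed to the mere existence of a non-zero intertwining operator) is essential: without it one would only obtain a projective $A$-action and have to contend with a $2$-cocycle obstruction in $H^2(A,\mathbb{C}^\times)$. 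Once this non-vanishing is in hand, the remainder of the argument is bookkeeping against the grading.
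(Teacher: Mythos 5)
Your proposal is correct and follows essentially the same route as the paper, which disposes of the theorem in two sentences just before its statement: apply Theorem~\ref{thm:uni_lift_auto} with $V'=V$ to get surjectivity, and use irreducibility of each $V(\alpha)$ (Schur's lemma plus multiplicativity of the scalars) to identify the kernels with $A^*$. You have simply supplied the details that the paper defers to \cite{sh2,SY}.
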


\paragraph{\textbf{A subgroup of the shape $3^{1+12}( 2\Suz:2)$}}
Next we will construct a subgroup of  the shape $3^{1+12}( 2\Suz:2)$ in $ \Aut( V^ \sharp)$.
First we recall a theorem from \cite{LY13}.

\begin{prop}[{\cite[Thm. 5.15]{LY13}}]\label{thm:ses:C_AutVL: C_OL}
 Let $L$ be an even positive definite rootless lattice.
Let $\nu$ be a fixed point free isometry of $L$ of prime order $p$ and
$\hat{\nu}$ a lift of $\nu$ in $O(\hat{L})$.
Then we have an exact sequence
\begin{equation}\label{eq:ses:C_AutVL: C_OL}
   1 \longrightarrow \Hom(L/(1-\nu)L, \mathds{Z}_p)
  \longrightarrow  \Ct_{\Aut(V_L)}(\hat{\nu})
  \stackrel{\varphi} \longrightarrow  \Ct_{O(L)}(\nu) \longrightarrow 1.
\end{equation}
\end{prop}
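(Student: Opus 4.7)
The plan is to derive the exact sequence from the standard description of $\Aut(V_L)$ for rootless $L$. Because $L$ has no norm-$2$ vectors, the weight-one subspace $V_{L,1}$ coincides with the Cartan $\mathfrak{h}=\mathds{C}\otimes_{\mathds{Z}} L$, so the action of $\Aut(V_L)$ on $V_{L,1}$ produces a short exact sequence
\[
1\longrightarrow T\longrightarrow \Aut(V_L)\stackrel{\pi}{\longrightarrow} O(L)\longrightarrow 1,\qquad T:=\Hom(L,\mathds{C}^\times),
\]
where $T$ acts by $\sigma_\chi(e^\alpha)=\chi(\alpha)e^\alpha$ and a set-theoretic section is provided by the embedding $O(\hat L)\hookrightarrow \Aut(V_L)$. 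The map $\varphi$ of the proposition is the restriction of $\pi$ to $\Ct_{\Aut(V_L)}(\hat\nu)$, so the task splits into identifying $\ker\varphi$ and proving that $\varphi$ maps onto $\Ct_{O(L)}(\nu)$.

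For the kernel, a direct computation gives $\hat\nu\,\sigma_\chi\,\hat\nu^{-1}=\sigma_{\chi\circ\nu^{-1}}$, so $\chi\in\ker\varphi = T^{\hat\nu}$ precisely when $\chi$ is trivial on $(1-\nu)L$; hence $T^{\hat\nu}=\Hom(L/(1-\nu)L,\mathds{C}^\times)$. To replace $\mathds{C}^\times$ by $\mathds{Z}_p$, I use that $\nu$ is fixed-point-free to conclude that $1+\nu+\cdots+\nu^{p-1}=0$ on $L$, which rearranges to $p=\sum_{k=1}^{p-1}(1-\nu^k)\in(1-\nu)\mathds{Z}[\nu]$. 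Therefore $pL\subset(1-\nu)L$, so $L/(1-\nu)L$ is elementary abelian of exponent $p$, and any character into $\mathds{C}^\times$ automatically lands in the $p$-th roots of unity $\mathds{Z}_p$.

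For surjectivity, given $g\in \Ct_{O(L)}(\nu)$, pick any lift $\hat g\in O(\hat L)\subset \Aut(V_L)$. The commutator $c:=\hat g\hat\nu\hat g^{-1}\hat\nu^{-1}$ lies in $\ker\pi=T$, and a short calculation shows that replacing $\hat g$ by $\sigma_\chi\hat g$ shifts $c$ by $(1-\nu)\chi$. Hence the class of $c$ in $T/(1-\nu)T$ is exactly the obstruction to lifting $g$ into the centralizer. Applying $\Hom(-,\mathds{C}^\times)$ to the exact sequence $0\to L\xrightarrow{1-\nu}L\to L/(1-\nu)L\to 0$, and using that $\mathds{C}^\times$ is divisible (so $\mathrm{Ext}^1$ vanishes on finite groups), shows $(1-\nu)\colon T\to T$ is surjective. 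Thus $T/(1-\nu)T=0$, the obstruction vanishes, and $\varphi$ is onto.

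The main obstacle I anticipate is pinning down the surjectivity step cleanly: one must verify that modifying $\hat g$ by torus elements really shifts the commutator $c$ by the claimed coboundary $(1-\nu)\chi$, and then confirm the cohomological vanishing inside $T$ rather than in the smaller group $\Hom(L,\mathds{Z}_2)=\ker(O(\hat L)\to O(L))$ where $c$ a priori sits. The divisibility of $\mathds{C}^\times$ handles this conceptually; a more hands-on alternative is to solve $c=(1-\nu)\chi$ explicitly by extracting a $p$-th root in $T$ and averaging $\hat g$ over $\langle\hat\nu\rangle$ to produce a centralizing lift.
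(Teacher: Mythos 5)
Your argument is correct, and it reconstructs what is essentially the standard proof: the paper itself offers no proof of this proposition (it is imported verbatim from \cite[Thm.~5.15]{LY13}), and the argument in that reference likewise rests on the exact sequence $1\to\Hom(L,\mathds{C}^\times)\to\Aut(V_L)\to O(L)\to 1$ available when $L$ is rootless, the computation $\hat\nu\sigma_\chi\hat\nu^{-1}=\sigma_{\chi\circ\nu^{-1}}$ identifying the kernel with $\Hom(L/(1-\nu)L,\mathds{C}^\times)=\Hom(L/(1-\nu)L,\mathds{Z}_p)$ via $pL\subset(1-\nu)L$, and the divisibility of $\mathds{C}^\times$ to kill the obstruction class in $T/(1-\nu)T$ for surjectivity. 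The one step you flag as delicate --- that the commutator shifts by $(1-\nu)\chi$ and that the vanishing must be checked in the full torus $T$ rather than in $\Hom(L,\mathds{Z}_2)$ --- is handled correctly by your injectivity-of-$\mathds{C}^\times$ argument, so no gap remains.
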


Recall that  $$V^\sharp = V_ \Lambda[0]  \oplus  (V_ \Lambda ^{T, 1})_ \mathds{Z} \oplus (V_ \Lambda ^{T, 2})_ \mathds{Z}.$$
There is a natural automorphism $\tau'$ of order $3$ which acts on $V^\sharp$ as $1$ on $V_ \Lambda[0] $ , as $\xi$ on $  (V_ \Lambda ^{T, 1})_ \mathds{Z}$, and as $\xi^2$ on $  (V_ \Lambda ^{T, 2})_ \mathds{Z} $.

\begin{prop}
Let $ \tau'$ be defined as above. Then
   \begin{align*}
 N_{\Aut(V^\sharp)} (\langle \tau' \rangle  )  \cong  3^{ 1 +12}. ( 2 \Suz :2).
\end{align*}
%Moreover,
% \begin{align*}
% H^N_A \cong 3^{12} ( 2 \Suz . 2) =
%( \Lambda/ ( 1 - \tau) \Lambda, \mathbb{Z}_3).
% \end{align*}
%the group $H_1$ satisfies \textbf{M1}  of Thm.~\ref{thm:3local_monster}:
%\begin{enumerate}[label=(\roman*)]
%   \item  $H_1 = N_G( Z( O_3(H_1)))$;
%   \item $ O_3( H_1)$ is extraspecial group of order $ 3^{13}$;
%   \item $H_1/{ O_3 (H_1)} \cong 2 \Suz : 2$;
%   \item $\Ct_{ H_1} ( O_3 ( H_1)) = Z( O_3(H_1))$.
%\end{enumerate}
\end{prop}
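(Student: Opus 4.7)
The plan is to view $V^\sharp$ as a $\mathds{Z}_3$-graded simple current extension of $V_\Lambda^\tau$ with grading pieces $V_\Lambda[0]$, $(V_\Lambda^{T,1})_\mathds{Z}$, $(V_\Lambda^{T,2})_\mathds{Z}$; by construction $\tau'$ generates the dual character group $A^*\cong \mathds{Z}_3$. Theorem \ref{NCthm} then yields the short exact sequence
\begin{equation*}
1 \longrightarrow \langle \tau' \rangle \longrightarrow N_{\Aut(V^\sharp)}(\langle \tau' \rangle) \longrightarrow H^N_A \longrightarrow 1,
\end{equation*}
where $H^N_A\le \Aut(V_\Lambda^\tau)$ is the subgroup permuting the three grading pieces, so the problem reduces to identifying $H^N_A$ and understanding the extension.

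For the subgroup $H^C_A\le H^N_A$ fixing each grading piece, I would first check that any $g\in H^C_A$ also permutes the three $\tau$-eigenspaces $V_\Lambda[\varepsilon]$ in $V_\Lambda$ (using that these are recoverable as fusion products of the twisted pieces), and then apply Theorem \ref{thm:uni_lift_auto} to the simple current extension $V_\Lambda\supset V_\Lambda^\tau$ to lift $g$ to an element of $\Aut(V_\Lambda)$ commuting with $\hat\tau$. This identifies $H^C_A$ with $\Ct_{\Aut(V_\Lambda)}(\hat\tau)/\langle \hat\tau\rangle$. Proposition \ref{thm:ses:C_AutVL: C_OL} together with the classical facts $\Lambda/(1-\tau)\Lambda \cong \mathds{F}_3^{12}$ and $\Ct_{O(\Lambda)}(\tau) \cong 6.\Suz$ (the centralizer of a fixed-point-free order~$3$ isometry in $O(\Lambda)=\mathrm{Co}_0$) then gives $H^C_A\cong 3^{12}.(2.\Suz)$. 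Since $V_\Lambda[0]$ is fixed by every automorphism, $H^N_A/H^C_A$ embeds into the symmetric group on $\{(V_\Lambda^{T,1})_\mathds{Z},(V_\Lambda^{T,2})_\mathds{Z}\}$, and the swap is realized by the lift $\theta$ of $-1\in O(\Lambda)$ since the two twisted modules are contragredient duals; hence $[H^N_A:H^C_A]=2$ and $H^N_A\cong 3^{12}.(2\Suz{:}2)$.

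The main obstacle is showing that the preimage of the elementary abelian $3^{12}$ inside $N_{\Aut(V^\sharp)}(\langle \tau'\rangle)$ is extraspecial of type $3^{1+12}$ rather than elementary abelian $3^{13}$. Although the ``translation'' automorphisms $\hat\nu$ with $\nu \in \Hom(\Lambda/(1-\tau)\Lambda,\mathds{Z}_3)$ commute on $V_\Lambda$ itself, their lifts to operators on the twisted sectors $(V_\Lambda^{T,i})_\mathds{Z}$ commute only up to scalars in $\langle \tau'\rangle$; the plan is to compute this commutator pairing explicitly and verify that it is a non-degenerate alternating form $3^{12}\times 3^{12} \to \langle \tau'\rangle$. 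Granting this, assembling the pieces yields the desired isomorphism $N_{\Aut(V^\sharp)}(\langle \tau'\rangle) \cong 3^{1+12}.(2\Suz{:}2)$.
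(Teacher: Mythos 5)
Your strategy is essentially the paper's: pass through the simple-current-extension exact sequence to reduce to $\Aut(V_\Lambda^\tau)\cong 3^{12}.(2\Suz{:}2)$, and then show that the preimage of the normal $3^{12}$ inside $N_{\Aut(V^\sharp)}(\langle\tau'\rangle)$ is extraspecial rather than elementary abelian. For that last (and hardest) step your plan is the right one and is exactly what the paper carries out: the twisted central extension $\hat{\Lambda}_{\tau^i}$ attached to the commutator map $c^{\tau^i}(\alpha,\beta)=\xi^{\langle\alpha,\beta\rangle-\langle\tau^i\alpha,\beta\rangle}$ yields $\hat{\Lambda}_{\tau^i}/K\cong 2\times 3^{1+12}$ acting faithfully on the twisted sector, the non-degeneracy of the induced alternating form on $\Lambda/(1-\tau)\Lambda\cong 3^{12}$ being what forces extraspecialness; the paper then assembles the diagonal subgroup $X\cong 2\times 3^{1+12}$ acting on $V_\Lambda\oplus V_\Lambda^{T,1}\oplus V_\Lambda^{T,2}$ and embeds it into $N_{\Aut(V^\sharp)}(\langle\tau'\rangle)$. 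You have only sketched this computation, but it is the correct mechanism.

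One step as written would fail: the interchange of $(V_\Lambda^{T,1})_{\mathds{Z}}$ and $(V_\Lambda^{T,2})_{\mathds{Z}}$ is not realized by a lift $\theta$ of $-1\in O(\Lambda)$. Since $-1$ is central it commutes with $\tau$, so $M\circ\theta$ is again a $\tau$-twisted module whenever $M$ is; as the $\tau^i$-twisted $V_\Lambda$-module is unique for each $i$ and conjugation by $\theta$ preserves conformal weights, $\theta$ fixes both twisted pieces of $S_A$ and lies in $H^C_A$ --- it accounts for the central $2$ of $2\Suz$, not for the outer $:2$. The swap, and hence the index-two quotient $H^N_A/H^C_A$, is instead realized by a lift of an element $g\in N_{O(\Lambda)}(\langle\tau\rangle)\setminus\Ct_{O(\Lambda)}(\tau)$ with $g\tau g^{-1}=\tau^2$, which carries $\tau$-twisted modules to $\tau^2$-twisted ones. (Contragredient duality does exchange the two sectors, but it is not of the form $M\mapsto M\circ\theta$ here.) With this correction, and with the commutator computation actually executed, your argument coincides with the one in the paper.
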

 \begin{proof}
  Let $ S_A := \{  V_ \Lambda[0]  ,  (V_ \Lambda ^{T, 1})_ \mathds{Z},  (V_ \Lambda ^{T, 2})_ \mathds{Z} \}$ and
 \begin{align*}
H^N_A&=\{h\in\Aut  V_ \Lambda[0]  \mid \ S_A\circ h=S_A\}.
\end{align*}
Recall that $V_\Lambda[0]$ has exactly nine irreducible modules and only $(V_ \Lambda ^{T, 1})_ \mathds{Z}$ and $(V_ \Lambda ^{T, 2})_ \mathds{Z} $  have top weight $2$ (see \cite{Mi13a}). Therefore, $ S_A$ is invariant under the action of  $ \Aut V_ \Lambda[0] $ and   $ H^N_A = \Aut V_ \Lambda[0] $.

Now consider the simple current extension:
\begin{align*}
V_ \Lambda = V_ \Lambda[0]  \oplus V_ \Lambda[1]  \oplus V_ \Lambda[2],
\end{align*}
which is graded by $ \mathds{Z}_3$.

Let $ S_B := \{   V_ \Lambda[0]  , V_ \Lambda[1]  , V_ \Lambda[2] \}$. Then we have a short exact sequence
\begin{align*}
 0 \xrightarrow{ } \langle \hat{\tau}\rangle  \xrightarrow{ } N_{ \Aut( V_ \Lambda) } ( \langle \hat{\tau}\rangle)  \xrightarrow{\varphi}  N^N_B \xrightarrow{} 0,
\end{align*}
where
\begin{align*}
H^N_B := \{ h \in \Aut ( V_ \Lambda^ \tau) \mid S_B \circ h = S_B \}.
\end{align*}
Since $ V_ \Lambda[1]  $ and $ V_ \Lambda[2]$ are the only irreducible modules of $ V_ \Lambda[0]$ which have the top weight $1$,    all elements of  $ \Aut ( V_ \Lambda ^ \tau) $ lift to $\Aut(V_\Lambda)$ and hence
\[
 H^N_B = \Aut ( V_ \Lambda ^ \tau) \cong N_{\Aut(V_\Lambda)}(\langle \hat{\tau} \rangle)/\langle \hat{\tau}\rangle\cong 3^{12}.N_{O(\Lambda)}(\tau)/\langle \tau\rangle.
\]

Finally we will show that the subgroup $\varphi^{-1}(3^{12})$ is extra-special of order $3^{13}$.

Let $V_{\Lambda}^{T,i} = S[\tau]\otimes T_i$ be the irreducible $ \tau^i$-twisted module of $V_\Lambda$ for $ i = 1,2$. Recall the central extension (see \cite{Dl96})
\[
1\ \longrightarrow\  <-\xi >\ \longrightarrow\  \hat{\Lambda}_{\tau^i}\
\bar{\longrightarrow\ } \Lambda \longrightarrow\  1
\]
associated to the commutator map $c^{\tau^i}(\alpha, \beta)= \xi^{\langle \alpha, \beta\rangle -\langle \tau^i\alpha, \beta\rangle}$, where $\xi^3=1$ and $i=1,2$.

Set $K=\{a^{-1}\hat{\tau}^i(a)\mid a\in \hat{\Lambda}_{\tau^i}\}$. Then $\hat{\Lambda}_{\tau^i}/ K \cong 2\times  3^{1+12}$ and  $\hat{\Lambda}_{\tau^i}/ K$ acts faithfully on $T_i$. By \cite[Prop. 5.5.3]{FLM}, there is an exact sequence
\[
1\to \mathbb{C}^\times \to N_{\Aut(T_i)}(\pi(\hat{\Lambda}_{\tau^i}/K)) \xrightarrow{int} \Aut(\hat{\Lambda}_{\tau^i}) \to 1,
\]
where $\pi$ is the representation of $\hat{\Lambda}_{\tau^i}/K$ on $T_i$ and $int(g)(x)=gxg^{-1}$.

Consider a subgroup  $$X=\{(g, g', g'')\in \hat{\Lambda}/ K \times \hat{\Lambda}_{\tau^1}/ K\times \hat{\Lambda}_{\tau^2}/ K\mid  \varphi(g)=int(g')=int(g'')\}.$$
Then $X\cong 2\times 3^{1+12}$.
By the similar argument as in \cite[Section 10.4]{FLM}, one can show that $X$ acts on the space
$
V_\Lambda\oplus V_\Lambda^{T,1}\oplus V_\Lambda^{T,2}.
$
Moreover, there is a canonical embedding
$
\nu: X \to N_{\Aut(V^\sharp)}(\langle \tau'\rangle)
$
such that $\varphi(\nu(3^{1+12})) =Hom(\Lambda/(1-\tau)\Lambda, \mathbb{Z}_3)$.
\end{proof}

%By Normalizer/Centralizer theorem, we have
%\begin{align*}
%N_{ \Aut( V_ \Lambda) } ( \mathbb{Z}_3)/ \Ct_{ \Aut( V_ \Lambda) } (  \mathbb{Z}_3 ) \cong \Aut %\mathds{Z}_3 = \mathds{Z}_2.
%\end{align*}
%(Need to argue: N $ \neq $ C)

%From equation ~\eqref{eq:ses:C_AutVL: C_OL} and the ATLAS~\cite{Atlas} we obtain
%  \begin{align*}
%H_1  \cong  3^{ 1 +12}. ( 2 \Suz :2).
%\end{align*}

%By definition of extraspecial group, we know $ Z( O_3(H_1)) \cong  \mathds{Z}_3$, which is exactly the  character group  of the  index group  $ \mathds{Z}_3$ of the simple current extension.

%For (iv),
% \end{proof}

\paragraph{\textbf{A subgroup of the shape} $3^8 .\Omega^-(8,3).2$}
We will construct a subgroup $ H_2$ of $ \Aut( V^ \sharp)$ satisfying the following  conditions:
\begin{enumerate}[label=(\roman*)]
   \item  $ H_2/ O_3( H_2) \cong \Omega_8^-(3 ) : 2$.
   \item $ O_3( H_2)$ is elementary abelian of order $ 3^8$.
   \item $ O_3 (H_2)$ is a natural $ H_2/ O_3( H_2)$-module.
\end{enumerate}

Recall that  the Coxeter-Todd lattice $ K_{12}$ can be constructed by using the Hexacode.

Let $ \pi:  \mathds{Z} [ \xi ] \to  \mathds{Z}[ \xi]/ 2 \mathds{Z}[ \xi] \cong \F_4 $ be the natural quotient, where $ \xi $ is the primitive cubic root of unity. We know the lattice $ K_{12}$ can be defined as
   \begin{equation*}
      K_{12} := \{ (x_1, \cdots, x_6) \in ( \mathds{Z}[ \xi ])^6 \mid (\pi x_1, \cdots, \pi x_6  ) \in \cH \},
   \end{equation*}
where $ \cH$ is the hexacode.% and we consider it as an $ \mathds{Z}$-lattice as before.

%\begin{lem}
%   There is a natural isometry:
%\begin{align*}
%   K_{12} \to L_{ \cH \times \cZ}. %: (x_1, \cdots, x_6) \mapsto (\pi x_1, 0) \times  \cdots %(\pi x_6, 0  ).
%\end{align*}
%\end{lem}
%\begin{proof}
%   The group algebra $ \mathds{Z}[  2 \omega]$ is spanned by $2$ and $ 2 \omega$ as $ \mathds{Z}$- vector space. Direct computation shows as  $ \mathds{Z}$-lattice $ \mathds{Z} 2 \oplus \mathds{Z} \omega $ is isometric to the lattice $ L \cong \sqrt{2} A_2$.
%As a vector space
%\begin{align*}
%  \mathds{Z}[ \omega ] \cong \mathds{Z}[ 2 \omega] + ( 1 + \mathds{Z}[ 2 \omega]) + ( %\omega +  \mathds{Z}[ 2 \omega] )  +  ( \omega^2 + \mathds{Z}[ 2 \omega]).
%\end{align*}
%On the other hand, comparing with the definition of $ L_i$ given in \eqref{eq:def:L_i}, we know
%\begin{align*}
% x + \mathds{Z}[ \omega] &\cong L_x= L^{ (x, 0)} ,&
%\end{align*}      for   $ x \in \F_4$.
%Therefore, we know
%\begin{align*}
% K_{12} \cong \bigcup_{ \bm{ x } \in \cH} L_{x_1} \times  L_{x_2} \times  L_{x_3} \times  %L_{x_4} \times  L_{x_5} \times  L_{x_6} = L_{ \cH \times \cZ}.
%\end{align*}
%\end{proof}

Since $ \cH$ is self-dual, every irreducible $V^ \tau_{ K_{12}}$-module is a simple current module. There are exactly $ 3^6 \cdot 3 \cdot 3 = 3^8$ of them. These irreducible modules form an abelian group isomorphic to $ \mathds{Z}_3^8$ under the fusion product.  Denote $ R( \Ktau) $ the set of  irreducible $V^ \tau_{ K_{12}}$-modules.
 % Then $ R( K_{12}) $ forms a group algebra $  \mathds{C} [ 3^8]$ under  fusion products.

As before, we denote irreducible $V^ \tau_{ K_{12}}$-modules as
\begin{align*}
    S^a[ x ], T^a[ x ], \textrm{ and } \check{T}^a[ x ]
\end{align*}
for $a \in K_{12} ^\ast \mod K_{12}, x \in \mathds{Z}_3 $. If $ a = \cZ$, we will  omit this superscript. The fusion rules are given as follows:
\begin{align}\label{FR:K12}
 S^a[x]  +  S^b[ y ] =& S^{ a +b} [ x +y],&
 S^a[x]  +  T^b[ y ] =& T^{ b -a } [ x +y],\\
 S^a[x]  +  \check{T} ^b[ y ] =& \check{T}^{ a +b} [ y- x],&
 T^a[x]  +  \check{T}^b[ y ] =& S^{b -a   } [ x -y],\\
 T^a[x]  +  T^b[ y ] =& \check{T}^{- (a +b)  } [- ( x +y)],&
 \check{T}^a[x]  +  \check{T}^b[ y ] =& {T}^{- (a +b)  } [- ( x +y)].&
\end{align}
Note that the operation $ +$ on the left hand side of the above equations is  the fusion product.

% \textbf{ No need to mention such a fact}
%
% We first prove the following lemma.
%
% \begin{lem}
%  Let   $V= ( \F_3^6, \langle -,- \rangle)$ be a 6-dimensional vector space  equipped with the standard bilinear form over $ \F_3$. Let $q_F$ be the quadratic form associated to $ \langle -,- \rangle$. Then the orthogonal group $ O( V, q_F) $ is isomorphic to $O^-_6(3)$ of minus type.
% \end{lem}
% \begin{proof}
%  It is clear that $ \{ 1^3 0^3, 1^2 2 0^3 \}$ and $ \{ 0^3 1^3, 0^3 1^2 2 \}$ are two hyperbolic pair. We also know that
%  \begin{align*}
%  X_1 := \Span  \{ 1^3 0^3, 1 21 0^3 \} =& \{ (x yx 0^3) \in \F_3^6 \mid x, y \in \F_3 \}, \\
%  X_2 := \Span \{ 0^3 1^3, 0^3 1 21 \} =& \{ (0^3 x yx ) \in \F_3^6 \mid x, y \in \F_3 \}.
% \end{align*}
%
% Let $ v = ( v_1, \cdots, v_6) \in V$. If $ v \perp H_i$ for $ i=1,2$, then we have
% \begin{align*}
% v_1 + v_3 = v_4 + v_6 = v_2 = v_5=0.
% \end{align*}
% Let $ A := \{ ( x, 0, -x, y, 0, -y) \mid x, y \in \F_3 \}$. Then we have $ V = X_1 \oplus H_2 \oplus A$. Moreover, for $ v \in A$ we have $ q_F(v ) =0$ if and only if $ v=0$. This means $ A$ is anisotropic. Therefore, the maximal totally isotropic subspace of $V$ is 2-dimensional and $O(V,q_F)$ is of minus type.
% \end{proof}

Recall  that  $ K_{12} ^\ast/ K_{12} \cong (\cH \times \cZ)^\ast/(\cH \times \cZ) = 0 \times \F_3^6$ as an abelian group.  It also forms a non-singular quadratic space of minus type (see for example \cite{CS})
if we define the quadratic form $$q_F(a+K_{12})= 3\langle a,a\rangle \mod 3.$$
%We may regard the representative in    $ K_{12} ^\ast/ K_{12}$ as an element in $ F\F_3^6$.

 \begin{prop}\label{thm:O83minus}
We define a map $ q : R(\Ktau) \to \mathds{Z}_3$ by
\begin{align*}
 q( S^a[x])  =& q_F( a ),& &\textrm{ and }&  q ( T^a [x ]) =& q ( \check{T}^a [x ])=q_F ( a) + x +1.
\end{align*}
Then $q$ is a quadratic form on $ R( \Ktau)$ and $( R(\Ktau),  q)$ is non-singular space of minus type.
\end{prop}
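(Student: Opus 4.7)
The plan is to identify $(R(\Ktau),q)$ explicitly as the orthogonal sum of a known minus-type quadratic space and a hyperbolic plane over $\mathbb{F}_3$.

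First, since $\cH$ is self-dual, Corollary~\ref{thm:VLC:simp_curr:iff} implies that every irreducible $V^\tau_{K_{12}}$-module is a simple current, so the fusion product turns $R(\Ktau)$ into a finite abelian group of order $3^{8}$ with identity $S^0[0]=V^\tau_{K_{12}}$. The fusion rules \eqref{FR:K12} show that every element has order dividing $3$ (for instance $3T^a[x]=\check{T}^a[x]+T^a[x]=S^0[0]$), so $R(\Ktau)\cong\mathbb{F}_3^{8}$ as an $\mathbb{F}_3$-vector space.

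Next I would construct a group isomorphism
\begin{align*}
\phi : R(\Ktau) &\;\longrightarrow\; (K_{12}^\ast/K_{12})\oplus\mathbb{F}_3\oplus\mathbb{F}_3,\\
S^a[x]&\;\longmapsto\;(a,x,0),\\
T^b[y]&\;\longmapsto\;(-b,y,1),\\
\check{T}^c[z]&\;\longmapsto\;(c,-z,-1),
\end{align*}
and verify it respects the fusion product by checking the six relations in \eqref{FR:K12}; for example $\phi(T^a[x])+\phi(T^b[y])=(-(a+b),x+y,-1)$, which matches $\phi(\check{T}^{-(a+b)}[-(x+y)])=(-(a+b),x+y,-1)$, in agreement with $T^a[x]\times T^b[y]=\check{T}^{-(a+b)}[-(x+y)]$. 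The other cases are analogous bookkeeping.

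Transporting $q$ along $\phi$ gives the single uniform expression
$$q(a,x,t)=q_F(a)+tx+t^{2},$$
as one sees on each of the three classes $t=0,\pm1$. This exhibits $(R(\Ktau),q)$ as the orthogonal sum of $(K_{12}^\ast/K_{12},q_F)$ with the binary form $q_0(x,t)=tx+t^{2}$ on $\mathbb{F}_3^{2}$. Since $q_F$ is a quadratic form on $K_{12}^\ast/K_{12}$ (recalled just above the statement) and $q_0$ has the nonsingular polar form with matrix $\bigl(\begin{smallmatrix}0&1\\1&2\end{smallmatrix}\bigr)$ over $\mathbb{F}_3$, their direct sum $q$ is a quadratic form. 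For the type: $q_F$ is non-singular of minus type on $\mathbb{F}_3^{6}$, and so has Witt index $2$, while $q_0$ has the isotropic vector $(1,0)$ and hence is a hyperbolic plane of Witt index $1$. Thus $q=q_F\perp q_0$ is non-singular with Witt index $2+1=3$, which is minus type for an $8$-dimensional $\mathbb{F}_3$-quadratic form.

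The only real work will be the case-by-case verification that $\phi$ is a group homomorphism, which is mechanical from \eqref{FR:K12}; once the orthogonal decomposition $q=q_F\perp q_0$ is in hand, the remaining assertions reduce to standard Witt theory over $\mathbb{F}_3$ together with the cited fact that $q_F$ is of minus type on $K_{12}^\ast/K_{12}$.
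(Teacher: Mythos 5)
Your proof is correct, and it lands on the same orthogonal decomposition the paper uses: a $6$-dimensional non-singular minus-type piece coming from $(K_{12}^\ast/K_{12},q_F)$ orthogonally summed with a hyperbolic plane. The difference is in how the two arguments certify that $q$ is a quadratic form. The paper computes the polar form $B(x,y)=\tfrac12\bigl(q(x+y)-q(x)-q(y)\bigr)$ case by case on the six kinds of pairs of modules and then asserts that bilinearity ``is straightforward to check''; you instead linearize first, transporting everything along the explicit fusion-group isomorphism $\phi:R(\Ktau)\to (K_{12}^\ast/K_{12})\oplus\mathbb{F}_3^2$ (whose verification against \eqref{FR:K12} I checked and which is correct), after which $q$ becomes the single homogeneous quadratic polynomial $q_F(a)+tx+t^2$ and bilinearity of the polar form is automatic. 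This buys you a cleaner justification of exactly the step the paper leaves to the reader, at the cost of having to verify that $\phi$ is a homomorphism (six mechanical checks, comparable in length to the paper's six polar-form computations). Your complement $\{0\}\times\mathbb{F}_3^2$ is precisely the span of $S[1]$ and $T[1]$ appearing in the paper, and your criterion for hyperbolicity --- a non-degenerate binary form possessing a nonzero singular vector, here $(1,0)=\phi(S[1])$ --- is actually the cleaner one, since $q(T[1])=2\neq 0$, so the paper's pair $\{S[1],T[1]\}$ is not literally a pair of singular vectors. Two cosmetic points: your Gram matrix $\bigl(\begin{smallmatrix}0&1\\1&2\end{smallmatrix}\bigr)$ corresponds to the convention $B(u,v)=q(u+v)-q(u)-q(v)$ without the factor $\tfrac12$ (either convention gives non-degeneracy over $\mathbb{F}_3$), and the check that $q(T^a[x])$ transports correctly uses $q_F(-a)=q_F(a)$, which holds for any quadratic form but is worth a word. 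The concluding Witt-index count ($2+1=3$, hence minus type in dimension $8$) is standard and correct.
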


\begin{proof}%[of Prop.~\ref{thm:O83minus} ]
 Let $B( x, y ) = \frac{1}{2} \big(  q ( x+ y ) - q(x) - q(y) \big) $. Then $B$ is  a symmetric form and we have
 \begin{enumerate}[label=(\roman*)]
   \item $ B( S^a[x], S^b[y] ) = 3\langle a, b \rangle$;
   \item $ B(S^a[x], T^b[y] ) = 6\langle a, b \rangle + 2 x$ and $ B(S^a[x], \check{T}^b[y] ) =  3\langle a, b \rangle +  x$;

   \item $ B(T^a[x], T^b[y] ) =  B( \check{T}^a[x],  \check{T}^b[y] ) =3\langle a, b \rangle + 2 ( x +y -1 )$;
   \item $ B( \check{T}^a[x], T^b[y] )= B( T^a[x], \check{T}^b[y] ) = 6 \langle a, b \rangle + x +y - 1$.

\end{enumerate}
It is straightforward to check that this form is  bilinear with respect to the fusion products and hence $ q$ is a quadratic form.

Using the bilinear form, it is clear that
   \begin{align*}
R( \Ktau ) = \{ S^ a[0] \mid a \in { K_{12}} ^\ast \mod K_{12}   \} \perp \Span \{ S[1], T[1]  \}.
\end{align*}
Note that $  \{ S[1], T[1]  \}$ is a hyperbolic pair and $ \{ S^ a[0] \mid a \in { K_{12}} ^\ast \mod K_{12}   \} $ is a quadratic space isometric to $K_{12}^*/K_{12}\cong \mathbb{F}_3^6$, which is a non-singular space of minus type. This completes our proof.
\end{proof}

\begin{lem}
   This quadratic form is $ \Aut V_{ K_{12}}^ \tau$-invariant.  That is for any $ M \in R(V_{ K_{12}}^ \tau) $ and $ g \in \Aut V_{ K_{12}}^ \tau$, we have $ q(M) = q( M \circ g)$.
\end{lem}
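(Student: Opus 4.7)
The strategy is to express $q$ in terms of conformal weights, which are manifestly preserved by every VOA automorphism (since any $g\in\Aut(V^\tau_{K_{12}})$ fixes the Virasoro element $\omega$, and hence the twist $M\circ g$ has the same $L(0)$-action as $M$). Concretely, I would prove the single congruence
\begin{equation*}
q(M)\equiv -3\,\wt(M)\pmod 3
\end{equation*}
for every irreducible $V^\tau_{K_{12}}$-module $M$. Once this is established, the $\Aut$-invariance is immediate, because $\wt(M)=\wt(M\circ g)$ forces $q(M)=q(M\circ g)$.

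To verify the congruence, I would decompose $M$ as a $(V_L^\tau)^{\otimes 6}$-module using Proposition~\ref{thm:decomp:into:L} and read off $\wt(M) \bmod \mathbb{Z}$ from the Tanabe--Yamada formulas
$\wt V_{L^{(0,j)}}[\varepsilon]\in \tfrac{2j^2}{3}+\mathbb{Z}$ and $\wt V_L^{T,j}(\tau^i)[\varepsilon]\in\tfrac{10-3(j^2+\varepsilon)}{9}+\mathbb{Z}$.
For an untwisted simple current $S^a[x]=V_{L_{\cH\times\bm\delta}}[x]$ with coset label $a$ corresponding to $\bm\delta\in\mathbb{Z}_3^6$, summing over the six tensor factors gives $\wt(S^a[x])\in \tfrac{2\|\bm\delta\|^2}{3}+\mathbb{Z}$, so $3\wt(S^a[x])\equiv -\|\bm\delta\|^2 \pmod 3$. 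A direct computation with the representative $a=\sum_k \delta_k\cdot\tfrac{-\beta_1+\beta_2}{3}\in K_{12}^\ast$, using $\|(-\beta_1+\beta_2)/3\|^2=4/3$, yields $q_F(a)=3\langle a,a\rangle=4\|\bm\delta\|^2\equiv\|\bm\delta\|^2\pmod 3$, confirming $q(S^a[x])\equiv -3\wt(S^a[x])\pmod 3$.

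For a twisted module $T^a[x]$ or $\check{T}^a[x]$ with parameter $\bm\eta\in\mathbb{Z}_3^6$, summing the six copies of the twisted weight formula gives
\begin{equation*}
\wt(T^a[x])\in \frac{60-3\|\bm\eta\|^2-3x}{9}+\mathbb{Z}=\frac{2-\|\bm\eta\|^2-x}{3}+\mathbb{Z},
\end{equation*}
so $-3\wt(T^a[x])\equiv \|\bm\eta\|^2+x+1 \equiv q_F(a)+x+1=q(T^a[x])\pmod 3$. The same argument applies to $\check{T}^a[x]$.

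The main obstacle I expect is the careful bookkeeping needed to match cosets $a+K_{12}\in K_{12}^\ast/K_{12}$ with the parameters $\bm\delta$ (or $\bm\eta$) indexing the $(V_L^\tau)^{\otimes 6}$-decomposition: one must pick representatives in $K_{12}^\ast$ consistently and verify that the norm computation producing $q_F$ agrees across both the untwisted and twisted cases. Once this identification is pinned down, the verification of the congruence $q(M)\equiv -3\wt(M)\pmod 3$ is a short numerical check, and $\Aut$-invariance follows at once.
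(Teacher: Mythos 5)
Your proposal is correct and follows essentially the same route as the paper: the paper likewise shows $\wt(M)\equiv \tfrac{2q(M)}{3}\pmod{\mathbb{Z}}$ (equivalent to your congruence $q(M)\equiv -3\wt(M)\pmod 3$) by combining the decomposition of Proposition~\ref{thm:decomp:into:L} with the Tanabe--Yamada weight formulas, and then concludes from the fact that $M\mapsto M\circ g$ preserves conformal weights. Your extra verification that $q_F(a)\equiv\|\bm\delta\|^2\pmod 3$ via the norm $\|(-\beta_1+\beta_2)/3\|^2=4/3$ is a correct spelling-out of a step the paper leaves implicit.
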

\begin{proof} Recall that the weights of irreducible $ V_L^ \tau$-modules are given by (see Tanabe and Yamada\cite[(5,10)]{TY07}):
  \begin{align*}
 \wt V_{ L^{0, j }} [ \varepsilon ] &\in  \frac{ 2 j^2 }{3} + \mathds{Z}, \\
 \wt V_L^{T, j}( \tau^i ) [ \varepsilon ] &\in  \frac{ 10 -  3 (  j^2 + \varepsilon ) }{9 } + \mathds{Z},
\end{align*}
for $ i = 1, 2, j, \varepsilon \in \mathds{Z}_3$.

Therefore, by the decompositions given in Prop.~\ref{thm:decomp:into:L}, we know that the  weights of irreducible $V_{ K_{12}}^ \tau$-modules are
\begin{align*}
 \wt S^a [ \varepsilon ] & \equiv  \frac{2 \langle a, a  \rangle}{3}= \frac{2 q( S^a[ \varepsilon ])}{3}  \mod  \mathds{Z}, \\
 \wt T^a[ \varepsilon ] = \wt \check{T}^a [ \varepsilon ] &  \equiv  \frac{ 2 ( 1 +  \varepsilon +   \langle a, a  \rangle) }{3 } = \frac{2 q( T^a[ \varepsilon ])}{3}  \mod \mathds{Z}.
\end{align*}
Since the action $ M \mapsto M \circ g$ preserves weights, we are done.
 \end{proof}

\begin{prop}
 Define a map $ \phi : \Aut V_{ K_{12}}^ \tau \to O( R( \Ktau ) , q) $ by
 \begin{align*}
 g \mapsto ( M \mapsto M \circ g ).
\end{align*}
Then $ \phi$ is a group monomorphism and $\mathrm{Im}\,\phi= {}^+\Omega^-_8(3)\cong \Omega^-_8(3).2$.
%\begin{align*}
%  \Aut V_{ K_{12}}^ \tau \cong O^-_8(3).
%\end{align*}
\end{prop}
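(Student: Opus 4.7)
The plan is to establish that $\phi$ is a group homomorphism (routine), to prove injectivity via simple-current extensions, and to identify the image as ${}^+\Omega^-_8(3) \cong \Omega^-_8(3).2$ by combining an explicit construction with an order count. That $\phi$ is a homomorphism is immediate from the definition of the twisted module $M \circ g$, since $Y_{M \circ (gh)}(u,z) = Y_M((gh)u,z)$ gives $\phi(gh) = \phi(g)\phi(h)$. The preceding lemma already establishes that $q$ is $\Aut V_{K_{12}}^\tau$-invariant, so $\phi$ lands in $O(R(\Ktau), q) \cong O_8^-(3)$.

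For injectivity, suppose $g \in \ker \phi$, so that $g$ fixes every iso class of irreducible $V_{K_{12}}^\tau$-module. All such modules are simple currents by Corollary~\ref{thm:VLC:simp_curr:iff}, so Theorem~\ref{NCthm} applies to the $\mathbb{Z}_3$-graded simple-current extension $V_{K_{12}} = V_{K_{12}}[0] \oplus V_{K_{12}}[1] \oplus V_{K_{12}}[2]$, lifting $g$ to $\tilde g \in \Ct_{\Aut V_{K_{12}}}(\hat\tau)$, unique modulo $\langle\hat\tau\rangle$. Under the projection of Proposition~\ref{thm:ses:C_AutVL: C_OL}, the image of $\tilde g$ in $\Ct_{O(K_{12})}(\tau)$ must fix every iso class of irreducible $V_{K_{12}}$-module (each of these arises as a summand of a $V_{K_{12}}^\tau$-module fixed by $g$), hence acts trivially on $K_{12}^\perp/K_{12}$; since the $\Hom(K_{12}/(1-\tau)K_{12}, \mathbb{Z}_3)$-kernel is also controlled by the action on twisted modules, $\tilde g \in \langle\hat\tau\rangle$, whence $g$ is trivial on $V_{K_{12}}^\tau$.

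For the image, we combine an upper bound with a lower bound. Containment in the index-two subgroup ${}^+\Omega^-_8(3) \subset O_8^-(3)$ comes from observing that every $g \in \Aut V_{K_{12}}^\tau$ preserves, beyond $q$, the $\{$untwisted$\}\sqcup\{$twisted$\}$ partition of $R(\Ktau)$ together with the combinatorial structure distinguishing the $\tau$- and $\tau^2$-twisted sectors; the stabilizer of this extra structure inside $O_8^-(3)$ is exactly ${}^+\Omega^-_8(3)$. For the lower bound, Proposition~\ref{thm:ses:C_AutVL: C_OL} together with the quotient by $\langle\hat\tau\rangle$ supplies a large subgroup of $\Aut V_{K_{12}}^\tau$ of order $3^6 \cdot |\Ct_{O(K_{12})}(\tau)|/3$; further automorphisms arise as ``Miyamoto-like'' involutions attached to the order-three simple currents $T^a[x] \in R(\Ktau)$ lying outside the $V_{K_{12}}$-extension. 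One then checks that the combined images in $O(R(\Ktau), q)$ generate a subgroup that is transitive enough on isotropic vectors and singular planes to exhaust ${}^+\Omega^-_8(3)$.

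The main obstacle will be the order count: verifying that the explicitly constructed subgroup has order exactly $2 \cdot |\Omega^-_8(3)|$. This hinges on identifying $\Ct_{O(K_{12})}(\tau)$ precisely --- viewing $K_{12}$ as a rank-$6$ Hermitian $\mathbb{Z}[\xi]$-lattice, it is a central extension by the scalar $\langle\xi\rangle$ of the Hermitian automorphism group of the Coxeter--Todd lattice, known to contain $U_4(3).2$ essentially with index two --- and on tracking how the $3^6$ inner-twist subgroup $\Hom(K_{12}/(1-\tau)K_{12}, \mathbb{Z}_3)$ embeds inside the stabilizer of the untwisted sector. Once the inclusions $\Omega^-_8(3).2 \subseteq \mathrm{Im}\,\phi \subseteq {}^+\Omega^-_8(3)$ are both verified and the isomorphism ${}^+\Omega^-_8(3) \cong \Omega^-_8(3).2$ matched, the conclusion follows.
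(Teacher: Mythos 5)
Your homomorphism and injectivity arguments are essentially the paper's: the paper also lifts $g\in\Ker\phi$ to $\Ct_{\Aut(V_{K_{12}})}(\hat\tau)$ via the simple current extension $V_{K_{12}}=V_{K_{12}}^\tau\oplus V_{K_{12}}[1]\oplus V_{K_{12}}[2]$ and then uses the exact sequence of Proposition~\ref{thm:ses:C_AutVL: C_OL} together with the identification $\Ct_{O(K_{12})}(\tau)/\langle\tau\rangle\cong 2.\mathrm{PSU}_4(3).2\cong\Omega_6^-(3)$ and a faithfulness check on $R(\Ktau)$. That part is acceptable, though your claim that the $\Hom(K_{12}/(1-\tau)K_{12},\mathbb{Z}_3)$-part is ``controlled by the action on twisted modules'' would need to be spelled out.

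The determination of the image, however, has two genuine gaps. First, your proposed upper bound $\mathrm{Im}\,\phi\subseteq{}^+\Omega^-_8(3)$ rests on the assertion that every $g\in\Aut V_{K_{12}}^\tau$ preserves the partition of $R(\Ktau)$ into untwisted and twisted sectors. This is false: by Lemma~\ref{S1} all nonzero singular vectors of $(R,q)$ have conformal weight $1$ regardless of sector, and since ${}^+\Omega^-_8(3)$ (which you also claim lies in the image) acts transitively on nonzero singular vectors, the image necessarily mixes $S$-type and $T$-type modules. Even if the partition were preserved, you give no argument that the stabilizer of your ``combinatorial structure'' inside $O_8^-(3)$ equals ${}^+\Omega^-_8(3)$. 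The paper avoids this entirely: knowing the image contains the index-two subgroup ${}^+\Omega^-_8(3)$, it rules out $\mathrm{Im}\,\phi=O^-_8(3)$ by observing that otherwise $Z(\Aut V_{K_{12}}^\tau)\cong\mathbb{Z}_2$ would contain an element $h$ with $\phi(h)=-1$; by the fusion rules \eqref{FR:K12} such an $h$ swaps $V_{K_{12}}[1]$ and $V_{K_{12}}[2]$, hence lifts into $N_{\Aut V_{K_{12}}}(\hat\tau)/\langle\hat\tau\rangle\cong 3^6{:}(2.\mathrm{PSU}_4(3).2^2)$, which is centerless---a contradiction. Second, your lower bound reduces to ``one then checks'' that the lattice-induced subgroup together with some ``Miyamoto-like involutions'' generates ${}^+\Omega^-_8(3)$; this generation statement is precisely the hard content, and the paper does not reprove it but cites the explicit construction of a subgroup $H\cong{}^+\Omega^-(8,3)$ by $\sigma$-involutions associated to $c=4/5$ Virasoro vectors in \cite[Rmk 5.52, Thm 5.64]{LY13}. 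Without either that citation or a concrete generation argument, your proof of the image statement is incomplete.
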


%\textbf{Need to rewrite and compute again} \newpage

\begin{proof}
 That $ \phi$ is a group homomorphism follows from that $ Y_{ M \circ g}( u, z) = Y_M ( g  u, z)$.

We now prove that $\phi$ is injective.  Let $ g \in \Ker \phi$ and consider the simple current extension:
  \begin{align*}
 V_{ K_{12}} =  V_{ K_{12}}^ \tau \oplus V_{ K_{12}} [1] \oplus V_{ K_{12}}[2].
\end{align*}
Since $ M \circ g \cong M  $ for $ M \in R(V_{K_{12}}^\tau)$,  the set $S_B=\{V_{ K_{12}}^ \tau, V_{ K_{12}} [1], V_{ K_{12}}[2]\}$ is also fixed by $g$ pointwise. Therefore, $ g $ can be lifted to $  \Ct_{ \Aut( V_ { K_{12}}) } (\langle \hat{\tau}\rangle) $.  By Prop. \ref{thm:ses:C_AutVL: C_OL}, we have the exact sequence
\begin{align*}
 0 \xrightarrow{ }\Hom( K_{12}/ ( 1 - \tau) K_{12}) \xrightarrow{ } \Ct_{ \Aut( V_ { K_{12}}) } (  \hat{\tau})  \xrightarrow{}  \Ct_{O(K_{12})}(\tau) \xrightarrow{} 0.
\end{align*}
Recall that $O(K_{12}) \cong 6. \mathrm{PSU}_4(3). 2^2\cong 3. O^-_6(3)$ (see for example \cite{CS}) and $\Ct_{O(K_{12})}(\tau)\cong 6. \mathrm{PSU}_4(3). 2$. Therefore, $\Ct_{O(K_{12})}(\tau) / \langle \tau\rangle \cong 2.\mathrm{PSU}_4(3). 2\cong \Omega^-_6(3)$.

By direct calculations, it is easy to verify that $\Ct_{ \Aut( V_ { K_{12}}) } (  \hat{\tau}) / \langle \hat{\tau}\rangle$ acts faithfully on $R(V_{K_{12}}^\tau)$. Hence $\ker \phi = id$.

Next we determine the image of $\phi$.  In \cite{LY13},
%two subgroups of $\Aut V_{ K_{12}}^\tau$  are constructed explicitly. These two groups are $ N_{ \Aut V_{ K_{12}}} ( \hat{\tau} ) \cong 3^6{:} \Omega^-(6,3).2\cong 3^6{:} O^-(6,3)$ and
a subgroup $H \cong {}^+ \Omega^-(8,3)$ is constructed explicitly using $\sigma$-involutions associated to $c=4/5$ Virasoro vectors (see \cite[Rmk 5.52,Thm 5.64]{LY13}).  Since ${}^+ \Omega^-(8,3)$ is an index 2 subgroup of the full orthogonal group $O^-_8(3)$, we have $\mathrm{Im}\,\phi \cong {}^+ \Omega^-(8,3)$ or $O^-_8(3)$.

Suppose $\mathrm{Im}\,\phi \cong O^-_8(3)$. In this case, $Z(\Aut (V_{ K_{12}}^{\tau}))\cong \mathbb{Z}_2$.

Let $h$ be an order $2$ element in $Z(\Aut (V_{ K_{12}}^{\tau}))$. Then $\phi(h)$ is the $-1$ map on $R(V_{K_{12}}^\tau)$. By the fusion rules (see Equation \ref{FR:K12}), we have
\[
V_{K_{12}}[1]\circ h =\phi(h) (V_{K_{12}}[1]) \cong V_{K_{12}}[2]\quad \text{ and } \quad
V_{K_{12}}[2]\circ h =\phi(h) (V_{K_{12}}[2]) \cong V_{K_{12}}[1].
\]
Therefore, $h$ lifts to $\Aut(V_{K_{12}})$ and is contained in the subgroup isomorphic to $$N_{ \Aut V_{ K_{12}}} ( \hat{\tau} )/ \langle \hat{\tau}\rangle \cong 3^6: (2.\mathrm{PSU}_4(3). 2^2),$$ which is centerless. It is a contradiction and hence
$\mathrm{Im}\,\phi \cong {}^+ \Omega^-(8,3)\cong \Omega^-_8(3).2$.
%This completes our proof.
\end{proof}

\medskip
For now on, we denote $R(V_{K_{12}}^\tau)$ by $R$ for simplicity.
Notice that  $(R, -q)$ also forms a non-singular quadratic space of minus type.
Therefore, $(R, -q)\cong (R, q)$.

Let $\eta: (R, -q) \to (R,q)$ be a linear isometry and set
\[
S_\eta= \{ (a, \eta(a))\in R\times R\mid a\in R\}.
\]

\begin{lem}\label{S1}
The set $S_\eta$ is a maximal totally singular subspace of $R\times R$. Moreover, the minimal conformal weight of $S_\eta$ is $2$.
\end{lem}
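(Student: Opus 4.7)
The plan has two parts: a linear algebra argument for maximal total singularity, and a conformal weight computation.

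\textbf{Maximal totally singular.} The map $a\mapsto(a,\eta(a))$ is an injective $\mathbb{F}_3$-linear map, so $\dim_{\mathbb{F}_3}S_\eta=\dim_{\mathbb{F}_3}R=8$. Total singularity is immediate:
\[(q\oplus q)(a,\eta(a))=q(a)+q(\eta(a))=q(a)-q(a)=0,\]
using the assumption that $\eta:(R,-q)\to(R,q)$ is an isometry. For maximality, recall that the orthogonal direct sum of two minus-type non-singular $\mathbb{F}_3$-quadratic spaces is of plus type (the invariant $(-1)^{n/2}\det$ is a product of two non-squares, hence a square). Since $(R,q)$ has rank $8$ and minus type by Prop.~\ref{thm:O83minus}, the space $(R\oplus R,q\oplus q)$ has rank $16$ and plus type, so Witt index $8$; therefore an $8$-dimensional totally singular subspace is maximal.

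\textbf{Minimal conformal weight is $2$.} Using Prop.~\ref{thm:decomp:into:L} together with the weight formulas for irreducible $V_L^\tau$-modules recalled from \cite{TY07} earlier, the minimum conformal weight of each irreducible $V_{K_{12}}^\tau$-module $M_a$ satisfies
\[\wt M_a \equiv \tfrac{2}{3}q(a)\pmod{\mathbb{Z}},\qquad a\in R.\]
Consequently, for every $(a,\eta(a))\in S_\eta$,
\[\wt M_a+\wt M_{\eta(a)}\equiv \tfrac{2}{3}\bigl(q(a)+q(\eta(a))\bigr)=0\pmod{\mathbb{Z}},\]
so this sum is always a non-negative integer. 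A key auxiliary fact is that every non-vacuum irreducible $V_{K_{12}}^\tau$-module has minimum conformal weight $\ge 2/3$. I will verify this case by case: (i) $S^0[\varepsilon]$ with $\varepsilon\neq 0$ has weight $1$, realized by $\tau$-eigenvectors of Cartan type in $V_{K_{12}}$; (ii) $S^a[\varepsilon]$ with $a\neq 0$ has weight $\mu(a)/2\ge 2/3$, since $K_{12}^*$ has minimum norm $4/3$ and $\tau$ acts fixed-point-freely on every non-trivial coset $K_{12}+a$; (iii) for the twisted modules, the combinatorial bound $\wt T^a[\varepsilon]=\tfrac{60-3\max_{\bm e\in S_\varepsilon}\sum_i(\eta_i^2+e_i)}{9}\ge\tfrac{60-3\cdot 18}{9}=\tfrac{2}{3}$ holds uniformly, since $\eta_i^2\in\{0,1\}$ and $e_i\le 2$.

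Given this, if $a\neq 0$ and the weight sum is $\le 1$, the value $0$ forces both modules to be the vacuum $V_{K_{12}}^\tau=S^0[0]$, giving $a=0$ by injectivity of $\eta$ (contradiction), while the value $1$ forces either one vanishing summand (again giving $a=0$) or two strictly positive summands each $\ge 2/3$, whose sum $\ge 4/3>1$ is impossible. Hence the non-zero part of $S_\eta$ has minimum weight $\ge 2$. Equality is attained because $S^0[1]$ and $S^0[2]$ are both singular of weight $1$ and there exist weight-$1$ singular modules of $T$-type (e.g.\ $T^a[2]$ with $\sum\eta_i^2=6$), so any isometry $\eta$ necessarily pairs some weight-$1$ singular element of $R$ either with another weight-$1$ singular element or with a weight-$2/3$/weight-$4/3$ pair, realizing total weight exactly $2$; this is seen by a counting argument on the singular locus of $(R,q)$.

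\textbf{Main obstacle.} The bulk of the technical work is contained in step (iii): the tight combinatorial bound of $2/3$ on the minimum weight of every twisted $V_{K_{12}}^\tau$-module, which requires simultaneous control of the parameters $\bm e\in S_\varepsilon$ and $\eta\in\mathbb{Z}_3^6$. The other delicate point is verifying that the bound $2$ is attained for every isometry $\eta$, which requires an $O_8^-(3)$-orbit analysis of the weight-$1$ singular locus of $R$.
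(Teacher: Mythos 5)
Your first half (maximal total singularity) is essentially the paper's argument: dimension $8=\tfrac12\dim(R\times R)$ plus $q(a)+q(\eta(a))=0$. Your extra digression through the plus-type/Witt-index computation is correct but unnecessary --- a totally singular subspace of a non-degenerate quadratic space can never exceed half the dimension, which is all the paper invokes.

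The second half diverges from the paper and has a genuine gap. The paper's proof is a one-line direct computation: it recalls the \emph{exact} lowest conformal weights of the irreducible $V_{K_{12}}^\tau$-modules as a function of $q$ alone (namely $0$ for the vacuum, $1$ for nonzero singular, and $4/3$, $2/3$ for the two non-singular values), from which $\wt(a,\eta(a))=2$ for \emph{every} nonzero $a$, since $q(\eta(a))=-q(a)$ pairs $1\leftrightarrow 1$ and $4/3\leftrightarrow 2/3$. You instead establish only the congruence $\wt M_a\equiv\tfrac23 q(a)\pmod{\mathbb{Z}}$ and a lower bound $\wt\ge 2/3$ for non-vacuum modules, which gives $\wt(a,\eta(a))\ge 2$ but not equality. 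Your attainment argument is not carried out: to conclude that some nonzero $(a,\eta(a))$ has weight exactly $2$ you would need to know, for instance, that every nonzero singular element of $R$ has weight exactly $1$ (your facts only give ``a positive integer''), or that the $q=1$ and $q=2$ classes have weights exactly $4/3$ and $2/3$; the ``counting argument on the singular locus'' you defer to is precisely where this missing input would have to enter, and once you have the exact weight table the whole integrality detour becomes superfluous. There is also a technical slip in your case (iii): the quantity $\tfrac{60-3\sum_i(\eta_i^2+e_i)}{9}$ is only a representative of the weight modulo $\mathbb{Z}$ (the paper states $\wt V_L^{T,j}(\tau^i)[\varepsilon]\in\tfrac{10-3(j^2+\varepsilon)}{9}+\mathbb{Z}$), not the actual lowest weight, so minimizing it over $\bm e$ and $\bm\eta$ does not by itself bound the weight below. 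The correct route to the $2/3$ bound is simply that each twisted tensor factor $V_L^{T,j}(\tau^i)[e]$ has all weights $\ge 1/9$, whence the six-fold tensor product has weight $\ge 6/9=2/3$; your formula happens to land on the same number, but for the wrong reason.
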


\begin{proof}
It is clear that $S_{\eta}$ is a vector subspace of $R\times R$ and $\dim_{\mathbb{F}_3} S_\eta =\dim_{\mathbb{F}_3} R=8$.

By the definition of $\eta$, we also have
\[
q(a, \eta(a))= q(a) +q(\eta(a))= q(a)-q(a)=0 \quad \text{ for all } a\in R
\]
Therefore, $S_\eta$ is totally singular. It is maximal since $\dim_{\mathbb{F}_3} S_\eta= 1/2 \dim_{\mathbb{F}_3} (R\times R)$.

Recall that the conformal weights of the elements in $R$ are given by
\[
\wt(a) =
\begin{cases}
0 & \text{ if } a=0, \\
1 & \text{ if } q(a)=0, a\neq 0,  \\
4/3 & \text{ if } q(a)=1, \\
2/3 & \text{ if } q(a)=2. \\
\end{cases}
\]
Therefore, $\wt(a, \eta(a)) =2$ if $a\neq 0$.
\end{proof}

\begin{lem}\label{S2}
Let $S$ be a maximal totally singular subspace of $R\times R$ such that the minimal conformal weight of $S$ is $\geq 2$. Then there is a linear isomorphism $\eta: R\to R$ such that $q(\eta(a)) =-q(a)$ for all $a\in R$ and $S= S_{\eta}$.
\end{lem}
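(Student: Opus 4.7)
The plan is to show that the hypothesis on weights forces both coordinate projections $\pi_1,\pi_2 \colon S\to R$ to be isomorphisms, after which $S$ is automatically the graph of a linear isomorphism $\eta$, and total singularity forces $\eta$ to be an anti-isometry.

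First I would read off from the weight table recalled in the proof of Lemma~\ref{S1} that any \emph{nonzero} element $r\in R$ satisfies $\wt(r)\in\{1,\tfrac43,\tfrac23\}$, and in particular $\wt(r)\le \tfrac43 < 2$. Consequently, for any $(a,b)\in R\times R$ with exactly one coordinate zero, $\wt(a,b)=\wt(a)+\wt(b) < 2$. This is the key numerical input.

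Next I would show $\pi_1$ is injective: suppose $(0,b)\in S$ with $b\neq 0$. Since $S$ is totally singular, $0=q(0,b)=q(b)$, but then $b$ is a nonzero isotropic vector, so $\wt(b)=1$ and hence $\wt(0,b)=1<2$, contradicting the assumption that the minimal conformal weight of $S$ is $\ge 2$. The same argument (swapping factors) gives injectivity of $\pi_2$. Since $\dim_{\mathbb{F}_3}S = 8 = \dim_{\mathbb{F}_3}R$, both $\pi_1$ and $\pi_2$ are linear isomorphisms. Setting $\eta := \pi_2\circ\pi_1^{-1}\colon R\to R$ gives a linear isomorphism with $S=\{(a,\eta(a))\mid a\in R\}=S_\eta$.

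Finally, total singularity of $S$ reads $0 = q(a,\eta(a)) = q(a)+q(\eta(a))$ for every $a\in R$, i.e.\ $q(\eta(a)) = -q(a)$, so $\eta$ is an anti-isometry, completing the proof.

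I do not expect any serious obstacle: the argument is essentially the standard graph-of-a-map characterization of maximal totally isotropic subspaces in a hyperbolic space $R\perp (-R)$, with the single nontrivial input being the observation that nonzero vectors in $R$ have conformal weight strictly less than $2$, which rules out kernels for the projections. The only point worth verifying carefully is that the weight bound $\wt(r)\le 4/3$ really applies to every nonzero $r$; this is immediate from the case analysis in Lemma~\ref{S1}.
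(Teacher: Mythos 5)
Your proposal is correct and follows essentially the same route as the paper's proof: both show the coordinate projections $p_i|_S$ are injective (hence bijective by the dimension count) by noting that an element of $S$ with one zero coordinate would have conformal weight less than $2$, and then read off $\eta$ as the graph map and the anti-isometry property from total singularity. The only cosmetic difference is that you use the uniform bound $\wt(r)\le 4/3$ for all nonzero $r$, while the paper pins the weight down to exactly $1$ via the singularity of $q(b)$; both are immediate from the weight table in Lemma \ref{S1}.
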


\begin{proof}
Let $p_i: R\times R \to R, i=1,2,$ be the natural projection to the $i$-th coordinate.

Let $(a,b)\in S$ be a non-zero vector. Then neither $a$ nor $b$ is zero; otherwise, $\wt(a,b)= \wt(a) +\wt(b) =1$.
That means $p_i|_{S}$ is injective for any $i=1,2$ and hence $p_i|_{S}$ is bijective for any $i=1,2$ since $\dim_{\mathbb{F}_3}(S)=\dim_{\mathbb{F}_3}(R)$.

Let $\eta= p_2\circ (p_1|_{S})^{-1}$. Then $\eta: R \to R $ is a linear isomorphism and
\[
S= \{ (a, \eta(a))\mid a\in R\}.
\]
Since $S$ is totally singular,  we have $q(a, \eta(a)) = q(a)+ q(\eta(a))=0$ for any $a\in R$ and hence $q(\eta(a))=-q(a)$ for all $a\in R$.
\end{proof}

\begin{prop}
Let $\mathcal{S}$ be the set of all maximal totally singular subspaces of $R\times R$ such that the minimal conformal weight is $\geq 2$. Then $\mathcal{S}$ is transitive under the action of $O^-_8(3)\wr 2$.
\end{prop}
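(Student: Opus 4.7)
The plan is to combine the two preceding lemmas into a clean parameterization of $\mathcal{S}$ and then exhibit a transitive action by hand. By Lemma~\ref{S2}, every element $S \in \mathcal{S}$ is of the form $S_\eta = \{(a,\eta(a)) \mid a \in R\}$ for some linear isomorphism $\eta : R \to R$ satisfying $q(\eta(a)) = -q(a)$, i.e. an isometry $\eta : (R,-q) \to (R,q)$. Conversely, Lemma~\ref{S1} tells us every such graph $S_\eta$ lies in $\mathcal{S}$. Thus the first step is to record the bijection
\[
\mathcal{S} \;\longleftrightarrow\; \{\eta : (R,-q) \xrightarrow{\sim} (R,q) \text{ an isometry}\},
\]
and to note that since $(R,-q) \cong (R,q)$ as non-singular quadratic spaces of minus type (Proposition~\ref{thm:O83minus}), the set $\mathcal{S}$ is nonempty.

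Next I would show that the base group $O^-_8(3) \times O^-_8(3)$ of the wreath product already acts transitively on $\mathcal{S}$, which certainly suffices. Given $S_{\eta_1}, S_{\eta_2} \in \mathcal{S}$, set $h := \eta_2 \eta_1^{-1} : R \to R$. The key observation is that $h$ is an isometry of $(R,q)$: indeed for any $a \in R$,
\[
q(h(a)) \;=\; q(\eta_2(\eta_1^{-1}(a))) \;=\; -q(\eta_1^{-1}(a)) \;=\; -(-q(a)) \;=\; q(a),
\]
using that $\eta_2$ reverses the form and $\eta_1^{-1}$ also reverses it. Hence $(1,h) \in O(R,q) \times O(R,q) \cong O^-_8(3) \times O^-_8(3)$, and a direct check gives
\[
(1,h)\cdot(a,\eta_1(a)) \;=\; (a,\eta_2\eta_1^{-1}\eta_1(a)) \;=\; (a,\eta_2(a)),
\]
so $(1,h)$ maps $S_{\eta_1}$ onto $S_{\eta_2}$. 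This establishes transitivity under $O^-_8(3) \times O^-_8(3)$, and therefore under $O^-_8(3) \wr 2$.

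There is no serious obstacle here; the content is all in the preceding two lemmas, which reduce the proposition to an elementary verification about graphs of isometries. The only thing to be careful about is that the element implementing the transitivity can be taken entirely in the base group, so the wreath extension by the swap is not actually needed for transitivity (it will, however, be natural later when one wants to enlarge the stabilizer).
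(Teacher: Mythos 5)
Your proof is correct and is exactly the argument the paper intends: the paper's proof consists of the single line ``This follows immediately from Lemmas \ref{S1} and \ref{S2},'' and your write-up simply makes that deduction explicit (graphs of isometries $(R,-q)\to(R,q)$ parameterize $\mathcal{S}$, and the base group $O(R,q)\times O(R,q)$ already acts transitively on such graphs via $(1,\eta_2\eta_1^{-1})$). No discrepancies.
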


\begin{proof}
This follows immediately from Lemmas \ref{S1} and \ref{S2}.
\end{proof}

\begin{lem}\label{lem:Ssharp}
Let $S^\sharp$ be a maximal totally isomorphic subspaces of $R\times R$ such that $ \oplus_{M\in S} M = V^\sharp$. Then
\[
Stab_{\Aut(V_{K_{12}}^\tau\otimes V_{K_{12}}^\tau)} (S^\sharp) \cong \Aut((V_{K_{12}}^\tau) \cong \Omega^-_8(3).2.
\]
\end{lem}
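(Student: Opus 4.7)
The plan is to identify $S^\sharp$ explicitly as the graph of an isometry and then determine the stabilizer by combining a twisted-diagonal construction with an orbit-stabilizer count.

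First, by Lemma \ref{S2} I would write $S^\sharp = S_\eta = \{(a, \eta(a)) \mid a \in R\}$ for some linear bijection $\eta : R \to R$ satisfying $q\circ \eta = -q$. Using that $V_{K_{12}}^\tau$ is simple and the two tensor factors are isomorphic, one has
$$
\Aut(V_{K_{12}}^\tau \otimes V_{K_{12}}^\tau) \cong \Aut(V_{K_{12}}^\tau) \wr \mathds{Z}_2,
$$
where $\mathds{Z}_2$ acts by swapping the two factors, and the induced action on $R \times R$ is via $\phi \wr \mathrm{id}$, with $\phi(G) = \Omega^-_8(3).2$ sitting inside $O(R,q) \cong O^-_8(3)$, where $G = \Aut(V_{K_{12}}^\tau)$.

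Next I would construct an explicit twisted-diagonal copy of $G$ in the stabilizer. Since $\eta O(R,q) \eta^{-1} = O(R,-q) = O(R,q)$, conjugation by $\eta$ is a (possibly outer) automorphism of $O^-_8(3)$, and I would check that it preserves the index-two subgroup $\phi(G)$ (this being the characteristic ${}^+\Omega^-_8(3)$-type subgroup appearing in the preceding proposition). Consequently, for each $g \in G$ there is a unique $g^\eta \in G$ with $\phi(g^\eta) = \eta\,\phi(g)\,\eta^{-1}$, and a direct calculation shows that $(g, g^\eta) \in G \times G$ fixes $S_\eta$ because $\phi(g^\eta)\eta = \eta\phi(g)$. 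The assignment $g \mapsto (g, g^\eta)$ is therefore an injective group homomorphism $G \hookrightarrow \mathrm{Stab}(S^\sharp)$.

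Finally, I would verify that this diagonal exhausts the stabilizer via a counting argument. From the proof of Lemma \ref{S2} one has $|\mathcal{S}| = |O^-_8(3)| = 2|G|$, because $\mathcal{S}$ is parametrised by the torsor of isometries $(R,q)\to(R,-q)$ under right multiplication by $O(R,q)$. The $\Aut(U)$-orbit of $S^\sharp$ consists of the spaces $S_{\eta'}$ with $\eta' \in \phi(G)\eta\phi(G)\cup\phi(G)\eta^{-1}\phi(G) = \eta\phi(G)\cup\eta^{-1}\phi(G)$, and these two cosets are disjoint precisely when $\eta^2 \notin \phi(G)$. Granted that, the orbit has size $2|G| = |\mathcal{S}|$, so $\Aut(U)$ is transitive on $\mathcal{S}$, and orbit-stabilizer gives $|\mathrm{Stab}(S^\sharp)| = |\Aut(U)|/|\mathcal{S}| = 2|G|^2/(2|G|) = |G|$, which by the injection above forces $\mathrm{Stab}(S^\sharp)$ to be the twisted diagonal, i.e.\ isomorphic to $\Aut(V_{K_{12}}^\tau) \cong \Omega_8^-(3).2$.

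The main obstacle will be the final verification $\eta^2 \notin \phi(G)$, which is equivalent to saying that the tensor-factor swap of $V_{K_{12}}^\tau \otimes V_{K_{12}}^\tau$ does not extend to an automorphism of $V^\sharp$. This should follow from an explicit description of $\eta$ using the asymmetric gluing of $K_{12}\oplus K_{12}$ inside the Leech lattice $\Lambda$ underlying $V^\sharp$, or alternatively by producing an $\Aut(U)$-invariant that separates $S^\sharp$ from its image under the swap.
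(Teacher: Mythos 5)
Your overall route coincides with the paper's: both identify $S^\sharp$ as the graph $S_\eta$ of a linear isometry $\eta\colon(R,-q)\to(R,q)$ via Lemma \ref{S2}, and both place the twisted diagonal $g\mapsto(g,\eta g\eta^{-1})$ inside the stabilizer. Where you differ is in the upper bound: the paper simply asserts that the stabilizer of $S_\eta$ in $O(R,q)\wr 2$ equals this twisted diagonal, whereas you replace that assertion by an orbit--stabilizer count on the set $\mathcal{S}$. Your count is the more careful of the two, because it makes visible exactly what the one-line claim hides: writing $\sigma$ for the factor interchange, for \emph{every} $g\in O(R,q)$ the element $(g,\eta g\eta)\sigma$ also stabilizes $S_\eta$ (one checks $q(\eta g\eta a)=-q(g\eta a)=q(a)$, so $\eta g\eta\in O(R,q)$, and $(a,\eta a)\mapsto(g\eta a,\eta g\eta a)\in S_\eta$); hence the stabilizer in the full wreath product has order $2\,\lvert O^-_8(3)\rvert$, not $\lvert O^-_8(3)\rvert$. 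The entire content of the lemma is therefore whether any of these swap-type elements lie in $\phi(G)\wr 2$, which is exactly your condition $\eta^2\notin\phi(G)$.

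That condition is precisely what you do not prove, and it is a genuine gap rather than a routine verification: if $\eta^2\in\phi(G)$ (with $\eta$-conjugation preserving $\phi(G)$), your own count gives an orbit of size $\lvert G\rvert$ and a stabilizer of order $2\lvert G\rvert$, i.e.\ the opposite conclusion; so as written the argument does not decide between $\Omega^-_8(3).2$ and $\Omega^-_8(3).2^2$. Closing it requires either the lattice-level input you allude to (the embedding of $K_{12}\perp K_{12}$ in $\Lambda$, showing the factor swap cannot be corrected to preserve $S^\sharp$) or a direct computation of the determinant and spinor-norm data of $\eta^2$. Two smaller points you assert without argument, which the paper also leaves implicit: that $\Aut(V_{K_{12}}^\tau\otimes V_{K_{12}}^\tau)$ is no larger than $\Aut(V_{K_{12}}^\tau)\wr\mathds{Z}_2$, and that conjugation by $\eta$ preserves the index-two subgroup $\phi(G)$ of $O^-_8(3)$ (needed both for your twisted diagonal to land in $\phi(G)\times\phi(G)$ and for the coset identity $\phi(G)\eta\phi(G)=\eta\phi(G)$); the latter is not automatic, since $O^-_8(3)$ has three subgroups of index two and conjugation by $\eta$ need not fix each of them.
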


\begin{proof}
By Lemma \ref{S2}, $S^\sharp= \{ (a, \eta(a))\mid a\in R\}$ for some linear isomorphism $\eta:R \to R$ such that $q(\eta(a))=-q(a)$.

Note also that for any $g\in O(R,q)$, $\eta g\eta^{-1}$ is also in $O(R,q)$. Moreover, the map
\[
\begin{split}
\xi: O(R,q) &\to O(R,q)\times O(R,q)\\
        g   & \mapsto (g, \eta g\eta^{-1})
\end{split}
\]
is a group monomorphism. It is also easy to verify that
$Stab_{O(R,q)\wr 2} (S^\sharp) = \xi(O(R,q))$.
Hence, we have the desired conclusion.
\end{proof}

\begin{prop}
Let $S^\sharp$ be defined as in Lemma \ref{lem:Ssharp}. Let $A$ be the abelian subgroup of $\Aut(V^\sharp)$, which acts on
$V^\sharp$ as the dual group of $S^\sharp$. Then
\[
N_{\Aut(V^\sharp)}(A) \cong 3^8.\Omega^-_8(3).2.
\]
\end{prop}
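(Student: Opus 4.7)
The plan is to apply Theorem \ref{NCthm} to the $S^\sharp$-graded simple current extension
\[
V^\sharp = \bigoplus_{\alpha \in S^\sharp} V(\alpha),
\]
where $V(\alpha)$ denotes the irreducible $V_{K_{12}}^\tau \otimes V_{K_{12}}^\tau$-module labelled by the element $\alpha \in S^\sharp \subset R \times R$. Because $\cH$ is a self-dual $\F$-code, Corollary \ref{thm:VLC:simp_curr:iff} shows that every irreducible $V_{K_{12}}^\tau$-module is a simple current, so each $V(\alpha)$ is a simple current module for $V_{K_{12}}^\tau \otimes V_{K_{12}}^\tau$, and the fusion rule $V(\alpha) \times V(\beta) = V(\alpha+\beta)$ holds for all $\alpha, \beta \in S^\sharp$. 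By definition the abelian group $A$ is the character group of $S^\sharp$, acting on $V^\sharp$ so that $V(\alpha)$ is the $\alpha$-eigenspace; hence $A \cong S^\sharp \cong \mathds{F}_3^8$ and $|A| = 3^8$.

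Next, Theorem \ref{NCthm}, applied with grading group $S^\sharp$ and $V(0) = V_{K_{12}}^\tau \otimes V_{K_{12}}^\tau$, produces the short exact sequence
\[
0 \longrightarrow A \longrightarrow N_{\Aut(V^\sharp)}(A) \longrightarrow H^N_{S^\sharp} \longrightarrow 0,
\]
where
\[
H^N_{S^\sharp} = \{ h \in \Aut(V_{K_{12}}^\tau \otimes V_{K_{12}}^\tau) \mid S^\sharp \circ h = S^\sharp \} = \St_{\Aut(V_{K_{12}}^\tau \otimes V_{K_{12}}^\tau)}(S^\sharp).
\]
Lemma \ref{lem:Ssharp} identifies this stabilizer with $\Aut(V_{K_{12}}^\tau) \cong \Omega_8^-(3).2$. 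Substituting into the exact sequence yields
\[
N_{\Aut(V^\sharp)}(A) \cong 3^8 . \Omega_8^-(3).2,
\]
as required.

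The step that calls for care is checking that the ``isomorphism-class stabilizer'' appearing in $H^N_{S^\sharp}$ actually coincides with the geometric stabilizer of $S^\sharp \subset R \times R$ computed in Lemma \ref{lem:Ssharp}. This reduces to the assertion that every VOA automorphism of $V_{K_{12}}^\tau \otimes V_{K_{12}}^\tau$ acts on the module set $R \times R$ as an isometry of the quadratic form $q \oplus q$. Since the quadratic form $q$ was defined in Proposition \ref{thm:O83minus} in terms of conformal weights modulo $\mathds{Z}$, and any automorphism preserves fractional weights, this is automatic; once noted, Lemma \ref{lem:Ssharp} can be invoked directly and the computation is complete.
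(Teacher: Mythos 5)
Your proof is correct and follows exactly the route the paper takes: its own proof of this proposition is the one-line "This follows from Theorem \ref{NCthm} and Lemma \ref{lem:Ssharp}," and you have simply filled in the details, including the worthwhile observation that automorphisms of $V_{K_{12}}^\tau\otimes V_{K_{12}}^\tau$ preserve the quadratic form because it is determined by conformal weights modulo $\mathds{Z}$, so the isomorphism-class stabilizer $H^N_{S^\sharp}$ coincides with the geometric stabilizer computed in Lemma \ref{lem:Ssharp}. No changes needed.
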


\begin{proof}
This follows from Theorem \ref{NCthm} and Lemma \ref{lem:Ssharp}.
\end{proof}

We have constructed two subgroups
\[
\begin{split}
&H_1= Stab_{\Aut(V^\sharp)} (V_\Lambda^\tau) = N_{\Aut(V^\sharp)}(\langle \tau'\rangle) \cong 3^{1+12}.(2.\Suz:2), \text{ and }\\
&H_2= Stab_{\Aut(V^\sharp)}(V_{K_{12}}^\tau \otimes V_{K_{12}}^\tau) = N_{\Aut(V^\sharp)}(3^8) \cong 3^8. \Omega^-_8(3).2.
\end{split}
\]

\begin{lem}
The intersection of $H_1$ and $H_2$ is the common stabilizer of the subVOAs $V_\Lambda^\tau$ and $V_{K_{12}}^\tau \otimes V_{K_{12}}^\tau$  and $H_1 \cap H_2 = 3^8 . ( 3^6 . (2  \mathrm{PSU}_4(3) . 2^2)).$
\end{lem}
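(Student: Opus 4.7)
The plan is to identify $H_1 \cap H_2$ with the simultaneous normalizer $N_{\Aut(V^\sharp)}(A) \cap N_{\Aut(V^\sharp)}(\langle \tau'\rangle)$ and then compute its structure by passing to the quotient modulo $A$.

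First, I record the classical identification of $\Lambda$ as a gluing of $K_{12}\oplus K_{12}$ by a self-dual $\mathbb{F}_3$-code of length $6$, which produces an embedding $K_{12}\oplus K_{12}\hookrightarrow \Lambda$ of index $3^6$ (see, e.g., Conway--Sloane). Consequently $V_{K_{12}}^\tau\otimes V_{K_{12}}^\tau\subset V_\Lambda^\tau\subset V^\sharp$, and the first assertion, that $H_1\cap H_2$ coincides with the common stabilizer, follows at once from the definitions of $H_1$ and $H_2$.

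Next I verify the containment $\tau'\in A$ and the equality $H_1\cap H_2 = N_{\Aut(V^\sharp)}(A)\cap N_{\Aut(V^\sharp)}(\langle\tau'\rangle)$. Since $\tau'$ acts trivially on $V_\Lambda^\tau$ and in particular on the subVOA $V_{K_{12}}^\tau\otimes V_{K_{12}}^\tau$, it lies in the kernel of the restriction $H_2\to \Aut(V_{K_{12}}^\tau\otimes V_{K_{12}}^\tau)$, which equals $A$ by Theorem~\ref{NCthm}. Because $A$ acts by scalars on each irreducible $V_{K_{12}}^\tau\otimes V_{K_{12}}^\tau$-summand and $V_\Lambda^\tau$ is a sum of such summands, $A$ preserves $V_\Lambda^\tau$; hence $A\subset H_1\cap H_2$. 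Combined with $H_1=N_{\Aut(V^\sharp)}(\langle\tau'\rangle)$ and $H_2=N_{\Aut(V^\sharp)}(A)$, this yields the displayed equality, and passing to the quotient identifies $(H_1\cap H_2)/A$ with the stabilizer in $H_2/A\cong\Omega^-_8(3).2$ of the line $\langle\tau'\rangle\subset A$.

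Finally, I show $\langle\tau'\rangle$ is a singular line and read off the structure. Transporting the quadratic form of Proposition~\ref{thm:O83minus} from $R(V_{K_{12}}^\tau\otimes V_{K_{12}}^\tau)$ to its dual $A\cong\mathbb{F}_3^8$, the element $\tau'$ is singular because $V^{\sharp,\tau'}=V_\Lambda^\tau$ contains the vacuum vector of conformal weight $0$. The stabilizer of a singular line in $\Omega^-_8(3).2$ is the maximal parabolic $P_1$, whose unipotent radical is elementary abelian of order $3^6$ and whose Levi quotient is the orthogonal group on the non-singular minus-type $\mathbb{F}_3^6$ space $\langle\tau'\rangle^\perp/\langle\tau'\rangle$, namely $2.\mathrm{PSU}_4(3).2^2$. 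Assembling the layers gives $H_1\cap H_2\cong 3^8.(3^6.(2.\mathrm{PSU}_4(3).2^2))$. The subtle point I expect is the last step: one must pin down the precise extension $2.\mathrm{PSU}_4(3).2^2$ (rather than a smaller index-$2$ subgroup) as the Levi factor of $P_1$ inside $\Omega^-_8(3).2$, which requires tracking both the graph/field outer involutions of $\mathrm{PSU}_4(3)$ and the outer $2$ of $\Omega^-_8(3).2$ to confirm they all survive the stabilization of $\langle\tau'\rangle$.
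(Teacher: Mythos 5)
Your overall strategy coincides with the paper's: both arguments first note that $O_3(H_2)=A\cong 3^8$ acts by a scalar on every irreducible $V_{K_{12}}^\tau\otimes V_{K_{12}}^\tau$-summand of $V^\sharp$ and therefore stabilizes $V_\Lambda^\tau$, and then identify $(H_1\cap H_2)/A$ with the stabilizer, inside $H_2/A\cong \Omega^-_8(3).2$ acting on the $8$-dimensional quadratic $\mathbb{F}_3$-space, of the datum cut out by $V_\Lambda^\tau$, reading off the shape $3^6{:}(2.\mathrm{PSU}_4(3).2^2)$ from the known subgroup structure. You phrase that datum as the singular line $\langle\tau'\rangle\subset A$, so the stabilizer is the maximal parabolic $P_1$; the paper phrases it via the induced action on a $6$-dimensional non-singular subspace and cites the Atlas. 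Your dual formulation is, if anything, the more transparent of the two, and your closing remark about pinning down the full Levi factor $2.\mathrm{PSU}_4(3).2^2$ inside $\Omega^-_8(3).2$ is exactly the point the paper delegates to the Atlas.

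The one genuine gap is your justification that $\langle\tau'\rangle$ is singular. The sentence ``$V^{\sharp,\tau'}=V_\Lambda^\tau$ contains the vacuum vector of conformal weight $0$'' proves nothing: the fixed-point subalgebra of \emph{every} finite-order automorphism contains the vacuum, and the vacuum lies in the summand indexed by $0$, which is singular for any quadratic form. Since the entire computation hinges on this point (a non-singular line would have stabilizer of type $\Omega_7(3)$, with no normal subgroup of order $3^6$), an actual argument is required. One that works: under the identification of $A$ with $(R,q)$ furnished by the bilinear form $B$ of Proposition~\ref{thm:O83minus}, the hyperplane $\langle\tau'\rangle^{\perp}$ indexes exactly the summands of $V_\Lambda^\tau$, and these are precisely the summands of untwisted type $S^{a}[x]\otimes S^{b}[y]$, because $V_\Lambda$ is an untwisted $V_{K_{12}\oplus K_{12}}$-module; a direct computation with $B$ shows that the perp of this $7$-dimensional subspace is spanned by the vector corresponding to $S[1]=V_{K_{12}}[1]$, which satisfies $q=q_F(0)=0$. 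Hence $\tau'$ corresponds to a singular vector (equivalently, the restriction of $q$ to $\langle\tau'\rangle^{\perp}$ is degenerate). With that step repaired, your proof is correct and consistent with the paper's conclusion.
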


\begin{proof}
Recall the exact sequence
\[
1 \to 3^8 \to H_2 \to \Omega^-_8(3).2.
\]
By definition, it is clear that the normal subgroup $3^8$ stabilizes all irreducible $V_{K_{12}}^\tau \otimes V_{K_{12}}^\tau$ submodules in $S^\sharp$ and hence it stabilizes $V_{\Lambda}^\tau$, also.

Note that  $H_2$ acts on $(R,q)$ as a subgroup of isometries. The subgroup that stabilizes $V_{\Lambda}^\tau$ will stabilize a $6$-dimensional non-singular subspace of $R$ and it has the shape $$3^6:(2.PSU_4(3).2^2)\qquad  (\text{see \cite[page 141]{Atlas}).}$$ Hence, $H_1\cap H_2 \cong 3^8.(3^6: (2.PSU_4(3).2^2)) $.
\end{proof}

\begin{rmk}
Unfortunately, we do not have a direct proof that $\Aut(V^\sharp)$ is finite and hence we cannot apply Theorem \ref{thm:3local_monster} to conclude that $\Aut(V^\sharp)$ is isomorphic to the Monster.
\end{rmk}

\bibliographystyle{amsalpha}
%\bibliography{../hc_voa}
\newcommand{\etalchar}[1]{$^{#1}$}
\providecommand{\bysame}{\leavevmode\hbox to3em{\hrulefill}\thinspace}
\providecommand{\MR}{\relax\ifhmode\unskip\space\fi MR }
% \MRhref is called by the amsart/book/proc definition of \MR.
\providecommand{\MRhref}[2]{%
  \href{http://www.ams.org/mathscinet-getitem?mr=#1}{#2}
}
\providecommand{\href}[2]{#2}

\end{document}